\documentclass[11pt,oneside,reqno]{amsart}
\usepackage[latin9]{inputenc}
\usepackage{bm}
\usepackage{amsbsy}
\usepackage{amstext}
\usepackage{amsthm}
\usepackage{amssymb}
\usepackage{geometry}
\usepackage{setspace}
\usepackage{esint}
\usepackage[bookmarks=false,
 breaklinks=false,pdfborder={0 0 1},backref=false,colorlinks=false]
 {hyperref}

\makeatletter
\numberwithin{equation}{section}
\numberwithin{figure}{section}

\usepackage{amsthm}
\usepackage{mathrsfs}
\usepackage[noadjust]{cite}
\usepackage{stmaryrd}
\usepackage{enumitem}
\setlist[itemize]{leftmargin=*}
\setlist[enumerate]{leftmargin=*}
\usepackage{bbm}
\usepackage{color}

\usepackage{tikz}

\def\th@plain{\thm@notefont{}\itshape}
\def\th@definition{\thm@notefont{}\normalfont}

\makeatother

\theoremstyle{remark}
\newtheorem*{notation*}{\protect\notationname}
\theoremstyle{plain}
\newtheorem{thm}{\protect\theoremname}[section]
\theoremstyle{remark}
\newtheorem{rem}[thm]{\protect\remarkname}
\theoremstyle{plain}
\newtheorem{prop}[thm]{\protect\propositionname}
\newtheorem{lem}[thm]{\protect\lemmaname}
\theoremstyle{definition}
\newtheorem{defn}[thm]{\protect\definitionname}
\theoremstyle{remark}
\newtheorem*{acknowledgement*}{\protect\acknowledgementname}
\providecommand{\acknowledgementname}{Acknowledgement}
\providecommand{\definitionname}{Definition}
\providecommand{\lemmaname}{Lemma}
\providecommand{\notationname}{Notation}
\providecommand{\propositionname}{Proposition}
\providecommand{\remarkname}{Remark}
\providecommand{\theoremname}{Theorem}

\begin{document}
\title[Condensation and Metastability in the Supercritical Two-Species ZRP]{Condensation and Metastability in the Supercritical Two-Species Zero-Range
Process via Resolvent and $H^{1}$-Approximation}
\author{Seonwoo Kim and Seungchan Lee}
\begin{abstract}
In this article, we investigate the two-species zero-range process,
a multi-species generalization of the classical zero-range process.
First, we analyze its condensation regime, which directly parallels
its single-species counterpart. As our main result, we establish the
dynamical metastable behavior of the location of the single condensate,
showing that its motion is governed by a simple Markov chain on the
accelerated time scale $N^{1+\alpha}$, where $N$ denotes the total
number of particles in the system and the parameter $\alpha>1$ governs
the attractivity of the system. Another novelty of this work lies
in the proof technique, which integrates the recently developed resolvent
approach with the $H^{1}$-approximation method to rigorously characterize
metastability.
\end{abstract}

\address{Seonwoo Kim. Department of Mathematics, Yonsei University, Republic
of Korea.}
\email{seonwookim@yonsei.ac.kr}
\address{Seungchan Lee. Department of Mathematics, Yonsei University, Republic
of Korea.}
\email{seungchan2718@yonsei.ac.kr}

\maketitle
\tableofcontents{}

\section{\label{sec1}Introduction and Main Results}

Metastability is a generic physical phenomenon where a thermodynamic
or stochastic system remains trapped for an exponentially long time
in an apparent equilibrium state before undergoing a random, abrupt
transition to its true ground state. The rigorous mathematical analysis
of this phenomenon was initiated in the 1970s, with the foundational
work by Penrose and Lebowitz \cite{PL71} framing the problem within
non-equilibrium statistical mechanics. Over the subsequent decades,
the field developed along two powerful, complementary paths: the pathwise
(large deviations and Freidlin--Wentzell type) approach introduced
by Cassandro, Galves, Olivieri and Vares \cite{CGOV84}, and the potential-theoretic
method pioneered by Bovier, Eckhoff, Gayrard and Klein \cite{BEGK04}.
While the potential-theoretic framework provided sharp, pre-exponential
estimates for mean transition times (the celebrated Eyring--Kramers
formula) in reversible Markov processes via Dirichlet forms and capacities,
extending these quantitative asymptotics to non-reversible, non-gradient,
or infinite-dimensional particle systems long remained a central challenge.

Recent advances have resolved many of these longstanding bottlenecks
through the martingale approach to metastability, introduced by Beltr\'an
and Landim \cite{BL10}, which characterizes the coarse-grained trace
process on metastable valleys via a simple Markov chain on the set
of metastable indices---a framework now commonly referred to as the
Markov chain model reduction technique. Pushing this program into
new frontiers, one of the authors and Seo \cite{KS25} introduced
the $H^{1}$-approximation method, which analyzes both the Eyring--Kramers
formula and model reduction via the construction of sharp asymptotic
test functions for the equilibrium potential (see \eqref{eq:eq-pot-RW})
in the $H^{1}$-norm. More recently, Landim, Marcondes and Seo \cite{LMS25}
introduced the resolvent approach to metastability, which resolves
the existing barriers with non-reversible analytic techniques through
the analysis of microscopic and macroscopic resolvent equations (see
\eqref{eq:res-mic} and \eqref{eq:res-mac}), thereby establishing
an equivalent characterization of metastability alongside various
local mixing time criteria for complex non-reversible Markov chains
and interacting particle systems.

This paper builds directly upon this modern framework. We argue that
combining the $H^{1}$-approximation technique with the resolvent
approach provides a robust and efficient path to characterizing the
dynamical transitions between metastable states. To demonstrate the
power of this unified approach, we investigate the two-species zero-range
process in the condensing regime (see Section \ref{sec1.1} for a
precise definition of the model). This model is a multi-species generalization
of the well-known condensing zero-range process \cite{EH05}, whose
single-species condensation and metastability have been extensively
studied \cite{BL12a,Lan14,LMS23,LMS25,Seo19,Oh19} during the past
few years. While static and macroscopic properties---such as phase
separation, equivalence of ensembles, condensation mechanisms and
hydrodynamic limits---have been widely investigated in the literature
(see \cite{DSZ17,Gro04,Gro08} and the references therein), the dynamical
metastability of the multi-species model has remained out of reach
due to the highly intricate energy landscape governing the typical
transition paths. In this paper, we provide a quantitatively precise
characterization of this dynamical behavior by synthesizing the $H^{1}$-approximation
and resolvent frameworks. We emphasize that our choice of two species
is made primarily for clarity of presentation; the same arguments
and methods apply to general $k$-species models for any $k\ge2$
(see Remark \ref{rem:multi-species} for details).
\begin{notation*}
\label{nota:general}We gather a few general notations that are used
repeatedly throughout the article.
\begin{itemize}
\item $\mathbb{N}:=\{1,2,\dots\}$, $\mathbb{N}_{0}:=\{0\}\cup\mathbb{N}$,
and $\mathbb{R}_{+}:=[0,\infty)$.
\item $a\wedge b:=\min\{a,b\}$ and $a\vee b:=\max\{a,b\}$.
\item $\lfloor a\rfloor$ (resp. $\lceil a\rceil$) denotes the greatest
(resp. least) integer less (resp. greater) than or equal to $a$.
\item $\llbracket a,b\rrbracket:=[a,b]\cap\mathbb{Z}$.
\item $c,c',c''>0$ denote positive constants that may vary line by line,
and may depend on the underlying geometry, but do not depend on the
number of particles $N$ or parameter $\epsilon$, etc.
\item $a_{N}\ll b_{N}$ or $a_{N}=o(b_{N})$ if $\lim_{N\to\infty}a_{N}/b_{N}=0$.
\item $a_{N}=O(b_{N})$ if $|a_{N}|\le cb_{N}$ for all $N\ge1$.
\item $a_{N}\simeq b_{N}$ if $\lim_{N\to\infty}a_{N}/b_{N}=1$.
\item $a_{N}\asymp b_{N}$ if $c\le\liminf_{N\to\infty}a_{N}/b_{N}\le\limsup_{N\to\infty}a_{N}/b_{N}\le c'$.
\end{itemize}
\end{notation*}

\subsection{\label{sec1.1}Two-Species Zero-Range Process}

Consider a finite site space $S$ with
\[
\kappa:=|S|\ge2,
\]
and two irreducible Markovian jump rates $r_{1},r_{2}:S\times S\to[0,\infty)$
with unique stationary distributions $m_{1},m_{2}:S\to(0,\infty)$,
respectively. We denote them as underlying random walks of types,
or species, $i\in\{1,2\}$. We assume each $m_{i}$, $i=1,2$, is
normalized such that 
\begin{equation}
\max_{x\in S}m_{i}(x)=1.\label{eq:mi-normalized}
\end{equation}
Denote by $L_{i}$ the corresponding infinitesimal generator that
acts on $\mathbb{R}^{S}$:
\[
L_{i}f(x)=\sum_{y\in S}r_{i}(x,y)\left[f(y)-f(x)\right],\qquad f\in\mathbb{R}^{S}.
\]

For $N\in\mathbb{N}$, consider a particle configuration space $\Omega_{N}$
defined as
\[
\Omega_{N}:=\left\{ \bm{\eta}=(\eta_{1},\eta_{2})\in\mathbb{N}^{S}_{0}\times\mathbb{N}^{S}_{0}:|\eta_{1}|:=\sum_{x\in S}\eta_{1}(x)=A,\enspace|\eta_{2}|:=\sum_{x\in S}\eta_{2}(x)=B\right\} ,
\]
where the integers $A=A_{N}$ and $B=B_{N}$ satisfy
\begin{equation}
\lim_{N\to\infty}\frac{A_{N}}{N}=\rho,\qquad\lim_{N\to\infty}\frac{B_{N}}{N}=1-\rho,\label{eq:ANBN}
\end{equation}
for a fixed density value $\rho\in(0,1)$. Notice that for each $\bm{\eta}\in\Omega_{N}$,
\[
|\bm{\eta}|:=|\eta_{1}|+|\eta_{2}|=N,
\]
which denotes the conserved total number of particles. Condition \eqref{eq:ANBN}
implies that the proportion of type $1$ (resp. $2$) particles in
the system is asymptotically $\rho$ (resp. $1-\rho$).

Consider a continuous-time Markov chain $\{\bm{\eta}_{N}(t)\}_{t\ge0}=\{(\bm{\eta}_{N}(x,t))_{x\in S}\}_{t\ge0}$
on $\Omega_{N}$ defined by an infinitesimal generator $\mathcal{L}_{N}$
acting on functions $F\in\mathbb{R}^{\Omega_{N}}$ as
\begin{equation}
\mathcal{L}_{N}F(\bm{\eta}):=\sum_{i\in\{1,2\}}\sum_{x,y\in S}\bm{g}_{i}(\bm{\eta}(x))r_{i}(x,y)\left(F(\bm{\eta}-\delta^{i}_{x}+\delta^{i}_{y})-F(\bm{\eta})\right).\label{eq:gen}
\end{equation}
Here, 
\begin{itemize}
\item $\bm{g}_{1},\bm{g}_{2}:\mathbb{N}_{0}\times\mathbb{N}_{0}\to[0,\infty)$
such that $\bm{g}_{i}(k_{1},k_{2})=0$ if and only if $k_{i}=0$;
\item $\delta^{i}_{z}$ denotes the configuration which is defined as
\begin{equation}
\delta^{i}_{z}(w):=\begin{cases}
(1,0) & \text{if}\quad i=1,\enspace w=z,\\
(0,1) & \text{if}\quad i=2,\enspace w=z,\\
(0,0) & \text{otherwise}.
\end{cases}\label{eq:delta-zi-def}
\end{equation}
\end{itemize}
Let $R_{N}:\Omega_{N}\times\Omega_{N}\to[0,\infty)$ be the corresponding
transition rate function, with the convention that $R_{N}(\bm{\eta},\bm{\eta}):=0$.
Denote by $\mathbb{P}^{N}_{\bm{\eta}}$ and $\mathbb{E}^{N}_{\bm{\eta}}$
the law and the corresponding expectation, respectively, of the process
$\bm{\eta}_{N}$ starting from $\bm{\eta}\in\Omega_{N}$.

It is obvious that $\bm{\eta}_{N}$ is irreducible, thus it admits
a unique stationary state. According to \cite[Theorem 4.1]{Gro04},
the stationary state is of product type if and only if the compatibility
relations hold:
\begin{equation}
\bm{g}_{1}(m,n-1)\bm{g}_{2}(m,n)=\bm{g}_{2}(m-1,n)\bm{g}_{1}(m,n)\qquad\text{for all}\quad(m,n)\in\mathbb{N}^{2}.\label{eq:g-cond}
\end{equation}
In words, this implies that from $(m-1,n-1)$ to $(m,n)$, following
the two lattice upright paths $(m-1,n-1)\xrightarrow{\bm{g}_{1}}(m,n-1)\xrightarrow{\bm{g}_{2}}(m,n)$
and $(m-1,n-1)\xrightarrow{\bm{g}_{2}}(m-1,n)\xrightarrow{\bm{g}_{1}}(m,n)$
yield the same activation energy, in which $\bm{g}_{1}$ and $\bm{g}_{2}$
record the horizontal and vertical energy, respectively. In this regard,
let us define $\bm{g!}:\mathbb{N}_{0}\times\mathbb{N}_{0}\to\mathbb{R}$
as
\[
\bm{g!}(m,n):=\bm{g}_{1}(1,0)\cdots\bm{g}_{1}(m,0)\bm{g}_{2}(m,1)\cdots\bm{g}_{2}(m,n),
\]
with the convention that $\bm{g!}(0,0):=1$. By \eqref{eq:g-cond},
$\bm{g!}(m,n)$ equals the activation energy from $(0,0)$ to $(m,n)$.
Then, the stationary state $\nu_{N}$ is written as
\[
\nu_{N}(\bm{\eta}):=\frac{1}{z_{N}}\frac{\bm{m}^{\bm{\eta}}}{\bm{g!}(\bm{\eta})}:=\frac{1}{z_{N}}\prod_{x\in S}\left[\frac{m_{1}(x)^{\eta_{1}(x)}m_{2}(x)^{\eta_{2}(x)}}{\bm{g!}(\bm{\eta}(x))}\right],
\]
where $z_{N}:=\sum_{\bm{\eta}\in\Omega_{N}}\frac{\bm{m}^{\bm{\eta}}}{\bm{g!}(\bm{\eta})}$
is a normalizing constant which makes $\nu_{N}$ a probability measure.
\begin{rem}
If $r_{i}(\cdot,\cdot)$ is further \emph{reversible} with respect
to $m_{i}$ for each $i\in\{1,2\}$, i.e., if $m_{i}(x)r_{i}(x,y)=m_{i}(y)r_{i}(y,x)$
for all $x,y\in S$, then the detailed balance equation holds:
\[
\nu_{N}(\bm{\eta})\bm{g}_{i}(\bm{\eta}(x))r_{i}(x,y)=\nu_{N}(\bm{\eta}-\delta^{i}_{x}+\delta^{i}_{y})\bm{g}_{i}((\boldsymbol{\eta}-\delta^{i}_{x}+\delta^{i}_{y})(y))r_{i}(y,x)\qquad\text{for all}\quad x,y\in S,
\]
thus the process $\bm{\eta}_{N}$ is also \emph{reversible} with respect
to $\nu_{N}$. In general, the system is not necessarily reversible
with respect to $\nu_{N}$, making it a non-equilibrium stationary
state.
\end{rem}

\subsection{\label{sec1.2}Condensing Regime}

From now on, we consider a specific choice of $\bm{g}_{1},\bm{g}_{2}$
which works as a representative model to study the condensation and
metastability phenomena. Fix a constant $\alpha\in(0,\infty)$ and
assume that
\begin{equation}
\bm{g}_{1}(m,n):=\begin{cases}
0 & \text{if}\quad m=n=0,\\
\frac{m}{m+n}\frac{\mathfrak{a}(m+n)}{\mathfrak{a}(m+n-1)} & \text{if}\quad m+n\ge1,
\end{cases}\qquad\bm{g}_{2}(m,n):=\begin{cases}
0 & \text{if}\quad m=n=0,\\
\frac{n}{m+n}\frac{\mathfrak{a}(m+n)}{\mathfrak{a}(m+n-1)} & \text{if}\quad m+n\ge1,
\end{cases}\label{eq:g-def}
\end{equation}
where 
\begin{equation}
\mathfrak{a}(k):=\begin{cases}
1 & \text{if}\quad k=0,\\
k^{\alpha} & \text{if}\quad k\ge1.
\end{cases}\label{eq:a-def}
\end{equation}
It is easy to check that \eqref{eq:g-cond} holds for this choice,
and that
\[
\bm{g!}(m,n)=\frac{\mathfrak{a}(m+n)}{{m+n \choose n}}.
\]
For notational reasons, we rewrite the stationary state $\nu_{N}$
of the system as
\begin{equation}
\nu_{N}(\bm{\eta})=\frac{N^{\alpha}}{Z_{N}\binom{N}{A}}\frac{\bm{m}^{\bm{\eta}}}{\bm{g!}(\bm{\eta})}=\frac{N^{\alpha}}{Z_{N}{N \choose A}}\prod_{x\in S}\left[\frac{m_{1}(x)^{\eta_{1}(x)}m_{2}(x)^{\eta_{2}(x)}}{\bm{g!}(\bm{\eta}(x))}\right],\label{eq:nuN-def}
\end{equation}
where
\begin{equation}
Z_{N}=\frac{N^{\alpha}}{\binom{N}{A}}\sum_{\bm{\eta}\in\Omega_{N}}\frac{\bm{m}^{\bm{\eta}}}{\bm{g!}(\bm{\eta})}.\label{eq:ZN-def}
\end{equation}
In this way, $Z_{N}$ will converge to a positive constant as $N\to\infty$
(see Proposition \ref{prop:ZN-limit}).
\begin{rem}
Suppose temporarily that the two underlying random walks are identical:
$r_{1}\equiv r_{2}\equiv r$. Then, project $\Omega_{N}$ onto the
\emph{blind-species} space $\widehat{\Omega}_{N}$ by
\[
\Omega_{N}\ni\bm{\eta}=(\eta_{1},\eta_{2})\longmapsto\eta_{1}+\eta_{2}\in\widehat{\Omega}_{N}:=\left\{ \eta\in\mathbb{N}^{S}_{0}:|\eta|=N\right\} .
\]
Then, it is straightforward to check that the projected system is
still Markovian and corresponds to the infinitesimal generator
\[
\widehat{\mathcal{L}}_{N}F(\eta)=\sum_{x,y\in S}\bm{g}(\eta(x))r(x,y)\left(F(\eta-\delta_{x}+\delta_{y})-F(\eta)\right),
\]
where $\delta_{z}(w):={\bf 1}\{w=z\}$ and $\bm{g}:\mathbb{N}_{0}\to\mathbb{R}$
is defined as (cf. \eqref{eq:a-def})
\[
\bm{g}(k):=\begin{cases}
0 & \text{if}\quad k=0,\\
\frac{\mathfrak{a}(k)}{\mathfrak{a}(k-1)} & \text{if}\quad k\ge1.
\end{cases}
\]
Therefore, we recover the original zero-range process \cite{BL12a}
on the graph $S$ with $N$ particles.
\end{rem}

For the single-species zero-range process, the condensation and metastability
phenomena are very well understood. In \cite{GSS03,JMP00}, condensation
and non-condensation have been observed for $\alpha\ge1$ and $\alpha<1$,
respectively, indicating a clear phase transition at the value $\alpha_{\star}=1$,
in a more statistical mechanics point of view. Beltr\'an and Landim
\cite{BL12a} studied the reversible, supercritical case of $\alpha>1$
and proved that the successive metastable transitions of the condensates
is well approximated by an irreducible random walk on the graph. Later,
Landim \cite{Lan14} and Seo \cite{Seo19} generalized this result
to non-reversible setup, and Landim, Marcondes and Seo \cite{LMS23,LMS25}
studied the critical case of $\alpha=1$ in the symmetric case, which
becomes much more technical due to the weak attraction of particles.
Several other works \cite{AGL17,BCL26,BJL17,Cho24,JG24} have explored
other related regimes, such as the fast-rate counterpart, the thermodynamic
limit with a diverging number of sites, and the pre-stationary nucleation
dynamics.

Motivated by the phase transition of condensation in the original
zero-range process, one might expect our model to undergo a similar
phase transition depending on the parameter $\alpha\in(0,\infty)$.
This is indeed true, as summarized in the following. Recall from \eqref{eq:nuN-def}
that $\nu_{N}$ denotes the unique stationary state. Define
\[
S_{\star}:=\left\{ x\in S:m_{1}(x)=m_{2}(x)=1\right\} .
\]
Since each $m_{i}$ is normalized such that $\max_{x\in S}m_{i}(x)=1$,
the set $S_{\star}$ collects the sites that maximize both $m_{1}$
and $m_{2}$. Let us assume that $S_{\star}$ has at least two elements,
and let
\begin{equation}
\kappa_{\star}:=|S_{\star}|\ge2.\label{eq:kappa-star}
\end{equation}
For each $x\in S$, define
\begin{equation}
\mathcal{E}^{x}_{N}:=\left\{ \bm{\eta}\in\Omega_{N}:|\bm{\eta}(x)|\ge N-\ell_{N}\right\} ,\label{eq:ENx-def}
\end{equation}
where $(\ell_{N})_{N\ge1}$ is a sequence of integers such that
\begin{equation}
1\ll\ell_{N}\ll N.\label{eq:ellN-def}
\end{equation}
Let us denote by $\bm{\xi}^{x}_{N}\in\mathcal{E}^{x}_{N}$ the central
configuration under which all particles are at $x$;
\begin{equation}
\bm{\xi}^{x}_{N}:=A\delta^{1}_{x}+B\delta^{2}_{x}.\label{eq:xiNx}
\end{equation}
Then, define
\begin{equation}
\mathcal{E}_{N}:=\bigcup_{x\in S_{\star}}\mathcal{E}^{x}_{N}\qquad\text{and}\qquad\Delta_{N}:=\Omega_{N}\setminus\mathcal{E}_{N}.\label{eq:EN-DeltaN}
\end{equation}
Mind that the union of $\mathcal{E}^{x}_{N}$ is taken over $x\in S_{\star}$,
not the full $S$. The next theorem is proved in Section \ref{sec2}.
\begin{thm}
\label{thm:cond}The following statements hold for any $\ell_{N}$
that satisfies \eqref{eq:ellN-def}.
\begin{enumerate}
\item For $\alpha\in(1,\infty)$, we have
\begin{equation}
\lim_{N\to\infty}\nu_{N}(\mathcal{E}^{x}_{N})=\frac{1}{\kappa_{\star}}\qquad\text{for all}\quad x\in S_{\star}.\label{eq:nuN-cond}
\end{equation}
In particular, $\lim_{N\to\infty}\nu_{N}(\Delta_{N})=0$.
\item If $\alpha=1$, then \eqref{eq:nuN-cond} still holds, under an additional
assumption that
\begin{equation}
\lim_{N\to\infty}\frac{\log\ell_{N}}{\log N}=1.\label{eq:ellN-cri}
\end{equation}
\item Otherwise, if $\alpha\in(0,1)$, then
\[
\lim_{N\to\infty}\nu_{N}(\mathcal{E}^{x}_{N})=0\qquad\text{for all}\quad x\in S.
\]
\end{enumerate}
\end{thm}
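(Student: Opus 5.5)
The plan is to reduce everything to a single-site computation. The weight factorizes over sites, and the single-site weight depends on $(\eta_1(x),\eta_2(x))=(m,n)$ only through
\[
\frac{m_1(x)^m m_2(x)^n\binom{m+n}{n}}{\mathfrak{a}(m+n)}.
\]
First I would sum over the species split at each site with the total occupation $k_x:=|\bm\eta(x)|$ fixed. Since the type-$1$ total is fixed at $A$, this is a coefficient extraction. Introduce a generating variable $t$ for species $1$. Then
\[
\sum_{\bm\eta\in\Omega_N}\frac{\bm m^{\bm\eta}}{\bm{g!}(\bm\eta)}=[t^A]\sum_{\sum_x k_x=N}\prod_{x\in S}\frac{(t\,m_1(x)+m_2(x))^{k_x}}{\mathfrak{a}(k_x)}.
\]
The same identity holds with the sum restricted to $\mathcal{E}^x_N$, i.e.\ to $k_x\ge N-\ell_N$. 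This turns the two-species model into a single-species zero-range-type sum with site weights $(t m_1+m_2)^{k}$, which I then analyze with a fixed-total-mass argument.

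For $\alpha>1$, fix a configuration $\bm\eta\in\mathcal{E}^x_N$ with $x\in S_\star$, and let $(j_y^1,j_y^2)_{y\neq x}$ be the particles off $x$, with $j:=\sum_{y\ne x}(j^1_y+j^2_y)\le\ell_N$. Since $m_i(x)=1$, the contribution is
\[
\frac{\binom{N-j}{A-j^1}}{\mathfrak{a}(N-j)}\prod_{y\neq x}\frac{m_1(y)^{j^1_y}m_2(y)^{j^2_y}\binom{j^1_y+j^2_y}{j^2_y}}{\mathfrak{a}(j^1_y+j^2_y)}.
\]
Uniformly for $j\le\ell_N$, we have $\mathfrak{a}(N-j)\simeq N^\alpha$ and $\binom{N-j}{A-j^1}/\binom{N}{A}\simeq\rho^{j^1}(1-\rho)^{j^2}$; the latter is a routine Stirling/ratio estimate using $A/N\to\rho$. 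Hence $\frac{N^\alpha}{\binom NA}\sum_{\mathcal{E}^x_N}$ converges, by dominated convergence, to
\[
\prod_{y\ne x}\sum_{k\ge0}\frac{(\rho m_1(y)+(1-\rho)m_2(y))^k}{\mathfrak{a}(k)}.
\]
This limit is finite: the base is at most $1$, and $\sum_k k^{-\alpha}<\infty$ since $\alpha>1$. For domination I use $\binom{N-j}{A-j^1}\le\binom NA\,c^{\,j}$? That bound is too lossy, so instead I sum over the species split first. That gives exactly $[t^{\cdot}]$-type binomial identities, yielding the clean bound $\sum_{j^1}\binom{N-j}{A-j^1}\binom{j}{j^1}=\binom NA$ (Vandermonde). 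Combined with $m_i\le1$, this dominates each fixed-$j$ shell by $\binom NA\,j^{\kappa-2}\,\max_{k\le j}\mathfrak a(k)^{-1}$-type terms, which are summable.

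The key point is that the limit is the same constant $Z_\star$ for all $x\in S_\star$, because the product runs over $y\ne x$ and includes the other maximizers. Call this common limit $Z_\star(x)$; the statement $\nu_N(\mathcal E^x_N)\to1/\kappa_\star$ then needs $Z_N\to\kappa_\star Z_\star$, i.e.\ that $\Delta_N$ contributes $o(\binom NA N^{-\alpha})$. On $\Delta_N$ either the maximal site $z$ has occupation in $[N/\kappa,N-\ell_N]$, or $z\notin S_\star$. In the first case the weight of $\bm\eta(z)$ is at most $N^{-\alpha}$ times the remaining mass of at least $\ell_N\to\infty$ particles, whose $\ell^1$-tail of $\sum k^{-\alpha}$ vanishes. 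In the second case, some $m_i(z)<1$; the species-$i$ part carries macroscopically many particles (order $\rho N$ or $(1-\rho)N$), giving an exponentially small factor. \textbf{The main obstacle} is this tail estimate with the binomial species factor. I would control it via the same Vandermonde summation, which reduces the bound to the single-species estimates of \cite{BL12a} with effective fugacity $\rho m_1+(1-\rho)m_2\le1$, with equality exactly on $S_\star$.

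For $\alpha=1$ the same scheme applies. The limiting sums now diverge logarithmically, and the condition $\log\ell_N/\log N\to1$ ensures the mass with at most $\ell_N$ particles off $x$ captures all but a vanishing fraction of the $(\log N)^{\kappa_\star-1}$-type normalization, as in \cite{LMS23}; a symmetry argument over $x\in S_\star$ then gives $1/\kappa_\star$. For $\alpha<1$, I would compare directly. Moving $\ell_N$ particles from $x$ and spreading them evenly over another maximizer $y$ changes the weight by a factor $\gtrsim(N/\ell_N)^{\alpha}\ell_N^{-\alpha}\cdot\#\text{splits}$. More cleanly, the total mass of configurations with $k_x\approx N/2$, $k_y\approx N/2$ is of order $N\cdot N^{-2\alpha}\gg N^{-\alpha}\sum_{k\le\ell_N}k^{-\alpha}\approx N^{-\alpha}\ell_N^{1-\alpha}$, since $\ell_N\ll N$. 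Hence $\nu_N(\mathcal{E}^x_N)\to0$.
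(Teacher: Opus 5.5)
Your part (1) is sound and has the same structure as the paper's proof. The exact invariance of $\nu_N$ under permutations of $S_\star$ reduces everything to $\nu_N(\Delta_N)\to0$, so your limit computation for each $\mathcal{E}^x_N$ is not even needed. $\Delta_N$ then splits into $\Xi^{\ell_N}_N$ and condensates at sites $z\notin S_\star$.

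For $\alpha>1$, Vandermonde plus $m_i\le1$ is a legitimate substitute for Lemma \ref{lem:comb-bound}. The bound $cN^{-\alpha}(\ell_N+1)^{1-\alpha}$ on the normalized $\Xi^{\ell_N}_N$-mass needs no fugacity. On $\Omega^{z,\ell_N}_N$ with $m_i(z)<1$ you have deterministically $\eta_i(z)\ge|\eta_i|-\ell_N$, which gives the exponentially small factor. One slip: your dominating function $j^{\kappa-2}\max_{k\le j}\mathfrak{a}(k)^{-1}$ is not summable; the right one is $2^{\alpha}\prod_{y\ne x}\mathfrak{a}(j_y)^{-1}$ on $\mathbb{N}_0^{S\setminus\{x\}}$.

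\textbf{Gap in parts (2) and (3): the effective fugacity.} Vandermonde only evaluates the unweighted sum $\sum_{(q_j)}\prod_j\binom{p_j}{q_j}=\binom{N}{A}$. Together with $m_i\le1$ it therefore yields fugacity $1$ at every site, not the effective fugacity $\rho m_1+(1-\rho)m_2$ you attribute to it.

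For $\alpha\le1$, a fugacity strictly below $1$ on $S\setminus S_\star$ is indispensable, since $\sum_k\mathfrak{a}(k)^{-1}=\infty$. It cannot be obtained deterministically when some $z\notin S_\star$ has $m_1(z)=1$ (or $m_2(z)=1$). A non-macroscopic pile of type-$1$ particles at such a $z$ carries no weight penalty, only a hypergeometric-entropy one.

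With fugacity $1$ everywhere, your tools give at best the following.
\begin{itemize}
\item For $\alpha=1$: $\nu_N(\Xi^{\ell_N}_N)\lesssim(\log N)^{\kappa-\kappa_\star-1}\log(N/\ell_N)$, which diverges when $\kappa>\kappa_\star$.
\item For $\alpha<1$: $\binom{N}{A}^{-1}\sum_{\mathcal{E}^x_N}\lesssim N^{-\alpha}\ell_N^{(\kappa-1)(1-\alpha)}$. This exceeds the lower bound $N^{\kappa_\star(1-\alpha)-1}$ for the total normalized mass when $\kappa>\kappa_\star$ and, e.g., $\ell_N=N/\log N$.
\end{itemize}

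The missing ingredient is a bound like Lemma \ref{lem:comb-bound}:
\[
\binom{N}{A}^{-1}[t^A]\prod_j(tm_1(j)+m_2(j))^{p_j}\le c\prod_j(\rho_Nm_1(j)+(1-\rho_N)m_2(j))^{p_j},
\]
with $c$ independent of $N$ and $(p_j)$. A naive Chernoff bound $t^{-A}\prod_j(tm_1(j)+m_2(j))^{p_j}$ at $t=A/B$ loses a factor of order $\sqrt N$, which is fatal here. The paper recovers that factor from the Gaussian term in the Cauchy integral. With this lemma, $L$ particles on $S\setminus S_\star$ cost $\upsilon^L$, and Lemmas \ref{lem:s6}, \ref{lem:s7} and \ref{lem:s10} go through.

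\textbf{Exponent error in part (3).} Your estimate $N^{-\alpha}\ell_N^{1-\alpha}$ for $\binom{N}{A}^{-1}\sum_{\mathcal{E}^x_N}$ treats the at most $\ell_N$ particles off $x$ as if they sat on a single site. Even with the fugacity bound this mass is of order $N^{-\alpha}\ell_N^{(\kappa_\star-1)(1-\alpha)}$, exactly so when $S=S_\star$. Against your two-site lower bound $N^{1-2\alpha}$, the ratio is $(\ell_N^{\kappa_\star-1}/N)^{1-\alpha}$, which need not vanish (e.g.\ $\kappa_\star=3$, $\ell_N=N^{0.9}$).

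You have to spread the lower-bound configurations over all of $S_\star$, obtaining $Z_N\ge cN^{(\kappa_\star-1)(1-\alpha)}$ as in Lemma \ref{lem:s9}. The ratio then becomes $(\ell_N/N)^{(\kappa_\star-1)(1-\alpha)}\to0$.

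Part (2) also needs an explicit estimate of the type of Lemma \ref{lem:asymp5}, rather than only a pointer to the single-species literature. This is routine once the fugacity bound is available.
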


Theorem \ref{thm:cond} can be interpreted as follows. If $\alpha\ge1$,
then the stationary state $\nu_{N}$ concentrates on a much smaller
set $\mathcal{E}_{N}$, in which most ($N-\ell_{N}\simeq N$) of the
particles concentrate on one of the specific sites in $S_{\star}$,
indicating the \emph{condensation} phenomenon. On the other hand,
if $\alpha<1$, then $\nu_{N}$ becomes negligible on any condensed
states and instead concentrates on the scattered configurations. This
indicates a sharp \emph{phase transition} on the parameter $\alpha$,
with a critical value $\alpha_{\star}=1$. In this regard, we refer
to the regimes $\alpha>1$, $\alpha=1$, and $\alpha<1$ as \emph{supercritical},
\emph{critical}, and \emph{subcritical}, respectively.

\subsection{\label{sec1.3}Main Result: Metastable Condensate Movements in the
Supercritical Regime}

Hereafter, we focus on the supercritical case; i.e., in this subsection,
we assume that
\[
\alpha\in(1,\infty).
\]

\subsubsection*{Order Process}

Define
\begin{equation}
\ell_{N}:=\lfloor\gamma\log N\rfloor,\label{eq:ellN-exact}
\end{equation}
where $\gamma>0$ is a small constant which will be specified later
at \eqref{eq:gamma-def}. Recall the sets $\mathcal{E}^{x}_{N}$,
$\mathcal{E}_{N}$, and $\Delta_{N}$ from \eqref{eq:ENx-def} and
\eqref{eq:EN-DeltaN}. Consider the local time $T_{N}(t)$ in $\mathcal{E}_{N}$
\[
T_{N}(t):=\int^{t}_{0}{\bf 1}\left\{ \bm{\eta}_{N}(u)\in\mathcal{E}_{N}\right\} {\rm d}u\qquad\text{for}\quad t\ge0,
\]
and its generalized inverse $S_{N}(t):=\sup\{u\ge0:T_{N}(u)\le t\}$.
The \emph{trace process} on $\mathcal{E}_{N}$ is defined as
\[
\bm{\eta}^{{\rm tr}}_{N}(t):=\bm{\eta}_{N}(S_{N}(t)).
\]
It is well known (see e.g. \cite[Section 6.1]{BL10}) that $\bm{\eta}^{{\rm tr}}_{N}$
becomes a Markov process on $\mathcal{E}_{N}$, and its unique stationary
distribution becomes $\nu_{N}$ restricted to $\mathcal{E}_{N}$.
To track only the location of the massive condensate, define a projection
$\Phi_{N}:\mathcal{E}_{N}\to S_{\star}$ as
\[
\Phi_{N}=\sum_{x\in S_{\star}}x{\bf 1}_{\mathcal{E}^{x}_{N}}.
\]
Then, the \emph{order process} on $S_{\star}$ is defined as
\[
X_{N}(t):=\Phi_{N}(\bm{\eta}^{{\rm tr}}_{N}(t)).
\]

\subsubsection*{Limiting Markov Chain}

Recall the underlying random walks induced by the two jump rates $r_{1},r_{2}:S\times S\to[0,\infty)$.
For each $i\in\{1,2\}$ and two disjoint nonempty sets $U,V\subset S$,
let $\mathfrak{h}_{i,U,V}:S\to\mathbb{R}$ be the unique solution
of the boundary-value problem 
\begin{equation}
\begin{cases}
L_{i}f\equiv0 & \text{on}\enspace S\setminus(U\cup V),\\
f\equiv1 & \text{on}\enspace U,\\
f\equiv0 & \text{on}\enspace V.
\end{cases}\label{eq:eq-pot-RW}
\end{equation}
The function $\mathfrak{h}_{i,U,V}$ is called the \emph{equilibrium
potential} between $U$ and $V$. It is easy to verify via the Markov
property that
\[
\mathfrak{h}_{i,U,V}(x)=P^{i}_{x}\left[H_{U}<H_{V}\right],
\]
where $P^{i}_{x}$ denotes the law of the Markov chain induced by
$L_{i}$ starting from $x$, and $H_{U}$ (resp. $H_{V}$) denotes
the hitting time of $U$ (resp. $V$). Then, the \emph{capacity} between
$U$ and $V$ is defined as
\begin{equation}
c_{i}(U,V):={\rm cap}_{i}(U,V):=\sum_{x\in S}m_{i}(x)\mathfrak{h}_{i,U,V}(x)(-L_{i}\mathfrak{h}_{i,U,V})(x).\label{eq:ci-def}
\end{equation}
In cases where $U$ or $V$ is a singleton, we typically omit the
brackets in the capacity notation. We know from \cite[Lemma 2.3]{GL14}
that capacity is symmetric:
\begin{equation}
c_{i}(U,V)=c_{i}(V,U).\label{eq:cap-sym}
\end{equation}

Now, we define a new Markov chain $\{\mathbb{X}(t)\}_{t\ge0}$ on
$S_{\star}$ as follows. Its infinitesimal generator acts on $f\in\mathbb{R}^{S_{\star}}$
as
\begin{equation}
\mathfrak{L}_{\mathbb{X}}f(x)=\sum_{y\in S_{\star}\setminus\{x\}}\frac{1}{\Gamma(\alpha)\mathcal{I}_{\alpha}}\left(\frac{\rho}{c_{1}(x,y)}+\frac{1-\rho}{c_{2}(x,y)}\right)^{-1}\left[f(y)-f(x)\right],\qquad x\in S_{\star}.\label{eq:lMC-gen}
\end{equation}
Here, $\Gamma(\alpha)$ is a positive constant which is defined at
\eqref{eq:Gamma-def}, and
\begin{equation}
\mathcal{I}_{\alpha}:=\int^{1}_{0}t^{\alpha}(1-t)^{\alpha}\,{\rm d}t\in(0,\infty).\label{eq:Ialpha-def}
\end{equation}
Denote by $\mathfrak{r}:S_{\star}\times S_{\star}\to[0,\infty)$ the
transition rate function of $\mathbb{X}$.

The following main theorem identifies the macroscopic limit of the
(non-Markovian) chain $X_{N}$ via acceleration. Define the acceleration
scale as
\[
\theta_{N}:=N^{1+\alpha}.
\]
In addition, let us define a larger neighborhood $\tilde{\mathcal{E}}^{x}_{N}$
around $\bm{\xi}^{x}_{N}$:
\begin{equation}
\tilde{\mathcal{E}}^{x}_{N}:=\left\{ \bm{\eta}\in\Omega_{N}:|\bm{\eta}(x)|\ge N-\tilde{\ell}_{N}\right\} \qquad\text{with}\quad\tilde{\ell}_{N}:=\lfloor N^{b}\rfloor,\label{eq:EN-tilde}
\end{equation}
where $b\in(0,1)$ is an arbitrarily small constant. It is clear that
$\mathcal{E}^{x}_{N}\subset\tilde{\mathcal{E}}^{x}_{N}$ for sufficiently
big $N$.
\begin{thm}
\label{thm:main}Suppose that $\alpha>1$. For any $x\in S_{\star}$,
the following statements hold.
\begin{enumerate}
\item (Local metastability) Each set $\mathcal{E}^{x}_{N}$ is metastable,
in the sense that from any configuration in $\mathcal{E}^{x}_{N}$,
it reaches $\bm{\xi}^{x}_{N}$ (cf. \eqref{eq:xiNx}) before exiting
$\tilde{\mathcal{E}}^{x}_{N}$ with high probability:
\[
\lim_{N\to\infty}\inf_{\bm{\eta}\in\mathcal{E}^{x}_{N}}\mathbb{P}^{N}_{\bm{\eta}}\left[H_{\bm{\xi}^{x}_{N}}<H_{\Omega_{N}\setminus\tilde{\mathcal{E}}^{x}_{N}}\right]=1.
\]
\item (Dynamical transitions) The law of the accelerated order process $\{X_{N}(\theta_{N}t)\}_{t\ge0}$
starting from any configuration in $\mathcal{E}^{x}_{N}$ converges
weakly, in the Skorokhod topology as $N\to\infty$, to the law of
the limiting Markov chain $\mathbb{X}$ starting from $x$.
\item (Negligibility) The time spent outside $\mathcal{E}_{N}$ is negligible:
\[
\lim_{N\to\infty}\sup_{\bm{\eta}\in\mathcal{E}^{x}_{N}}\mathbb{E}^{N}_{\bm{\eta}}\left[\int^{T}_{0}{\bf 1}\left\{ \bm{\eta}_{N}(\theta_{N}u)\in\Delta_{N}\right\} {\rm d}u\right]=0\qquad\text{for all}\quad T>0.
\]
\end{enumerate}
\end{thm}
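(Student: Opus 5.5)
The plan is to derive Theorem \ref{thm:main} from the resolvent characterization of metastability of Landim--Marcondes--Seo \cite{LMS25}. Given $\lambda>0$ and $\bm{g}:S_{\star}\to\mathbb{R}$, let $F_{N}=F_{N}^{\lambda,\bm{g}}$ solve the microscopic resolvent equation
\[
(\lambda-\theta_{N}\mathcal{L}_{N})F_{N}=G_{N}:=\sum_{y\in S_{\star}}\bm{g}(y)\mathbf{1}_{\mathcal{E}^{y}_{N}}.
\]
Let $f$ solve the macroscopic equation $(\lambda-\mathfrak{L}_{\mathbb{X}})f=\bm{g}$. The criterion in \cite{LMS25} states that parts (2) and (3) follow from two facts. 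First, $F_{N}$ is asymptotically constant on each valley, that is, $\sup_{\bm{\eta}\in\mathcal{E}^{x}_{N}}|F_{N}(\bm{\eta})-f(x)|\to0$ for every $x\in S_{\star}$. Second, $\nu_{N}(\Delta_{N})\to0$, which is Theorem \ref{thm:cond}(1). Part (1), local metastability, is the ingredient that upgrades the flatness of $F_{N}$ from an $L^{2}$ statement to a sup statement. Indeed, $F_{N}$ is bounded by $\|\bm{g}\|_{\infty}/\lambda$, and from any $\bm{\eta}\in\mathcal{E}^{x}_{N}$ the process hits $\bm{\xi}^{x}_{N}$ before leaving $\tilde{\mathcal{E}}^{x}_{N}$, in a time that is $o(\theta_{N})$. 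Hence $F_{N}(\bm{\eta})-F_{N}(\bm{\xi}^{x}_{N})\to0$, so it suffices to identify $\lim F_{N}(\bm{\xi}^{x}_{N})=f(x)$.

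\textbf{Step 1: local metastability.} Inside $\tilde{\mathcal{E}}^{x}_{N}$, the at most $N^{b}$ particles outside $x$ behave like a zero-range system in which site $x$ acts as a reservoir. A particle at $x$ leaves at a rate of order $(1+\ell/N)^{\alpha}$ times a rate of order one per species. A site $z\neq x$ carrying $k$ particles loses them at rate of order $(k/(k-1))^{\alpha}\ge1$. Using the Lyapunov function $\bm{\eta}\mapsto\sum_{z\neq x}|\bm{\eta}(z)|$, together with the fact that $m_{i}(x)=1$ is maximal for both species, we show that this count has a negative drift once it is large. We then compare it with a subcritical birth--death chain. This gives a return to a bounded set in time $O(\ell_{N})$, with exit from $\tilde{\mathcal{E}}^{x}_{N}$ having probability $o(1)$. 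From the bounded set, $\bm{\xi}^{x}_{N}$ is reached with probability bounded below, and this is iterated. The choice $\ell_{N}=\lfloor\gamma\log N\rfloor$ with small $\gamma$ keeps the union bounds summable.

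\textbf{Step 2: the $H^{1}$-approximation, which is the core step.} We construct a test function $\Psi_{N}$ approximating the equilibrium potential between valleys. Along the tube from $\bm{\xi}^{x}_{N}$ to $\bm{\xi}^{y}_{N}$, the typical configurations have $t N$ particles at $y$, $(1-t)N$ at $x$, and $O(1)$ particles elsewhere. Within each of the two piles the species split in proportion $\rho:1-\rho$. A type-$i$ particle at a site $z\notin\{x,y\}$ moves between $x$ and $y$ like the random walk $r_{i}$, with rates modulated by the pile sizes $((1-t)N)^{\alpha}$ and $(tN)^{\alpha}$. We therefore set
\[
\Psi_{N}(\bm{\eta})=\frac{\int_{0}^{|\bm{\eta}(y)|/N}s^{\alpha}(1-s)^{\alpha}\,{\rm d}s}{\mathcal{I}_{\alpha}},
\]
corrected by the potentials $\mathfrak{h}_{i,y,x}$ evaluated at the positions of the few free particles. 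The effective conductance is a series combination of the two species contributions, weighted by their densities $\rho$ and $1-\rho$, which is where the harmonic mean $(\rho/c_{1}+(1-\rho)/c_{2})^{-1}$ in \eqref{eq:lMC-gen} comes from. The constant $\Gamma(\alpha)$ enters through the partition function asymptotics of Proposition \ref{prop:ZN-limit}. We then verify two estimates. The first is
\[
\theta_{N}\,\mathcal{D}_{N}(\Psi_{N})\to\frac{1}{\Gamma(\alpha)\mathcal{I}_{\alpha}}\Bigl(\frac{\rho}{c_{1}(x,y)}+\frac{1-\rho}{c_{2}(x,y)}\Bigr)^{-1}\cdot\frac{1}{\kappa_{\star}}.
\]
The second is that $\theta_{N}\mathcal{L}_{N}\Psi_{N}$ is small in a weighted $L^{1}(\nu_{N})$ sense away from the valleys; it is not required to vanish near them. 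Because the dynamics are non-reversible, this has to be done with the generator itself rather than with a Dirichlet principle. Where the proof demands it, we also use the adjoint generator together with symmetry of capacity \eqref{eq:cap-sym}.

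\textbf{Step 3: identifying the limit of the resolvent.} We test the resolvent equation against $\Psi_{N}$-based functions $\sum_{y}f(y)\Psi^{y}_{N}$ in $L^{2}(\nu_{N})$. This gives $\nu_{N}$-averaged relations between the values $F_{N}(\bm{\xi}^{x}_{N})$ that converge to the macroscopic equation $(\lambda-\mathfrak{L}_{\mathbb{X}})f=\bm{g}$. The energy estimate $\theta_{N}\mathcal{D}_{N}(F_{N})\le C$ controls the error terms. Combined with the flatness from Step 1, every limit point of $(F_{N}(\bm{\xi}^{x}_{N}))_{x}$ solves this equation, so it equals $f$.

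The main obstacle is Step 2. The main difficulty there is controlling $\mathcal{L}_{N}\Psi_{N}$ on the mesoscopic boundary layers, where one pile has size between $\ell_{N}$ and $N^{b}$, and on configurations with several large piles. On such configurations the two-species rates $\bm{g}_{i}$ do not factor cleanly. I would first handle these regions with crude $\nu_{N}$-mass bounds of order $N^{-(\alpha-1)}$ times polylogarithmic factors, using $\alpha>1$, and then choose the smoothing cutoff $\gamma$ accordingly.
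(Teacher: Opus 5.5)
Your overall architecture (the resolvent criterion, flatness in the valleys, and identification of the valley averages by testing against an approximate equilibrium potential) is the paper's. The test function in Step 2, however, cannot work when $c_1(x,y)\neq c_2(x,y)$, because it is species-blind at the two piles.

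\textbf{The test function in Step 2.} $\Psi_N$ sees $\eta_1(y),\eta_2(y)$ only through $|\bm{\eta}(y)|$, and your corrections act only on the free particles. So a type-$i$ jump changes $\Psi_N$ by about $U\,(\mathfrak{h}_i(w)-\mathfrak{h}_i(u))/N$, with the same prefactor for both species. This realizes the parallel combination, not the series one: one finds $\theta_N\mathcal{D}_N(\Psi_N)\to(\rho c_1+(1-\rho)c_2)/(\kappa_\star\Gamma(\alpha)\mathcal{I}_\alpha)$, which is strictly larger than the harmonic-mean limit you claim.

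More seriously, it does not close the resolvent identity. You must build $\Psi_N$ from the adjoint-walk potentials $\mathfrak{h}^*_i$ when pairing with $\mathcal{L}^*_N$; with $\mathfrak{h}_i$, the terms $L^*_i\mathfrak{h}_i\neq0$ at free sites are of main order. Even with $\mathfrak{h}^*_i$, carry out the change of variables $\bm{\eta}=\bm{\xi}+\delta^i_u$, insert $F_N-\overline{F}^i_N$ via stationarity of $m_i$, and use $-L^*_i\mathfrak{h}^*_{i,x,y}(x)=L^*_i\mathfrak{h}^*_{i,x,y}(y)=c_i$. The tube contribution then becomes $\sum_p\bigl(c_1D_1(p)+c_2D_2(p)\bigr)$. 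Here $D_i(p)$ is the hypergeometrically weighted increment of $F_N$ when one type-$i$ particle is transferred between the piles at pile size $p$. Only $D_1+D_2=\overline{F}^{\bm{\sigma},p+1}_N-\overline{F}^{\bm{\sigma},p}_N$ telescopes (cf. \eqref{eq:dom-4-4}). The remainder $\frac{c_1-c_2}{2}\sum_p(D_1-D_2)$ is of the same order as the main term, and it is not determined by valley flatness plus $\theta_N\mathcal{D}_N(F_N)=O(1)$.

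The missing idea is the correction $\omega_N U_\epsilon(u+w)\bigl(u-(u+w)\rho_N\bigr)$ in \eqref{eq:WN-def}. By \eqref{eq:lambda-theta} it makes the type-$i$ slope $U/(N\tau_Nc_i)$, so $c_i$ times the slope equals $1/\tau_N$ for both species. The sum then telescopes to $\overline{F}^{\bm{\sigma},N-k}_N-\overline{F}^{\bm{\sigma},0}_N\approx f_N(x)-f_N(y)$, and $1/\tau_N\to(\rho/c_1+(1-\rho)/c_2)^{-1}$ appears. The price is an error proportional to $\xi_1(x)-\rho_N|\bm{\xi}(x)|$, which the hypergeometric variance bound of Lemma \ref{lem:dom-3} removes. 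Your heuristic names the series combination, but your construction does not implement it.

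Relatedly, your claim that $\theta_N\mathcal{L}_N\Psi_N$ is small in $L^1(\nu_N)$ off the valleys is false. Its leading part $\frac{U}{N}(c_1-c_2)\bigl(\frac{\eta_1(y)}{|\bm{\eta}(y)|}-\frac{\eta_1(x)}{|\bm{\eta}(x)|}\bigr)$ is of order $N^{-3/2}$ under the composition fluctuations. That gives an $L^1$ norm of order $N^{1+\alpha}\cdot N^{-3/2}\cdot N^{1-\alpha}=N^{1/2}$ on the tube. The paper never uses an $L^1$ bound; it controls the tube only weakly, through Cauchy--Schwarz against $\theta_N\mathcal{D}_N(F_N)$.

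\textbf{Step 1.} The drift argument does not work as written. The count $K=\sum_{z\neq x}|\bm{\eta}(z)|$ has no negative drift pointwise: if the free particles sit at sites not adjacent to $x$, $K$ grows at rate of order one (emission from the condensate) and cannot decrease. Nor is the single-site mechanism subcritical. At $z\in S_\star$ the departure rate $(k/(k-1))^\alpha=1+\alpha/k+O(k^{-2})$ exceeds the balancing value only by $\alpha/k$, and the free particles have polynomial stationary tails $\propto k^{-\alpha}$. So there is no comparison with a subcritical birth--death chain; at best a Lamperti-type one, with scale function $\asymp k^{1+\alpha}$ and a geometry-dependent Lyapunov function.

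The paper avoids this with crude potential theory:
\begin{enumerate}
\item The Dirichlet principle with $\mathbf{1}_{\tilde{\mathcal{E}}^x_N}$ gives ${\rm CAP}_N(\mathcal{E}^x_N,\Omega_N\setminus\tilde{\mathcal{E}}^x_N)\le cN^{-b\alpha}$.
\item The Thomson principle along a path of length $<\kappa\ell_N$ inside $\mathcal{E}^x_N$, where $\nu_N\ge N^{-\gamma c_0}(\log N)^{-\alpha(\kappa-1)}$, gives ${\rm CAP}_N(\bm{\eta},\bm{\xi}^x_N)\ge cN^{-\gamma c_0}(\log N)^{-1-\alpha(\kappa-1)}$.
\item The renewal inequality concludes once $\gamma c_0<b\alpha$. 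This, not summability of union bounds, is why $\gamma$ must be small.
\end{enumerate}
The same lower bound gives $\mathbb{E}^N_{\bm{\eta}}[H_{\bm{\xi}^x_N}]\le{\rm CAP}_N(\bm{\eta},\bm{\xi}^x_N)^{-1}\ll\theta_N$, hence flatness directly.

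\textbf{Edge bounds.} Bounds of order $N^{-(\alpha-1)}$ times logarithms are too weak, because the tube carrying the main term already has $\nu_N$-mass of that order. The discarded regions need mass $o(N^{1-\alpha})$, and the codimension-one edges even $o(N^{-\alpha})$. The paper gets these from Lemmas \ref{lem:s3} and \ref{lem:s4} with $\pi_N^\alpha\gg N$.
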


Part (1) is called the \emph{attractor} condition (cf. \cite[cond. (V1)]{BL10}),
and it indicates that starting from any configuration in $\mathcal{E}^{x}_{N}$,
the process reaches a local equilibrium inside $\mathcal{E}^{x}_{N}$
before it escapes $\tilde{\mathcal{E}}^{x}_{N}$ and enters the effective
metastable transition set. This explains the exact terminology of
\emph{metastability} in our context.
\begin{rem}
The transition rates given in \eqref{eq:lMC-gen} directly generalize
the rates for the single-species analogue presented in \cite[eq. (3.3)]{Seo19}.
The two formulas differ by a factor of $M_{\star}$ in the previous
paper since, in our situation, the stationary distributions $m_{1},m_{2}$
on $S$ are normalized such that \eqref{eq:mi-normalized} holds.
Taking this into account, it is easy to see that our rates a direct
generalization of \cite[eq. (3.3)]{Seo19}.
\end{rem}

\begin{figure}
\begin{tikzpicture}[scale=1.2]
\draw[thick] (0,0) ellipse (2 and 1);
\foreach \i in {0,...,74} { \draw[] ({2*cos(\i*5)},{1*sin(\i*5)-0.05})--({2*cos(\i*5)},{1*sin(\i*5)}); }

\foreach \i in {6} { \foreach \j in {0,...,8} { \fill[blue!50!white] ({2*cos(\i*15)},{1*sin(\i*15)+0.05+0.1*\j}) circle (0.05); \draw[blue] ({2*cos(\i*15)},{1*sin(\i*15)+0.05+0.1*\j}) circle (0.05); }
\foreach \j in {9,...,14} { \fill[red!50!white] ({2*cos(\i*15)},{1*sin(\i*15)+0.05+0.1*\j}) circle (0.05); \draw[red] ({2*cos(\i*15)},{1*sin(\i*15)+0.05+0.1*\j}) circle (0.05); } }
\foreach \i in {20} { \foreach \j in {0,...,11} { \fill[blue!50!white] ({2*cos(\i*15)},{1*sin(\i*15)+0.05+0.1*\j}) circle (0.05); \draw[blue] ({2*cos(\i*15)},{1*sin(\i*15)+0.05+0.1*\j}) circle (0.05); }
\foreach \j in {12,...,19} { \fill[red!50!white] ({2*cos(\i*15)},{1*sin(\i*15)+0.05+0.1*\j}) circle (0.05); \draw[red] ({2*cos(\i*15)},{1*sin(\i*15)+0.05+0.1*\j}) circle (0.05); } }

\foreach \i in {20} { \foreach \j in {0,...,0} { \fill[blue!50!white] ({2*cos(\i*5)},{1*sin(\i*5)+0.05+0.1*\j}) circle (0.05); \draw[blue] ({2*cos(\i*5)},{1*sin(\i*5)+0.05+0.1*\j}) circle (0.05); } }
\foreach \i in {21} { \foreach \j in {0,...,1} { \fill[blue!50!white] ({2*cos(\i*5)},{1*sin(\i*5)+0.05+0.1*\j}) circle (0.05); \draw[blue] ({2*cos(\i*5)},{1*sin(\i*5)+0.05+0.1*\j}) circle (0.05); } }
\foreach \i in {24} { \foreach \j in {0,...,0} { \fill[blue!50!white] ({2*cos(\i*5)},{1*sin(\i*5)+0.05+0.1*\j}) circle (0.05); \draw[blue] ({2*cos(\i*5)},{1*sin(\i*5)+0.05+0.1*\j}) circle (0.05); } }
\foreach \i in {27} { \foreach \j in {0,...,2} { \fill[blue!50!white] ({2*cos(\i*5)},{1*sin(\i*5)+0.05+0.1*\j}) circle (0.05); \draw[blue] ({2*cos(\i*5)},{1*sin(\i*5)+0.05+0.1*\j}) circle (0.05); } }
\foreach \i in {32} { \foreach \j in {0,...,1} { \fill[blue!50!white] ({2*cos(\i*5)},{1*sin(\i*5)+0.05+0.1*\j}) circle (0.05); \draw[blue] ({2*cos(\i*5)},{1*sin(\i*5)+0.05+0.1*\j}) circle (0.05); } }
\foreach \i in {33} { \foreach \j in {0,...,0} { \fill[blue!50!white] ({2*cos(\i*5)},{1*sin(\i*5)+0.05+0.1*\j}) circle (0.05); \draw[blue] ({2*cos(\i*5)},{1*sin(\i*5)+0.05+0.1*\j}) circle (0.05); } }
\foreach \i in {37} { \foreach \j in {0,...,0} { \fill[blue!50!white] ({2*cos(\i*5)},{1*sin(\i*5)+0.05+0.1*\j}) circle (0.05); \draw[blue] ({2*cos(\i*5)},{1*sin(\i*5)+0.05+0.1*\j}) circle (0.05); } }
\foreach \i in {40} { \foreach \j in {0,...,1} { \fill[blue!50!white] ({2*cos(\i*5)},{1*sin(\i*5)+0.05+0.1*\j}) circle (0.05); \draw[blue] ({2*cos(\i*5)},{1*sin(\i*5)+0.05+0.1*\j}) circle (0.05); } }
\foreach \i in {44} { \foreach \j in {0,...,3} { \fill[blue!50!white] ({2*cos(\i*5)},{1*sin(\i*5)+0.05+0.1*\j}) circle (0.05); \draw[blue] ({2*cos(\i*5)},{1*sin(\i*5)+0.05+0.1*\j}) circle (0.05); } }
\foreach \i in {49} { \foreach \j in {0,...,0} { \fill[blue!50!white] ({2*cos(\i*5)},{1*sin(\i*5)+0.05+0.1*\j}) circle (0.05); \draw[blue] ({2*cos(\i*5)},{1*sin(\i*5)+0.05+0.1*\j}) circle (0.05); } }
\foreach \i in {51} { \foreach \j in {0,...,1} { \fill[blue!50!white] ({2*cos(\i*5)},{1*sin(\i*5)+0.05+0.1*\j}) circle (0.05); \draw[blue] ({2*cos(\i*5)},{1*sin(\i*5)+0.05+0.1*\j}) circle (0.05); } }
\foreach \i in {56} { \foreach \j in {0,...,0} { \fill[blue!50!white] ({2*cos(\i*5)},{1*sin(\i*5)+0.05+0.1*\j}) circle (0.05); \draw[blue] ({2*cos(\i*5)},{1*sin(\i*5)+0.05+0.1*\j}) circle (0.05); } }

\foreach \i in {15} { \foreach \j in {0,...,0} { \fill[red!50!white] ({2*cos(\i*5)},{1*sin(\i*5)+0.05+0.1*\j}) circle (0.05); \draw[red] ({2*cos(\i*5)},{1*sin(\i*5)+0.05+0.1*\j}) circle (0.05); } }
\foreach \i in {13} { \foreach \j in {0,...,1} { \fill[red!50!white] ({2*cos(\i*5)},{1*sin(\i*5)+0.05+0.1*\j}) circle (0.05); \draw[red] ({2*cos(\i*5)},{1*sin(\i*5)+0.05+0.1*\j}) circle (0.05); } }
\foreach \i in {10} { \foreach \j in {0,...,2} { \fill[red!50!white] ({2*cos(\i*5)},{1*sin(\i*5)+0.05+0.1*\j}) circle (0.05); \draw[red] ({2*cos(\i*5)},{1*sin(\i*5)+0.05+0.1*\j}) circle (0.05); } }
\foreach \i in {9} { \foreach \j in {0,...,0} { \fill[red!50!white] ({2*cos(\i*5)},{1*sin(\i*5)+0.05+0.1*\j}) circle (0.05); \draw[red] ({2*cos(\i*5)},{1*sin(\i*5)+0.05+0.1*\j}) circle (0.05); } }
\foreach \i in {7} { \foreach \j in {0,...,0} { \fill[red!50!white] ({2*cos(\i*5)},{1*sin(\i*5)+0.05+0.1*\j}) circle (0.05); \draw[red] ({2*cos(\i*5)},{1*sin(\i*5)+0.05+0.1*\j}) circle (0.05); } }
\foreach \i in {5} { \foreach \j in {0,...,1} { \fill[red!50!white] ({2*cos(\i*5)},{1*sin(\i*5)+0.05+0.1*\j}) circle (0.05); \draw[red] ({2*cos(\i*5)},{1*sin(\i*5)+0.05+0.1*\j}) circle (0.05); } }
\foreach \i in {-2} { \foreach \j in {0,...,1} { \fill[red!50!white] ({2*cos(\i*5)},{1*sin(\i*5)+0.05+0.1*\j}) circle (0.05); \draw[red] ({2*cos(\i*5)},{1*sin(\i*5)+0.05+0.1*\j}) circle (0.05); } }
\foreach \i in {-6} { \foreach \j in {0,...,0} { \fill[red!50!white] ({2*cos(\i*5)},{1*sin(\i*5)+0.05+0.1*\j}) circle (0.05); \draw[red] ({2*cos(\i*5)},{1*sin(\i*5)+0.05+0.1*\j}) circle (0.05); } }
\foreach \i in {-8} { \foreach \j in {0,...,2} { \fill[red!50!white] ({2*cos(\i*5)},{1*sin(\i*5)+0.05+0.1*\j}) circle (0.05); \draw[red] ({2*cos(\i*5)},{1*sin(\i*5)+0.05+0.1*\j}) circle (0.05); } }
\foreach \i in {-9} { \foreach \j in {0,...,1} { \fill[red!50!white] ({2*cos(\i*5)},{1*sin(\i*5)+0.05+0.1*\j}) circle (0.05); \draw[red] ({2*cos(\i*5)},{1*sin(\i*5)+0.05+0.1*\j}) circle (0.05); } }

\draw[blue,densely dashed,thick,-latex] ({2*cos(100)},{sin(100)+0.5}) arc (100:290:2 and 1);
\draw[red,densely dashed,thick,-latex] ({2*cos(80)},{sin(80)+0.5}) arc (80:-50:2 and 1);
\draw ({2*cos(90)},{sin(90)-0.1}) node[below]{$x$};
\draw ({2*cos(300)},{sin(300)-0.1}) node[below]{$y$};
\end{tikzpicture}\caption{\label{fig1.1}Typical particle transitions in the simple example
given at Remark \ref{rem:simple}. Blue arrow indicates small fraction
of type-$1$ particles moving in the counter-clockwise direction,
and red arrow indicates small fraction of type-$2$ particles moving
in the opposite direction.}
\end{figure}

\begin{rem}
\label{rem:simple}To illustrate the main theorem, let us consider
a simple setting where $S$ is a one-dimensional discrete torus, and
$r_{1}$ (resp. $r_{2}$) corresponds to a totally asymmetric random
walk in the counter-clockwise (resp. clockwise) direction. See Figure
\ref{fig1.1} for an illustration. In this context, suppose that a
metastable transition occurs from a condensate at a site $x\in S_{\star}$
to another site $y\in S_{\star}$. Then, the typical trajectory of
this transition proceeds as follows: small fractions of type-$1$
and type-$2$ particles successively detach from the macroscopic condensate
at $x$, moving respectively in the counter-clockwise and clockwise
directions, and ultimately coalesce at the target site $y\in S_{\star}$,
to form the new condensate. In particular, the two particle species
execute independent-like random walks along the torus in opposite
directions before recombining at the target site.

The relative abundance of the two species also effects the metastable
transition. Indeed, according to \eqref{eq:lMC-gen}, the transition
rate from $x$ to $y$ is proportional to 
\[
\left(\frac{\rho}{c_{1}(x,y)}+\frac{1-\rho}{c_{2}(x,y)}\right)^{-1},
\]
so that the macroscopic particle densities act as weights on the effective
transport resistances associated with the underlying two random walks.
In particular, when $c_{1}(x,y)\neq c_{2}(x,y)$, changing the relative
proportion of the two species accelerates or slows down the transitions,
while the qualitative transition mechanism described above remains
unchanged.
\end{rem}

\begin{rem}
\label{rem:multi-species}Although the present article focuses on
the two-species case, one can easily generalize all the results to
the $k$-species case for any fixed integer $k\ge2$. Namely, the
condensation result at Theorem \ref{thm:cond} remains true along
the same threshold $\alpha_{\star}=1$, and the dynamical metastable
transitions can also be explained, as in Theorem \ref{thm:main},
via three characterizations: local metastability, dynamical transitions,
and negligibility of the remainder set, with the same time scale $\theta_{N}=N^{1+\alpha}$
and jump rates
\[
\frac{1}{\Gamma(\alpha)\mathcal{I}_{\alpha}}\left(\sum^{k}_{j=1}\frac{\rho_{j}}{c_{j}(x,y)}\right)^{-1}\qquad\text{for}\quad x\to y,
\]
where $c_{j}(x,y)$ denotes the capacity between $x$ and $y$ according
to the $j$th random walk, and $\rho_{j}$ denotes the macroscopic
particle density of the $j$th species.
\end{rem}

\begin{rem}
We strongly believe that the same results hold in the remaining critical
case $\alpha=1$. Indeed, the condensation result is already established
in Theorem \ref{thm:cond}, and parts (2) and (3) of Theorem \ref{thm:main}
can be checked in a similar manner, albeit with significantly more
technical details. The main bottleneck in this generalization lies
in part (1) of the main theorem, namely the local metastability condition.
This presents a substantially harder challenge because, when $\alpha=1$,
each (potentially) metastable set $\mathcal{E}^{x}_{N}$, $x\in S_{\star}$,
becomes significantly larger, so that a crude capacity estimate (as
performed in Proposition \ref{prop3} for the supercritical case)
no longer suffices to ensure rapid local mixing. In \cite{LMS23},
this difficulty was overcome for the single-species zero-range process
in the symmetric random walk case through the construction of a delicate
superharmonic test function on the annulus region of $\mathcal{E}^{x}_{N}$,
combined with same the capacity estimate localized near the central
configuration $\bm{\xi}^{x}_{N}$. We do not expect this strategy
to extend, at least directly, to general non-reversible, multi-species
settings, suggesting that entirely new ideas will be required to resolve
the critical case.
\end{rem}

\subsection{\label{sec1.4}Proof Ideas}

Here, we sketch the main strategy to prove Theorem \ref{thm:main}.

\subsubsection*{Resolvent Theory}

The main tool is the recently developed \emph{resolvent approach}
to metastability \cite{LMS25}. The key idea is to interpret the metastable
behavior of the dynamics as a specific analytic property of the resolvent
equation
\begin{equation}
(\lambda-\theta_{N}\mathcal{L}_{N})F_{N}=\sum_{x\in S_{\star}}\mathfrak{g}(x){\bf 1}_{\mathcal{E}^{x}_{N}},\qquad\lambda>0,\quad\mathfrak{g}:S_{\star}\to\mathbb{R}.\label{eq:res-mic}
\end{equation}
Namely, the condition is that the microscopic solution $F_{N}$ to
\eqref{eq:res-mic} is asymptotically flat in each valley, and its
value is close to the solution $\mathfrak{f}$ to the following macroscopic
resolvent equation:
\begin{equation}
(\lambda-\mathfrak{L}_{\mathbb{X}})\mathfrak{f}=\mathfrak{g},\label{eq:res-mac}
\end{equation}
where, recall from \eqref{eq:lMC-gen}, $\mathfrak{L}_{\mathbb{X}}$
is the infinitesimal generator of the limiting Markov chain $\mathbb{X}$.
It is clear that both resolvent equations admit unique solutions since
the generators are non-positive and $\lambda>0$. The precise statement
can be summarized as:
\begin{thm}
\label{thm:res}For any fixed $\lambda>0$ and $\mathfrak{g}:S_{\star}\to\mathbb{R}$,
\[
\lim_{N\to\infty}\sup_{\bm{\eta}\in\mathcal{E}^{x}_{N}}\left|F_{N}(\bm{\eta})-\mathfrak{f}(x)\right|=0\qquad\text{for each}\quad x\in S_{\star}.
\]
\end{thm}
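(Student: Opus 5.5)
The plan follows the resolvent scheme of \cite{LMS25}, with the $H^{1}$-approximation of \cite{KS25} supplying the key test functions. Theorem \ref{thm:res} splits into two parts: \emph{flatness} of $F_{N}$ on each valley $\mathcal{E}^{x}_{N}$, and \emph{identification} of the flat values with $\mathfrak{f}(x)$.

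\textbf{Step 1: flatness.} Write $F_{N}(\bm{\eta})=\mathbb{E}^{N}_{\bm{\eta}}\int_{0}^{\infty}e^{-\lambda t}G(\bm{\eta}_{N}(\theta_{N}t))\,{\rm d}t$ with $G=\sum_{x}\mathfrak{g}(x)\mathbf{1}_{\mathcal{E}^{x}_{N}}$, so that $\|F_{N}\|_{\infty}\le\|\mathfrak{g}\|_{\infty}/\lambda$. By Theorem \ref{thm:main}(1), from any $\bm{\eta}\in\mathcal{E}^{x}_{N}$ the process hits $\bm{\xi}^{x}_{N}$ before leaving $\tilde{\mathcal{E}}^{x}_{N}$ with high probability. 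I would prove quantitatively that this hitting happens in time $o(\theta_{N})$. Concretely, the relaxation inside $\tilde{\mathcal{E}}^{x}_{N}$ involves at most $N^{b}$ particles at rates $O(1)$, so it takes time polynomially smaller than $N^{1+\alpha}$; this can be read off from the capacity estimate of Proposition \ref{prop3}. The strong Markov property at $H_{\bm{\xi}^{x}_{N}}$ then gives $\sup_{\bm{\eta}\in\mathcal{E}^{x}_{N}}|F_{N}(\bm{\eta})-F_{N}(\bm{\xi}^{x}_{N})|\to0$. Since the $F_{N}(\bm{\xi}^{x}_{N})$ are bounded, it suffices to show that every subsequential limit $\mathfrak{f}^{\ast}(x)$ of these values solves \eqref{eq:res-mac}; uniqueness then finishes the proof.

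\textbf{Step 2: test functions.} For each $y\in S_{\star}$, build an approximate equilibrium potential $Q^{y}_{N}$ between $\mathcal{E}^{y}_{N}$ and $\bigcup_{z\neq y}\mathcal{E}^{z}_{N}$ with the following properties:
\begin{itemize}
\item $Q^{y}_{N}\equiv\mathbf{1}\{x=y\}$ on each $\mathcal{E}^{x}_{N}$;
\item it is bounded, and $\nu_{N}$-negligible in $\Delta_{N}$ away from the transition tubes;
\item on the tube between $x$ and $z$ it is built from the one-dimensional profile $\int_{0}^{t}s^{\alpha}(1-s)^{\alpha}{\rm d}s/\mathcal{I}_{\alpha}$ in the fraction $t$ of particles moved from $x$ to $z$;
\item it is corrected by the random-walk equilibrium potentials $\mathfrak{h}_{1,x,z},\mathfrak{h}_{2,x,z}$ for the positions of the few traveling particles of each species.
\end{itemize}
The two species share the $t$-profile but split resistance in proportions $\rho$ and $1-\rho$. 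This split is what produces the series-resistance formula $(\rho/c_{1}+(1-\rho)/c_{2})^{-1}$, and $\Gamma(\alpha)$ enters through the estimate of $Z_{N}$ (Proposition \ref{prop:ZN-limit}). The target estimate is
\[
\theta_{N}\,\langle -\mathcal{L}_{N}F,\,Q^{y}_{N}\rangle_{\nu_{N}}=\kappa_{\star}^{-1}\sum_{x}\mathfrak{L}_{\mathbb{X}}\mathbf{1}_{y}(x)\,\mathfrak{u}(x)+o(1)+\text{error}
\]
for functions $F$ that are flat on the valleys with values $\mathfrak{u}$. In the non-reversible case this must hold with the adjoint as well, so I would construct $Q^{y}_{N}$ for both $\mathcal{L}_{N}$ and $\mathcal{L}^{\ast}_{N}$, in the style of \cite{Seo19,KS25}.

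\textbf{Step 3: identification.} Pair \eqref{eq:res-mic} with the adjoint test function $Q^{y,\ast}_{N}$ under $\nu_{N}$:
\[
\lambda\langle F_{N},Q^{y,\ast}_{N}\rangle_{\nu_{N}}-\theta_{N}\langle F_{N},\mathcal{L}^{\ast}_{N}Q^{y,\ast}_{N}\rangle_{\nu_{N}}=\mathfrak{g}(y)\,\nu_{N}(\mathcal{E}^{y}_{N})+o(1).
\]
By Theorem \ref{thm:cond} and Step 1, the first term is $\kappa_{\star}^{-1}\mathfrak{f}^{\ast}(y)+o(1)$, since the mass of $\Delta_{N}$ is negligible and $F_{N}$ is bounded. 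The second term, $\theta_{N}\mathcal{L}^{\ast}_{N}Q^{y,\ast}_{N}$, is concentrated near the valleys, more precisely on their boundary layers. Its pairing with $F_{N}$ is therefore $\kappa_{\star}^{-1}\sum_{x}\mathfrak{f}^{\ast}(x)\,\mathfrak{L}^{\dagger}(x,y)+o(1)$. Closing this requires flatness of $F_{N}$ on the enlarged sets $\tilde{\mathcal{E}}^{x}_{N}$, which follows from the same argument as Step 1, together with the $H^{1}$ bound $\theta_{N}\,\mathcal{D}_{N}(F_{N})=O(1)$ obtained by pairing \eqref{eq:res-mic} with $F_{N}$; this bound controls the contribution from the tubes by Cauchy--Schwarz. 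Since $\nu_{N}$ restricted to the valleys is asymptotically uniform, the result is $(\lambda-\mathfrak{L}_{\mathbb{X}})\mathfrak{f}^{\ast}=\mathfrak{g}$.

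\textbf{Main obstacle.} The hard step is Step 2: the sharp $H^{1}$ construction in the non-reversible two-species landscape. One must verify that the Dirichlet form of $Q^{y}_{N}-Q^{y,\ast}_{N}$ type errors is $o(\theta_{N}^{-1})$, and that the profile across the tube interacts correctly with the compatibility structure of $\bm{g}_{1},\bm{g}_{2}$. I would first handle the bulk of the tube ($\epsilon N\le$ number of moved particles $\le(1-\epsilon)N$) by a Taylor expansion in $t$ using the explicit form of $\bm{g!}$. I would then cut off near the valleys, using $\ell_{N}=\gamma\log N$ to control the mollified region, and finally let $\epsilon\to0$.
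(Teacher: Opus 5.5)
Your architecture matches the paper's: flatness on $\mathcal{E}^{x}_{N}$ via the strong Markov property at $H_{\bm{\xi}^{x}_{N}}$, then pairing \eqref{eq:res-mic} with an approximate adjoint equilibrium potential, with Theorem \ref{thm:cond} handling the $\lambda$- and $\mathfrak{g}$-terms and $\theta_{N}\mathcal{D}_{N}(F_{N})=O(1)$ handling errors. But there is a genuine gap: Steps 2--3 omit the construction that makes the central pairing computable, and the way you close Step 3 does not work.

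\textbf{The test function.} As you describe it (a profile in the total displaced fraction $t$ shared by both species, plus $\mathfrak{h}_{i,x,z}$-corrections for travelling particles), it is the naive $\widehat{W}_{N}$, whose increment under a type-$i$ jump does not depend on $i$. Pair with $F_{N}$ and sum over the target site using $-L^{*}_{i}\mathfrak{h}^{*}_{i,x,y}(x)=L^{*}_{i}\mathfrak{h}^{*}_{i,x,y}(y)=c_{i}(x,y)$. Up to the $\nu_{N}$- and $U$-weights, the tube sum becomes $\sum_{i}c_{i}(x,y)\big[F_{N}(\bm{\xi}+\delta^{i}_{x})-F_{N}(\bm{\xi}+\delta^{i}_{y})\big]$. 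For $c_{1}\neq c_{2}$ the two species' increments do not recombine into differences of fibre averages of $F_{N}$. In energy terms this function realizes the parallel conductance $\rho c_{1}+(1-\rho)c_{2}$, which is strictly larger than $(\rho/c_{1}+(1-\rho)/c_{2})^{-1}$. So it is not an $H^{1}$-approximation of $\mathfrak{H}^{*}_{x}$.

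What is needed is that a type-$i$ jump $u\to w$ change the test function by $(\tau_{N}(x,y)c_{i}(x,y))^{-1}\big(\mathfrak{h}^{*}_{i,x,y}(w)-\mathfrak{h}^{*}_{i,x,y}(u)\big)U(p/N)/N$. Equivalently, the profile is evaluated at $a_{1}\mathcal{X}^{1}_{x,y}+a_{2}\mathcal{X}^{2}_{x,y}$ with $a_{i}=(\tau_{N}(x,y)c_{i}(x,y))^{-1}$, so that $\rho_{N}a_{1}+(1-\rho_{N})a_{2}=1$. Since $a_{1}u+a_{2}w=(u+w)+\omega_{N}(x,y)(u-(u+w)\rho_{N})$, the paper's $W_{N}$ in \eqref{eq:WN-def} is exactly the linearization of this. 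The resulting fluctuation term $(\xi_{1}(x)-|\bm{\xi}(x)|\rho_{N})/N^{2}$ is controlled by the hypergeometric variance bound at \eqref{eq:dom-3-1.5}. Saying that the species ``split resistance in proportions $\rho$ and $1-\rho$'' names the answer, not this mechanism.

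\textbf{Closing Step 3.} The argument ``$\theta_{N}\mathcal{L}^{*}_{N}Q$ lives on boundary layers of the valleys'' plus flatness on $\tilde{\mathcal{E}}^{x}_{N}$ fails. With the correctly weighted function, $-\theta_{N}\mathcal{L}^{*}_{N}\mathbb{W}_{x}$ does act, against $F$ with $\theta_{N}\mathcal{D}_{N}(F)=O(1)$, like a measure on two levels, up to $o(1)+o_{\epsilon}(1)$. But this comes only from an exact summation by parts: with the common coefficient $\tau_{N}^{-1}$, the bulk sum telescopes in the fibre averages $\overline{F}^{\bm{\sigma},p}_{N}$ as in \eqref{eq:dom-4-4}--\eqref{eq:dom-4-5}. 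The surviving levels are the tube ends $|\bm{\eta}(x)|\approx N(1-2\epsilon)$ and $|\bm{\eta}(x)|\approx 2N\epsilon$, at macroscopic distance from $\mathcal{E}^{x}_{N}$ and $\mathcal{E}^{y}_{N}$.

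Flatness on $\tilde{\mathcal{E}}^{x}_{N}$, where only $N^{b}$ particles are displaced, does not reach these levels. It also does not follow from your Step 1. The input there is $\mathbb{E}^{N}_{\bm{\eta}}[H_{\bm{\xi}^{x}_{N}}]\le{\rm CAP}_{N}(\bm{\eta},\bm{\xi}^{x}_{N})^{-1}$ together with Lemma \ref{lem:cap2}, not the ratio in Proposition \ref{prop3}. That lemma needs $\nu_{N}\ge N^{-\gamma c_{0}}(\log N)^{-\alpha(\kappa-1)}$, hence the choice $\ell_{N}=\lfloor\gamma\log N\rfloor$. By contrast, $\tilde{\mathcal{E}}^{x}_{N}$ contains configurations (e.g.\ $\lfloor N^{b}\rfloor$ type-$1$ particles at another site) of mass about $\rho^{N^{b}}$.

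The paper bridges the gap with Lemma \ref{lem:dom-5}. It applies Cauchy--Schwarz along each fibre and uses $\theta_{N}\mathcal{D}_{N}(F_{N})=O(1)$. The key fact is that the stretch between a tube end and the valley carries only an $O(\epsilon^{\alpha+1})$ fraction of the tube's resistance. This gives $\overline{F}^{\bm{\sigma},N(1-2\epsilon)-1}_{N}\approx\overline{F}^{\bm{\sigma},N-k}_{N}$ up to $o_{\epsilon}(1)$ in weighted mean square. As a result, flatness is needed only on $\mathcal{E}^{x}_{N}$, and the limits are taken as $N\to\infty$, then $\epsilon\to0$.

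If you intend a pure $H^{1}$ route instead, what must be shown is $\mathcal{D}_{N}(Q^{y,*}_{N}-\mathfrak{H}^{*}_{y})=o(\theta_{N}^{-1})$. A bound on $Q^{y}_{N}-Q^{y,*}_{N}$ is the wrong target: its Dirichlet form is in general of order $\theta_{N}^{-1}$, because $\mathfrak{h}_{i,x,y}\neq\mathfrak{h}^{*}_{i,x,y}$. That estimate is exactly what your proposal leaves open.
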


The previous theorem, which is called condition $\mathfrak{R}$ in
\cite{LMS25}, is equivalent to parts (2) and (3) of the main theorem,
as verified in \cite[Theorem 2.3]{LMS25}. In practice, this theorem
can be divided into the following two propositions:
\begin{prop}[Condition $\mathfrak{R}^{(1)}$]
\label{prop1}The solution $F_{N}$ to \eqref{eq:res-mic} is asymptotically
flat in each $\mathcal{E}^{x}_{N}$:
\[
\lim_{N\to\infty}\max_{\bm{\eta},\bm{\eta'}\in\mathcal{E}^{x}_{N}}|F_{N}(\bm{\eta})-F_{N}(\bm{\eta'})|=0\qquad\text{for each}\quad x\in S_{\star}.
\]
\end{prop}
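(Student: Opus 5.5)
The plan is to prove flatness of $F_N$ on $\mathcal{E}^x_N$ by combining an a priori sup bound on $F_N$ with the local metastability statement, part (1) of Theorem \ref{thm:main}, whose proof via a capacity estimate in the spirit of Proposition \ref{prop3} we take as available. Write $G:=\sum_{y\in S_\star}\mathfrak{g}(y){\bf 1}_{\mathcal{E}^y_N}$. By the maximum principle for $\lambda-\theta_N\mathcal{L}_N$, $\|F_N\|_\infty\le\lambda^{-1}\|\mathfrak{g}\|_\infty$. The stochastic representation is
\[
F_N(\bm{\eta})=\mathbb{E}^N_{\bm{\eta}}\Bigl[\int_0^\infty e^{-\lambda t}G(\bm{\eta}_N(\theta_N t))\,{\rm d}t\Bigr].
\]
Applying the strong Markov property at $\tau:=H_{\bm{\xi}^x_N}$, for $\bm{\eta}\in\mathcal{E}^x_N$ we get
\[
F_N(\bm{\eta})-F_N(\bm{\xi}^x_N)=\mathbb{E}^N_{\bm{\eta}}\Bigl[\int_0^{\tau/\theta_N}e^{-\lambda t}G\,{\rm d}t\Bigr]+\mathbb{E}^N_{\bm{\eta}}\bigl[(e^{-\lambda\tau/\theta_N}-1)\bigr]F_N(\bm{\xi}^x_N).
\]
Both terms are bounded by $C\,\mathbb{E}^N_{\bm{\eta}}[1\wedge(\tau/\theta_N)]$. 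It therefore suffices to show that $\tau\ll\theta_N$ in probability, uniformly over $\bm{\eta}\in\mathcal{E}^x_N$.

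To get this, I split on the event $\{\tau<H_{\Omega_N\setminus\tilde{\mathcal{E}}^x_N}\}$. By part (1) of Theorem \ref{thm:main} its complement has vanishing probability, uniformly in $\bm{\eta}\in\mathcal{E}^x_N$, and on the complement the integrand is bounded by $1$. On the event itself, $\tau$ is bounded by the hitting time of $\bm{\xi}^x_N$ for the process reflected, or killed, outside $\tilde{\mathcal{E}}^x_N$. I bound its mean using the standard formula for a mean hitting time of a point in terms of the equilibrium potential and capacity. In the non-reversible case one uses the sector-type comparison of capacities for zero-range processes, since the symmetrized generator has comparable Dirichlet form. This gives
\[
\mathbb{E}_{\bm{\eta}}[\tau\,;\,\text{stay in }\tilde{\mathcal{E}}^x_N]\lesssim\frac{\nu_N(\tilde{\mathcal{E}}^x_N)}{\mathrm{cap}_N(\bm{\eta},\bm{\xi}^x_N)}.
\]
The capacity is then bounded below by a path argument. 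One moves the at most $\ell_N=\lfloor\gamma\log N\rfloor$ outside particles of each species one at a time to $x$ along shortest $r_i$-paths. Each edge has conductance at least $c\,\nu_N(\bm{\eta}')\,N^{-\alpha}$-ish, and the configurations along the path have measure at least $\nu_N(\bm{\xi}^x_N)N^{-C\ell_N}$-type, since $\nu_N(\bm{\xi}^x_N)\asymp1$ by Theorem \ref{thm:cond} and Proposition \ref{prop:ZN-limit}. This gives $\mathrm{cap}_N\ge N^{-c\gamma\log N}$, which is too weak. So I will instead use the structure near the condensate: with $k$ particles outside, $\nu_N\approx\prod_{z\neq x}k_z^{-\alpha}$. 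Consequently the worst bottleneck costs only polynomially, namely $\mathrm{cap}\gtrsim N^{-\alpha}\ell_N^{-C}$ after choosing $\gamma$ small, which is presumably the role of \eqref{eq:gamma-def}. This yields $\mathbb{E}[\tau]\le N^{\alpha}(\log N)^C\ll\theta_N=N^{1+\alpha}$, and Markov's inequality concludes.

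The main obstacle is this last capacity lower bound: I need a transport path from any $\bm{\eta}\in\mathcal{E}^x_N$ to $\bm{\xi}^x_N$ whose total resistance is $o(N^{1+\alpha})$ uniformly. I would try first the single-species route of \cite{BL12a,Seo19}. One returns particles in decreasing order of pile size, so that every intermediate configuration has measure comparable to the starting one up to factors $\ell_N^{C}$. The two-species weights $\binom{m+n}{n}$ stay harmless because the product $\bm{m}^{\bm{\eta}}/\bm{g!}$ only changes by bounded ratios $\bm{g}_i\in[c\,k^{-1}\cdot k,\,C]$ at each step. The remaining ingredient is the Thomson principle with a unit flow along these paths, combined with the non-reversible comparison of capacities.
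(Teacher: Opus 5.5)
Your reduction matches the paper's: strong Markov at $H_{\bm{\xi}^{x}_{N}}$, then show $H_{\bm{\xi}^{x}_{N}}\ll\theta_{N}$ via a capacity formula and a Thomson-principle bound along a path inside $\mathcal{E}^{x}_{N}$. The detour through Theorem~\ref{thm:main}-(1) and a reflected chain buys nothing. For the full chain one already has $\mathbb{E}^{N}_{\bm{\eta}}[H_{\bm{\xi}^{x}_{N}}]\le 1/{\rm CAP}_{N}(\bm{\eta},\bm{\xi}^{x}_{N})$, because the numerator $\langle\nu_{N},\mathfrak{H}^{*}_{\bm{\eta},\bm{\xi}^{x}_{N}}\rangle$ is at most $1$. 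The comparison ${\rm CAP}_{N}\ge{\rm CAP}^{s}_{N}$ needs no sector condition. Moreover, in the paper part (1) of Theorem~\ref{thm:main} is itself deduced from the very capacity lower bound you still have to prove. The reflected route would also require checking that $\nu_{N}$ restricted to $\tilde{\mathcal{E}}^{x}_{N}$ is invariant for the reflected non-reversible chain. It is, by a boundary-flux identity, but you do not address it.

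The genuine gap is that capacity lower bound, which carries the whole content of the proposition, and your heuristics for it are wrong.

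\emph{What must be bounded.} ${\rm CAP}_{N}(\bm{\eta},\bm{\xi}^{x}_{N})$ is at most $\nu_{N}(\bm{\eta})$ times the total jump rate out of $\bm{\eta}$. So what is needed is a uniform lower bound on $\nu_{N}$ over all of $\mathcal{E}^{x}_{N}$, including the starting configuration. Comparing intermediate configurations with the starting one does not help.

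\emph{Your measure heuristic fails.} The approximation $\nu_{N}\approx\prod_{z\neq x}k_{z}^{-\alpha}$ is false. The condensate factor $\binom{|\bm{\eta}(x)|}{\eta_{1}(x)}/\binom{N}{A}$ and the weights $m_{i}(z)^{\eta_{i}(z)}$ for $z\notin S_{\star}$ cost factors exponential in the number $k\le\ell_{N}$ of outside particles. For example, $k$ type-$1$ particles on a single $z\in S_{\star}\setminus\{x\}$ give $\nu_{N}\asymp\rho^{k}k^{-\alpha}$.

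\emph{Your abandoned estimate was too pessimistic.} The loss is $e^{-C\ell_{N}}=N^{-C\gamma}$, not $N^{-C\ell_{N}}$, so that attempt was on the right track with a computational slip. Also, the edges carry no $N^{-\alpha}$. At each step, move the species with the larger count at the departure site one step along a shortest $r_{i}$-path toward $x$; this keeps $R^{s}_{N}$ bounded below and uses fewer than $\kappa\ell_{N}$ steps.

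\emph{The correct estimate} is the one in Lemma~\ref{lem:cap2}: on $\mathcal{E}^{x}_{N}$,
$\nu_{N}(\bm{\eta})\ge c\,[c\rho(1-\rho)m_{0}]^{\ell_{N}}\ell_{N}^{-\alpha(\kappa-1)}\ge N^{-\gamma c_{0}}(\log N)^{-\alpha(\kappa-1)}$.
The Thomson principle along that path then gives ${\rm CAP}_{N}(\bm{\eta},\bm{\xi}^{x}_{N})\ge cN^{-\gamma c_{0}}(\log N)^{-1-\alpha(\kappa-1)}$. One concludes because $\gamma c_{0}<1+\alpha$. This is exactly where the smallness of $\gamma$ enters. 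Your sketch invokes ``$\gamma$ small'' but offers no mechanism that would require it.
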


Define $f_{N}:S_{\star}\to\mathbb{R}$ as
\begin{equation}
f_{N}(x):=\frac{1}{\nu_{N}(\mathcal{E}^{x}_{N})}\sum_{\bm{\eta}\in\mathcal{E}^{x}_{N}}\nu_{N}(\bm{\eta})F_{N}(\bm{\eta}),\label{eq:fN-def}
\end{equation}
which is the $\nu_{N}$-average of the resolvent solution $F_{N}$
in $\mathcal{E}^{x}_{N}$.
\begin{prop}[Condition $\mathfrak{R}^{(2)}_{\mathbb{X}}$]
\label{prop2}Recall the solution $\mathfrak{f}$ from \eqref{eq:res-mac}.
For each $x\in S_{\star}$,
\[
\lim_{N\to\infty}f_{N}(x)=\mathfrak{f}(x).
\]
\end{prop}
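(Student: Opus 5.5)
We must prove Proposition \ref{prop2}: $f_N(x)\to\mathfrak{f}(x)$. The strategy is the standard one from \cite{LMS25}, combined with the $H^{1}$-approximation of \cite{KS25}. We test the resolvent equation \eqref{eq:res-mic} against a function that approximates the equilibrium potential and is constant on each valley.

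\textbf{Step 1: test functions.} For each $y\in S_\star$ we construct $Q_N^y:\Omega_N\to[0,1]$ with $Q_N^y\equiv\mathbf{1}\{z=y\}$ on $\tilde{\mathcal{E}}^z_N$ for every $z\in S_\star$, and with $Q^y_N$ taking values in $[0,1]$ elsewhere. On the tubes joining $\bm{\xi}^x_N$ to $\bm{\xi}^{x'}_N$, the dominant configurations carry particles only at $x$, at $x'$, and a bounded number at other sites. There we set $Q^y_N$ equal to a function of the mass fraction $t=|\bm{\eta}(x')|/N$ and of the intermediate occupations, modelled on the single-species construction in \cite{Seo19}. Its profile is $\int_0^t s^\alpha(1-s)^\alpha\,ds/\mathcal{I}_\alpha$, corrected by the species-wise equilibrium potentials $\mathfrak{h}_{1,x',x}$ and $\mathfrak{h}_{2,x',x}$ for the few particles in transit. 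The species are weighted so that a type-$i$ particle crossing contributes resistance proportional to $\rho_i/c_i(x,x')$, with $\rho_1=\rho$ and $\rho_2=1-\rho$. This is what produces the harmonic-mean rate in \eqref{eq:lMC-gen}.

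\textbf{Step 2: the identity.} We multiply \eqref{eq:res-mic} by $\nu_N Q^y_N$ and sum over $\Omega_N$. The right-hand side gives $\sum_x\mathfrak{g}(x)\nu_N(\mathcal{E}^x_N)Q^y_N(x)$, which equals $\mathfrak{g}(y)\nu_N(\mathcal{E}^y_N)\to\mathfrak{g}(y)/\kappa_\star$ by Theorem \ref{thm:cond}. For the left-hand side, the term $\lambda\sum\nu_N F_N Q^y_N$ equals $\lambda\nu_N(\mathcal{E}^y_N)f_N(y)+o(1)$. This uses $\|F_N\|_\infty\le\|\mathfrak{g}\|_\infty/\lambda$ and $\nu_N(\Delta_N)\to0$. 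For the generator term, the stationarity of $\nu_N$ gives $\langle -\theta_N\mathcal{L}_N F_N,Q^y_N\rangle_{\nu_N}=\langle F_N,-\theta_N\mathcal{L}^*_N Q^y_N\rangle_{\nu_N}$. We then need two facts:
\begin{enumerate}
\item $\theta_N\mathcal{L}^*_N Q^y_N$ is negligible in $L^1(\nu_N)$ away from the valleys. Approximate harmonicity of the construction in the transition tubes makes the bulk contribution vanish; the remaining mass sits near each $\tilde{\mathcal{E}}^z_N$.
\item The remaining mass near each valley aggregates to $\kappa_\star^{-1}\mathfrak{r}(z,\cdot)$-type weights, namely
\[\sum_{\bm{\eta}\text{ near }\tilde{\mathcal{E}}^z_N}\nu_N(\bm{\eta})\,\theta_N(-\mathcal{L}^*_NQ^y_N)(\bm{\eta})\to\kappa_\star^{-1}\bigl(\mathbf{1}\{z=y\}\textstyle\sum_{w}\mathfrak{r}(y,w)-\mathfrak{r}(z,y)\bigr).\]
\end{enumerate}

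\textbf{Step 3: flatness.} Where $\mathcal{L}^*_NQ^y_N$ charges the boundary layer of $\tilde{\mathcal{E}}^z_N$, we need $F_N\approx f_N(z)$. This follows from Proposition \ref{prop1} together with part (1) of Theorem \ref{thm:main}, which is the reason $\tilde{\mathcal{E}}_N$ is the larger neighbourhood. Combining Steps 2 and 3 yields $\lambda f_N(y)-\sum_z\mathfrak{r}(y,z)(f_N(z)-f_N(y))=\mathfrak{g}(y)+o(1)$. Since $f_N$ is bounded, every subsequential limit solves \eqref{eq:res-mac}, and uniqueness of its solution gives the claim.

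\textbf{Main obstacle.} The hardest part is Step 2, specifically the sharp computation of $\theta_N\mathcal{L}^*_NQ^y_N$ near the valleys with the exact constant $\Gamma(\alpha)\mathcal{I}_\alpha$. This requires precise asymptotics of $Z_N$ (Proposition \ref{prop:ZN-limit}). It also requires a Laplace-type evaluation of $\nu_N$ on configurations where $k$ particles of mixed type sit at non-condensate sites, whose binomial weights factor through $\binom{N}{A}$. In addition, the error terms in the non-reversible adjoint, which come from the intermediate random-walk transport of the two species, must be controlled in $L^1(\nu_N)$. I would first verify the computation in the reversible case via the Dirichlet form, and then handle the non-reversible case by building the species-wise corrections from both $\mathfrak{h}_{i}$ and its adjoint counterpart, as in \cite{Seo19}.
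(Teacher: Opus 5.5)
Your overall architecture matches the paper's: test \eqref{eq:res-mic} against an approximate adjoint equilibrium potential, derive $\lambda f_N-\mathfrak L_{\mathbb X}f_N\approx\mathfrak g$, then invert. The gap is in Step 2(1), which does not work as stated.

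\textbf{Step 2(1).} On a tube, $\theta_N\nu_N(\mathcal J^{x,y}_N)\asymp N^{1+\alpha}\cdot N^{1-\alpha}=N^2$. For any explicit test function of the kind you describe, the pointwise value of $\mathcal L^*_NQ^y_N$ on the tube contains pieces of size $\asymp N^{-2}$ that do not cancel configuration by configuration. Two sources produce them:
\begin{itemize}
\item the second-order terms carry the occupation-dependent rates $\bm g_i(\bm\eta(u))$ at transit sites, which vanish at empty sites and equal $m_i(u)$ only on average;
\item the species fractions $\eta_1(x)/|\bm\eta(x)|$ and $\eta_1(y)/|\bm\eta(y)|$ fluctuate on the hypergeometric scale $N^{-1/2}$.
\end{itemize}
Hence $\theta_N\mathcal L^*_NQ^y_N$ has $L^1(\nu_N)$-norm of order one on the tubes. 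That is the same order as the rates you want to extract, so no $L^1$ bound can discard it. These residuals become harmless only after pairing with $F_N$ and using the a priori bound $\theta_N\mathcal D_N(F_N)=O(1)$ (Lemma \ref{lem:PE1}), which your proposal never invokes.

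This is exactly the $H^1$ step in the paper:
\begin{enumerate}
\item Change variables $\bm\xi=\bm\eta-\delta^i_u$, so the weights become $\frac{N^\alpha}{Z_N\binom NA}\frac{\bm m^{\bm\xi}}{\bm g!(\bm\xi)}m_i(u)$.
\item Subtract $\overline F^i_N(\bm\xi)$, using stationarity of $m_i$.
\item Split the increments as in Lemma \ref{lem:Wx-incr}, freezing the leading coefficient at $|\bm\xi(x)|^\alpha|\bm\xi(y)|^\alpha$.
\item Bound every remainder by Cauchy--Schwarz against $\mathcal D_N(F_N)$ (Lemmas \ref{lem:PE2} and \ref{lem:dom-3}, the latter via the hypergeometric variance bound).
\item Compute the leading term exactly. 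Summing $\mathfrak h^*_{i,x,y}$ against $r^*_i$ leaves $c_i(x,y)[F_N(\bm\xi+\delta^i_x)-F_N(\bm\xi+\delta^i_y)]$, and $c_i(x,y)$ cancels the species weight, producing $\tau_N(x,y)^{-1}$. The factor $|\bm\xi(x)|^\alpha|\bm\xi(y)|^\alpha$ cancels the $\mathfrak a$-factors in $\bm g!$, and the sum over $p$ telescopes in the hypergeometric averages $\overline F^{\bm\sigma,p}_N$.
\end{enumerate}
So the rates come from this telescoping, with the constant supplied by Proposition \ref{prop:ZN-limit}. They do not come from an $L^1$-localization of the generator near the valleys.

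\textbf{Step 3.} The citation does not support the claim. Proposition \ref{prop1} gives flatness only on $\mathcal E^z_N$, whose depth is $\ell_N=\lfloor\gamma\log N\rfloor$. Theorem \ref{thm:main}-(1) concerns exits from $\tilde{\mathcal E}^z_N$ for starting points inside $\mathcal E^z_N$. Neither controls $F_N$ on the layer $|\bm\eta(z)|\approx N-N^b$, which lies far outside $\mathcal E^z_N$; you would need a separate return estimate from there. The paper avoids this. Its plateaus end at macroscopic distance $|\bm\eta(z)|=N(1-2\epsilon)$, and Lemma \ref{lem:dom-5} carries the telescoped boundary values $\overline F^{\bm\sigma,N(1-2\epsilon)-1}_N$ and $\overline F^{\bm\sigma,2N\epsilon-k+1}_N$ into $\mathcal E^x_N$ and $\mathcal E^y_N$ in averaged $L^2$. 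This is again a Dirichlet-form estimate, using that the strip between tube edge and valley has small resistance: $\sum_p\mathfrak a(p)\mathfrak a(N-p-k-1)=N^{2\alpha+1}o_\epsilon(1)$. Only then is Proposition \ref{prop1} applied, with $\epsilon\to0$ taken after $N\to\infty$.

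\textbf{Step 1.} Two points need fixing even inside an $H^1$ argument.
\begin{itemize}
\item Since you pair with $\mathcal L^*_N$, the transit corrections must use $\mathfrak h^*_{i,x,y}$. With $\mathfrak h_i$ the first-order term is not harmonic at transit sites, and the error stays of the order of the main term even after Cauchy--Schwarz.
\item The species weights $1/(\tau_N(x,y)c_i(x,y))$ must act on the whole coordinate $\mathcal X^i_{x,y}$, including the species counts at the condensate sites. Weighting only the particles in transit breaks $L^*_i$-harmonicity at sites adjacent to the condensate. The correction $\omega_N(x,y)U_\epsilon(\mathcal X_{x,y})(\mathcal X^1_{x,y}-\rho_N\mathcal X_{x,y})$ in \eqref{eq:WN-def} implements this: it is the first-order expansion of $\int_0^{\sum_i\mathcal X^i_{x,y}/(\tau_N(x,y)c_i(x,y))}U_\epsilon$, by \eqref{eq:lambda-theta}.
\end{itemize}
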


The condensation result (Theorem \ref{thm:cond}), condition $\mathfrak{R}^{(1)}$
(Proposition \ref{prop1}), and Theorem \ref{thm:main}-(1) follow
from the stationary analysis of $\nu_{N}$ and elementary potential
theory.

\subsubsection*{Potential Theory}

As defined in \eqref{eq:eq-pot-RW}, for nonempty disjoint subsets
$\mathcal{U},\mathcal{V}\subset\Omega_{N}$, let us denote by $\mathfrak{H}_{\mathcal{U},\mathcal{V}}(\bm{\eta}):=\mathbb{P}^{N}_{\bm{\eta}}[H_{\mathcal{U}}<H_{\mathcal{V}}]$
the equilibrium potential between $\mathcal{U},\mathcal{V}$, which
is the unique solution to
\[
\begin{cases}
\mathcal{L}_{N}F\equiv0 & \text{on}\enspace\Omega_{N}\setminus(\mathcal{U}\cup\mathcal{V}),\\
F\equiv1 & \text{on}\enspace\mathcal{U},\\
F\equiv0 & \text{on}\enspace\mathcal{V}.
\end{cases}
\]
Denote by ${\rm CAP}_{N}(\mathcal{U},\mathcal{V})$ the corresponding
capacity between $\mathcal{U},\mathcal{V}$:
\[
{\rm CAP}_{N}(\mathcal{U},\mathcal{V}):=\sum_{\bm{\eta}\in\Omega_{N}}\nu_{N}(\bm{\eta})\mathfrak{H}_{\mathcal{U},\mathcal{V}}(\bm{\eta})(-\mathcal{L}_{N}\mathfrak{H}_{\mathcal{U},\mathcal{V}})(\bm{\eta})=:\langle\mathfrak{H}_{\mathcal{U},\mathcal{V}},-\mathcal{L}_{N}\mathfrak{H}_{\mathcal{U},\mathcal{V}}\rangle_{\nu_{N}},
\]
where $\langle\cdot,\cdot\rangle_{\nu_{N}}$ denotes the inner product
with respect to $\nu_{N}$. More generally, the \emph{Dirichlet form}
of a function $F\in\mathbb{R}^{\Omega_{N}}$ is defined as
\[
\mathcal{D}_{N}(F):=\langle F,-\mathcal{L}_{N}F\rangle_{\nu_{N}}=\frac{1}{2}\sum_{\eta\in\Omega_{N}}\sum_{i\in\{1,2\}}\sum_{x,y\in S}\nu_{N}(\bm{\eta})\bm{g}_{i}(\bm{\eta}(x))r_{i}(x,y)\left(F(\bm{\eta}-\delta^{i}_{x}+\delta^{i}_{y})-F(\bm{\eta})\right)^{2}.
\]
The second equality follows easily from \eqref{eq:gen} and a change
of variables. In this sense,
\[
{\rm CAP}_{N}(\mathcal{U},\mathcal{V})=\mathcal{D}_{N}(\mathfrak{H}_{\mathcal{U},\mathcal{V}}).
\]
The two-species zero-range process satisfies the following \emph{sector
condition}, as an analogous property \cite[Proposition 4.2]{Seo19}
of the original zero-range process. We prove the next lemma in Section
\ref{sec3}.
\begin{lem}[Sector Condition]
\label{lem:sector}For any $F,G\in\mathbb{R}^{\Omega_{N}}$, we have
\[
\langle G,-\mathcal{L}_{N}F\rangle^{2}_{\nu_{N}}\leq c\mathcal{D}_{N}(F)\mathcal{D}_{N}(G).
\]
\end{lem}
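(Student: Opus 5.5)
Following the approach of \cite[Proposition 4.2]{Seo19}, we decompose the generator into elementary pieces, one for each species $i\in\{1,2\}$ and each cycle of the underlying random walk $r_i$. For fixed $i$, a standard cycle decomposition of the stationary flow of $r_i$ applies. Since $m_i(x)r_i(x,y)$ is a flow with zero divergence, we may write
\[
m_i(x)r_i(x,y)=\sum_{k}\omega_{i,k}\,{\bf 1}\{(x,y)\in C_{i,k}\},
\]
where $C_{i,k}$ are finitely many directed cycles on $S$ of lengths between $2$ and $\kappa$, and $\omega_{i,k}>0$. Accordingly we set
\[
\mathcal{L}_N=\sum_{i,k}\omega_{i,k}\mathcal{L}^{i,k}_N,
\]
where $\mathcal{L}^{i,k}_N$ moves a type-$i$ particle along one edge $(x,y)$ of $C_{i,k}$ at rate $\bm{g}_i(\bm{\eta}(x))/m_i(x)$. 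The key point is that each $\mathcal{L}^{i,k}_N$ is itself stationary with respect to $\nu_N$.

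To see stationarity, use the product form of $\nu_N$. The quantity $\nu_N(\bm{\eta})\bm{g}_i(\bm{\eta}(x))/m_i(x)$ equals $\nu_N(\bm{\eta}-\delta^i_x)$ up to the normalisation. Hence the flux along each edge of a cycle, when one type-$i$ particle is removed from the configuration $\bm{\zeta}=\bm{\eta}-\delta^i_x$ and placed at the next site, depends only on $\bm{\zeta}$. This uses the compatibility relation \eqref{eq:g-cond} through the identity $\bm{g!}(\bm{\eta}(x))=\bm{g}_i(\bm{\eta}(x))\,\bm{g!}(\bm{\eta}(x)-e_i)$, which holds for each $i$ because $\bm{g!}$ is path-independent. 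Consequently, for fixed $\bm{\zeta}$, the flux into and out of each configuration along a cycle balances. The Dirichlet form also decomposes:
\[
\mathcal{D}_N=\sum_{i,k}\omega_{i,k}\mathcal{D}^{i,k}_N.
\]

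It then suffices to prove the sector inequality for each cycle generator $\mathcal{L}^{i,k}_N$, with a constant depending only on the cycle length. Writing $\nu_N(\bm{\eta})\bm{g}_i(\bm{\eta}(x))/m_i(x)=c\,\nu_N(\bm{\zeta})$, we have
\[
\langle G,-\mathcal{L}^{i,k}_NF\rangle_{\nu_N}=\sum_{\bm{\zeta}}c\,\nu_N(\bm{\zeta})\sum_{j}G(\bm{\zeta}+\delta^i_{x_j})\bigl(F(\bm{\zeta}+\delta^i_{x_j})-F(\bm{\zeta}+\delta^i_{x_{j+1}})\bigr),
\]
where the inner sum runs over the cycle $x_1\to\cdots\to x_n\to x_1$ and $\bm{\zeta}$ ranges over configurations with one fewer type-$i$ particle. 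For each fixed $\bm{\zeta}$, the inner sum is $\langle u,-Lv\rangle$ for the uniform totally asymmetric walk on the cycle $\mathbb{Z}_n$, with $u_j=G(\bm{\zeta}+\delta^i_{x_j})$ and $v_j=F(\bm{\zeta}+\delta^i_{x_j})$. That finite-dimensional walk satisfies a sector condition with constant $c_n$. This holds because its generator is normal with kernel equal to the constants, so on the complement of the kernel the antisymmetric part is bounded by a multiple of the symmetric part. Summing over $\bm{\zeta}$ with Cauchy--Schwarz gives
\[
\langle G,-\mathcal{L}^{i,k}_NF\rangle^2\le c_n\,\mathcal{D}^{i,k}_N(F)\,\mathcal{D}^{i,k}_N(G),
\]
since the local Dirichlet forms sum exactly to $\mathcal{D}^{i,k}_N$.

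Finally we combine the pieces. By Cauchy--Schwarz over $(i,k)$,
\[
\Bigl(\sum\omega_{i,k}\sqrt{\mathcal{D}^{i,k}(F)\mathcal{D}^{i,k}(G)}\Bigr)^2\le\mathcal{D}_N(F)\mathcal{D}_N(G).
\]
The constant is uniform in $N$ because it depends only on $\kappa$ and the finite cycle decomposition. The main obstacle is the bookkeeping in the stationarity and reparametrisation step for two species. There one must check that removing a type-$i$ particle gives a $\nu_N$-weight independent of the site from which it is removed. This is exactly where the product-type stationarity, i.e. \eqref{eq:g-cond}, is used.
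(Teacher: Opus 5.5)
Your proof is correct, but it proves the core inequality by a different route than the paper. Both arguments start from the same reduction. After the change of variables $\bm{\xi}=\bm{\eta}-\delta^{i}_{u}$, the weight $\nu_{N}(\bm{\xi}+\delta^{i}_{u})\bm{g}_{i}((\bm{\xi}+\delta^{i}_{u})(u))$ equals $\frac{N^{\alpha}}{Z_{N}\binom{N}{A}}\frac{\bm{m}^{\bm{\xi}}}{\bm{g!}(\bm{\xi})}m_{i}(u)$. The problem therefore becomes, fibrewise in $\bm{\xi}$, a sector inequality for the single-particle walk $r_i$ on $S$.

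The paper proves that finite-dimensional inequality directly:
\begin{enumerate}
\item Stationarity of $m_i$ lets it replace $G(\bm{\xi}+\delta^{i}_{u})$ by $G(\bm{\xi}+\delta^{i}_{u})-\overline{G}^{i}(\bm{\xi})$.
\item Young's inequality with a free parameter $\theta$ splits off $\theta\mathcal{D}_{N}(F)$.
\item A path-counting Poincar\'e bound on $S$ controls the local variance of $G$ by its local Dirichlet form.
\item Finally, $\theta$ is optimized.
\end{enumerate}
You instead decompose the circulation $m_i(x)r_i(x,y)$ into directed cycles. You then use the explicit sector bound for the uniform cycle walk on $\mathbb{Z}_n$ (Fourier analysis gives $c_n=\sin^{-2}(\pi/n)$) and recombine by Cauchy--Schwarz over $(i,k)$.

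\emph{What the paper's route buys.} It is more elementary: no flow decomposition and no spectral computation. Its mean-subtraction device $\overline{F}^{i}_{N}$ is also reused later in Lemmas \ref{lem:PE2}, \ref{lem:neg-2} and \ref{lem:dom-3}. Its constant, however, depends on the rates through the extremal edge weights in \eqref{eq:sector-4}, as well as on $\kappa$.

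\emph{What your route buys.} It gives a sharper statement. The weights $\omega_{i,k}$ drop out in your final Cauchy--Schwarz step, so the constant is $\max_{2\le n\le\kappa}c_n$. This depends only on $\kappa=|S|$, not on the rates or on the particular cycle decomposition; you wrote that it also depends on the decomposition, but it does not. Normality of the cycle generator is not really needed for the local bound, since any irreducible finite chain satisfies a sector condition by the paper's mean-subtraction argument. What makes your constant rate-free is that all edges within a cycle carry the same weight.
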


The sector condition permits us to calculate the capacity of the original
process in terms of the capacity of its \emph{symmetrized} version,
which is particularly useful since the latter becomes a reversible
process. We explain this briefly below. Recall that $R_{N}(\cdot,\cdot)$
denotes the transition rate function of $\bm{\eta}_{N}$ and $\nu_{N}$
is its stationary state. Then, the adjoint jump rate function $R^{*}_{N}:\Omega_{N}\times\Omega_{N}\to[0,\infty)$
is defined as
\begin{equation}
R^{*}_{N}(\eta,\xi):=\frac{\nu_{N}(\xi)}{\nu_{N}(\eta)}R_{N}(\xi,\eta).\label{eq:adj-rate}
\end{equation}
Then, $R^{*}_{N}$ generates an irreducible Markov chain $\bm{\eta}^{*}_{N}$
on $\Omega_{N}$, with the same stationary state $\nu_{N}$, and its
infinitesimal generator $\mathcal{L}^{*}_{N}$ satisfies
\begin{equation}
\langle G,\mathcal{L}_{N}F\rangle_{\nu_{N}}=\langle\mathcal{L}^{*}_{N}G,F\rangle_{\nu_{N}}.\label{eq:gen-adj}
\end{equation}
It follows from a direct calculation that $\mathcal{L}^{*}_{N}$ also
has an explicit form:
\[
\mathcal{L}^{*}_{N}F(\bm{\eta})=\sum_{i\in\{1,2\}}\sum_{x,y\in S}\bm{g}_{i}(\bm{\eta}(x))r^{*}_{i}(x,y)\left(F(\bm{\eta}-\delta^{i}_{x}+\delta^{i}_{y})-F(\bm{\eta})\right),
\]
where $r^{*}_{i}:S\times S\to[0,\infty)$ is the adjoint jump rate
function of the original random walk $r_{i}:S\times S\to[0,\infty)$
defined as 
\[
r^{*}_{i}(x,y):=\frac{m_{i}(y)}{m_{i}(x)}r_{i}(y,x).
\]
In particular, $\bm{\eta}^{*}_{N}$ is again a two-species zero-range
process on $\Omega_{N}$. It is clear that $\mathcal{D}_{N}$ is again
the Dirichlet form of the adjoint process:
\[
\mathcal{D}_{N}(F)=\langle F,-\mathcal{L}_{N}F\rangle_{\nu_{N}}=\langle\mathcal{L}^{*}_{N}F,-F\rangle_{\nu_{N}}=\langle F,-\mathcal{L}^{*}_{N}F\rangle_{\nu_{N}}.
\]
Thus, we have an alternative representation:
\begin{equation}
\mathcal{D}_{N}(F)=\frac{1}{2}\sum_{\eta\in\Omega_{N}}\sum_{i\in\{1,2\}}\sum_{x,y\in S}\nu_{N}(\bm{\eta})\bm{g}_{i}(\bm{\eta}(x))r^{*}_{i}(x,y)\left(F(\bm{\eta}-\delta^{i}_{x}+\delta^{i}_{y})-F(\bm{\eta})\right)^{2}.\label{eq:Diri-adj}
\end{equation}
Let $\mathfrak{H}^{*}_{\mathcal{U},\mathcal{V}}$ be the equilibrium
potential of the adjoint process $\bm{\eta}^{*}_{N}$ between $\mathcal{U},\mathcal{V}$.

Define yet another jump rate function $R^{s}_{N}:\Omega_{N}\times\Omega_{N}\to[0,\infty)$
as
\[
R^{s}_{N}(\bm{\eta},\bm{\eta'}):=\frac{R_{N}(\bm{\eta},\bm{\eta'})+R^{*}_{N}(\bm{\eta},\bm{\eta'})}{2},
\]
which again defines an irreducible Markov chain $\bm{\eta}^{s}_{N}$
on $\Omega_{N}$ with the same stationary state $\nu_{N}$. We call
this the \emph{symmetrized} process of $\bm{\eta}_{N}$. Denote by
${\rm CAP}^{*}_{N}(\mathcal{U},\mathcal{V})$ and ${\rm CAP}^{s}_{N}(\mathcal{U},\mathcal{V})$
the capacities between disjoint nonempty sets $\mathcal{U},\mathcal{V}$
with respect to the adjoint process $\bm{\eta}^{*}_{N}$ and the symmetrized
process $\bm{\eta}^{s}_{N}$, respectively. It is well known (see
e.g. \cite[Section 2]{GL14}) that
\begin{equation}
{\rm CAP}^{s}_{N}(\mathcal{U},\mathcal{V})\le{\rm CAP}_{N}(\mathcal{U},\mathcal{V})={\rm CAP}^{*}_{N}(\mathcal{U},\mathcal{V})\le c{\rm CAP}^{s}_{N}(\mathcal{U},\mathcal{V}),\label{eq:CAPCAP}
\end{equation}
where $c>0$ is the sector constant given in Lemma \ref{lem:sector}
which is independent of $N$. Thus, the capacity ${\rm CAP}_{N}$
has the same scale as the symmetrized capacity ${\rm CAP}^{s}_{N}$
as $N\to\infty$.

Since the symmetrized process $\bm{\eta}^{s}_{N}$ is reversible with
respect to $\nu_{N}$, it is much easier to estimate its capacity.
The following two variational principles are typical (see \cite[Theorems 7.33 and 7.37]{BdH15}):
\begin{itemize}
\item \textbf{(Dirichlet principle)} For any $F:\Omega_{N}\to\mathbb{R}$
such that $F|_{\mathcal{U}}=1$ and $F|_{\mathcal{V}}=0$, we have
\begin{equation}
{\rm CAP}^{s}_{N}(\mathcal{U},\mathcal{V})\le\mathcal{D}_{N}(F).\label{eq:DP}
\end{equation}
\item \textbf{(Thomson principle)} For any sequence $\mathcal{U}\ni\bm{\eta}_{0},\bm{\eta}_{1},\dots,\bm{\eta}_{k}\in\mathcal{V}$
such that $R^{s}_{N}(\bm{\eta}_{m},\bm{\eta}_{m+1})>0$ for each $m\in\llbracket0,k-1\rrbracket$,
\begin{equation}
\frac{1}{{\rm CAP}^{s}_{N}(\mathcal{U},\mathcal{V})}\le\sum^{k-1}_{m=0}\frac{1}{\nu_{N}(\bm{\eta}_{m})R^{s}_{N}(\bm{\eta}_{m},\bm{\eta}_{m+1})}.\label{eq:TP}
\end{equation}
\end{itemize}
On behalf of these preliminaries, we have the following proposition,
to be proved in Section \ref{sec3}. This is the main ingredient to
prove part (1) of Theorem \ref{thm:main}.
\begin{prop}[Capacity Bound]
\label{prop3}Recall $\bm{\xi}^{x}_{N}$ from \eqref{eq:xiNx}. For
each $x\in S_{\star}$,
\[
\lim_{N\to\infty}\frac{{\rm CAP}_{N}(\mathcal{E}^{x}_{N},\Omega_{N}\setminus\tilde{\mathcal{E}}^{x}_{N})}{\inf_{\bm{\eta}\in\mathcal{E}^{x}_{N}\setminus\{\bm{\xi}^{x}_{N}\}}{\rm CAP}_{N}(\bm{\eta},\bm{\xi}^{x}_{N})}=0.
\]
\end{prop}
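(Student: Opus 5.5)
By \eqref{eq:CAPCAP}, it suffices to prove the statement with ${\rm CAP}_N$ replaced by ${\rm CAP}^s_N$ in both numerator and denominator, at the cost of a constant. The plan is to bound the numerator from above via the Dirichlet principle \eqref{eq:DP} and the denominator from below via the Thomson principle \eqref{eq:TP}. We then check that the ratio vanishes, which is where the choice $\ell_N=\lfloor\gamma\log N\rfloor$ with $\gamma$ small, together with $\tilde\ell_N=\lfloor N^b\rfloor$, enters.

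\emph{Upper bound on the numerator.} We test the Dirichlet principle with a radial function $F(\bm\eta)=\phi(N-|\bm\eta(x)|)$, where $\phi\equiv1$ on $[0,\ell_N]$, $\phi\equiv0$ on $[\tilde\ell_N,\infty)$, and $\phi$ is linear in between. Only jumps changing $|\bm\eta(x)|$ by one contribute, and each contributes at most $(\tilde\ell_N-\ell_N)^{-2}$. This gives
\[
\mathcal D_N(F)\le \frac{c}{\tilde\ell_N^{2}}\,\nu_N\bigl(\{\bm\eta:\ell_N\le N-|\bm\eta(x)|\le\tilde\ell_N\}\bigr).
\]
To estimate this mass we use \eqref{eq:nuN-def}. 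Writing $k=N-|\bm\eta(x)|$, the weight at $x$ is $\binom{N-k}{\eta_2(x)}/(N-k)^\alpha$, and the weights at the other sites carry factors $1/\mathfrak a(|\bm\eta(y)|)$. The ratio $\binom{N-k}{\cdot}/\binom{N}{A}$ is summed over the compositions, and the remaining configuration lives on $\kappa-1$ sites with total $k$. The resulting bound on the set is $\lesssim\sum_{k\ge\ell_N}k^{-\alpha}\lesssim \ell_N^{1-\alpha}$, using $\alpha>1$ and the fact that, outside $x$, the leading contribution comes from one site holding almost all $k$ particles. Hence the numerator is $O(N^{-2b}\ell_N^{1-\alpha})$; it is in fact at least polynomially small in $N$.

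\emph{Lower bound on the denominator.} For $\bm\eta\in\mathcal E^x_N\setminus\{\bm\xi^x_N\}$, we build a path to $\bm\xi^x_N$ of length at most $c\ell_N$. The path moves each particle outside $x$ to $x$ one at a time along a fixed shortest path in the graph of $r_i+r_i^*$; this is possible since both walks are irreducible. Each step has rate $R^s_N\ge c\,\bm g_i(\bm\eta(z))$, which is bounded below by $c\,\ell_N^{-1}$ or so along the path (in fact the rates $\bm g_i$ are comparable to $\frac{m}{m+n}$ times something of order one). Along the path the measure satisfies $\nu_N(\bm\eta_m)\ge \nu_N(\bm\xi^x_N)\,e^{-c\ell_N}\,\ell_N^{-c}$, because every configuration has at most $\ell_N$ particles off $x$, and each such particle costs at most a factor $\min m_i(z)$. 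The $\binom{N-k}{\eta_2(x)}/\binom NA$ ratio is also at least $e^{-c\ell_N}$ when $k\le\ell_N$ (roughly $\rho^{k}$-type factors), and $\nu_N(\bm\xi^x_N)\asymp 1$. By the Thomson principle,
\[
{\rm CAP}^s_N(\bm\eta,\bm\xi^x_N)\ge c\,\ell_N^{-c'}e^{-c\ell_N}=N^{-c\gamma}\ell_N^{-c'}.
\]

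\emph{Conclusion and main obstacle.} Combining the two estimates, the ratio is at most $N^{-2b+c\gamma}(\log N)^{c'}$. This tends to zero once $\gamma<2b/c$, which is exactly the condition to be encoded in the choice \eqref{eq:gamma-def} of $\gamma$. The step I expect to be most delicate is the uniform lower bound for $\nu_N$ along the path, in particular controlling the binomial ratio $\binom{N-k}{\eta_2(x)}/\binom{N}{A}$ when the species composition at $x$ deviates from $(A,B)$. A composition far from typical could make this ratio tiny. To handle it, I would first choose the path so that it restores each particle in an order that moves the composition at $x$ toward $(A,B)$, or else note that the composition imbalance is at most $\ell_N$, which gives a factor at least $e^{-c\ell_N}$. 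This keeps the loss polynomial in $N$ with exponent proportional to $\gamma$.
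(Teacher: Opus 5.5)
Your proposal is correct and follows essentially the same route as the paper (Lemmas \ref{lem:cap1} and \ref{lem:cap2}). The numerator is bounded above by the Dirichlet principle, and the denominator is bounded below by the Thomson principle along a path of length $O(\ell_N)$ inside $\mathcal{E}^x_N$, using the bound $\nu_N\ge N^{-c\gamma}(\log N)^{-c'}$ there, which you obtain exactly as the paper does from the composition imbalance being at most $\ell_N$.

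The differences are minor. The paper uses the indicator $\mathbf{1}_{\tilde{\mathcal{E}}^x_N}$, giving $cN^{-b\alpha}$ rather than your $cN^{-2b}\ell_N^{1-\alpha}$. It also lets only the majority species jump to keep rates bounded below, where you use the cruder but valid bound $c/\ell_N$. As a result, your smallness condition on $\gamma$ differs from \eqref{eq:gamma-def} and is stricter when $\alpha>2$; this is harmless, since $\gamma$ is yours to choose (or you can switch to the indicator test function).
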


\subsubsection*{Idea to Prove Condition $\mathfrak{R}^{(2)}_{\mathbb{X}}$: $H^{1}$-Approximation}

Now, we heuristically explain how one can prove condition $\mathfrak{R}^{(2)}_{\mathbb{X}}$.
Let us fix $x\in S_{\star}$. The key idea is to construct an asymptotic
test function (which will later be denoted as $\mathbb{W}_{x}$) that
approximates, in the $H^{1}$-norm, the adjoint equilibrium potential
\[
\mathfrak{H}^{*}_{x}:=\mathfrak{H}^{*}_{\mathcal{E}^{x}_{N},\mathcal{E}_{N}\setminus\mathcal{E}^{x}_{N}}
\]
between $\mathcal{E}^{x}_{N}$ and $\mathcal{E}_{N}\setminus\mathcal{E}^{x}_{N}$.
Recall that $F_{N}:\Omega_{N}\to\mathbb{R}$ solves \eqref{eq:res-mic}.
Let us multiply $\mathfrak{H}^{*}_{x}$ at both sides and integrate
them with respect to the stationary profile $\nu_{N}$. Then,
\begin{equation}
\lambda\sum_{\bm{\eta}\in\Omega_{N}}F_{N}(\bm{\eta})\mathfrak{H}^{*}_{x}(\bm{\eta})\nu_{N}(\bm{\eta})-\theta_{N}\sum_{\bm{\eta}\in\Omega_{N}}\mathcal{L}_{N}F_{N}(\bm{\eta})\mathfrak{H}^{*}_{x}(\bm{\eta})\nu_{N}(\bm{\eta})=\sum_{\bm{\eta}\in\Omega_{N}}\sum_{y\in S_{\star}}\mathfrak{g}(y){\bf 1}_{\mathcal{E}^{y}_{N}}(\bm{\eta})\mathfrak{H}^{*}_{x}(\bm{\eta})\nu_{N}(\bm{\eta}).\label{eq:res-int}
\end{equation}
Since $\mathfrak{H}^{*}_{x}$ vanishes on $\mathcal{E}_{N}\setminus\mathcal{E}^{x}_{N}$,
the right-hand side becomes
\begin{equation}
\sum_{\bm{\eta}\in\Omega_{N}}\sum_{y\in S_{\star}}\mathfrak{g}(y){\bf 1}_{\mathcal{E}^{y}_{N}}(\bm{\eta})\mathfrak{H}^{*}_{x}(\bm{\eta})\nu_{N}(\bm{\eta})=\sum_{\bm{\eta}\in\mathcal{E}^{x}_{N}}\mathfrak{g}(x)\nu_{N}(\bm{\eta})=\mathfrak{g}(x)\nu_{N}(\mathcal{E}^{x}_{N}).\label{eq:int-1}
\end{equation}
Similarly, the first term in the left-hand side of \eqref{eq:res-int}
becomes (cf. \eqref{eq:fN-def})
\begin{equation}
\lambda\sum_{\bm{\eta}\in\mathcal{E}^{x}_{N}}F_{N}(\bm{\eta})\nu_{N}(\bm{\eta})+\lambda\sum_{\bm{\eta}\in\Delta_{N}}F_{N}(\bm{\eta})\mathfrak{H}^{*}_{x}(\bm{\eta})\nu_{N}(\bm{\eta})=\lambda\nu_{N}(\mathcal{E}^{x}_{N})f_{N}(x)+o(\nu_{N}(\mathcal{E}^{x}_{N})),\label{eq:int-2}
\end{equation}
where the equality holds since the functions $F_{N},\mathfrak{H}^{*}_{x}$
are uniformly bounded (see \eqref{eq:FN-bdd}) and $\nu_{N}(\Delta_{N})\ll\nu_{N}(\mathcal{E}^{x}_{N})$
by Theorem \ref{thm:cond}. The remaining second term at the left-hand
side of \eqref{eq:res-int} becomes
\[
-\theta_{N}\langle\mathcal{L}_{N}F_{N},\mathfrak{H}^{*}_{x}\rangle_{\nu_{N}}=\theta_{N}\langle F_{N},-\mathcal{L}^{*}_{N}\mathfrak{H}^{*}_{x}\rangle_{\nu_{N}},
\]
by \eqref{eq:gen-adj}. By the definition of $\mathfrak{H}^{*}_{x}$
as an equilibrium potential, $\mathcal{L}^{*}_{N}\mathfrak{H}^{*}_{x}$
vanishes outside $\mathcal{E}_{N}$. Thus,
\begin{align*}
\theta_{N}\langle F_{N},-\mathcal{L}^{*}_{N}\mathfrak{H}^{*}_{x}\rangle_{\nu_{N}} & =\theta_{N}\sum_{\bm{\eta}\in\mathcal{E}_{N}}F_{N}(\bm{\eta})(-\mathcal{L}^{*}_{N}\mathfrak{H}^{*}_{x})(\bm{\eta})\nu_{N}(\bm{\eta})\\
 & \simeq\theta_{N}\left(f_{N}(x)\sum_{\bm{\eta}\in\mathcal{E}^{x}_{N}}(-\mathcal{L}^{*}_{N}\mathfrak{H}^{*}_{x})(\bm{\eta})\nu_{N}(\bm{\eta})+\sum_{y\in S_{\star}\setminus\{x\}}f_{N}(y)\sum_{\bm{\eta}\in\mathcal{E}^{y}_{N}}(-\mathcal{L}^{*}_{N}\mathfrak{H}^{*}_{x})(\bm{\eta})\nu_{N}(\bm{\eta})\right).
\end{align*}
Here, the asymptotic holds since $F_{N}$ is flat in each metastable
valley by Proposition \ref{prop1}. By definition we have
\[
{\rm CAP}_{N}(\mathcal{E}^{x}_{N},\mathcal{E}_{N}\setminus\mathcal{E}^{x}_{N})=\langle\mathfrak{H}^{*}_{x},-\mathcal{L}^{*}_{N}\mathfrak{H}^{*}_{x}\rangle_{\nu_{N}}=\sum_{\bm{\eta}\in\mathcal{E}^{x}_{N}}(-\mathcal{L}^{*}_{N}\mathfrak{H}^{*}_{x})(\bm{\eta})\nu_{N}(\bm{\eta})=\sum_{y\in S_{\star}\setminus\{x\}}\sum_{\bm{\eta}\in\mathcal{E}^{y}_{N}}(\mathcal{L}^{*}_{N}\mathfrak{H}^{*}_{x})(\bm{\eta})\nu_{N}(\bm{\eta}).
\]
Thus, we conclude that
\begin{equation}
-\theta_{N}\langle\mathcal{L}_{N}F_{N},\mathfrak{H}^{*}_{x}\rangle_{\nu_{N}}\simeq\sum_{y\in S_{\star}\setminus\{x\}}\left(\theta_{N}\sum_{\bm{\eta}\in\mathcal{E}^{y}_{N}}(-\mathcal{L}^{*}_{N}\mathfrak{H}^{*}_{x})(\bm{\eta})\nu_{N}(\bm{\eta})\right)\left[f_{N}(y)-f_{N}(x)\right]\simeq-\nu_{N}(\mathcal{E}^{x}_{N})\mathfrak{L}_{\mathbb{X}}f_{N}(x),\label{eq:int-3}
\end{equation}
provided that
\[
\theta_{N}\sum_{\bm{\eta}\in\mathcal{E}^{y}_{N}}\mathcal{L}^{*}_{N}\mathfrak{H}^{*}_{x}(\bm{\eta})\nu_{N}(\bm{\eta})\simeq\nu_{N}(\mathcal{E}^{x}_{N})\mathfrak{r}(x,y).
\]
Indeed, the limiting rate function $\mathfrak{r}(\cdot,\cdot)$ is
determined by this asymptotic. Combining these heuristics at \eqref{eq:int-1},
\eqref{eq:int-2}, and \eqref{eq:int-3}, we obtain that
\[
\lambda f_{N}(x)-\mathfrak{L}_{\mathbb{X}}f_{N}(x)\simeq\mathfrak{g}(x)\qquad\text{for each}\quad x\in S_{\star},\qquad\text{thus}\qquad(\lambda-\mathfrak{L}_{\mathbb{X}})(f_{N}-\mathfrak{f})\simeq0.
\]
The inverted operator $(\lambda-\mathfrak{L}_{\mathbb{X}})^{-1}$
is bounded, thus this would give us $f_{N}-\mathfrak{f}\simeq0$ as
desired. 

Therefore, our objective is to find a nice test object $\mathbb{W}_{x}$
which approximates $\mathfrak{H}^{*}_{x}$ well enough such that the
above heuristics are justified and we have the desired asymptotic
equation as $N\to\infty$. One can easily see that this approximation
should be in the $H^{1}$ sense, since the value of $\langle F,\mathcal{L}^{*}_{N}\mathfrak{H}^{*}_{x}\rangle_{\nu_{N}}$
must be close to that of $\langle F,\mathcal{L}^{*}_{N}\mathbb{W}_{x}\rangle_{\nu_{N}}$
for all functions $F\in L^{2}(\nu_{N})$.

\subsubsection*{Organization of the Article}

Following the proof ideas presented previously in this subsection,
the rest of the article is organized as follows. In Section \ref{sec2},
we conduct an analysis on the stationary state $\nu_{N}$ and prove
Theorem \ref{thm:cond}. In Section \ref{sec3}, we analyze the local
metastable state $\mathcal{E}^{x}_{N}$, $x\in S_{\star}$, and prove
Propositions \ref{prop1} and \ref{prop3}. As a consequence, at the
end of that section, we prove part (1) of Theorem \ref{thm:main}.
In Section \ref{sec4}, we construct the test function $\mathbb{W}_{x}$
explained above. Finally, in Section \ref{sec5}, we prove Proposition
\ref{prop2} and thus prove the remaining parts (2) and (3) of Theorem
\ref{thm:main}. In the appendix, we gather several lemmas regarding
summation asymptotics that are useful in the main context.

\section{\label{sec2}Stationary State Analysis}

In this section, we analyze the stationary state $\nu_{N}$ and prove
Theorem \ref{thm:cond}. To achieve this, we first present a series
of lemmas regarding the normalizing constant $Z_{N}$ (cf. \eqref{eq:ZN-def}).

Given a threshold $\ell<N/2$, we may divide the set $\Omega_{N}$
into three subsets as
\begin{equation}
\Omega_{N}=\bigcup_{x\in S_{\star}}\Omega^{x,\ell}_{N}\cup\bigcup_{y\in S\setminus S_{\star}}\Omega^{y,\ell}_{N}\cup\Xi^{\ell}_{N},\label{eq:Omega-dec}
\end{equation}
where
\begin{equation}
\Xi^{\ell}_{N}:=\left\{ \bm{\eta}\in\Omega_{N}:|\bm{\eta}(x)|<N-\ell\quad\text{for all}\enspace x\in S\right\} ,\label{eq:XiNell}
\end{equation}
and
\begin{equation}
\Omega^{x,\ell}_{N}:=\left\{ \bm{\eta}\in\Omega_{N}:|\bm{\eta}(x)|\ge N-\ell\right\} \qquad\text{for each}\quad x\in S.\label{eq:OmegaNxell}
\end{equation}
Since $N>2\ell$, the display \eqref{eq:Omega-dec} serves as a partition,
i.e., a disjoint decomposition.

\subsection{\label{sec2.1}Supercritical Case: $\alpha>1$}

In this subsection, let us assume that $\alpha>1$ and prove the first
part of Theorem \ref{thm:cond}.
\begin{lem}
\label{lem:s1}There exists $c>0$ such that
\[
1\le Z_{N}\le c.
\]
\end{lem}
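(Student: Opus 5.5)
We rewrite $Z_N$ as a sum over the per-site totals and species splits. For $\bm{\eta}\in\Omega_N$ write $k_x:=|\bm{\eta}(x)|$ and $a_x:=\eta_1(x)$. By \eqref{eq:nuN-def}, \eqref{eq:ZN-def} and the identity $\bm{g!}(m,n)=\mathfrak{a}(m+n)/\binom{m+n}{n}$,
\[
Z_N=\frac{N^{\alpha}}{\binom{N}{A}}\sum_{\bm{\eta}\in\Omega_N}\prod_{x\in S}\frac{\binom{k_x}{a_x}m_1(x)^{a_x}m_2(x)^{k_x-a_x}}{\mathfrak{a}(k_x)}.
\]

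\textbf{Lower bound.} All terms are nonnegative, so we keep only the single configuration $\bm{\xi}^{x}_N$ for some $x\in S_\star$. There $k_x=N$ and $a_x=A$, and every other site is empty with factor $1$ since $\mathfrak{a}(0)=1$. Because $m_1(x)=m_2(x)=1$, this term equals
\[
\frac{N^\alpha}{\binom NA}\cdot\frac{\binom NA}{N^\alpha}=1 .
\]
Hence $Z_N\ge1$.

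\textbf{Upper bound.} We use $m_i\le1$ from \eqref{eq:mi-normalized}. Then we sum over the species splits $(a_x)$ with $\sum_x a_x=A$ for fixed totals $(k_x)$. By Vandermonde's identity,
\[
\sum_{(a_x):\,\sum_x a_x=A}\ \prod_x\binom{k_x}{a_x}=\binom{N}{A}.
\]
This gives
\[
Z_N\le N^{\alpha}\sum_{\substack{(k_x)\in\mathbb{N}_0^S\\ \sum_x k_x=N}}\ \prod_{x\in S}\frac{1}{\mathfrak{a}(k_x)}.
\]
The right-hand side is exactly the normalizing constant of the single-species condensing zero-range process with all site weights equal to one. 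So it remains to show
\[
\sum_{\sum_x k_x=N}\prod_{x\in S}\mathfrak{a}(k_x)^{-1}\le cN^{-\alpha}.
\]

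\textbf{Single-species bound.} Since $\sum_x k_x=N$ and $|S|=\kappa$, some site has $k_x\ge N/\kappa$. Fixing such a site $x$ (at most $\kappa$ choices), its factor is at most $(N/\kappa)^{-\alpha}=\kappa^{\alpha}N^{-\alpha}$. The remaining sites contribute at most
\[
\Big(\sum_{k\ge0}\mathfrak{a}(k)^{-1}\Big)^{\kappa-1}=\Big(1+\sum_{k\ge1}k^{-\alpha}\Big)^{\kappa-1}<\infty,
\]
which is finite because $\alpha>1$. This is the one place the supercritical assumption enters. Altogether
\[
Z_N\le N^{\alpha}\cdot\kappa\cdot\kappa^{\alpha}N^{-\alpha}\Big(1+\sum_{k\ge1}k^{-\alpha}\Big)^{\kappa-1}=:c .
\]

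The only step with real content is the Vandermonde reduction, which removes the $\binom{N}{A}$ normalization exactly. Once it is done, the problem reduces to the classical single-species estimate and the argument is routine.
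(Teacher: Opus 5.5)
Your proof is correct. The lower bound is exactly the paper's. The upper bound has the same two-step structure: collapse the species splits, then bound the single-species sum by $cN^{-\alpha}$. You use lighter tools at both steps.

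The paper keeps the site weights through the species sum by invoking Lemma~\ref{lem:comb-bound}, a Cauchy-integral/Stirling estimate. This gives $\prod_j\bigl((Am_1(j)+Bm_2(j))/N\bigr)^{p_j}$ up to a constant. The paper discards these weights only afterwards via $m_i\le 1$, and then appeals to the asymptotics of Lemma~\ref{lem:asymp2}. You discard the weights first, so the species sum collapses exactly to $\binom{N}{A}$ by Vandermonde. You then replace Lemma~\ref{lem:asymp2} by a pigeonhole bound. This yields the explicit constant $\kappa^{1+\alpha}\Gamma(\alpha)^{\kappa-1}$ with no complex analysis. Since nothing about $A/N$ is used, your argument also gives Remark~\ref{rem:s1} and its later application on subsets of $S$.

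What the paper's heavier route buys is reuse across Section~\ref{sec2}. The retained weights $(Am_1(j)+Bm_2(j))/N<1$ at $j\notin S_\star$ are exactly what produce the factor $\upsilon^{N-\ell}$ in Lemma~\ref{lem:s5} and the $\upsilon^{L}$ summability in Lemma~\ref{lem:s6}. The two-sided asymptotics of Lemma~\ref{lem:asymp2} are also needed for lower bounds such as those in Lemmas~\ref{lem:s6} and \ref{lem:s9}. Your pigeonhole step cannot serve there: it gives only an upper bound and relies on $\sum_k\mathfrak{a}(k)^{-1}<\infty$, i.e.\ on $\alpha>1$.
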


\begin{proof}
Fix one $x\in S_{\star}$ and consider the configuration $\bm{\eta}:=A\delta^{1}_{x}+B\delta^{2}_{x}\in\Omega_{N}$
(cf. \eqref{eq:delta-zi-def}). Then,
\[
Z_{N}\geq\frac{N^{\alpha}}{\binom{N}{A}}\frac{\bm{m}^{\bm{\eta}}}{\bm{g!}(\bm{\eta})}=1
\]
leads to the lower bound. For the upper bound, for simplicity let
us denote by $S=\llbracket1,\kappa\rrbracket$. From \eqref{eq:nuN-def},
\[
Z_{N}=\frac{N^{\alpha}}{{N \choose A}}\sum_{\bm{\eta}\in\Omega_{N}}\frac{\bm{m}^{\bm{\eta}}}{\bm{g!}(\bm{\eta})}=\frac{N^{\alpha}}{{N \choose A}}\sum_{\substack{p_{1},\dots,p_{\kappa}\ge0:\\
p_{1}+\cdots+p_{\kappa}=N
}
}\sum_{\substack{q_{1},\dots,q_{\kappa}\ge0:\\
q_{1}+\cdots+q_{\kappa}=A
}
}\prod^{\kappa}_{j=1}\frac{{p_{j} \choose q_{j}}m_{1}(j)^{q_{j}}m_{2}(j)^{p_{j}-q_{j}}}{\mathfrak{a}(p_{j})},
\]
where $p_{j}$ denotes the number of particles at $j\in\llbracket1,\kappa\rrbracket$,
and $q_{j}$ denotes the number of type-$1$ particles at $j$. Then,
Lemma \ref{lem:comb-bound} implies that
\[
Z_{N}\le\sum_{\substack{p_{1},\dots,p_{\kappa}\ge0:\\
p_{1}+\cdots+p_{\kappa}=N
}
}\frac{cN^{\alpha}}{\mathfrak{a}(p_{1})\mathfrak{a}(p_{2})\cdots\mathfrak{a}(p_{\kappa})}\prod^{\kappa}_{j=1}\left(\frac{Am_{1}(j)+Bm_{2}(j)}{N}\right)^{p_{j}}\le\sum_{\substack{p_{1},\dots,p_{\kappa}\ge0:\\
p_{1}+\cdots+p_{\kappa}=N
}
}\frac{cN^{\alpha}}{\mathfrak{a}(p_{1})\mathfrak{a}(p_{2})\cdots\mathfrak{a}(p_{\kappa})},
\]
where the second inequality follows from trivial bounds $m_{1}(j),m_{2}(j)\le1$.
Thus, Lemma \ref{lem:asymp2} implies the desired result.
\end{proof}

\begin{rem}
\label{rem:s1}One can easily notice that the previous lemma holds
without the asymptotic condition \eqref{eq:ANBN}, i.e., it holds
for any $A,B\ge0$ such that $A+B=N$.
\end{rem}

Next, we analyze the set $\Xi^{\ell}_{N}$ defined at \eqref{eq:XiNell}.
\begin{lem}
\label{lem:s2}For $\ell<N/2$ we have
\[
\frac{N^{\alpha}}{{N \choose A}}\sum_{\bm{\eta}\in\Xi^{\ell}_{N}}\frac{\bm{m}^{\bm{\eta}}}{\bm{g!}(\bm{\eta})}\leq\frac{c}{(\ell+1)^{\alpha-1}}.
\]
\end{lem}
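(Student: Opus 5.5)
We follow the proof of Lemma \ref{lem:s1}: collapse the species variables first, then bound a purely single-species sum. Write $S=\llbracket1,\kappa\rrbracket$ and parametrize configurations in $\Xi^{\ell}_{N}$ by the total occupations $p_{j}$ and type-$1$ occupations $q_{j}$. Applying Lemma \ref{lem:comb-bound} to the inner sum over $(q_{j})$, exactly as in Lemma \ref{lem:s1}, and using $m_{1},m_{2}\le1$, we get
\[
\frac{N^{\alpha}}{{N \choose A}}\sum_{\bm{\eta}\in\Xi^{\ell}_{N}}\frac{\bm{m}^{\bm{\eta}}}{\bm{g!}(\bm{\eta})}\le\sum_{\substack{p_{1}+\cdots+p_{\kappa}=N\\
p_{j}<N-\ell\ \forall j
}
}\frac{cN^{\alpha}}{\mathfrak{a}(p_{1})\cdots\mathfrak{a}(p_{\kappa})}.
\]
The constraint $p_{j}<N-\ell$ for all $j$ depends only on the $p_{j}$, so the reduction is legitimate. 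The task is now a single-species estimate.

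To bound the remaining sum, order the $p_{j}$ and let $p_{\max}$ be the largest. It satisfies $N/\kappa\le p_{\max}<N-\ell$, so the remaining mass $M:=N-p_{\max}$ satisfies $M\ge\ell+1$. Since $\mathfrak{a}(p_{\max})\ge(N/\kappa)^{\alpha}$, the factor $N^{\alpha}/\mathfrak{a}(p_{\max})$ is at most $\kappa^{\alpha}$. Fixing which site carries the maximum (at most $\kappa$ choices), we obtain the bound
\[
c\sum_{M\ge\ell+1}\;\sum_{\substack{p_{2}+\cdots+p_{\kappa}=M}}\prod^{\kappa}_{j=2}\frac{1}{\mathfrak{a}(p_{j})}.
\]

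The key point is the convolution estimate: for $\alpha>1$,
\[
\sum_{p_{2}+\cdots+p_{\kappa}=M}\prod^{\kappa}_{j=2}\mathfrak{a}(p_{j})^{-1}\le c(M+1)^{-\alpha}.
\]
We prove it by induction on the number of factors. For two factors, split the sum according to which summand is at least $M/2$: that factor contributes at most $c(M+1)^{-\alpha}$, and the other factor's sum is bounded by $\sum_{k}\mathfrak{a}(k)^{-1}<\infty$. This is presumably the content of (or a variant of) the appendix asymptotic Lemma \ref{lem:asymp2}, and we would invoke it in that form. Summing over $M\ge\ell+1$ then gives $\sum_{M\ge\ell+1}c(M+1)^{-\alpha}\le c(\ell+1)^{1-\alpha}$, which is the claim.

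The main obstacle is this convolution bound, whose constants must be uniform in $M$. It is also essential that the factor $N^{\alpha}$ be absorbed by the largest site rather than by a generic site. Note that the case $p_{\max}$ near $N/\kappa$ is harmless, since there $M$ is of order $N$ and the bound only improves.
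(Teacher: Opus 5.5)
Your proof is correct. The first step, collapsing the species variables with Lemma~\ref{lem:comb-bound} and $m_{1},m_{2}\le1$, is exactly what the paper does. The difference is in how you bound the resulting single-species sum.

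The paper appeals to Lemma~\ref{lem:asymp3}, proved by induction on the number of sites. It splits according to the value of $p_{1}$:
\begin{itemize}
\item For $\ell<p_{1}<N-\ell$, the other sites are unconstrained, so Lemma~\ref{lem:asymp2} and a symmetric sum suffice.
\item For $p_{1}\le\ell$, the constraint is passed down with the shifted threshold $\ell-p_{1}$. The induction hypothesis is then combined with a symmetrization of $\sum_{p}p^{-\alpha}(\ell+1-p)^{1-\alpha}$.
\end{itemize}

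You instead note that the constraint is equivalent to $N-p_{\max}\ge\ell+1$, and let the largest site absorb $N^{\alpha}$ by pigeonhole. You are then left with only the unconstrained convolution bound, summed over the tail $M\ge\ell+1$. The uniform-in-$M$ bound you flag as the obstacle is indeed available. Lemma~\ref{lem:asymp2} with $k=\kappa-1$ gives it for large $M$, and the finitely many small $M$ are trivial. Your two-factor splitting argument also closes the induction directly, applied to $(a+1)^{-\alpha}$ and $(b+1)^{-\alpha}$.

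Your route is shorter, never carries the constraint through an induction, and does not use $\ell<N/2$. Applying the same trick to the second-largest site would likewise give Lemma~\ref{lem:asymp4}, without needing $\ell'<\ell/2<N/4$. The paper's route instead packages the estimate as a standalone lemma, and its inductive template is reused for Lemmas~\ref{lem:asymp4} and~\ref{lem:asymp5}.
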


\begin{proof}
Let us adopt the same notation as in the proof of Lemma \ref{lem:s1}.
By definition and Lemma \ref{lem:comb-bound}, the left-hand side
becomes
\[
\frac{N^{\alpha}}{{N \choose A}}\sum_{\substack{0\le p_{1},\dots,p_{\kappa}<N-\ell:\\
p_{1}+\cdots+p_{\kappa}=N
}
}\sum_{\substack{q_{1},\dots,q_{\kappa}\ge0:\\
q_{1}+\cdots+q_{\kappa}=A
}
}\prod^{\kappa}_{j=1}\frac{{p_{j} \choose q_{j}}m_{1}(j)^{q_{j}}m_{2}(j)^{p_{j}-q_{j}}}{\mathfrak{a}(p_{j})}\le\sum_{\substack{0\le p_{1},\dots,p_{\kappa}<N-\ell:\\
p_{1}+\cdots+p_{\kappa}=N
}
}\frac{cN^{\alpha}}{\mathfrak{a}(p_{1})\mathfrak{a}(p_{2})\cdots\mathfrak{a}(p_{\kappa})}.
\]
By Lemma \ref{lem:asymp3}, we conclude that
\[
\frac{N^{\alpha}}{{N \choose A}}\sum_{\bm{\eta}\in\Xi^{\ell}_{N}}\frac{\bm{m}^{\bm{\eta}}}{\bm{g!}(\bm{\eta})}\le\frac{c}{(\ell+1)^{\alpha-1}},
\]
for a constant $c>0$ independent of $N$.
\end{proof}

Before moving on, we go one step further. The next two lemmas will
be exploited in Section \ref{sec5.2}. Define
\begin{equation}
\Xi^{\ell,\ell'}_{N}:=\left\{ \bm{\eta}\in\Omega_{N}:|\bm{\eta}(x)|<N-\ell\enspace\forall x\in S,\quad|\bm{\eta}(x)|+|\bm{\eta}(y)|<N-\ell'\enspace\forall x\ne y\in S\right\} .\label{eq:XiNellell'}
\end{equation}

\begin{lem}
\label{lem:s3}For $\ell'<\ell/2<N/4$ we have
\[
\frac{N^{\alpha}}{{N \choose A}}\sum_{\bm{\eta}\in\Xi^{\ell,\ell'}_{N}}\frac{\bm{m}^{\bm{\eta}}}{\bm{g!}(\bm{\eta})}\leq\frac{c}{(\ell+1)^{\alpha-1}(\ell'+1)^{\alpha-1}}.
\]
\end{lem}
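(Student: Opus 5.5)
Following the proofs of Lemmas \ref{lem:s1} and \ref{lem:s2}, write $S=\llbracket1,\kappa\rrbracket$. Parametrize configurations by the total occupations $p_{j}$ and the type-$1$ occupations $q_{j}$. First apply Lemma \ref{lem:comb-bound} to sum out the $q_{j}$'s, and use $m_{1},m_{2}\le1$. This reduces the left-hand side to at most
\[
\sum_{(p_{1},\dots,p_{\kappa})\in P}\frac{cN^{\alpha}}{\mathfrak{a}(p_{1})\cdots\mathfrak{a}(p_{\kappa})},
\]
where $P$ is the set of $(p_{j})$ with $\sum_{j}p_{j}=N$, every $p_{j}<N-\ell$, and every pair satisfying $p_{j}+p_{k}<N-\ell'$. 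The remaining task is a purely one-species summation estimate, in the spirit of Lemma \ref{lem:asymp3}.

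For the summation, take any $(p_{j})\in P$ and order its entries so that $p_{(1)}\ge p_{(2)}\ge\cdots$. The largest entry satisfies $p_{(1)}\ge N/\kappa$, so $\mathfrak{a}(p_{(1)})\ge cN^{\alpha}$, and this absorbs the prefactor $N^{\alpha}$. The constraints force $N-p_{(1)}>\ell$, i.e.\ the remaining particles number more than $\ell$. They also force the residual $r:=N-p_{(1)}-p_{(2)}>\ell'$. Up to the $\kappa^{2}$ choices of which sites are the two largest, I then need to bound
\[
\sum_{k>\ell}\ \sum_{\substack{p_{(2)}+r=k,\ r>\ell'}}\frac{1}{\mathfrak{a}(p_{(2)})}\sum_{\text{rest summing to }r}\prod\frac{1}{\mathfrak{a}(p_{j})}.
\]
Here $k=N-p_{(1)}$ ranges over $(\ell,N/2]$ roughly, and the sum over $p_{(1)}$ has been traded for the sum over $k$.

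The key input is the standard convolution bound for $\alpha>1$:
\[
\sum_{q_{1}+\dots+q_{m}=r}\prod\frac{1}{\mathfrak{a}(q_{i})}\le\frac{c}{\mathfrak{a}(r)},
\]
which is essentially Lemma \ref{lem:asymp2}. Applying it, the innermost sum is at most $c\,\mathfrak{a}(r)^{-1}$. Two regimes then need care. If $p_{(2)}\ge r$, then $p_{(2)}\ge k/2$, and summing $1/\mathfrak{a}(r)$ over $r>\ell'$ gives $c(\ell'+1)^{1-\alpha}$, while $1/\mathfrak{a}(p_{(2)})\le ck^{-\alpha}$. If instead $p_{(2)}<r$, then $r\ge k/2$, so $1/\mathfrak{a}(r)\le ck^{-\alpha}$; this forces $p_{(2)}$ and $r$ comparable up to a factor, since all remaining entries are at most $p_{(2)}$. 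Splitting the rest so that some entry is $\ge r/\kappa$ gives $p_{(2)}\ge r/\kappa\ge k/(2\kappa)$, so both factors are $\lesssim k^{-\alpha}$. The sum over at most $k$ choices then yields $k^{1-2\alpha}\le k^{-\alpha}(\ell'+1)^{1-\alpha}$ because $k>\ell'$. In either regime the inner quantity is $\le ck^{-\alpha}(\ell'+1)^{1-\alpha}$. Finally, $\sum_{k>\ell}k^{-\alpha}\le c(\ell+1)^{1-\alpha}$ gives the claim.

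The main obstacle is bookkeeping the two-site constraint correctly, in particular making sure the residual-mass bound $r>\ell'$ is used with the right summable factor in each regime. The hypothesis $\ell'<\ell/2$ ensures $k>\ell>2\ell'$, so that the two constraints are not redundant and the bounds $k^{1-2\alpha}\lesssim k^{-\alpha}\ell'^{1-\alpha}$ are valid. Everything else is routine, as in Lemmas \ref{lem:s1} and \ref{lem:s2}.
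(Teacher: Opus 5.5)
Your proof is correct. The reduction step (Lemma \ref{lem:comb-bound} together with $m_{1},m_{2}\le1$) is exactly the paper's; you differ in how you prove the one-species summation estimate.

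The paper handles that estimate by citing Lemma \ref{lem:asymp4}. That lemma is proved by induction on the number of sites. One splits according to whether $p_{1}$ lies in $\llbracket\ell-\ell'+1,N-\ell-1\rrbracket$, $\llbracket\ell'+1,\ell-\ell'\rrbracket$ or $\llbracket0,\ell'\rrbracket$, and feeds each piece either to Lemma \ref{lem:asymp3} or to the induction hypothesis. The hypothesis $\ell'<\ell/2<N/4$ is used in that case analysis, for instance through $\ell>2\ell'$.

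You instead give a direct, non-inductive argument based on ordering the occupations:
\begin{itemize}
\item the largest site absorbs the factor $N^{\alpha}$;
\item the mass $k=N-p_{(1)}>\ell$ off the largest site produces $(\ell+1)^{1-\alpha}$;
\item the residual $r>\ell'$ off the two largest sites produces $(\ell'+1)^{1-\alpha}$.
\end{itemize}
The only input is the convolution bound $\sum\prod\mathfrak{a}(q_{i})^{-1}\le c/\mathfrak{a}(r)$ from Lemma \ref{lem:asymp2}. This makes the origin of each factor transparent and avoids the nested case analysis. The paper's route instead keeps the same inductive template as Lemmas \ref{lem:asymp3} and \ref{lem:asymp5}.

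Your argument never actually needs $\ell'<\ell/2<N/4$. The step $k^{1-2\alpha}\le k^{-\alpha}(\ell'+1)^{1-\alpha}$ only requires $k\ge r\ge\ell'+1$, which holds automatically. So your closing remark attributing that step to the hypothesis is unnecessary.

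A few harmless imprecisions:
\begin{itemize}
\item $k$ ranges up to $N(1-1/\kappa)$, not $N/2$. This does not matter, since you sum $k^{-\alpha}$ over all $k>\ell$.
\item In your second regime the sharp lower bound is $p_{(2)}\ge r/(\kappa-2)$, which implies the $r/\kappa$ you state.
\item For $\kappa=2$ the index set is empty (because $p_{1}+p_{2}=N\not<N-\ell'$), so the convolution bound with $\kappa-2$ factors is never invoked in that case.
\end{itemize}
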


\begin{proof}
Similarly as before, the left-hand side of the lemma becomes
\begin{align*}
\frac{N^{\alpha}}{{N \choose A}}\sum_{\substack{0\le p_{1},\dots,p_{\kappa}<N-\ell:\\
p_{1}+\cdots+p_{\kappa}=N,\\
p_{j}+p_{j'}<N-\ell',\ \forall j\ne j'
}
}\sum_{\substack{q_{1},\dots,q_{\kappa}\ge0:\\
q_{1}+\cdots+q_{\kappa}=A
}
}\prod^{\kappa}_{j=1}\frac{{p_{j} \choose q_{j}}m_{1}(j)^{q_{j}}m_{2}(j)^{p_{j}-q_{j}}}{\mathfrak{a}(p_{j})} & \le\sum_{\substack{0\le p_{1},\dots,p_{\kappa}<N-\ell:\\
p_{1}+\cdots+p_{\kappa}=N,\\
p_{j}+p_{j'}<N-\ell',\ \forall j\ne j'
}
}\frac{cN^{\alpha}}{\mathfrak{a}(p_{1})\mathfrak{a}(p_{2})\cdots\mathfrak{a}(p_{\kappa})}\\
 & \le\frac{c}{(\ell+1)^{\alpha-1}(\ell'+1)^{\alpha-1}},
\end{align*}
now via Lemma \ref{lem:asymp4} at the last inequality.
\end{proof}

\begin{lem}
\label{lem:s4}For $\ell'<\ell/2<N/4$ and fixed $x\ne y\in S$,
\[
\nu_{N}\left(\bm{\eta}\in\Omega_{N}:|\bm{\eta}(x)|,|\bm{\eta}(y)|<N-\ell,\quad|\bm{\eta}(x)|+|\bm{\eta}(y)|=N-\ell'\right)\leq\frac{c}{(\ell+1)^{\alpha-1}(\ell'+1)^{\alpha}}.
\]
\end{lem}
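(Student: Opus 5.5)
We follow the scheme of Lemmas \ref{lem:s2} and \ref{lem:s3}. Using \eqref{eq:nuN-def} and the lower bound $Z_{N}\ge1$ from Lemma \ref{lem:s1}, the measure in question is bounded by
\[
\frac{N^{\alpha}}{{N \choose A}}\sum\frac{\bm{m}^{\bm{\eta}}}{\bm{g!}(\bm{\eta})},
\]
where the sum runs over the configurations in the event. Label $S=\llbracket1,\kappa\rrbracket$ with $x=1$, $y=2$, and write the sum in terms of the occupation numbers $p_{j}$ and the type-$1$ counts $q_{j}$. Lemma \ref{lem:comb-bound} disposes of the species sum together with the prefactor ${N \choose A}^{-1}$, exactly as in those proofs. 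Using $m_{1},m_{2}\le1$, this leaves the bound
\[
\sum\frac{cN^{\alpha}}{\mathfrak{a}(p_{1})\cdots\mathfrak{a}(p_{\kappa})},
\]
taken over $p_{1},p_{2}<N-\ell$, $p_{1}+p_{2}=N-\ell'$, and $p_{3}+\cdots+p_{\kappa}=\ell'$.

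This sum factorizes into two parts, which we estimate separately.
\begin{itemize}
\item \emph{The remaining sites.} The sum over $(p_{3},\dots,p_{\kappa})$ with total $\ell'$ of $\prod_{j\ge3}\mathfrak{a}(p_{j})^{-1}$ is $O((\ell'+1)^{-\alpha})$. This is the standard single-big-jump estimate for $\alpha>1$: the dominant terms put almost all of the mass $\ell'$ on one site, and the product of the other factors is summable. It is the same estimate that underlies Lemma \ref{lem:asymp2}, now at total $\ell'$ with no $N^{\alpha}$ normalization. If $\kappa=2$ then necessarily $\ell'=0$ and this factor is $1$.
\item \emph{The pair $(x,y)$.} Writing $p_{1}=k$ and $p_{2}=M-k$ with $M:=N-\ell'$, the constraints $p_{1},p_{2}<N-\ell$ force $k\in(\ell-\ell',\,M-\ell+\ell')$. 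Since $\ell'<\ell/2$, we have $\ell-\ell'>\ell/2$, so both $k$ and $M-k$ exceed $\ell/2$. Splitting at $k=M/2$ gives
\[
\sum_{k}\frac{1}{\mathfrak{a}(k)\mathfrak{a}(M-k)}\le\frac{2\cdot2^{\alpha}}{M^{\alpha}}\sum_{k>\ell/2}k^{-\alpha}\le\frac{c}{M^{\alpha}(\ell+1)^{\alpha-1}}.
\]
\end{itemize}

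Finally, $M=N-\ell'\ge3N/4$ because $\ell'<N/4$, so the factor $N^{\alpha}M^{-\alpha}$ is bounded. Multiplying the two factors therefore gives $c(\ell+1)^{1-\alpha}(\ell'+1)^{-\alpha}$, which is the claim.

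The only step needing care is the first bullet, since the appendix lemmas are stated with the $N^{\alpha}$ normalization and need to be reinterpreted at total $\ell'$. If needed, we would prove it directly by induction on the number of sites: splitting at the largest coordinate, the largest coordinate is at least $\ell'/(\kappa-2)$, and the remaining coordinates contribute a convergent sum because $\alpha>1$.
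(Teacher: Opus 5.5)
Your proposal is correct and follows essentially the same route as the paper: you reduce via $Z_N\ge1$ and Lemma~\ref{lem:comb-bound} to a sum of $\prod_j\mathfrak{a}(p_j)^{-1}$, factor off the sites $S\setminus\{x,y\}$ to obtain the $(\ell'+1)^{-\alpha}$ factor, and bound the pair sum by symmetry about $(N-\ell')/2$, using that both $p_1$ and $p_2$ exceed $\ell-\ell'>\ell/2$. Your worry about the first factor is unnecessary: the paper simply applies Lemma~\ref{lem:asymp2} with total $\ell'$ in place of $N$, which is legitimate since that lemma is an asymptotic statement in the total particle number, and your direct largest-coordinate argument works equally well.
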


\begin{proof}
Write $x=1$, $y=2$, and $S=\llbracket1,\kappa\rrbracket$. Applying
$Z_{N}\ge1$ from Lemma \ref{lem:s1} and using the same representation,
the left-hand side is bounded by
\[
cN^{\alpha}\sum^{N-\ell-1}_{p_{1}=\ell-\ell'+1}\frac{1}{\mathfrak{a}(p_{1})\mathfrak{a}(N-\ell'-p_{1})}\sum_{\substack{p_{3},\dots,p_{\kappa}\ge0:\\
p_{3}+\cdots+p_{\kappa}=\ell'
}
}\frac{1}{\mathfrak{a}(p_{3})\cdots\mathfrak{a}(p_{\kappa})}\le\frac{c'}{(\ell'+1)^{\alpha}}\sum^{N-\ell-1}_{p_{1}=\ell-\ell'+1}\frac{N^{\alpha}}{p^{\alpha}_{1}(N-\ell'-p_{1})^{\alpha}}.
\]
Above, Lemma \ref{lem:asymp2} was also used. Since $(\ell-\ell'+1)+(N-\ell-1)=N-\ell'\ge N/2$,
we may exploit the symmetry to bound as
\[
\sum^{N-\ell-1}_{p_{1}=\ell-\ell'+1}\frac{N^{\alpha}}{p^{\alpha}_{1}(N-\ell'-p_{1})^{\alpha}}\le c\sum^{\frac{N-\ell'}{2}}_{p_{1}=\ell-\ell'+1}\frac{1}{p^{\alpha}_{1}}\le\frac{c'}{(\ell-\ell'+1)^{\alpha-1}}\le\frac{c''}{(\ell+1)^{\alpha-1}},
\]
via $2(\ell-\ell'+1)>\ell+1$. Combining the last two displays finishes
the proof.
\end{proof}

We return to the main storyline of this subsection. Our next step
is to study $\Omega^{x,\ell}_{N}$ (cf. \eqref{eq:OmegaNxell}).
\begin{lem}
\label{lem:s5}For each $x\in S\setminus S_{\star}$ and $\ell\in\llbracket0,N\rrbracket$,
\[
\frac{N^{\alpha}}{{N \choose A}}\sum_{\bm{\eta}\in\Omega^{x,\ell}_{N}}\frac{\bm{m}^{\bm{\eta}}}{\bm{g!}(\bm{\eta})}\leq c\upsilon^{N-\ell},
\]
where $\upsilon\in(0,1)$ does not depend on $N$ and $\ell$.
\end{lem}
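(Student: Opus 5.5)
We follow the representation used in the proofs of Lemmas \ref{lem:s1} and \ref{lem:s2}: write $S=\llbracket1,\kappa\rrbracket$ with $x=1$, and expand the left-hand side as a sum over occupation numbers $p_{1},\dots,p_{\kappa}$ with $p_{1}\ge N-\ell$ and type-$1$ counts $q_{j}$. The key point is that $x\notin S_{\star}$, so at least one of $m_{1}(x),m_{2}(x)$ is strictly less than $1$. We therefore keep the factor $\left(\frac{Am_{1}(x)+Bm_{2}(x)}{N}\right)^{p_{1}}$ produced by Lemma \ref{lem:comb-bound}, instead of bounding it by $1$ as was done before.

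Since $A/N\to\rho\in(0,1)$ and $B/N\to1-\rho$, for $N$ large we have
\[
\frac{Am_{1}(x)+Bm_{2}(x)}{N}\le1-\frac{\min\{\rho,1-\rho\}}{2}\bigl(1-\min\{m_{1}(x),m_{2}(x)\}\bigr)=:\upsilon_{x}<1 .
\]
For small $N$ the statement is trivial after enlarging $c$, because the left-hand side is bounded by $Z_{N}\le c$ (Lemma \ref{lem:s1}) and $\upsilon^{N-\ell}\ge\upsilon^{N}$ is bounded below for finitely many $N$. Set $\upsilon:=\max_{x\in S\setminus S_{\star}}\upsilon_{x}$. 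Bounding the factors at the other sites $j\ne1$ by $1$, Lemma \ref{lem:comb-bound} gives
\[
\frac{N^{\alpha}}{\binom{N}{A}}\sum_{\bm{\eta}\in\Omega^{x,\ell}_{N}}\frac{\bm{m}^{\bm{\eta}}}{\bm{g!}(\bm{\eta})}\le\sum_{p_{1}=N-\ell}^{N}\upsilon^{p_{1}}\sum_{\substack{p_{2}+\cdots+p_{\kappa}=N-p_{1}}}\frac{cN^{\alpha}}{\mathfrak{a}(p_{1})\mathfrak{a}(p_{2})\cdots\mathfrak{a}(p_{\kappa})}.
\]

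Next we control the polynomial factors. Lemma \ref{lem:asymp2}, or simply summability of $k^{-\alpha}$ for $\alpha>1$, bounds the inner sum over $p_{2},\dots,p_{\kappa}$ by $c/\mathfrak{a}(N-p_{1})$ up to a constant, and in any case by a constant. It remains to handle $N^{\alpha}/\mathfrak{a}(p_{1})$. If $p_{1}\ge N/2$, this ratio is at most $2^{\alpha}$, and the geometric sum $\sum_{p_{1}\ge N-\ell}\upsilon^{p_{1}}\le\upsilon^{N-\ell}/(1-\upsilon)$ finishes. If $\ell>N/2$ and $p_{1}<N/2$, we use $N^{\alpha}/\mathfrak{a}(p_{1})\le N^{\alpha}$. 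This polynomial factor is absorbed by replacing $\upsilon$ with some $\upsilon'\in(\upsilon,1)$, since $N^{\alpha}\upsilon^{p_{1}}$ is not uniformly bounded by $\upsilon'^{p_{1}}$ when $p_{1}$ is small. In that regime, however, $N^{\alpha}\upsilon'^{N-\ell}\ge c$ for small $N-\ell$, and more simply the whole left-hand side is at most $Z_{N}\le c$.

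So the cleanest route is a case split. When $N-\ell\ge N/2$, every $p_{1}\ge N/2$ and the first argument applies. When $N-\ell<N/2$, the left-hand side is at most $c\le c\,\upsilon^{-N/2}\upsilon^{N-\ell}$, and this bound is not uniform. To handle that case we split the sum at $p_{1}=N/2$ instead. The part with $p_{1}\ge N/2$ is at most $c\,\upsilon^{N/2}\le c\,\upsilon^{N-\ell}$. The part with $N-\ell\le p_{1}<N/2$ is at most $\sum_{p_{1}}N^{\alpha}\upsilon^{p_{1}}\cdot c/\mathfrak{a}(N-p_{1})\le c\,2^{\alpha}\sum_{p_{1}\ge N-\ell}\upsilon^{p_{1}}$, because $N-p_{1}>N/2$. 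Here the inner-sum bound $c/\mathfrak{a}(N-p_{1})$ from Lemma \ref{lem:asymp2} is used: it holds uniformly since $\alpha>1$, via the standard fact that the convolution of $k^{-\alpha}$ is $O(n^{-\alpha})$.

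The main obstacle is exactly this bookkeeping of the polynomial factor $N^{\alpha}$ against $\mathfrak{a}(p_{1})$ uniformly in $\ell$, together with the convolution estimate $\sum_{p_{2}+\cdots+p_{\kappa}=n}\prod_{j\ge2}\mathfrak{a}(p_{j})^{-1}\le c\,\mathfrak{a}(n)^{-1}$. The strict contraction $\upsilon<1$ at non-maximizing sites is straightforward.
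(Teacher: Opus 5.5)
Your proof is correct and follows the paper's route: you expand over occupation numbers, apply Lemma~\ref{lem:comb-bound} keeping only the factor $\bigl(\frac{Am_{1}(x)+Bm_{2}(x)}{N}\bigr)^{p_{1}}$ at $x$, and use that this ratio is eventually at most some $\upsilon<1$ because $x\notin S_{\star}$. The case split at $p_{1}=N/2$ is unnecessary, and the abandoned, self-contradictory $\upsilon'$ paragraph should be deleted: since $p_{1}\ge N-\ell$, you can pull out $\upsilon^{p_{1}}\le\upsilon^{N-\ell}$ directly and bound the remaining full sum $N^{\alpha}\sum_{p_{1}+\cdots+p_{\kappa}=N}\prod_{j}\mathfrak{a}(p_{j})^{-1}\le c$ by Lemma~\ref{lem:asymp2}, which is exactly what the paper does.
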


\begin{proof}
Let $S=\llbracket1,\kappa\rrbracket$ and $x=1$. Denote by $p_{1}\in\llbracket N-\ell,N\rrbracket$
the number of particles at $x$. Then, we may write
\[
\frac{N^{\alpha}}{{N \choose A}}\sum_{\bm{\eta}\in\Omega^{x,\ell}_{N}}\frac{\bm{m}^{\bm{\eta}}}{\bm{g!}(\bm{\eta})}=\frac{N^{\alpha}}{{N \choose A}}\sum_{\substack{p_{1},\dots,p_{\kappa}\ge0:\\
p_{1}+\cdots+p_{\kappa}=N,\\
p_{1}\ge N-\ell
}
}\sum_{\substack{q_{1},\dots,q_{\kappa}\ge0:\\
q_{1}+\cdots+q_{\kappa}=A
}
}\prod^{\kappa}_{j=1}\frac{{p_{j} \choose q_{j}}m_{1}(j)^{q_{j}}m_{2}(j)^{p_{j}-q_{j}}}{\mathfrak{a}(p_{j})}.
\]
By Lemma \ref{lem:comb-bound}, the right-hand side is bounded above
by
\[
cN^{\alpha}\sum_{\substack{p_{1},\dots,p_{\kappa}\ge0:\\
p_{1}+\cdots+p_{\kappa}=N,\\
p_{1}\ge N-\ell
}
}\frac{1}{\mathfrak{a}(p_{1})\mathfrak{a}(p_{2})\cdots\mathfrak{a}(p_{\kappa})}\left(\frac{Am_{1}(x)+Bm_{2}(x)}{N}\right)^{p_{1}}\le c\left(\frac{Am_{1}(x)+Bm_{2}(x)}{N}\right)^{N-\ell},
\]
where Lemma \ref{lem:asymp2} was used at the inequality. Since $x\in S\setminus S_{\star}$,
we have $m_{1}(x)\wedge m_{2}(x)<1$. Thus by \eqref{eq:ANBN},
\[
\frac{Am_{1}(x)+Bm_{2}(x)}{N}\xrightarrow{N\to\infty}\rho m_{1}(x)+(1-\rho)m_{2}(x)<1.
\]
This guarantees the existence of a positive constant $\upsilon<1$
such that $\max_{y\in S\setminus S_{\star}}\frac{Am_{1}(y)+Bm_{2}(y)}{N}\le\upsilon$
for sufficiently large $N$. This proves the lemma.
\end{proof}

Now, we are ready to prove the first part of Theorem \ref{thm:cond}.
\begin{proof}[Proof of Theorem \ref{thm:cond}-(1)]
 By the symmetry of $\nu_{N}$ in \eqref{eq:nuN-def} between the
sites in $S_{\star}$, it suffices to prove that
\begin{equation}
\lim_{N\to\infty}\nu_{N}(\Delta_{N})=0.\label{eq:cond-1-wts}
\end{equation}
The decomposition \eqref{eq:Omega-dec} implies that
\[
\Delta_{N}=\Omega_{N}\setminus\mathcal{E}_{N}=\bigcup_{x\in S\setminus S_{\star}}\Omega^{x,\ell_{N}}_{N}\cup\Xi^{\ell_{N}}_{N}.
\]
First, by Lemma \ref{lem:s2},
\[
\nu_{N}(\Xi^{\ell_{N}}_{N})=\frac{N^{\alpha}}{Z_{N}{N \choose A}}\sum_{\bm{\eta}\in\Xi^{\ell_{N}}_{N}}\frac{\bm{m}^{\bm{\eta}}}{\bm{g!}(\bm{\eta})}\le\frac{c}{Z_{N}(\ell_{N}+1)^{\alpha-1}}\ll1.
\]
The final bound follows from Lemma \ref{lem:s1} and the fact that
$\ell_{N}\gg1$. Next, by Lemma \ref{lem:s5}, for each $x\in S\setminus S_{\star}$,
\[
\nu_{N}(\Omega^{x,\ell_{N}}_{N})=\frac{N^{\alpha}}{Z_{N}{N \choose A}}\sum_{\bm{\eta}\in\Omega^{x,\ell_{N}}_{N}}\frac{\bm{m}^{\bm{\eta}}}{\bm{g!}(\bm{\eta})}\le\frac{c\upsilon^{N-\ell_{N}}}{Z_{N}}\ll1.
\]
Again, the final bound follows from Lemma \ref{lem:s1} and $N-\ell_{N}\gg1$.
The last two displayed estimates prove \eqref{eq:cond-1-wts} as desired.
\end{proof}

Before moving on, we provide a sharp asymptotic of the normalizing
constant $Z_{N}$ as $N\to\infty$. Recall that $\rho\in(0,1)$ denotes
the fixed macroscopic density value. Let
\begin{equation}
\Gamma_{x}:=\sum_{p\ge0}\frac{(\rho m_{1}(x)+(1-\rho)m_{2}(x))^{p}}{\mathfrak{a}(p)}\qquad\text{for}\quad x\in S,\qquad\Gamma(\alpha):=\sum_{p\ge0}\frac{1}{\mathfrak{a}(p)}.\label{eq:Gamma-def}
\end{equation}
Notice that $\Gamma_{x}=\Gamma(\alpha)$ if and only if $x\in S_{\star}$.
For each nonempty $S_{0}\subset S$, let
\[
\Omega_{S_{0}}(p_{1},p_{2}):=\left\{ \bm{\sigma}=(\sigma_{1},\sigma_{2})\in(\mathbb{N}^{2}_{0})^{S_{0}}:|\sigma_{1}|=p_{1},\enspace|\sigma_{2}|=p_{2}\right\} .
\]
In addition, let
\[
\Omega_{S_{0}}(L):=\left\{ \bm{\sigma}\in(\mathbb{N}^{2}_{0})^{S_{0}}:|\bm{\sigma}|=L\right\} =\bigcup_{\substack{p_{1},p_{2}\ge0:\\
p_{1}+p_{2}=L
}
}\Omega_{S_{0}}(p_{1},p_{2}).
\]

\begin{prop}
\label{prop:ZN-limit}Recall \eqref{eq:kappa-star}. We have
\[
\lim_{N\to\infty}Z_{N}=\kappa_{\star}\Gamma(\alpha)^{\kappa_{\star}-1}\prod_{z\in S\setminus S_{\star}}\Gamma_{z}\in(0,\infty).
\]
\end{prop}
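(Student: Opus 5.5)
We split $\Omega_{N}$ by the decomposition \eqref{eq:Omega-dec} with a threshold $\ell=\ell_{N}$ satisfying $1\ll\ell_{N}\ll N$; to keep later error terms small we take $\ell_{N}\ll\sqrt{N}$, for instance $\ell_{N}=\lfloor\log N\rfloor$. By Lemma \ref{lem:s2}, the contribution of $\Xi^{\ell_{N}}_{N}$ to $Z_{N}$ is at most $c(\ell_{N}+1)^{1-\alpha}\to0$. By Lemma \ref{lem:s5}, the contribution of each $\Omega^{y,\ell_{N}}_{N}$ with $y\in S\setminus S_{\star}$ is at most $c\upsilon^{N-\ell_{N}}\to0$. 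Since the weight is symmetric among the sites of $S_{\star}$, it therefore suffices to show that for each fixed $x\in S_{\star}$
\[
\frac{N^{\alpha}}{\binom{N}{A}}\sum_{\bm{\eta}\in\Omega^{x,\ell_{N}}_{N}}\frac{\bm{m}^{\bm{\eta}}}{\bm{g!}(\bm{\eta})}\longrightarrow\Gamma(\alpha)^{\kappa_{\star}-1}\prod_{z\in S\setminus S_{\star}}\Gamma_{z}.
\]

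To compute this, parametrize $\bm{\eta}\in\Omega^{x,\ell_{N}}_{N}$ by its restriction $\bm{\sigma}\in\Omega_{S\setminus\{x\}}(k_{1},k_{2})$, where $L=k_{1}+k_{2}\le\ell_{N}$. The condensate at $x$ then carries $A-k_{1}$ type-1 and $B-k_{2}$ type-2 particles, and $m_{1}(x)=m_{2}(x)=1$. Its weight is therefore
\[
\frac{\binom{N-L}{A-k_{1}}}{\mathfrak{a}(N-L)},
\]
and the summand equals
\[
\frac{N^{\alpha}}{(N-L)^{\alpha}}\cdot\frac{\binom{N-L}{A-k_{1}}}{\binom{N}{A}}\cdot\frac{\bm{m}^{\bm{\sigma}}}{\bm{g!}(\bm{\sigma})}.
\]
Uniformly in $L\le\ell_{N}$ we have $N^{\alpha}/(N-L)^{\alpha}=1+o(1)$. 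For the binomial ratio, we expand the factorials:
\[
\frac{\binom{N-L}{A-k_{1}}}{\binom{N}{A}}=\left(\tfrac{A}{N}\right)^{k_{1}}\left(\tfrac{B}{N}\right)^{k_{2}}(1+O(L^{2}/N)),
\]
which is $(1+o(1))\rho_{N}^{k_{1}}(1-\rho_{N})^{k_{2}}$ uniformly because $\ell_{N}^{2}\ll N$. Here $\rho_{N}:=A/N\to\rho$.

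Summing over $\bm{\sigma}$ at each site $z\ne x$ with $p$ particles, of which $q$ are of type 1, gives
\[
\sum_{q}\binom{p}{q}\frac{(\rho_{N}m_{1}(z))^{q}((1-\rho_{N})m_{2}(z))^{p-q}}{\mathfrak{a}(p)}=\frac{(\rho_{N}m_{1}(z)+(1-\rho_{N})m_{2}(z))^{p}}{\mathfrak{a}(p)}.
\]
Since the constraint $L\le\ell_{N}$ couples the sites only through the total particle number, the main term is $(1+o(1))\prod_{z\ne x}\Gamma_{z}^{(N,\ell_{N})}$ up to truncation. Here $\Gamma_{z}^{(N,\ell)}$ denotes the series \eqref{eq:Gamma-def} with $\rho_{N}$ in place of $\rho$. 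We expect the main obstacle to be removing this truncation and passing $\rho_{N}\to\rho$ inside the series. Both follow by dominated convergence: each site factor is bounded by $1/\mathfrak{a}(p)$, which is summable since $\alpha>1$. This is the same bound used in Lemma \ref{lem:asymp2}. The tail of the product sum over configurations with total $L>\ell_{N}$ is bounded by $\sum_{L>\ell_{N}}c(L+1)^{-\alpha}\to0$, again by the convolution estimate of Lemma \ref{lem:asymp2}. For $z\in S_{\star}\setminus\{x\}$ the limit factor is exactly $\Gamma(\alpha)$, and for $z\notin S_{\star}$ it is $\Gamma_{z}$. Finiteness and positivity are clear, since each $\Gamma_{z}$ lies between $1$ and $\Gamma(\alpha)<\infty$.

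Multiplying the per-site limit by the $\kappa_{\star}$ equal contributions yields the stated limit.
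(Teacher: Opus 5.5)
Your proposal is correct and follows essentially the same route as the paper: the decomposition \eqref{eq:Omega-dec}, Lemmas \ref{lem:s2} and \ref{lem:s5} to discard $\Xi^{\ell_N}_N$ and the sites outside $S_\star$, symmetry over $S_\star$, reduction of the binomial ratio to $\rho_N^{k_1}(1-\rho_N)^{k_2}$ using $\ell_N^2\ll N$, and the per-site binomial theorem. The difference is only in presentation. The paper sandwiches the ratio with the two-sided bound \eqref{eq:ZN-4} and takes $\limsup$/$\liminf$. You instead use a uniform $1+O(\ell_N^2/N)$ expansion, and you spell out the truncation-tail and $\rho_N\to\rho$ dominated-convergence steps that the paper leaves implicit.
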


\begin{proof}
By the decomposition \eqref{eq:Omega-dec},
\begin{equation}
Z_{N}=\sum_{x\in S_{\star}}\frac{N^{\alpha}}{{N \choose A}}\sum_{\bm{\eta}\in\Omega^{x,\ell_{N}}_{N}}\frac{\bm{m}^{\bm{\eta}}}{\bm{g!}(\bm{\eta})}+\sum_{x\in S\setminus S_{\star}}\frac{N^{\alpha}}{{N \choose A}}\sum_{\bm{\eta}\in\Omega^{x,\ell_{N}}_{N}}\frac{\bm{m}^{\bm{\eta}}}{\bm{g!}(\bm{\eta})}+\frac{N^{\alpha}}{{N \choose A}}\sum_{\bm{\eta}\in\Xi^{\ell_{N}}_{N}}\frac{\bm{m}^{\bm{\eta}}}{\bm{g!}(\bm{\eta})}.\label{eq:ZN-1}
\end{equation}
As done in the proof of Theorem \ref{thm:cond}-(1), we obtain via
Lemmas \ref{lem:s2} and \ref{lem:s5} that
\begin{equation}
\frac{N^{\alpha}}{{N \choose A}}\sum_{\bm{\eta}\in\Xi^{\ell}_{N}}\frac{\bm{m}^{\bm{\eta}}}{\bm{g!}(\bm{\eta})}\le\frac{c}{(\ell_{N}+1)^{\alpha-1}}\ll1,\qquad\sum_{x\in S\setminus S_{\star}}\frac{N^{\alpha}}{{N \choose A}}\sum_{\bm{\eta}\in\Omega^{x,\ell}_{N}}\frac{\bm{m}^{\bm{\eta}}}{\bm{g!}(\bm{\eta})}\le c\upsilon^{N-\ell_{N}}\ll1.\label{eq:ZN-2}
\end{equation}
For the first term in the right-hand side of \eqref{eq:ZN-1}, by
symmetry,
\begin{equation}
\sum_{x\in S_{\star}}\frac{N^{\alpha}}{{N \choose A}}\sum_{\bm{\eta}\in\Omega^{x,\ell_{N}}_{N}}\frac{\bm{m}^{\bm{\eta}}}{\bm{g!}(\bm{\eta})}=\kappa_{\star}\sum^{\ell_{N}}_{p=0}\sum^{p}_{q=0}\frac{N^{\alpha}\binom{N-p}{A-q}}{(N-p)^{\alpha}\binom{N}{A}}\sum_{\bm{\sigma}\in\Omega_{S\setminus\{x\}}(q,p-q)}\frac{\bm{m}^{\bm{\sigma}}}{\bm{g!}(\bm{\sigma})}.\label{eq:ZN-3}
\end{equation}
At the right-hand side, a site $x\in S_{\star}$ was arbitrarily fixed,
$p$ denotes the number of particles not at $x$, and $q$ denotes
the number of type-$1$ particles not at $x$. First, note that
\begin{equation}
\frac{(A-\ell_{N})^{q}(B-\ell_{N})^{p-q}}{N^{p}}\le\frac{N^{\alpha}\binom{N-p}{A-q}}{(N-p)^{\alpha}\binom{N}{A}}\le\left(\frac{N}{N-\ell_{N}}\right)^{\alpha}\frac{A^{q}B^{p-q}}{(N-\ell_{N})^{p}}.\label{eq:ZN-4}
\end{equation}
Thus, the right-hand side of \eqref{eq:ZN-3} is bounded above by
\[
\kappa_{\star}\left(\frac{N}{N-\ell_{N}}\right)^{\alpha}\sum^{\ell_{N}}_{p=0}\left(\frac{N}{N-\ell_{N}}\right)^{p}\sum^{p}_{q=0}\left(\frac{A}{N}\right)^{q}\left(\frac{B}{N}\right)^{p-q}\sum_{\bm{\sigma}\in\Omega_{S\setminus\{x\}}(q,p-q)}\frac{\bm{m}^{\bm{\sigma}}}{\bm{g!}(\bm{\sigma})}\simeq\kappa_{\star}\sum^{\ell_{N}}_{p=0}\sum_{\bm{\sigma}\in\Omega_{S\setminus\{x\}}(p)}\frac{(\bm{m}_{\rho_{N}})^{\bm{\sigma}}}{\bm{g!}(\bm{\sigma})},
\]
where
\begin{equation}
\rho_{N}:=\frac{A}{N},\label{eq:rhoN-def}
\end{equation}
$(\bm{m}_{\rho_{N}})_{1}:=\rho_{N}m_{1}$, and $(\bm{m}_{\rho_{N}})_{2}:=(1-\rho_{N})m_{2}$.
Taking the limit $N\to\infty$, the right-hand side converges to
\begin{align*}
\kappa_{\star}\sum^{\infty}_{p=0}\sum_{\bm{\sigma}\in\Omega_{S\setminus\{x\}}(p)}\frac{(\bm{m}_{\rho})^{\bm{\sigma}}}{\bm{g!}(\bm{\sigma})} & =\kappa_{\star}\left(\prod_{y\in S_{\star}\setminus\{x\}}\sum^{\infty}_{p=0}\frac{1}{\mathfrak{a}(p)}\right)\left(\prod_{z\in S\setminus S_{\star}}\sum^{\infty}_{p=0}\frac{(\rho m_{1}(z)+(1-\rho)m_{2}(z))^{p}}{\mathfrak{a}(p)}\right)\\
 & =\kappa_{\star}\Gamma(\alpha)^{\kappa_{\star}-1}\prod_{z\in S\setminus S_{\star}}\Gamma_{z}.
\end{align*}
Thus, we have proved that
\[
\limsup_{N\to\infty}\kappa_{\star}\sum^{\ell_{N}}_{p=0}\sum^{p}_{q=0}\frac{N^{\alpha}\binom{N-p}{A-q}}{(N-p)^{\alpha}\binom{N}{A}}\sum_{\bm{\sigma}\in\Omega_{S\setminus\{x\}}(q,p-q)}\frac{\bm{m}^{\bm{\sigma}}}{\bm{g!}(\bm{\sigma})}\le\kappa_{\star}\Gamma(\alpha)^{\kappa_{\star}-1}\prod_{z\in S\setminus S_{\star}}\Gamma_{z}.
\]
Next, applying the lower bound part of \eqref{eq:ZN-4} to \eqref{eq:ZN-3}
yields similarly that
\[
\liminf_{N\to\infty}\kappa_{\star}\sum^{\ell_{N}}_{p=0}\sum^{p}_{q=0}\frac{N^{\alpha}\binom{N-p}{A-q}}{(N-p)^{\alpha}\binom{N}{A}}\sum_{\bm{\sigma}\in\Omega_{S\setminus\{x\}}(q,p-q)}\frac{\bm{m}^{\bm{\sigma}}}{\bm{g!}(\bm{\sigma})}\ge\kappa_{\star}\Gamma(\alpha)^{\kappa_{\star}-1}\prod_{z\in S\setminus S_{\star}}\Gamma_{z}.
\]
Combining the last two displays, along with \eqref{eq:ZN-1} and \eqref{eq:ZN-2},
completes the proof.
\end{proof}

The rest of Section \ref{sec2} is devoted to the analysis of the
critical ($\alpha=1$) and subcritical cases ($\alpha<1$), and may
be safely skipped by readers interested solely in the supercritical
regime, which is the primary focus of this article.

\subsection{\label{sec2.2}Critical Case: $\alpha=1$}

Next, we consider the critical case of $\alpha=1$, which we assume
throughout this subsection. The overall logic here is similar to Section
\ref{sec2.1}, but now the number of sites in $S_{\star}$ starts
to play a crucial role, so we need a more delicate analysis.
\begin{lem}
\label{lem:s6}We have
\[
Z_{N}\asymp(\log N)^{\kappa_{\star}-1}.
\]
\end{lem}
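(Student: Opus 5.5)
The plan is to reduce $Z_{N}$ to a one-species sum, exactly as in the proof of Lemma \ref{lem:s1}, and then evaluate that sum when $\mathfrak{a}(p)=p\vee1$. Write $S=\llbracket1,\kappa\rrbracket$ and let $p_{j}$, $q_{j}$ be the total and type-$1$ occupations at site $j$. Then
\[
Z_{N}=\frac{N}{\binom{N}{A}}\sum_{p_{1}+\cdots+p_{\kappa}=N}\frac{1}{\prod_{j}\mathfrak{a}(p_{j})}\sum_{q_{1}+\cdots+q_{\kappa}=A}\prod_{j}\binom{p_{j}}{q_{j}}m_{1}(j)^{q_{j}}m_{2}(j)^{p_{j}-q_{j}}.
\]

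\textbf{Upper bound.} By Lemma \ref{lem:comb-bound}, $Z_{N}\le c\sum_{|p|=N}N\prod_{j}\mathfrak{a}(p_{j})^{-1}w_{j}^{p_{j}}$, where $w_{j}=(Am_{1}(j)+Bm_{2}(j))/N$. We have $w_{j}=1$ on $S_{\star}$ and $w_{j}\le\upsilon<1$ off $S_{\star}$ for large $N$.
\begin{itemize}
\item Summing out the sites $j\notin S_{\star}$ costs a bounded factor $\sum_{p}\upsilon^{p}/\mathfrak{a}(p)<\infty$. The remaining total $M\le N$ on $S_{\star}$ can be handled by monotonicity: the factor $N/M$ is harmless when $M\ge N/2$, and the region $M<N/2$ carries weight $\upsilon^{N/2}$.
\item It then suffices to show $\Sigma_{k}(M):=\sum_{p_{1}+\cdots+p_{k}=M}\prod_{j}\mathfrak{a}(p_{j})^{-1}\le cM^{-1}(\log M)^{k-1}$ for $k=\kappa_{\star}$.
\item This follows by induction on $k$, using the convolution identity $\Sigma_{k}(M)=\sum_{p=0}^{M}\mathfrak{a}(p)^{-1}\Sigma_{k-1}(M-p)$ with $\Sigma_{1}(M)=\mathfrak{a}(M)^{-1}$.
\item In the convolution, split into $p\le M/2$ and $p>M/2$. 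The first half gives $\frac{c(\log M)^{k-2}}{M}\sum_{p\le M/2}\frac{1}{p\vee1}\asymp M^{-1}(\log M)^{k-1}$. The second half gives $\frac{c}{M}\sum_{q<M/2}\Sigma_{k-1}(q)$, which is bounded by $M^{-1}(\log M)^{k-1}$ by the inductive hypothesis summed over $q$.
\end{itemize}
Multiplying by $N$ then gives $Z_{N}\le c(\log N)^{\kappa_{\star}-1}$.

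\textbf{Lower bound.} Restrict to configurations supported on $S_{\star}$ in which every site has at most $N/2$ particles except one. More simply, keep the region where all $p_{j}\le N/(2\kappa_{\star})$ except the largest. For the inner $q$-sum, avoid a sharp lower version of Lemma \ref{lem:comb-bound} by choosing explicit type compositions. On $S_{\star}$ all $m_{i}=1$, so the $q$-sum is exactly $\sum_{q}\prod_{j}\binom{p_{j}}{q_{j}}=\binom{N}{A}$ by Vandermonde. This identity is the key simplification: it removes $\binom{N}{A}$ entirely, giving $Z_{N}\ge N\,\Sigma_{\kappa_{\star}}(N)$.

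It then remains to show $\Sigma_{k}(N)\ge cN^{-1}(\log N)^{k-1}$. Put $p_{1}=N-\sum_{j\ge2}p_{j}\ge N/2$ and let $p_{2},\dots,p_{k}$ range over $\llbracket1,N/(2k)\rrbracket$. This gives
\[
\Sigma_{k}(N)\ge\frac{1}{N}\Big(\sum_{p=1}^{N/(2k)}\frac{1}{p}\Big)^{k-1}\ge cN^{-1}(\log N)^{k-1}.
\]

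The main obstacle is the inductive upper bound on $\Sigma_{k}$, specifically the second half of the convolution. There one needs the cumulative estimate $\sum_{q\le M}\Sigma_{k-1}(q)\le c(\log M)^{k-1}$. I would prove this bound alongside the pointwise bound in the same induction: the cumulative sum is the coefficient sum of $(\sum_{p\le M}1/\mathfrak{a}(p))^{k-1}$, which is at most $(1+\log M)^{k-1}$. With that, the remaining steps are routine. The contribution of the non-$S_{\star}$ sites is controlled by Lemma \ref{lem:s5}-type geometric decay, which is insensitive to $\alpha$.
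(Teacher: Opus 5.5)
Your proof is correct and follows the paper's argument. For the lower bound you restrict to configurations supported on $S_{\star}$ and apply Vandermonde to get $Z_{N}\ge N\,\Sigma_{\kappa_{\star}}(N)$, and for the upper bound you use Lemma~\ref{lem:comb-bound} together with the geometric decay $w_{j}\le\upsilon<1$ off $S_{\star}$, exactly as the paper does.

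The only differences are that you prove the two-sided bound $\Sigma_{k}(M)\asymp M^{-1}(\log M)^{k-1}$ by an elementary convolution and harmonic-sum argument instead of citing Lemma~\ref{lem:asymp2}. You also explicitly control the factor $N/(N-L)$ by splitting at $N-L=N/2$, a point the paper passes over. Your extra restriction to $p_{j}\le N/(2\kappa_{\star})$ before applying Vandermonde is unnecessary.
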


\begin{proof}
We want to prove that there exist two constants $c,c'>0$ independent
of $N$ such that
\begin{equation}
c(\log N)^{\kappa_{\star}-1}\le Z_{N}\le c'(\log N)^{\kappa_{\star}-1}.\label{eq:s4-wts}
\end{equation}
The lower bound part is easy. Recall that
\[
Z_{N}=\frac{N}{{N \choose A}}\sum_{\bm{\eta}\in\Omega_{N}}\frac{\bm{m}^{\bm{\eta}}}{\bm{g!}(\bm{\eta})}.
\]
For simplicity write $S=\llbracket1,\kappa\rrbracket$ and $S_{\star}=\llbracket1,\kappa_{\star}\rrbracket$.
Let us restrict the summation on the configurations in which all particles
are on $S_{\star}$. Then, by writing $p_{j}:=|\bm{\eta}(j)|$ and
$q_{j}:=\eta_{1}(j)$ for each $\bm{\eta}\in\Omega_{N}$, we have
\[
Z_{N}\ge\frac{N}{{N \choose A}}\sum_{\substack{p_{1},\dots,p_{\kappa_{\star}}\ge0:\\
p_{1}+\cdots+p_{\kappa_{\star}}=N
}
}\sum_{\substack{q_{1},\dots,q_{\kappa_{\star}}\ge0:\\
q_{1}+\cdots+q_{\kappa_{\star}}=A
}
}\prod^{\kappa_{\star}}_{j=1}\frac{{p_{j} \choose q_{j}}}{\mathfrak{a}(p_{j})}=N\sum_{\substack{p_{1},\dots,p_{\kappa_{\star}}\ge0:\\
p_{1}+\cdots+p_{\kappa_{\star}}=N
}
}\prod^{\kappa_{\star}}_{j=1}\frac{1}{\mathfrak{a}(p_{j})},
\]
where the equality follows from Vandermonde's identity. By Lemma \ref{lem:asymp2},
the right-hand side has an asymptotic scale of $(\log N)^{\kappa_{\star}-1}$
as $N\to\infty$, thus we have the lower bound.

For the upper bound part of \eqref{eq:s4-wts}, let us write $x_{j}:=m_{1}(j)$
and $y_{j}:=m_{2}(j)$. As done above, we may represent $Z_{N}$ as
\[
Z_{N}=\frac{N}{{N \choose A}}\sum_{\substack{p_{1},\dots,p_{\kappa}\ge0:\\
p_{1}+\cdots+p_{\kappa}=N
}
}\sum_{\substack{q_{1},\dots,q_{\kappa}\ge0:\\
q_{1}+\cdots+q_{\kappa}=A
}
}\prod^{\kappa}_{j=1}\frac{{p_{j} \choose q_{j}}x^{q_{j}}_{j}y^{p_{j}-q_{j}}_{j}}{\mathfrak{a}(p_{j})}.
\]
Applying Lemma \ref{lem:comb-bound} to the right-hand side, we obtain
\[
Z_{N}\le cN\sum_{\substack{p_{1},\dots,p_{\kappa}\ge0:\\
p_{1}+\cdots+p_{\kappa}=N
}
}\left(\frac{1}{\mathfrak{a}(p_{1})\mathfrak{a}(p_{2})\cdots\mathfrak{a}(p_{\kappa})}\prod^{\kappa}_{j=1}\left(\frac{Ax_{j}+By_{j}}{N}\right)^{p_{j}}\right),
\]
where $c>0$ is a global constant since it depends only on the graph
information from $x_{j},y_{j}$. Divide the summation into $(p_{1},\dots,p_{\kappa_{\star}})$
and $(p_{\kappa_{\star}+1},\dots,p_{\kappa})$ to bound the right-hand
side with
\begin{equation}
cN\sum^{N}_{L=0}\left(\sum_{\substack{p_{1},\dots,p_{\kappa_{\star}}\ge0:\\
p_{1}+\cdots+p_{\kappa_{\star}}=N-L
}
}\frac{1}{\mathfrak{a}(p_{1})\mathfrak{a}(p_{2})\cdots\mathfrak{a}(p_{\kappa_{\star}})}\right)\left(\sum_{\substack{p_{\kappa_{\star}+1},\dots,p_{\kappa}\ge0:\\
p_{\kappa_{\star}+1}+\cdots+p_{\kappa}=L
}
}\prod^{\kappa}_{j=\kappa_{\star}+1}\left(\frac{Ax_{j}+By_{j}}{N}\right)^{p_{j}}\right),\label{eq:s4-3}
\end{equation}
where $L$ stands for the number of particles at $S\setminus S_{\star}$.
Above, in the first parenthesis we used $\frac{Ax_{j}+By_{j}}{N}=1$
which holds since $x_{j}=y_{j}=1$, and in the second parenthesis
we used $\mathfrak{a}(p_{\kappa_{\star}+1}),\dots,\mathfrak{a}(p_{\kappa})\ge1$.
For $j\in\llbracket\kappa_{\star}+1,\kappa\rrbracket$, we have $x_{j}\wedge y_{j}<1$
and $\frac{A}{N}\simeq\rho\in(0,1)$, thus
\[
\lim_{N\to\infty}\frac{Ax_{j}+By_{j}}{N}=\rho x_{j}+(1-\rho)y_{j}<1.
\]
This implies that there exists a constant $\upsilon\in(0,1)$ such
that $\frac{Ax_{j}+By_{j}}{N}\le\upsilon$ for all $j\in\llbracket\kappa_{\star}+1,\kappa\rrbracket$,
for all sufficiently big $N$. Therefore, the term at \eqref{eq:s4-3}
is bounded by
\begin{equation}
cN\sum^{N}_{L=0}(L+1)^{\kappa}\upsilon^{L}\left(\sum_{\substack{p_{1},\dots,p_{\kappa_{\star}}\ge0:\\
p_{1}+\cdots+p_{\kappa_{\star}}=N-L
}
}\frac{1}{\mathfrak{a}(p_{1})\mathfrak{a}(p_{2})\cdots\mathfrak{a}(p_{\kappa_{\star}})}\right),\label{eq:s4-4}
\end{equation}
where we bounded the number of all $(p_{\kappa_{\star}+1},\dots,p_{\kappa})$
simply by $c(L+1)^{\kappa}$. By Lemma \ref{lem:asymp2}, this is
bounded above by
\[
c\sum^{N}_{L=0}(L+1)^{\kappa}\upsilon^{L}(\log N)^{\kappa_{\star}-1}.
\]
It is straightforward that this has an asymptotic scale of $(\log N)^{\kappa_{\star}-1}$
as $N\to\infty$. This completes the upper bound part.
\end{proof}

The following is an analogue of Lemma \ref{lem:s2}.
\begin{lem}
\label{lem:s7}We have
\[
\frac{N}{{N \choose A}}\sum_{\bm{\eta}\in\Xi^{\ell_{N}}_{N}}\frac{\bm{m}^{\bm{\eta}}}{\bm{g!}(\bm{\eta})}\leq c(\log N)^{\kappa_{\star}-2}\log\frac{N}{\ell_{N}}.
\]
\end{lem}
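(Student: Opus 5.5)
We follow the proof of Lemma \ref{lem:s6}, with the extra constraint that every site carries fewer than $N-\ell_{N}$ particles. Write $S=\llbracket1,\kappa\rrbracket$, $S_{\star}=\llbracket1,\kappa_{\star}\rrbracket$, $x_{j}=m_{1}(j)$, $y_{j}=m_{2}(j)$. Applying Lemma \ref{lem:comb-bound} as before, the left-hand side is bounded by
\[
cN\sum_{L=0}^{N}(L+1)^{\kappa}\upsilon^{L}\sum_{\substack{0\le p_{1},\dots,p_{\kappa_{\star}}<N-\ell_{N}:\\ p_{1}+\cdots+p_{\kappa_{\star}}=N-L}}\prod_{j=1}^{\kappa_{\star}}\frac{1}{\mathfrak{a}(p_{j})}.
\]
As in \eqref{eq:s4-3}--\eqref{eq:s4-4}, $L$ counts the particles on $S\setminus S_{\star}$. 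Dropping the constraint $p_{j}<N-\ell_{N}$ on the sites outside $S_{\star}$ only enlarges the sum, so this bound is legitimate.

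The core step is a restricted version of Lemma \ref{lem:asymp2} with $\alpha=1$. For $M=N-L$ and $k=\kappa_{\star}\ge2$, we want
\[
\Sigma_{k}(M,\ell):=\sum_{\substack{0\le p_{j}<N-\ell:\\ \sum p_{j}=M}}\prod_{j=1}^{k}\frac{1}{\mathfrak{a}(p_{j})}\le\frac{c}{N}(\log N)^{k-2}\log\frac{N}{\ell}\qquad\text{for }L\le N/2 .
\]
The heuristic is that unrestricted sums concentrate where one $p_{j}$ is close to $M$, which the constraint forbids. We prove it by induction on $k$.

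\emph{Base case $k=2$.} Here $p_{1}\in(M-N+\ell,N-\ell)$. Using $\frac{1}{p(M-p)}=\frac{1}{M}\bigl(\frac1p+\frac1{M-p}\bigr)$, the sum is at most
\[
\frac{2}{M}\sum_{p=M-N+\ell}^{N-\ell}\frac{1}{p\vee1}\le\frac{c}{N}\log\frac{N}{\ell-L}.
\]
This is fine when $L\le\ell_{N}/2$, since then $\log\frac{N}{\ell-L}\le c\log\frac{N}{\ell_N}$. When $L>\ell_{N}/2$, we use the unrestricted bound $c\log N/N$ instead, and the factor $\upsilon^{L}\le\upsilon^{\ell_{N}/2}$ makes that part negligible. (The constraint must be used carefully here; this is the point of the estimate.)

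\emph{Inductive step.} Isolate one coordinate $p_{k}=q$ and split into $q\le M/2$ and $q>M/2$. For $q\le M/2$, apply the induction hypothesis to the remaining $k-1$ coordinates with total $M-q$. This gives
\[
\sum_{q\le M/2}\frac{1}{\mathfrak{a}(q)}\cdot\frac{c}{N}(\log N)^{k-3}\log\frac{N}{\ell}\le\frac{c}{N}(\log N)^{k-2}\log\frac{N}{\ell}.
\]
For $q>M/2$, we have $q<N-\ell$, and the other coordinates sum to $M-q\in(M-N+\ell,M/2)$. The unrestricted bound from Lemma \ref{lem:asymp2} gives
\[
\sum_{q}\frac{1}{q}\cdot\frac{(\log(M-q+2))^{k-2}}{M-q+1}\le\frac{c}{N}(\log N)^{k-2}\sum_{r=\ell-L}^{M/2}\frac{1}{r}\le\frac{c}{N}(\log N)^{k-2}\log\frac{N}{\ell}.
\]

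Finally, multiply by $cN$ and sum over $L$. The sum over $L\le\ell_{N}/2$ is controlled by $\sum_{L}(L+1)^{\kappa}\upsilon^{L}<\infty$. The range $L>\ell_{N}/2$ contributes at most $\upsilon^{\ell_{N}/4}(\log N)^{\kappa_{\star}-1}$, which is also dominated by the claimed bound since $\ell_N\gg 1$ (e.g., under \eqref{eq:ellN-cri} or for $\ell_N$ growing as $\log\log N$-scale or faster; otherwise one keeps the $\log\frac{N}{\ell-L}$ form and notes $\log\frac{N}{\ell-L}\le\log N$, so this range is small relative to $(\log N)^{\kappa_\star-1}$). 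The main obstacle is the base-case bookkeeping when $L$ is comparable to $\ell_{N}$.
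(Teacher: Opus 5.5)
Your overall structure matches the paper's. You do the reduction via Lemma~\ref{lem:comb-bound} exactly as in \eqref{eq:s4-3}--\eqref{eq:s4-4}, then bound the capped sum over $S_{\star}$ (the paper simply quotes Lemma~\ref{lem:asymp5} here), then sum over $L$.

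The gap is in your re-proof of the capped bound, in the range $q\le M/2$ of the inductive step. There you bound the sum over the remaining $k-1$ coordinates, whose total is $M-q=N-(L+q)$, by $\frac{c}{N}(\log N)^{k-3}\log\frac{N}{\ell}$. That is, you invoke the hypothesis with effective shift $L+q$, which runs up to about $N/2$. But this bound only holds when the shift is below a fixed fraction of $\ell$, as your own base case indicates. For $k=2$ and $L=\ell$ the cap removes only $p_{1}\in\{0,M\}$, and the sum equals $2H_{M-1}/M\sim 2\log N/N$, where $H_{n}$ denotes the harmonic numbers.

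Concretely, as soon as $q>\ell-L$ the remaining coordinates total less than $N-\ell$. The cap is then vacuous, and by Lemma~\ref{lem:asymp2} their sum is of order $(\log N)^{k-2}/N$. For $\ell=N/\log N$, which is admissible under \eqref{eq:ellN-cri}, this exceeds your bound by a factor $\log N/\log\log N$. So the displayed estimate for $q\le M/2$ does not follow. The target bound as you state it (uniformly in $L\le N/2$) is false in general, and the induction does not close as written.

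The final order is nevertheless right, and the repair is what Lemma~\ref{lem:asymp5} does.
\begin{itemize}
\item Carry the $L$-dependent form $\frac{c}{N}(\log N)^{k-2}\log\frac{N}{(\ell-L)\vee 1}$ from your base case through the induction. Equivalently, use the shift-invariant form of Lemma~\ref{lem:asymp5}: total $M$, every coordinate $<M-\ell'$, bound $\frac{c}{M}(\log M)^{k-2}(\log M-\log(\ell'+1))$ uniformly in $0\le \ell'<M/2$.
\item Split the isolated coordinate at $q=\ell-L$, not only at $M/2$.
\item For $\ell-L<q\le M/2$, use the unrestricted bound $c(\log N)^{k-2}/N$. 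Then $\sum_{q>\ell-L}1/q$ produces the factor $\log(N/\ell)$ when $L\le \ell/2$.
\item For $q\le \ell-L$, apply the hypothesis with shifted cap $\ell-L-q$ and sum $\frac{1}{q\vee 1}\log\frac{N}{\ell-L-q+1}$, splitting at $q=(\ell-L)/2$. On the upper half the crude bound $\log N$ suffices, because $\sum_{(\ell-L)/2<q\le \ell-L}1/q=O(1)$.
\end{itemize}
Your $q>M/2$ estimate is fine as is.

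On the $L$-summation, your split at $L=\ell_{N}/2$ is more careful than the paper's one-line use of Lemma~\ref{lem:asymp5}, which tacitly replaces the shifted cap $\ell_{N}-L$ by $\ell_{N}$. Your closing hedge, however, is unnecessary. If $\ell_{N}\le\sqrt{N}$ then $\log(N/\ell_{N})\ge\frac{1}{2}\log N$. If $\ell_{N}>\sqrt{N}$ then $\upsilon^{\ell_{N}/4}\log N\to 0$. Either way the range $L>\ell_{N}/2$ fits within the claimed bound, with no growth condition on $\ell_{N}$.
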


\begin{proof}
Again, write $S=\llbracket1,\kappa\rrbracket$ and $S_{\star}=\llbracket1,\kappa_{\star}\rrbracket$.
From the proof of Lemma \ref{lem:s6}, especially from \eqref{eq:s4-3}
and \eqref{eq:s4-4}, we infer that
\[
\frac{N}{{N \choose A}}\sum_{\bm{\eta}\in\Xi^{\ell_{N}}_{N}}\frac{\bm{m}^{\bm{\eta}}}{\bm{g!}(\bm{\eta})}\le cN\sum^{N}_{L=0}(L+1)^{\kappa}\upsilon^{L}\left(\sum_{\substack{0\le p_{1},\dots,p_{\kappa_{\star}}<N-\ell_{N}:\\
p_{1}+\cdots+p_{\kappa_{\star}}=N-L
}
}\frac{1}{\mathfrak{a}(p_{1})\mathfrak{a}(p_{2})\cdots\mathfrak{a}(p_{\kappa_{\star}})}\right).
\]
By Lemma \ref{lem:asymp5}, the right-hand side is bounded above by
\[
c\sum^{N}_{L=0}(L+1)^{\kappa}\upsilon^{L}(\log N)^{\kappa_{\star}-2}(\log N-\log(\ell_{N}+1))\asymp(\log N)^{\kappa_{\star}-2}(\log N-\log\ell_{N}),
\]
as desired.
\end{proof}

The next lemma is an analogue of Lemma \ref{lem:s5}.
\begin{lem}
\label{lem:s8}For each $x\in S\setminus S_{\star}$,
\[
\frac{N}{\binom{N}{A}}\sum_{\bm{\eta}\in\Omega^{x,\ell_{N}}_{N}}\frac{\bm{m}^{\bm{\eta}}}{\bm{g!}(\bm{\eta})}\leq c(\log N)^{\kappa-1}\upsilon^{N-\ell_{N}},
\]
where $\upsilon\in(0,1)$ does not depend on $N$.
\end{lem}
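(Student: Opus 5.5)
We follow the proof of Lemma \ref{lem:s5}, now with $\alpha=1$. Write $S=\llbracket1,\kappa\rrbracket$ and $x=1$, and let $p_{j}$ and $q_{j}$ denote the total number and the number of type-$1$ particles at site $j$. As in the proof of Lemma \ref{lem:s6}, the left-hand side equals
\[
\frac{N}{{N \choose A}}\sum_{\substack{p_{1}+\cdots+p_{\kappa}=N,\\
p_{1}\ge N-\ell_{N}
}
}\sum_{q_{1}+\cdots+q_{\kappa}=A}\prod^{\kappa}_{j=1}\frac{{p_{j} \choose q_{j}}m_{1}(j)^{q_{j}}m_{2}(j)^{p_{j}-q_{j}}}{\mathfrak{a}(p_{j})}.
\]
Lemma \ref{lem:comb-bound} bounds this by
\[
cN\sum\prod^{\kappa}_{j=1}\frac{1}{\mathfrak{a}(p_{j})}\left(\frac{Am_{1}(j)+Bm_{2}(j)}{N}\right)^{p_{j}},
\]
where the sum runs over the same range of $(p_{1},\dots,p_{\kappa})$.

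Next we estimate each factor. For every $j$ we have $\frac{Am_{1}(j)+Bm_{2}(j)}{N}\le1$, since $m_{i}\le1$. For $j=1$, the site $x$ lies outside $S_{\star}$, so by \eqref{eq:ANBN} the same ratio is at most some $\upsilon\in(0,1)$ for all large $N$, exactly as in Lemma \ref{lem:s5}. Hence the factor at $x$ is at most $\upsilon^{p_{1}}/\mathfrak{a}(p_{1})\le\upsilon^{N-\ell_{N}}/\mathfrak{a}(p_{1})$.

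It remains to bound
\[
N\sum_{p_{1}\ge N-\ell_{N}}\frac{1}{\mathfrak{a}(p_{1})}\sum_{p_{2}+\cdots+p_{\kappa}=N-p_{1}}\prod_{j\ge2}\frac{1}{\mathfrak{a}(p_{j})}.
\]
This is where the logarithms enter. Since $\alpha=1$, Lemma \ref{lem:asymp2} no longer gives an $O(N^{-\alpha})$ bound for the full convolution sum. Instead, the whole sum over $p_{1}+\cdots+p_{\kappa}=N$ of $\prod_{j}1/\mathfrak{a}(p_{j})$ is $O\big((\log N)^{\kappa-1}/N\big)$, which is the critical case of Lemma \ref{lem:asymp2}, the same estimate already used in Lemma \ref{lem:s6}. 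Multiplying by $N$ gives a factor $c(\log N)^{\kappa-1}$. Combining this with the factor $\upsilon^{N-\ell_{N}}$ yields the claim.

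If one prefers not to invoke the critical asymptotic, an elementary route also works. Use $\mathfrak{a}(p_{1})\ge N/2$, bound the remaining $(\kappa-1)$-fold sum over $p_{2}+\cdots+p_{\kappa}\le\ell_{N}$ by $\big(\sum_{p\le N}1/\mathfrak{a}(p)\big)^{\kappa-1}\le c(\log N)^{\kappa-1}$, and account for the at most $\ell_{N}+1$ choices of $p_{1}$ by absorbing the corresponding $\upsilon^{p_{1}}$ sum. This gives the same bound, even with a better power of $\log N$.

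The only delicate point is this bookkeeping of the harmonic sums. Everything else is a direct transcription of Lemma \ref{lem:s5}.
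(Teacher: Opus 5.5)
Your main argument is correct and is essentially the paper's proof: apply Lemma \ref{lem:comb-bound}, bound the site-$x$ ratio by $\upsilon$ (so $\upsilon^{p_{1}}\le\upsilon^{N-\ell_{N}}$) and the other ratios by $1$, drop the constraint $p_{1}\ge N-\ell_{N}$, and use the critical case of Lemma \ref{lem:asymp2} to get $N\cdot O\big((\log N)^{\kappa-1}/N\big)=O\big((\log N)^{\kappa-1}\big)$. Your alternative elementary route also works as written, but it gives the same factor $(\log N)^{\kappa-1}$ rather than a better power, unless you restrict the harmonic sums to $p\le\ell_{N}$.
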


\begin{proof}
For simplicity write $x=1$ and let $S=\llbracket1,\kappa\rrbracket$.
Via Lemmas \ref{lem:comb-bound} and \ref{lem:asymp2}, the left-hand
side becomes
\begin{align*}
 & \frac{N}{\binom{N}{A}}\sum_{\substack{p_{1},\dots,p_{\kappa}\ge0:\\
p_{1}+\cdots+p_{\kappa}=N,\\
p_{1}\ge N-\ell_{N}
}
}\sum_{\substack{q_{1},\dots,q_{\kappa}\ge0:\\
q_{1}+\cdots+q_{\kappa}=A
}
}\prod^{\kappa}_{j=1}\frac{{p_{j} \choose q_{j}}m_{1}(j)^{q_{j}}m_{2}(j)^{p_{j}-q_{j}}}{\mathfrak{a}(p_{j})}\\
 & \le cN\sum_{\substack{p_{1},\dots,p_{\kappa}\ge0:\\
p_{1}+\cdots+p_{\kappa}=N,\\
p_{1}\ge N-\ell_{N}
}
}\left(\prod^{\kappa}_{j=1}\frac{1}{\mathfrak{a}(p_{j})}\right)\left(\frac{Am_{1}(x)+Bm_{2}(x)}{N}\right)^{p_{1}}\le c'(\log N)^{\kappa-1}\left(\frac{Am_{1}(x)+Bm_{2}(x)}{N}\right)^{N-\ell_{N}}.
\end{align*}
As claimed in the proof of Lemma \ref{lem:s5}, there exists $\upsilon\in(0,1)$
such that $\max_{y\in S\setminus S_{\star}}\frac{Am_{1}(y)+Bm_{2}(y)}{N}\le\upsilon$
for all big $N$. This finishes the proof.
\end{proof}

\begin{proof}[Proof of Theorem \ref{thm:cond}-(2)]
 As done in part (1), it suffices to prove that
\begin{equation}
\lim_{N\to\infty}\nu_{N}(\Delta_{N})=0.\label{eq:cond-2-wts}
\end{equation}
Lemmas \ref{lem:s6} and \ref{lem:s7} imply
\[
\nu_{N}(\Xi^{\ell_{N}}_{N})=\frac{N}{Z_{N}{N \choose A}}\sum_{\bm{\eta}\in\Xi^{\ell_{N}}_{N}}\frac{\bm{m}^{\bm{\eta}}}{\bm{g!}(\bm{\eta})}\le\frac{c(\log N)^{\kappa_{\star}-2}\log\frac{N}{\ell_{N}}}{(\log N)^{\kappa_{\star}-1}}\ll1,
\]
since \eqref{eq:ellN-cri} implies $\log\frac{N}{\ell_{N}}\ll\log N$.
Moreover, Lemmas \ref{lem:s6} and \ref{lem:s8} give, for $x\notin S_{\star}$,
\[
\nu_{N}(\Omega^{x,\ell_{N}}_{N})=\frac{1}{Z_{N}}\frac{N}{\binom{N}{A}}\sum_{\bm{\eta}\in\Omega^{x,\ell_{N}}_{N}}\frac{\bm{m}^{\bm{\eta}}}{\bm{g!}(\bm{\eta})}\le\frac{c(\log N)^{\kappa-1}\upsilon^{N-\ell_{N}}}{(\log N)^{\kappa_{\star}-1}}\ll1.
\]
The last two displays prove \eqref{eq:cond-2-wts} as desired, via
\eqref{eq:Omega-dec}.
\end{proof}

\subsection{\label{sec2.3}Subcritical Case: $\alpha<1$}

Finally, in this last subsection we assume that $\alpha<1$. Here,
we want to prove the exact opposite; i.e., we want to prove that the
sets $\mathcal{E}^{x}_{N}$, $x\in S$, are negligible in the limit.
\begin{lem}
\label{lem:s9}We have
\[
Z_{N}\ge cN^{(\kappa_{\star}-1)(1-\alpha)}.
\]
\end{lem}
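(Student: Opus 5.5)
We follow the lower-bound half of the proof of Lemma \ref{lem:s6}. Writing $S=\llbracket1,\kappa\rrbracket$ and $S_{\star}=\llbracket1,\kappa_{\star}\rrbracket$, we drop all configurations with particles outside $S_{\star}$; all terms are nonnegative, so this only decreases $Z_{N}$. On the remaining configurations every site $j\in S_{\star}$ has $m_{1}(j)=m_{2}(j)=1$, so with $p_{j}=|\bm{\eta}(j)|$ and $q_{j}=\eta_{1}(j)$,
\[
Z_{N}\ge\frac{N^{\alpha}}{{N \choose A}}\sum_{\substack{p_{1},\dots,p_{\kappa_{\star}}\ge0:\\
p_{1}+\cdots+p_{\kappa_{\star}}=N
}
}\sum_{\substack{q_{1},\dots,q_{\kappa_{\star}}\ge0:\\
q_{1}+\cdots+q_{\kappa_{\star}}=A
}
}\prod^{\kappa_{\star}}_{j=1}\frac{{p_{j} \choose q_{j}}}{\mathfrak{a}(p_{j})}=N^{\alpha}\sum_{\substack{p_{1},\dots,p_{\kappa_{\star}}\ge0:\\
p_{1}+\cdots+p_{\kappa_{\star}}=N
}
}\prod^{\kappa_{\star}}_{j=1}\frac{1}{\mathfrak{a}(p_{j})}.
\]
The equality is Vandermonde's identity: $\sum_{q_{1}+\cdots+q_{\kappa_{\star}}=A}\prod_{j}{p_{j} \choose q_{j}}={N \choose A}$, which exactly cancels the prefactor.

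It then remains to bound the convolution sum from below by $cN^{(\kappa_{\star}-1)(1-\alpha)-\alpha}$. If the appendix version of Lemma \ref{lem:asymp2} covers $\alpha<1$, I would cite it directly; otherwise I would argue by hand as follows. Restrict to the bulk region where $p_{j}\in[N/(2\kappa_{\star}),N/\kappa_{\star}]$ for $j\le\kappa_{\star}-1$, and $p_{\kappa_{\star}}=N-\sum_{j<\kappa_{\star}}p_{j}\in[N/\kappa_{\star},N/2]$ is determined. In this region every $p_{j}$ is comparable to $N$, so each summand is at least $cN^{-\kappa_{\star}\alpha}$. The number of admissible tuples is at least $cN^{\kappa_{\star}-1}$.

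Multiplying, the sum is at least $cN^{\kappa_{\star}-1-\kappa_{\star}\alpha}$, and hence
\[
Z_{N}\ge cN^{\alpha}N^{\kappa_{\star}-1-\kappa_{\star}\alpha}=cN^{(\kappa_{\star}-1)(1-\alpha)}.
\]
The only point needing care is that the bulk region is nonempty with the claimed cardinality for large $N$, which is elementary. I expect no real obstacle here: the lemma is a one-sided estimate, and the scattered configurations on $S_{\star}$ alone already produce the claimed growth.
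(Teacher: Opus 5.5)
Your argument is correct and is essentially the paper's proof: restrict to configurations supported on $S_{\star}$, collapse the $q$-sums by Vandermonde's identity, and apply Lemma \ref{lem:asymp2}, which does cover $\alpha<1$ and gives the order $N^{\kappa_{\star}(1-\alpha)-1}$. In your fallback argument, $p_{\kappa_{\star}}$ actually ranges over $[N/\kappa_{\star},(\kappa_{\star}+1)N/(2\kappa_{\star})]$ rather than $[N/\kappa_{\star},N/2]$, but this slip is harmless. In fact the bulk restriction is unnecessary: $\mathfrak{a}(p)\le N^{\alpha}$ for every $0\le p\le N$, and there are $\binom{N+\kappa_{\star}-1}{\kappa_{\star}-1}\ge cN^{\kappa_{\star}-1}$ tuples.
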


\begin{proof}
Recall from \eqref{eq:kappa-star} that $\kappa_{\star}\ge2$. Consider
only the configurations in which all particles are on $S_{\star}$.
Then, via Vandermonde's identity,
\[
Z_{N}\ge N^{\alpha}\sum_{\substack{p_{1},\dots,p_{\kappa_{\star}}\ge0:\\
p_{1}+\cdots+p_{\kappa_{\star}}=N
}
}\frac{1}{\mathfrak{a}(p_{1})\mathfrak{a}(p_{2})\cdots\mathfrak{a}(p_{\kappa_{\star}})}\ge cN^{\alpha}N^{\kappa_{\star}(1-\alpha)-1}=cN^{(\kappa_{\star}-1)(1-\alpha)},
\]
where the second inequality follows from Lemma \ref{lem:asymp2}.
\end{proof}

\begin{lem}
\label{lem:s10}For each $x\in S$, we have
\[
\frac{N^{\alpha}}{\binom{N}{A}}\sum_{\bm{\eta}\in\Omega^{x,\ell_{N}}_{N}}\frac{\bm{m}^{\bm{\eta}}}{\bm{g!}(\bm{\eta})}\leq c\ell^{(\kappa_{\star}-1)(1-\alpha)}_{N}.
\]
\end{lem}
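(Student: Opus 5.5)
We will proceed exactly as in the proofs of Lemmas \ref{lem:s5} and \ref{lem:s8}. Write $S=\llbracket1,\kappa\rrbracket$ with $x=1$, and let $p_{j}$ (resp. $q_{j}$) be the total (resp. type-$1$) number of particles at site $j$. Then the left-hand side equals
\[
\frac{N^{\alpha}}{{N \choose A}}\sum_{\substack{p_{1}+\cdots+p_{\kappa}=N,\\
p_{1}\ge N-\ell_{N}
}
}\sum_{q_{1}+\cdots+q_{\kappa}=A}\prod^{\kappa}_{j=1}\frac{{p_{j} \choose q_{j}}m_{1}(j)^{q_{j}}m_{2}(j)^{p_{j}-q_{j}}}{\mathfrak{a}(p_{j})}.
\]
Lemma \ref{lem:comb-bound} bounds this by
\[
cN^{\alpha}\sum\prod_{j}\frac{1}{\mathfrak{a}(p_{j})}\Bigl(\frac{Am_{1}(j)+Bm_{2}(j)}{N}\Bigr)^{p_{j}},
\]
and each factor $(Am_{1}(j)+Bm_{2}(j))/N$ is at most $1$. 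We then set $L:=N-p_{1}\in\llbracket0,\ell_{N}\rrbracket$, the number of particles off $x$. Since $p_{1}\ge N-\ell_{N}\ge N/2$, we have $N^{\alpha}/\mathfrak{a}(p_{1})\le2^{\alpha}$, so the bound reduces to
\[
c\sum_{L=0}^{\ell_{N}}\sum_{\substack{p_{2}+\cdots+p_{\kappa}=L}}\prod_{j=2}^{\kappa}\frac{1}{\mathfrak{a}(p_{j})}\Bigl(\frac{Am_{1}(j)+Bm_{2}(j)}{N}\Bigr)^{p_{j}}.
\]

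Next we split the remaining sites into $S_{\star}\setminus\{x\}$, on which the weights equal $1$, and $S\setminus S_{\star}$, on which they are at most $\upsilon<1$ for large $N$ (as shown in the proof of Lemma \ref{lem:s5}). Let $M$ be the number of particles on $S\setminus S_{\star}$. The geometric factor gives $\sum_{M}(M+1)^{\kappa}\upsilon^{M}<\infty$, which lets us absorb the non-maximal sites into a constant. We are left with the sum over the free sites of $S_{\star}$:
\[
\sum_{K=0}^{\ell_{N}}\sum_{p_{1}+\cdots+p_{r}=K}\prod_{i=1}^{r}\frac{1}{\mathfrak{a}(p_{i})},\qquad r:=|S_{\star}\setminus\{x\}|.
\]
Here $r=\kappa_{\star}-1$ if $x\in S_{\star}$, and $r=\kappa_{\star}$ otherwise.

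For $\alpha<1$, Lemma \ref{lem:asymp2} gives that the inner sum is $\le c(K+1)^{r(1-\alpha)-1}$, so summing over $K\le\ell_{N}$ yields $c\,\ell_{N}^{r(1-\alpha)}$. An equivalent route is to bound directly by $\bigl(\sum_{p\le\ell_{N}}\mathfrak{a}(p)^{-1}\bigr)^{r}\le c\ell_{N}^{r(1-\alpha)}$. This gives the claim when $x\in S_{\star}$.

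The main obstacle is the case $x\notin S_{\star}$, where $r=\kappa_{\star}$ would give a worse exponent. Here we will not discard the factor $((Am_{1}(x)+Bm_{2}(x))/N)^{p_{1}}\le\upsilon^{N-\ell_{N}}$. Combined with the crude bound above, this gives $c\,\upsilon^{N-\ell_{N}}\ell_{N}^{\kappa_{\star}(1-\alpha)}$, which is far smaller than $\ell_{N}^{(\kappa_{\star}-1)(1-\alpha)}$ since $\ell_{N}\ll N$. This completes the planned proof in both cases.
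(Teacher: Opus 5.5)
Your proposal is correct and follows essentially the paper's argument: Lemma \ref{lem:comb-bound}, the bound $N^{\alpha}/\mathfrak{a}(p_{1})\le 2^{\alpha}$, geometric decay in $\upsilon$ on the sites of $S\setminus S_{\star}$, and Lemma \ref{lem:asymp2} (or your simpler product bound $(\sum_{p\le\ell_{N}}\mathfrak{a}(p)^{-1})^{\kappa_{\star}-1}$) on $S_{\star}\setminus\{x\}$. The only difference is the case $x\notin S_{\star}$. The paper handles it by asserting that the left-hand side is maximized at some $x\in S_{\star}$; this can be justified by exchanging the contents of $x$ and a site of $S_{\star}$, which does not decrease $\bm{m}^{\bm{\eta}}$ because $\eta_{1}(x)\ge A-\ell_{N}$ and $\eta_{2}(x)\ge B-\ell_{N}$. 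You instead keep the factor $\upsilon^{N-\ell_{N}}$ as in Lemma \ref{lem:s5}, which is more explicit and even shows that this case is exponentially negligible.
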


\begin{proof}
The facts $A-\ell_{N}\gg\ell_{N}$ and $B-\ell_{N}\gg\ell_{N}$ imply
that the left-hand side of the lemma is maximized if $x\in S_{\star}$.
Thus, we may assume so. Let $S=\llbracket1,\kappa\rrbracket$, $S_{\star}=\llbracket1,\kappa_{\star}\rrbracket$,
and $x=1$. As done in the proof of Lemma \ref{lem:s8}, the left-hand
side is bounded by
\begin{align*}
 & \frac{N^{\alpha}}{{N \choose A}}\sum_{\substack{p_{1},\dots,p_{\kappa}\ge0:\\
p_{1}+\cdots+p_{\kappa}=N,\\
p_{1}\ge N-\ell_{N}
}
}\sum_{\substack{q_{1},\dots,q_{\kappa}\ge0:\\
q_{1}+\cdots+q_{\kappa}=A
}
}\prod^{\kappa}_{j=1}\frac{{p_{j} \choose q_{j}}m_{1}(j)^{q_{j}}m_{2}(j)^{p_{j}-q_{j}}}{\mathfrak{a}(p_{j})}\\
 & \le cN^{\alpha}\sum_{\substack{p_{1},\dots,p_{\kappa}\ge0:\\
p_{1}+\cdots+p_{\kappa}=N,\\
p_{1}\ge N-\ell_{N}
}
}\left(\prod^{\kappa}_{j=1}\frac{1}{\mathfrak{a}(p_{j})}\right)\prod^{\kappa}_{j=\kappa_{\star}+1}\left(\frac{Am_{1}(j)+Bm_{2}(j)}{N}\right)^{p_{j}}.
\end{align*}
As claimed in the proof of Lemma \ref{lem:s5}, there exists $\upsilon\in(0,1)$
such that $\max_{y\in S\setminus S_{\star}}\frac{Am_{1}(y)+Bm_{2}(y)}{N}\le\upsilon$
for sufficiently large $N$. Thus, denoting by $L$ the number of
particles on $S\setminus S_{\star}$, the right-hand side equals
\[
cN^{\alpha}\sum^{\ell_{N}}_{L=0}\sum^{N-L}_{p_{1}=N-\ell_{N}}\frac{1}{p^{\alpha}_{1}}\sum_{\substack{p_{2},\dots,p_{\kappa_{\star}}\ge0:\\
p_{2}+\cdots+p_{\kappa_{\star}}=N-L-p_{1}
}
}\sum_{\substack{p_{\kappa_{\star}+1},\dots,p_{\kappa}\ge0:\\
p_{\kappa_{\star}+1}+\cdots+p_{\kappa}=L
}
}\left(\prod^{\kappa}_{j=2}\frac{1}{\mathfrak{a}(p_{j})}\right)\upsilon^{L}.
\]
By Lemma \ref{lem:asymp2} and the fact that $p^{\alpha}_{1}\ge(N-\ell_{N})^{\alpha}\asymp N^{\alpha}$,
this is bounded by
\[
c\sum^{\ell_{N}}_{L=0}(L+1)^{(\kappa-\kappa_{\star})(1-\alpha)-1}\upsilon^{L}\sum^{N-L}_{p_{1}=N-\ell_{N}}(N-L-p_{1}+1)^{(\kappa_{\star}-1)(1-\alpha)-1}.
\]
Summing up the summation in $p_{1}$, this is bounded above by
\begin{align*}
c\sum^{\ell_{N}}_{L=0}(\ell_{N}-L+1)^{(\kappa_{\star}-1)(1-\alpha)}(L+1)^{(\kappa-\kappa_{\star})(1-\alpha)-1}\upsilon^{L} & \le c(\ell_{N}+1)^{(\kappa_{\star}-1)(1-\alpha)}\sum^{\ell_{N}}_{L=0}(L+1)^{(\kappa-\kappa_{\star})(1-\alpha)-1}\upsilon^{L}\\
 & \le c'(\ell_{N}+1)^{(\kappa_{\star}-1)(1-\alpha)}.
\end{align*}
This finishes the proof.
\end{proof}

\begin{proof}[Proof of Theorem \ref{thm:cond}-(3)]
 Our objective is to prove that
\[
\nu_{N}(\mathcal{E}^{x}_{N})\ll1\qquad\text{for all}\quad x\in S.
\]
This follows from Lemmas \ref{lem:s9} and \ref{lem:s10}:
\[
\nu_{N}(\mathcal{E}^{x}_{N})=\frac{N^{\alpha}}{Z_{N}{N \choose A}}\sum_{\bm{\eta}\in\Omega^{x,\ell_{N}}_{N}}\frac{\bm{m}^{\bm{\eta}}}{\bm{g!}(\bm{\eta})}\le\frac{c\ell^{(\kappa_{\star}-1)(1-\alpha)}_{N}}{N^{(\kappa_{\star}-1)(1-\alpha)}}\ll1,
\]
which is valid since $\ell_{N}\ll N$, $\kappa_{\star}-1\ge1$, and
$1-\alpha>0$.
\end{proof}

\section{\label{sec3}Local Analysis in Metastable Valleys}

In this section, we prove two local mixing results in Propositions
\ref{prop1} and \ref{prop3}. Then, we present a proof of Theorem
\ref{thm:main}-(1).

We start with the proof of the sector condition, Lemma \ref{lem:sector}.
The following notation is useful throughout. Define $(N-1)$-particle
configuration spaces $\Omega^{i,-}_{N-1}$ for each $i\in\{1,2\}$
as 
\begin{equation}
\begin{aligned}\Omega^{1,-}_{N-1} & :=\left\{ \bm{\xi}\in\mathbb{N}^{S}_{0}:\sum_{x\in S}\xi_{1}(x)=A-1,\enspace\sum_{x\in S}\xi_{2}(x)=B\right\} ,\\
\Omega^{2,-}_{N-1} & :=\left\{ \bm{\xi}\in\mathbb{N}^{S}_{0}:\sum_{x\in S}\xi_{1}(x)=A,\enspace\sum_{x\in S}\xi_{2}(x)=B-1\right\} .
\end{aligned}
\label{eq:Omega-minus}
\end{equation}
In words, $\Omega^{i,-}_{N-1}$ is obtained from $\Omega_{N}$ by
removing one type-$i$ particle.
\begin{proof}[Proof of Lemma \ref{lem:sector}]
 Notice that 
\[
\langle G,-\mathcal{L}_{N}F\rangle_{\nu_{N}}=\sum_{\bm{\eta}\in\Omega_{N}}\sum_{i\in\{1,2\}}\sum_{u,w\in S}\nu_{N}(\bm{\eta})\bm{g}_{i}(\bm{\eta}(u))r_{i}(u,w)\left(F(\bm{\eta})-F(\bm{\eta}-\delta^{i}_{u}+\delta^{i}_{w})\right)G(\bm{\eta}).
\]
By the change of variables $\bm{\eta}-\delta^{i}_{u}=\bm{\xi}\in\Omega^{i,-}_{N-1}$,
one can notice that the above expression is equal to
\begin{equation}
\begin{aligned} & \sum_{i\in\{1,2\}}\sum_{u,w\in S}\sum_{\bm{\xi}\in\Omega^{i,-}_{N-1}}\nu_{N}(\bm{\xi}+\delta^{i}_{u})\bm{g}_{i}((\bm{\xi}+\delta^{i}_{u})(u))r_{i}(u,w)\left(F(\bm{\xi}+\delta^{i}_{u})-F(\bm{\xi}+\delta^{i}_{w})\right)G(\bm{\xi}+\delta^{i}_{u})\\
 & =\sum_{i\in\{1,2\}}\sum_{\bm{\xi}\in\Omega^{i,-}_{N-1}}\frac{N^{\alpha}}{Z_{N}\binom{N}{A}}\frac{\bm{m}^{\bm{\xi}}}{\bm{g!}(\bm{\xi})}\sum_{u,w\in S}m_{i}(u)r_{i}(u,w)G(\bm{\xi}+\delta^{i}_{u})\left(F(\bm{\xi}+\delta^{i}_{u})-F(\bm{\xi}+\delta^{i}_{w})\right).
\end{aligned}
\label{eq:sector-1}
\end{equation}
From the stationarity of $m_{i}$, we have $\sum_{u,w\in S}m_{i}(u)r_{i}(u,w)(F(\bm{\xi}+\delta^{i}_{u})-F(\bm{\xi}+\delta^{i}_{w}))=0$.
Thus by letting 
\begin{equation}
\overline{G}^{i}(\bm{\xi}):=\frac{1}{\kappa}\sum_{z\in S}G(\bm{\xi}+\delta^{i}_{z}),\label{eq:Gibar-def}
\end{equation}
we have
\begin{align*}
 & \left|\sum_{u,w\in S}m_{i}(u)r_{i}(u,w)G(\bm{\xi}+\delta^{i}_{u})\left(F(\bm{\xi}+\delta^{i}_{u})-F(\bm{\xi}+\delta^{i}_{w})\right)\right|\\
 & =\left|\sum_{u,w\in S}m_{i}(u)r_{i}(u,w)\left(G(\bm{\xi}+\delta^{i}_{u})-\overline{G}^{i}(\bm{\xi})\right)\left(F(\bm{\xi}+\delta^{i}_{u})-F(\bm{\xi}+\delta^{i}_{w})\right)\right|\\
 & \leq\frac{\theta}{2}\sum_{u,w\in S}m_{i}(u)r_{i}(u,w)\left(F(\bm{\xi}+\delta^{i}_{u})-F(\bm{\xi}+\delta^{i}_{w})\right)^{2}+\frac{1}{2\theta}\sum_{u,w\in S}m_{i}(u)r_{i}(u,w)\left(G(\bm{\xi}+\delta^{i}_{u})-\overline{G}^{i}(\bm{\xi})\right)^{2},
\end{align*}
for any $\theta>0$. Substituting this to \eqref{eq:sector-1}, we
obtain
\begin{equation}
\left|\langle G,-\mathcal{L}_{N}F\rangle_{\nu_{N}}\right|\le\theta\mathcal{D}_{N}(F)+\frac{1}{2\theta}\sum_{i\in\{1,2\}}\sum_{\bm{\xi}\in\Omega^{i,-}_{N-1}}\frac{N^{\alpha}}{Z_{N}\binom{N}{A}}\frac{\bm{m}^{\bm{\xi}}}{\bm{g!}(\bm{\xi})}\sum_{u,w\in S}m_{i}(u)r_{i}(u,w)\left(G(\bm{\xi}+\delta^{i}_{u})-\overline{G}^{i}(\bm{\xi})\right)^{2}.\label{eq:sector-2}
\end{equation}
Note that by the definition of $\overline{G}^{i}(\bm{\xi})$,
\begin{equation}
\left(G(\bm{\xi}+\delta^{i}_{u})-\overline{G}^{i}(\bm{\xi})\right)^{2}=\left(G(\bm{\xi}+\delta^{i}_{u})-\frac{1}{\kappa}\sum_{z\in S}G(\bm{\xi}+\delta^{i}_{z})\right)^{2}\le\frac{1}{\kappa}\sum_{z\in S}\left(G(\bm{\xi}+\delta^{i}_{u})-G(\bm{\xi}+\delta^{i}_{z})\right)^{2}.\label{eq:sector-3}
\end{equation}
Let $E_{i}:=\{(x,y)\in S\times S:r_{i}(x,y)>0\}$ and note that
\begin{equation}
c\le\min_{i\in\{1,2\}}\min_{(x,y)\in E_{i}}m_{i}(x)r_{i}(x,y)\le\max_{i\in\{1,2\}}\max_{(x,y)\in E_{i}}m_{i}(x)r_{i}(x,y)\le c',\label{eq:sector-4}
\end{equation}
By the irreducibility of the underlying random walk, for each pair
$u,z\in S$ there exists a path
\[
u=u^{i}_{0}\to u^{i}_{1}\to\cdots\to u^{i}_{k}=z
\]
such that $k<\kappa$ and $(u^{i}_{m},u^{i}_{m+1})\in E_{i}$ for
each $m\in\llbracket0,k-1\rrbracket$. Then, the summation at the
right-hand line of \eqref{eq:sector-3} can be bounded above, via
the Cauchy--Schwarz inequality, by 
\[
\frac{1}{\kappa}\sum_{z\in S}k\sum^{k-1}_{m=0}\left[G(\bm{\xi}+\delta^{i}_{u^{i}_{m+1}})-G(\bm{\xi}+\delta^{i}_{u^{i}_{m}})\right]^{2}\leq\sum_{z\in S}\sum_{(x,y)\in E_{i}}\left(G(\bm{\xi}+\delta^{i}_{x})-G(\bm{\xi}+\delta^{i}_{y})\right)^{2}.
\]
Thus, via \eqref{eq:sector-4}, the summation in $u,w\in S$ at \eqref{eq:sector-2}
is bounded above by
\begin{equation}
\begin{aligned}\sum_{u,w\in S}m_{i}(u)r_{i}(u,w)\sum_{z\in S}\sum_{(x,y)\in E_{i}} & \left(G(\bm{\xi}+\delta^{i}_{x})-G(\bm{\xi}+\delta^{i}_{y})\right)^{2}\\
 & \le c\sum_{(x,y)\in E_{i}}m_{i}(x)r_{i}(x,y)\left(G(\bm{\xi}+\delta^{i}_{x})-G(\bm{\xi}+\delta^{i}_{y})\right)^{2}.
\end{aligned}
\label{eq:sector-5}
\end{equation}
Substituting this to \eqref{eq:sector-2} and returning to $\Omega_{N}$
with $\bm{\xi}+\delta^{i}_{x}=\bm{\eta}$, we have
\begin{align*}
\left|\langle G,-\mathcal{L}_{N}F\rangle_{\nu_{N}}\right| & \le\theta\mathcal{D}_{N}(F)+\frac{c}{\theta}\sum_{i\in\{1,2\}}\sum_{\bm{\eta}\in\Omega_{N}}\sum_{(x,y)\in E_{i}}\nu_{N}(\bm{\eta})\bm{g}_{i}(\bm{\eta}(x))r_{i}(x,y)\left(G(\bm{\eta})-G(\bm{\eta}-\delta^{i}_{x}+\delta^{i}_{y})\right)^{2}\\
 & =\theta\mathcal{D}_{N}(F)+\frac{2c}{\theta}\mathcal{D}_{N}(G).
\end{align*}
The proof is completed by taking $\theta=\sqrt{\mathcal{D}_{N}(G)/\mathcal{D}_{N}(F)}$.
\end{proof}

Next, we state and prove two capacity estimates which immediately
verify Proposition \ref{prop3}.
\begin{lem}
\label{lem:cap1}Recall the definition of $\tilde{\mathcal{E}}^{x}_{N}$
from \eqref{eq:EN-tilde}. For each $x\in S_{\star}$,
\[
{\rm CAP}_{N}(\mathcal{E}^{x}_{N},\Omega_{N}\setminus\tilde{\mathcal{E}}^{x}_{N})\le cN^{-b\alpha}.
\]
\end{lem}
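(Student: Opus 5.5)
We use the Dirichlet principle together with \eqref{eq:CAPCAP}. Since ${\rm CAP}_{N}\le c\,{\rm CAP}^{s}_{N}$ by the sector condition (Lemma \ref{lem:sector}), it suffices to bound ${\rm CAP}^{s}_{N}(\mathcal{E}^{x}_{N},\Omega_{N}\setminus\tilde{\mathcal{E}}^{x}_{N})$. By \eqref{eq:DP}, this is at most $\mathcal{D}_{N}(F)$ for any test function $F$ with $F\equiv1$ on $\mathcal{E}^{x}_{N}$ and $F\equiv0$ off $\tilde{\mathcal{E}}^{x}_{N}$.

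We choose $F$ as a function of $|\bm{\eta}(x)|$ only. Writing $k:=N-|\bm{\eta}(x)|$ for the number of particles outside $x$, set $F=1$ for $k\le\ell_{N}$ and $F=0$ for $k\ge\tilde{\ell}_{N}$. In between, $F$ interpolates linearly in $k$:
\[
F(\bm{\eta})=\frac{\tilde{\ell}_{N}-k}{\tilde{\ell}_{N}-\ell_{N}}.
\]
The function $F$ changes only along jumps that move a particle into or out of $x$, and each such jump changes $F$ by $1/(\tilde{\ell}_{N}-\ell_{N})\asymp N^{-b}$. The jump rates $\bm{g}_{i}(\bm{\eta}(z))r_{i}(z,w)$ are uniformly bounded, since $\bm{g}_{i}\le\mathfrak{a}(n)/\mathfrak{a}(n-1)\le2^{\alpha}$. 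Hence
\[
\mathcal{D}_{N}(F)\le\frac{c}{N^{2b}}\,\nu_{N}\bigl(\bm{\eta}:\ell_{N}\le N-|\bm{\eta}(x)|\le\tilde{\ell}_{N}+1\bigr).
\]
A cruder bound $\nu_{N}(\cdot)\le1$ would only give $cN^{-2b}$. That already beats $N^{-b\alpha}$ when $\alpha\le2$, but not for large $\alpha$, so we need to estimate the $\nu_{N}$-mass of the annulus sharply.

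To do so we split the annulus into two parts. In the first part, the $k$ outside particles are spread out with no single site carrying more than $k/2$ of them. In the second part, some site $y\ne x$ carries a large pile. Following the computation in \eqref{eq:ZN-3}--\eqref{eq:ZN-4} and using Lemmas \ref{lem:comb-bound} and \ref{lem:asymp2}, the $\nu_{N}$-weight of configurations with exactly $k$ particles off $x$ is bounded by $c$ times the $k$-particle weight on $S\setminus\{x\}$, with $1/\mathfrak{a}$ factors, times $\bigl(N/(N-k)\bigr)^{\alpha}\le c$. For $\alpha>1$, the $k$-particle weight on the remaining $\kappa-1$ sites is $O(k^{-\alpha})$, by the one-big-jump asymptotics of Lemma \ref{lem:asymp2}. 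Summing over $k\ge\ell_{N}$ gives an annulus mass of $O(\ell_{N}^{1-\alpha})$, which is too weak.

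The better approach is to put the flat part of $F$ on the inner portion of the annulus and let $F$ decay where the measure is small. We therefore choose a profile $F=\phi(k)$ that minimizes the one-dimensional Dirichlet energy
\[
\sum_{k}w(k)\bigl(\phi(k+1)-\phi(k)\bigr)^{2},\qquad w(k)\asymp k^{-\alpha},
\]
subject to $\phi(\ell_{N})=1$ and $\phi(\tilde{\ell}_{N})=0$. The optimal value is
\[
\Bigl(\sum_{k=\ell_{N}}^{\tilde{\ell}_{N}}k^{\alpha}\Bigr)^{-1}\asymp\tilde{\ell}_{N}^{-(1+\alpha)}\le N^{-b\alpha}.
\]
To justify this we must check that the edge weight for changing $k$ by one is $O(k^{-\alpha})$: the relevant rate is $\bm{g}_{i}$ at $x$ or at the site receiving or losing the particle, and the measure of level $k$ is $O(k^{-\alpha})$. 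The main obstacle is verifying this level-$k$ mass bound uniformly, including configurations where the outside particles pile up at one other site $y\in S_{\star}$. That case is handled by the $k^{-\alpha}$ factor from $\mathfrak{a}(k)$ at $y$, via the same bounds as in Lemma \ref{lem:s4}.
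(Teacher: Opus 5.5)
Your proof is correct, but it takes a more roundabout route than the paper.

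The paper plugs the plain indicator $F=\mathbf{1}_{\tilde{\mathcal{E}}^{x}_{N}}$ into \eqref{eq:DP}. Then $\mathcal{D}_{N}(F)=\langle F,-\mathcal{L}_{N}F\rangle_{\nu_{N}}$ only sees jumps leaving $\tilde{\mathcal{E}}^{x}_{N}$. These occur at bounded rate from the single level $\{|\bm{\eta}(x)|=N-\tilde{\ell}_{N}\}$, so $\mathcal{D}_{N}(F)\le c\,\nu_{N}(|\bm{\eta}(x)|=N-\tilde{\ell}_{N})\le c\tilde{\ell}_{N}^{-\alpha}$. This is exactly your level-$k$ mass bound, used only at $k=\tilde{\ell}_{N}$.

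Your heuristic that the energy is (increment)$^{2}$ times (annulus mass) led you past this choice. A unit drop placed on the outermost level, where the level mass is smallest, already gives $N^{-b\alpha}$, with no need to spread the gradient over the annulus.

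Your optimized profile does buy something. With resistances $k^{\alpha}$ in series it gives the sharper bound $c\tilde{\ell}_{N}^{-(1+\alpha)}$, which would relax \eqref{eq:gamma-def} to $\gamma c_{0}<b(1+\alpha)$. Only the upper bound $w(k)\le ck^{-\alpha}$ is needed: choosing $\phi$ optimal for the weights $ck^{-\alpha}$ then bounds $\mathcal{D}_{N}(F)$ from above. The price is that you need the level bound on all of $\llbracket\ell_{N},\tilde{\ell}_{N}\rrbracket$, plus a one-dimensional resistor computation.

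Finally, what you call the main obstacle is not really one. For every $k\le N/2$, Lemmas \ref{lem:s1}, \ref{lem:comb-bound} (with $m_{i}\le1$) and \ref{lem:asymp2} give, uniformly, $\nu_{N}(N-|\bm{\eta}(x)|=k)\le c\bigl(\frac{N}{N-k}\bigr)^{\alpha}\sum_{p_{2}+\cdots+p_{\kappa}=k}\prod_{j\ge2}\mathfrak{a}(p_{j})^{-1}\le c(k+1)^{-\alpha}$. The pile-up configurations at a single site $y$ are precisely the one-big-jump term already built into the asymptotics of Lemma \ref{lem:asymp2}. No case split or appeal to Lemma \ref{lem:s4} is required.
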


\begin{proof}
By \eqref{eq:CAPCAP}, it suffices to prove the inequality with ${\rm CAP}^{s}_{N}$
in place of ${\rm CAP}_{N}$. Using a test object $F={\bf 1}_{\tilde{\mathcal{E}}^{x}_{N}}$,
which clearly satisfies $F|_{\mathcal{E}^{x}_{N}}\equiv1$ and $F|_{\Omega_{N}\setminus\tilde{\mathcal{E}}^{x}_{N}}\equiv0$
since $\mathcal{E}^{x}_{N}\subset\tilde{\mathcal{E}}^{x}_{N}$, the
Dirichlet principle \eqref{eq:DP} implies that
\begin{align*}
 & {\rm CAP}^{s}_{N}(\mathcal{E}^{x}_{N},\Omega_{N}\setminus\tilde{\mathcal{E}}^{x}_{N})\le\mathcal{D}_{N}(F)=\langle F,-\mathcal{L}_{N}F\rangle_{\nu_{N}}\\
 & =\sum_{\bm{\eta}\in\tilde{\mathcal{E}}^{x}_{N}}\nu_{N}(\bm{\eta})\sum_{i\in\{1,2\}}\sum_{u,w\in S}\bm{g}_{i}(\bm{\eta}(u))r_{i}(u,w)\left[1-{\bf 1}_{\tilde{\mathcal{E}}^{x}_{N}}(\bm{\eta}-\delta^{i}_{u}+\delta^{i}_{w})\right].
\end{align*}
By \eqref{eq:EN-tilde}, the new configuration $\bm{\eta}-\delta^{i}_{u}+\delta^{i}_{w}$
exits $\tilde{\mathcal{E}}^{x}_{N}$ if and only if $|\bm{\eta}(x)|=N-\tilde{\ell}_{N}$,
$u=x$, and $w\ne x$. Thus, the right-hand side equals
\[
\sum_{\substack{\bm{\eta}\in\tilde{\mathcal{E}}^{x}_{N}:\\
|\bm{\eta}(x)|=N-\tilde{\ell}_{N}
}
}\nu_{N}(\bm{\eta})\sum_{i\in\{1,2\}}\sum_{w\in S\setminus\{x\}}\bm{g}_{i}(\bm{\eta}(x))r_{i}(x,w)\le c\nu_{N}\left(\bm{\eta}\in\tilde{\mathcal{E}}^{x}_{N}:|\bm{\eta}(x)|=N-\tilde{\ell}_{N}\right).
\]
The second inequality holds since $\bm{g}_{i}$ and $r_{i}$ are uniformly
bounded. Finally, we write the measure at the right-hand side as
\[
\frac{N^{\alpha}}{Z_{N}{N \choose A}}\sum_{\substack{p_{1},\dots,p_{\kappa}\ge0:\\
p_{1}+\cdots+p_{\kappa}=N,\\
p_{1}=N-\tilde{\ell}_{N}
}
}\sum_{\substack{q_{1},\dots,q_{\kappa}\ge0:\\
q_{1}+\cdots+q_{\kappa}=A
}
}\prod^{\kappa}_{j=1}\frac{{p_{j} \choose q_{j}}m_{1}(j)^{q_{j}}m_{2}(j)^{p_{j}-q_{j}}}{\mathfrak{a}(p_{j})}\le cN^{\alpha}\sum_{\substack{p_{1},\dots,p_{\kappa}\ge0:\\
p_{1}+\cdots+p_{\kappa}=N,\\
p_{1}=N-\tilde{\ell}_{N}
}
}\frac{1}{\mathfrak{a}(p_{1})\mathfrak{a}(p_{2})\cdots\mathfrak{a}(p_{\kappa})},
\]
via Lemmas \ref{lem:s1}, \ref{lem:comb-bound}, and Vandermonde's
identity. Then by Lemma \ref{lem:asymp2}, the right-hand side is
bounded by
\[
c\sum_{\substack{p_{2},\dots,p_{\kappa}\ge0:\\
p_{2}+\cdots+p_{\kappa}=\tilde{\ell}_{N}
}
}\frac{1}{\mathfrak{a}(p_{2})\cdots\mathfrak{a}(p_{\kappa})}\le\frac{c'}{\tilde{\ell}^{\alpha}_{N}}\asymp N^{-b\alpha}.
\]
Combining all the estimates completes the proof.
\end{proof}

Recall from \eqref{eq:ellN-exact} that we selected $\ell_{N}=\lfloor\gamma\log N\rfloor$.
The following lemma explains why we need to take $\gamma>0$ small,
which is clearly different from the original zero-range process considered
in \cite{BL12a,Lan14,Seo19}.
\begin{lem}
\label{lem:cap2}Recall $\bm{\xi}^{x}_{N}\in\mathcal{E}^{x}_{N}$
from \eqref{eq:xiNx}. For each $x\in S_{\star}$,
\[
\inf_{\bm{\eta}\in\mathcal{E}^{x}_{N}\setminus\{\bm{\xi}^{x}_{N}\}}{\rm CAP}_{N}(\bm{\eta},\bm{\xi}^{x}_{N})\ge\frac{cN^{-\gamma c_{0}}}{(\log N)^{1+\alpha(\kappa-1)}},
\]
where $c_{0}>0$ is a global constant defined at \eqref{eq:c0-def}.
\end{lem}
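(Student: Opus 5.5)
By \eqref{eq:CAPCAP} it suffices to bound ${\rm CAP}^{s}_{N}(\bm{\eta},\bm{\xi}^{x}_{N})$ from below. We use the Thomson principle \eqref{eq:TP}, which needs only a single admissible path. Fix $\bm{\eta}\in\mathcal{E}^{x}_{N}\setminus\{\bm{\xi}^{x}_{N}\}$ and let $p:=N-|\bm{\eta}(x)|\in\llbracket1,\ell_{N}\rrbracket$ be the number of particles outside $x$. We build a path from $\bm{\eta}$ to $\bm{\xi}^{x}_{N}$ that moves the outside particles to $x$ one at a time.

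Each particle is moved along a shortest path in the graph of $E_{i}\cup E_{i}^{*}$, where $E_{i}^{*}$ is the reversed edge set. This graph is connected, and $R^{s}_{N}\ge\frac{1}{2}R_{N}$ and $R^{s}_{N}\ge\frac{1}{2}R^{*}_{N}$, so every step has positive symmetrized rate. Each particle needs at most $\kappa-1$ steps, so the path has $k\le(\kappa-1)\ell_{N}$ steps. Along the whole path every configuration $\bm{\zeta}_{m}$ stays in $\mathcal{E}^{x}_{N}$, and its outside particles are a sub-multiset of those of $\bm{\eta}$, up to at most one particle in transit. By \eqref{eq:TP},
\[
\frac{1}{{\rm CAP}^{s}_{N}(\bm{\eta},\bm{\xi}^{x}_{N})}\le\sum_{m=0}^{k-1}\frac{1}{\nu_{N}(\bm{\zeta}_{m})R^{s}_{N}(\bm{\zeta}_{m},\bm{\zeta}_{m+1})}\le(\kappa-1)\ell_{N}\max_{m}\frac{1}{\nu_{N}(\bm{\zeta}_{m})R^{s}_{N}(\bm{\zeta}_{m},\bm{\zeta}_{m+1})}.
\]
Since $\ell_{N}\asymp\log N$, this prefactor supplies one power of $\log N$. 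It remains to bound $\nu_{N}(\bm{\zeta})$ and the rates from below uniformly along the path.

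\textbf{Lower bound on the rates.} Every rate in the path is at least $c\,\bm{g}_{i}(\bm{\zeta}(u))$ for some site $u$ holding a type-$i$ particle. From \eqref{eq:g-def}, $\bm{g}_{i}(m,n)\ge\frac{k_{i}}{k}\cdot\frac{\mathfrak{a}(k)}{\mathfrak{a}(k-1)}$ with $k=m+n$. At a non-condensate site $k\le\ell_{N}+1$, so this is at least $c/\ell_{N}$. At $x$ the species fraction $k_{i}/k$ is of order $\rho$ or $1-\rho$, so the rate is of order one. Hence the rates contribute at most another factor $c\log N$.

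\textbf{Lower bound on the measure.} From \eqref{eq:nuN-def},
\[
\nu_{N}(\bm{\zeta})=\frac{N^{\alpha}}{Z_{N}\binom{N}{A}}\cdot\frac{\binom{N-p'}{A-q'}}{\mathfrak{a}(N-p')}\prod_{y\ne x}\frac{m_{1}(y)^{\zeta_{1}(y)}m_{2}(y)^{\zeta_{2}(y)}\binom{|\bm{\zeta}(y)|}{\zeta_{1}(y)}}{\mathfrak{a}(|\bm{\zeta}(y)|)},
\]
where $p'\le\ell_{N}$ counts all outside particles and $q'$ the outside type-$1$ particles. We bound the three factors separately.
\begin{itemize}
\item By \eqref{eq:ZN-4}, the ratio $\binom{N-p'}{A-q'}\big/\binom{N}{A}$ is at least $(\rho_{N}\wedge(1-\rho_{N}))^{p'}(1-o(1))$, hence at least $e^{-c\ell_{N}}$.
\item The factor $m_{i}(y)^{\zeta_{i}(y)}$ is at least $(\min_{i,y}m_{i}(y))^{\ell_{N}}$, and the binomial at each site is at least $1$.
\item Each $\mathfrak{a}(|\bm{\zeta}(y)|)\le(\ell_{N}+1)^{\alpha}$, and there are at most $\kappa-1$ such sites, which costs at most $(\log N)^{\alpha(\kappa-1)}$. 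Also $N^{\alpha}/\mathfrak{a}(N-p')\ge1$ and $Z_{N}\le c$ by Lemma \ref{lem:s1}.
\end{itemize}
Collecting these, $\nu_{N}(\bm{\zeta})\ge c\,e^{-c_{0}\ell_{N}}(\log N)^{-\alpha(\kappa-1)}$, where
\[
c_{0}:=\log\frac{1}{\min_{i,y}m_{i}(y)}+\log\frac{1}{\rho\wedge(1-\rho)}+1.
\]
With $\ell_{N}=\lfloor\gamma\log N\rfloor$ the exponential factor equals $N^{-\gamma c_{0}}$.

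\textbf{Combining.} Putting the path length, rate and measure bounds into the Thomson estimate gives ${\rm CAP}^{s}_{N}\ge cN^{-\gamma c_{0}}(\log N)^{-\alpha(\kappa-1)-2}$. This is one logarithm worse than the claimed exponent $1+\alpha(\kappa-1)$. The loss can be recovered by bookkeeping more carefully: the small rate $c/\ell_{N}$ occurs only when a site holds many particles, and then the count of $\binom{|\bm{\zeta}(y)|}{\zeta_{1}(y)}$ or the product $\nu_{N}R^{s}_{N}$ can be combined to absorb one factor. Alternatively, one can simply sum the per-step terms rather than using a maximum.

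The main obstacle is keeping all of these bounds uniform in the starting configuration $\bm{\eta}$. Concretely, the path must keep species fractions and site occupations controlled, and the combinatorial losses must be exactly exponential in $\ell_{N}$. Exponential growth in $\ell_{N}$ is what forces $\gamma$ to be small later on.
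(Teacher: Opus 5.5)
Your strategy matches the paper's. You pass to ${\rm CAP}^{s}_{N}$ via \eqref{eq:CAPCAP}, apply the Thomson principle \eqref{eq:TP} to one path of length $O(\ell_N)$ inside $\mathcal{E}^{x}_{N}$, and bound $\nu_N$ from below uniformly along it; your measure bound is essentially \eqref{eq:nuN-LB}. The gap is in the rates. The bound $\bm{g}_{i}(\bm{\zeta}(u))\ge c/\ell_N$ yields only $(\log N)^{-2-\alpha(\kappa-1)}$, and the step that should recover the missing logarithm is not carried out.

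Of your two remedies, \emph{summing instead of taking the maximum} does not work by itself, because you never fix the order of the moves. For example, $\asymp\ell_N$ type-$1$ particles may be routed through an intermediate site holding $n\asymp\ell_N$ type-$2$ particles. Each such jump has rate $\bm{g}_{1}(1,n)\asymp1/\ell_N$. Summing your per-step bounds (binomial factors discarded) then still gives order $\ell_N^{2}$ times the inverse of your measure bound.

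Your other remedy is the right one, and it can be made exact. As in \eqref{eq:sector-1}, for $\bm{\xi}:=\bm{\zeta}-\delta^{i}_{u}$,
\[
\nu_N(\bm{\zeta})\,\bm{g}_{i}(\bm{\zeta}(u))=\frac{N^{\alpha}}{Z_N\binom{N}{A}}\frac{\bm{m}^{\bm{\xi}}}{\bm{g!}(\bm{\xi})}\,m_i(u).
\]
At the departure site this is the identity $\frac{m}{m+n}\binom{m+n}{m}=\binom{m+n-1}{m-1}\ge1$. No jump on your path leaves $x$, so $\bm{\xi}(x)=\bm{\zeta}(x)$ and $\bm{\xi}$ has at most $\ell_N$ particles off $x$. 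Your three bullet estimates therefore apply verbatim to $\bm{\xi}$. They give $\nu_N(\bm{\zeta})R^{s}_{N}(\bm{\zeta},\bm{\zeta}')\ge c\,e^{-c_0\ell_N}\ell_N^{-\alpha(\kappa-1)}$ for every step, in any order. With at most $\kappa\ell_N$ steps this gives exactly the exponent $1+\alpha(\kappa-1)$.

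The paper takes a different route to the same end. It keeps the plain bound on $\nu_N$ and chooses the path so that each jump is made by a (weakly) majority species at its departure site, so $\bm{g}_{i}\ge\frac12$ and $R^{s}_{N}\ge c$. For this to hold at intermediate sites, particles must also be routed only through sites already emptied (e.g.\ process sites in order of increasing graph distance to $x$); the paper leaves this implicit. Your conductance identity makes any such ordering unnecessary. For the later uses in Propositions \ref{prop1} and \ref{prop3}, any polylogarithmic loss would be harmless, but the lemma as stated needs this extra step.
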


\begin{proof}
Fix $x\in S_{\star}$. For any $\bm{\eta}\in\mathcal{E}^{x}_{N}$,
\[
\nu_{N}(\bm{\eta})=\frac{N^{\alpha}}{Z_{N}{N \choose A}}\frac{\bm{m}^{\bm{\eta}}}{\bm{g!}(\bm{\eta})}=\frac{N^{\alpha}}{Z_{N}{N \choose A}}\frac{1}{\bm{g!}(\eta_{1}(x),\eta_{2}(x))}\prod_{y\ne x}\frac{m_{1}(y)^{\eta_{1}(y)}m_{2}(y)^{\eta_{2}(y)}}{\bm{g!}(\eta_{1}(y),\eta_{2}(y))}.
\]
First,
\begin{align*}
\frac{N^{\alpha}}{{N \choose A}}\frac{1}{\bm{g!}(\eta_{1}(x),\eta_{2}(x))}=\frac{N^{\alpha}}{{N \choose A}}\frac{{\eta_{1}(x)+\eta_{2}(x) \choose \eta_{1}(x)}}{(\eta_{1}(x)+\eta_{2}(x))^{\alpha}} & \ge\frac{\left[A(A-1)\cdots(\eta_{1}(x)+1)\right]\left[B(B-1)\cdots(\eta_{2}(x)+1)\right]}{N(N-1)\cdots(\eta_{1}(x)+\eta_{2}(x)+1)}\\
 & \ge\left(\frac{A-\ell_{N}}{N}\right)^{\ell_{N}}\left(\frac{B-\ell_{N}}{N}\right)^{\ell_{N}}\ge\left[c\rho(1-\rho)\right]^{\ell_{N}},
\end{align*}
for sufficiently big $N$. At the first inequality we used the fact
that $\eta_{1}(x)+\eta_{2}(x)\le N$. In addition, we have $\sum_{y\in S\setminus\{x\}}|\bm{\eta}(y)|\le\ell_{N}$,
thus
\[
\prod_{y\ne x}\frac{m_{1}(y)^{\eta_{1}(y)}m_{2}(y)^{\eta_{2}(y)}}{\bm{g!}(\eta_{1}(y),\eta_{2}(y))}\ge m^{\ell_{N}}_{0}\prod_{y\ne x}\frac{{\eta_{1}(y)+\eta_{2}(y) \choose \eta_{1}(y)}}{\mathfrak{a}(\eta_{1}(y)+\eta_{2}(y))}\ge\frac{m^{\ell_{N}}_{0}}{\ell^{\alpha(\kappa-1)}_{N}},
\]
where $m_{0}:=\min_{y\in S}(m_{1}(y)\wedge m_{2}(y))>0$. Therefore,
gathering the last two displays and Proposition \ref{prop:ZN-limit},
we obtain that
\begin{equation}
\nu_{N}(\bm{\eta})\ge\frac{c[c\rho(1-\rho)]^{\ell_{N}}m^{\ell_{N}}_{0}}{\ell^{\alpha(\kappa-1)}_{N}}\asymp\frac{[c\rho(1-\rho)]^{\gamma\log N}m^{\gamma\log N}_{0}}{(\log N)^{\alpha(\kappa-1)}}\ge\frac{N^{-\gamma c_{0}}}{(\log N)^{\alpha(\kappa-1)}},\label{eq:nuN-LB}
\end{equation}
where 
\begin{equation}
c_{0}:=|\log(c(\rho(1-\rho)))|-\log m_{0}>0\label{eq:c0-def}
\end{equation}
 is a constant independent of $N$.

Now, take any $\bm{\eta}\in\mathcal{E}^{x}_{N}$ such that $\bm{\eta}\ne\bm{\xi}^{x}_{N}$.
By the definition of $\mathcal{E}^{x}_{N}$ and the irreducibility,
there exists a sequence
\[
\bm{\eta}=\bm{\eta}_{0},\bm{\eta}_{1},\dots,\bm{\eta}_{k}=\bm{\xi}^{x}_{N}
\]
such that $\bm{\eta}_{m}\in\mathcal{E}^{x}_{N}$, $R^{s}_{N}(\bm{\eta}_{m},\bm{\eta}_{m+1})>0$
for all $m$, and $k<\kappa\ell_{N}$, and that in each jump $\bm{\eta}_{m}\to\bm{\eta}_{m+1}$,
only the particle type with a greater or equal count can jump. In
this way, according to \eqref{eq:g-def}, $\{R^{s}_{N}(\bm{\eta}_{m},\bm{\eta}_{m+1})\}_{m\in\llbracket0,k-1\rrbracket}$
is bounded uniformly away from zero. Thus, by \eqref{eq:CAPCAP} and
the Thomson principle \eqref{eq:TP},
\[
{\rm CAP}_{N}(\bm{\eta},\bm{\xi}^{x}_{N})\ge{\rm CAP}^{s}_{N}(\bm{\eta},\bm{\xi}^{x}_{N})\ge\left(\sum^{k-1}_{m=0}\frac{1}{\nu_{N}(\bm{\eta}_{m})R^{s}_{N}(\bm{\eta}_{m},\bm{\eta}_{m+1})}\right)^{-1}\ge c\left(\sum^{k-1}_{m=0}\frac{1}{\nu_{N}(\bm{\eta}_{m})}\right)^{-1}.
\]
By \eqref{eq:nuN-LB} and the fact that $k<\kappa\ell_{N}$, we conclude
that
\[
{\rm CAP}_{N}(\bm{\eta},\bm{\xi}^{x}_{N})\ge c\left(kN^{\gamma c_{0}}(\log N)^{\alpha(\kappa-1)}\right)^{-1}\ge c'\frac{N^{-\gamma c_{0}}}{(\log N)^{1+\alpha(\kappa-1)}},
\]
which is a bound uniform for all $\bm{\eta}\in\mathcal{E}^{x}_{N}\setminus\{\bm{\xi}^{x}_{N}\}$,
as desired.
\end{proof}

Now, we are ready to prove the two propositions. Let us take
\begin{equation}
\gamma c_{0}<b\alpha,\label{eq:gamma-def}
\end{equation}
where $b\in(0,1)$ is from \eqref{eq:EN-tilde}.
\begin{proof}[Proof of Proposition \ref{prop1}]
 To prove the proposition, it suffices to prove that
\[
\lim_{N\to\infty}\max_{\bm{\eta}\in\mathcal{E}^{x}_{N}}|F_{N}(\bm{\eta})-F_{N}(\bm{\xi}^{x}_{N})|=0\qquad\text{for each}\quad x\in S_{\star}.
\]
Let us write
\begin{equation}
G_{N}:=\sum_{x\in S_{\star}}\mathfrak{g}(x){\bf 1}_{\mathcal{E}^{x}_{N}}.\label{eq:GN-def}
\end{equation}
It is well known that the solution $F_{N}$ to \eqref{eq:res-mic}
is represented as
\[
F_{N}(\bm{\eta})=\mathbb{E}^{N}_{\bm{\eta}}\left[\int^{\infty}_{0}e^{-\lambda t}G_{N}(\bm{\eta}_{N}(\theta_{N}t))\,{\rm d}t\right]=\theta^{-1}_{N}\mathbb{E}^{N}_{\bm{\eta}}\left[\int^{\infty}_{0}e^{-\lambda\theta^{-1}_{N}t}G_{N}(\bm{\eta}_{N}(t))\,{\rm d}t\right].
\]
In particular, 
\begin{equation}
\|F_{N}\|_{\infty}\le\frac{\|\mathfrak{g}\|_{\infty}}{\lambda}.\label{eq:FN-bdd}
\end{equation}
Dividing the integration at time $t=H_{\bm{\xi}^{x}_{N}}$ and applying
the strong Markov property, we obtain
\[
F_{N}(\bm{\eta})=\theta^{-1}_{N}\mathbb{E}^{N}_{\bm{\eta}}\left[\int^{H_{\bm{\xi}^{x}_{N}}}_{0}e^{-\lambda\theta^{-1}_{N}t}G_{N}(\bm{\eta}_{N}(t))\,{\rm d}t\right]+\mathbb{E}^{N}_{\bm{\eta}}\left[e^{-\lambda\theta^{-1}_{N}H_{\bm{\xi}^{x}_{N}}}\right]F_{N}(\bm{\xi}^{x}_{N}).
\]
Thus, we have
\begin{align*}
|F_{N}(\bm{\eta})-F_{N}(\bm{\xi}^{x}_{N})| & \le\mathbb{E}^{N}_{\bm{\eta}}\left[1-e^{-\lambda\theta^{-1}_{N}H_{\bm{\xi}^{x}_{N}}}\right]|F_{N}(\bm{\xi}^{x}_{N})|+\theta^{-1}_{N}\|\mathfrak{g}\|_{\infty}\mathbb{E}^{N}_{\bm{\eta}}\left[\int^{H_{\bm{\xi}^{x}_{N}}}_{0}e^{-\lambda\theta^{-1}_{N}t}\,{\rm d}t\right]\\
 & \le\frac{2\|\mathfrak{g}\|_{\infty}}{\lambda}\mathbb{E}^{N}_{\bm{\eta}}\left[1-e^{-\lambda\theta^{-1}_{N}H_{\bm{\xi}^{x}_{N}}}\right]\le2\theta^{-1}_{N}\|\mathfrak{g}\|_{\infty}\mathbb{E}^{N}_{\bm{\eta}}\left[H_{\bm{\xi}^{x}_{N}}\right].
\end{align*}
Therefore, it suffices to prove that
\[
\max_{\bm{\eta}\in\mathcal{E}^{x}_{N}}\mathbb{E}^{N}_{\bm{\eta}}\left[H_{\bm{\xi}^{x}_{N}}\right]\ll\theta_{N}=N^{\alpha+1}.
\]
A well-known formula (cf. \cite[Proposition A.2]{BL12b}) gives that
\[
\mathbb{E}^{N}_{\bm{\eta}}\left[H_{\bm{\xi}^{x}_{N}}\right]=\frac{\sum_{\bm{\eta'}\in\Omega_{N}}\nu_{N}(\bm{\eta'})\mathfrak{H}^{*}_{\bm{\eta},\bm{\xi}^{x}_{N}}(\bm{\eta'})}{{\rm CAP}_{N}(\bm{\eta},\bm{\xi}^{x}_{N})}\le\frac{1}{{\rm CAP}_{N}(\bm{\eta},\bm{\xi}^{x}_{N})},
\]
where the inequality is simply from $\mathfrak{H}^{*}_{\bm{\eta},\bm{\xi}^{x}_{N}}\le1$.
Since $\gamma c_{0}<\alpha+1$ by \eqref{eq:gamma-def}, Lemma \ref{lem:cap2}
concludes the proof.
\end{proof}

\begin{proof}[Proof of Proposition \ref{prop3}]
 By Lemmas \ref{lem:cap1} and \ref{lem:cap2},
\[
\frac{{\rm CAP}_{N}(\mathcal{E}^{x}_{N},\Omega_{N}\setminus\tilde{\mathcal{E}}^{x}_{N})}{\inf_{\bm{\eta}\in\mathcal{E}^{x}_{N}\setminus\{\bm{\xi}^{x}_{N}\}}{\rm CAP}_{N}(\bm{\eta},\bm{\xi}^{x}_{N})}\le cN^{\gamma c_{0}-b\alpha}(\log N)^{1+\alpha(\kappa-1)}\ll1,
\]
since $\gamma$ satisfies \eqref{eq:gamma-def}.
\end{proof}

Finally, we are ready to prove Theorem \ref{thm:main}-(1).
\begin{proof}[Proof of Theorem \ref{thm:main}-(1)]
 A simple renewal estimate (cf. \cite[Lemma 8.4]{BdH15}) implies
that for each $\bm{\eta}\in\mathcal{E}^{x}_{N}$,
\[
\mathbb{P}^{N}_{\bm{\eta}}\left[H_{\bm{\xi}^{x}_{N}}>H_{\Omega_{N}\setminus\tilde{\mathcal{E}}^{x}_{N}}\right]\le\frac{{\rm CAP}_{N}(\bm{\eta},\Omega_{N}\setminus\tilde{\mathcal{E}}^{x}_{N})}{{\rm CAP}_{N}(\bm{\eta},\bm{\xi}^{x}_{N})}\le\frac{{\rm CAP}_{N}(\mathcal{E}^{x}_{N},\Omega_{N}\setminus\tilde{\mathcal{E}}^{x}_{N})}{{\rm CAP}_{N}(\bm{\eta},\bm{\xi}^{x}_{N})}.
\]
The second inequality follows from the fact that capacity is monotone
(cf. \cite[Lemma 2.2]{GL14}). Therefore, Proposition \ref{prop3}
concludes the proof.
\end{proof}

\section{\label{sec4}Test Function Construction}

In this section, we fix $x\in S_{\star}$ and construct a test object
$\mathbb{W}_{x}:\Omega_{N}\to\mathbb{R}$ which approximates the equilibrium
potential $\mathfrak{H}^{*}_{x}=\mathfrak{H}^{*}_{\mathcal{E}^{x}_{N},\mathcal{E}_{N}\setminus\mathcal{E}^{x}_{N}}$
between $\mathcal{E}^{x}_{N}$ and $\mathcal{E}_{N}\setminus\mathcal{E}^{x}_{N}$
of the adjoint process $\bm{\eta}^{*}_{N}$ (cf. \eqref{eq:adj-rate}),
as explained in Section \ref{sec1.4}.

Let us fix a sufficiently small positive constant $\epsilon>0$. We
always assume that $N\epsilon$ is an integer, since otherwise we
may consider $\lfloor N\epsilon\rfloor$ instead. See Figure \ref{fig4.1}
for illustrations of macroscopic and microscopic decompositions of
the space.

\subsection{\label{sec4.1}Macroscopic Tube Construction}

\begin{figure}
\begin{tikzpicture}[scale=0.7]
\fill[red!30!white] (0,0)--(3,0)--(1.5,{-1.5*sqrt(3)})--cycle;
\fill[blue!30!white] (3,0)--(2.5,{-0.5*sqrt(3)})--(9.5,{-0.5*sqrt(3)})--(10,0)--cycle;
\fill[blue!30!white] (2,{-sqrt(3)})--(1.5,{-1.5*sqrt(3)})--(5,{-5*sqrt(3)})--(5.5,{-4.5*sqrt(3)})--cycle;

\draw[] (0,0)--(10,0)--(5,{-5*sqrt(3)})--cycle;
\draw[densely dotted] (0.5,{-0.5*sqrt(3)})--(9.5,{-0.5*sqrt(3)}); \draw[densely dotted] (1,0)--(5.5,{-4.5*sqrt(3)});
\draw (3,0)--(1.5,{-1.5*sqrt(3)});
\draw[very thick] (3,0)--(2.5,{-0.5*sqrt(3)})--(9.5,{-0.5*sqrt(3)})--(10,0)--cycle;
\draw[very thick] (2,{-sqrt(3)})--(1.5,{-1.5*sqrt(3)})--(5,{-5*sqrt(3)})--(5.5,{-4.5*sqrt(3)})--cycle;

\draw (-0.2,0) node[left]{$x$}; \draw (10.2,0) node[right]{$y$}; \draw (5,{-5*sqrt(3)-0.2}) node[below]{$z$};
\draw (0.75,{-0.75*sqrt(3)}) node[below left]{\color{red} $\mathcal D_{3\epsilon}^x$};
\draw (6,{-0.5*sqrt(3)-0.2}) node[below]{\color{blue} $\mathcal K_{\epsilon}^{x,y}$};
\draw (3.25,{-3.25*sqrt(3)}) node[below left]{\color{blue} $\mathcal K_{\epsilon}^{x,z}$};
\end{tikzpicture}\hspace{10mm}\begin{tikzpicture}[scale=0.7]
\fill[red!30!white] (0,0)--(2,0)--(1,{-sqrt(3)})--cycle; \fill[red] (0,0)--(0.5,0)--(0.25,{0.25*-sqrt(3)})--cycle;
\fill[red!30!white] (10,0)--(8,0)--(9,{-sqrt(3)})--cycle; \fill[red] (10,0)--(9.5,0)--(9.75,{0.25*-sqrt(3)})--cycle;
\fill[red!30!white] (5,{-5*sqrt(3)})--(6,{-4*sqrt(3)})--(4,{-4*sqrt(3)})--cycle; \fill[red] (5,{-5*sqrt(3)})--(5.25,{-4.75*sqrt(3)})--(4.75,{-4.75*sqrt(3)})--cycle;
\fill[blue!30!white] (2.2,0)--(1.9,{-0.3*sqrt(3)})--(8.1,{-0.3*sqrt(3)})--(7.8,0)--cycle;
\fill[blue!30!white] (1.4,{-0.8*sqrt(3)})--(1.1,{-1.1*sqrt(3)})--(3.9,{-3.9*sqrt(3)})--(4.5,{-3.9*sqrt(3)})--cycle;

\draw[] (0,0)--(10,0)--(5,{-5*sqrt(3)})--cycle;
\draw[dotted] (0.5,{-0.5*sqrt(3)})--(9.5,{-0.5*sqrt(3)}); \draw[dotted] (1,0)--(5.5,{-4.5*sqrt(3)});
\draw[dotted] (3,0)--(1.5,{-1.5*sqrt(3)});
\draw[densely dotted] (0.3,{-0.3*sqrt(3)})--(9.7,{-0.3*sqrt(3)}); \draw[densely dotted] (0.6,0)--(5.3,{-4.7*sqrt(3)});
\draw[] (2,0)--(1,{-sqrt(3)});
\draw[very thick] (0,0)--(2,0)--(1,{-sqrt(3)})--cycle;  \draw[thick] (0,0)--(0.5,0)--(0.25,{0.25*-sqrt(3)})--cycle;
\draw[very thick] (10,0)--(8,0)--(9,{-sqrt(3)})--cycle; \draw[thick] (10,0)--(9.5,0)--(9.75,{0.25*-sqrt(3)})--cycle;
\draw[very thick] (5,{-5*sqrt(3)})--(6,{-4*sqrt(3)})--(4,{-4*sqrt(3)})--cycle; \draw[thick] (5,{-5*sqrt(3)})--(5.25,{-4.75*sqrt(3)})--(4.75,{-4.75*sqrt(3)})--cycle;
\draw[very thick] (2.2,0)--(1.9,{-0.3*sqrt(3)})--(8.1,{-0.3*sqrt(3)})--(7.8,0)--cycle;
\draw[very thick] (1.4,{-0.8*sqrt(3)})--(1.1,{-1.1*sqrt(3)})--(3.9,{-3.9*sqrt(3)})--(4.5,{-3.9*sqrt(3)})--cycle;

\draw (-0.2,0) node[left]{$x$}; \draw (10.2,0) node[right]{$y$}; \draw (5,{-5*sqrt(3)-0.2}) node[below]{$z$};
\draw (0.5,{-0.5*sqrt(3)}) node[below left]{\color{red} $\mathcal D_N^x$};
\draw (9.5,{-0.5*sqrt(3)}) node[below right]{\color{red} $\mathcal D_N^y$};
\draw (5.5+0.2,{-4.5*sqrt(3)}) node[right]{\color{red} $\mathcal D_N^z$};
\draw (5,{-0.3*sqrt(3)-0.2}) node[below]{\color{blue} $\mathcal J_N^{x,y}$};
\draw (2.5,{-2.5*sqrt(3)}) node[below left]{\color{blue} $\mathcal J_N^{x,z}$};
\draw (0.25,0.1) node[above]{\color{red} $\mathcal E_N^x$};
\draw (9.75,0.1) node[above]{\color{red} $\mathcal E_N^y$};
\draw (4.875-0.1,{-4.875*sqrt(3)}) node[left]{\color{red} $\mathcal E_N^z$};
\end{tikzpicture}\caption{\label{fig4.1}Macroscopic (left) and microscopic (right) decompositions
of the state space. By taking thicker macroscopic valleys and tubes,
our computations in the microscopic world are allowed to be conducted
mainly on the bulk tube $\mathcal{J}^{x,y}_{N}$.}
\end{figure}

First, we present a macroscopic decomposition of the state space.
This allows us in our computations to neglect all edge terms via truncation
and focus on the calculation on the bulk part. Let $\Omega$ be a
compact subset of $\mathbb{R}^{S}_{+}\times\mathbb{R}^{S}_{+}$ defined
as 
\[
\Omega:=\left\{ \bm{u}=(u_{1},u_{2})\in\mathbb{R}^{S}_{+}\times\mathbb{R}^{S}_{+}:|\bm{u}|:=\sum_{x\in S}(u_{1}(x)+u_{2}(x))=1\right\} .
\]
There is a natural embedding of the configuration space $\Omega_{N}$
into $\Omega$ as
\[
\Omega_{N}\ni\bm{\eta}\longmapsto\frac{\bm{\eta}}{N}\in\Omega.
\]
For each pair of different sites $x,y\in S_{\star}$, consider the
following subsets of $\Omega$:
\[
\mathcal{D}^{x}_{3\epsilon}:=\{\bm{u}\in\Omega:|\bm{u}(x)|:=u_{1}(x)+u_{2}(x)>1-3\epsilon\},\qquad\mathcal{T}^{x,y}_{\epsilon}:=\{\bm{u}\in\Omega:|\bm{u}(x)|+|\bm{u}(y)|\geq1-\epsilon\}.
\]
Note that $\mathcal{T}^{x,y}_{\epsilon}=\mathcal{T}^{y,x}_{\epsilon}$.
Let $\mathcal{K}^{x,y}_{\epsilon}=\mathcal{T}^{x,y}_{\epsilon}\setminus\mathcal{D}^{x}_{3\epsilon}$.
Since $\{\mathcal{K}^{x,y}_{\epsilon}\}_{y\in S_{\star}\setminus\{x\}}$
is a collection of pairwise-disjoint compact subsets of $\Omega$
(since $\epsilon$ is small enough), we can construct a smooth partition
of unity
\begin{equation}
\Theta^{x,y}_{\epsilon}:\Omega\to[0,1],\qquad y\in S_{\star}\setminus\{x\},\label{eq:PoU}
\end{equation}
such that $\sum_{y\in S_{\star}\setminus\{x\}}\Theta^{x,y}_{\epsilon}(\bm{u})=1$
for all $\bm{u}\in\Omega$ and $\Theta^{x,y}_{\epsilon}|_{\mathcal{K}^{x,y}_{\epsilon}}\equiv1$
for each $y\in S_{\star}\setminus\{x\}$.

\subsection{\label{sec4.2}Microscopic Tube Construction}

Next, we define microscopic sets that correspond to the macroscopic
sets in the previous subsection. For some fixed constant $\beta\in(\frac{1}{\alpha},1)$
define
\begin{equation}
\pi_{N}:=\lfloor N^{\beta}\rfloor\qquad\text{such that}\quad\pi_{N}\ll N\ll\pi^{\alpha}_{N}.\label{eq:piN}
\end{equation}
For each $x,y\in S_{\star}$ with $x\ne y$, define
\begin{equation}
\mathcal{D}^{x}_{N}:=\{\bm{\eta}\in\Omega_{N}:|\bm{\eta}(x)|\geq N(1-2\epsilon)\},\qquad\mathcal{T}^{x,y}_{N}:=\{\bm{\eta}\in\Omega_{N}:|\bm{\eta}(x)|+|\bm{\eta}(y)|\geq N-\pi_{N}\},\label{eq:DT}
\end{equation}
and
\begin{equation}
\mathcal{J}^{x,y}_{N}:=\mathcal{T}^{x,y}_{N}\setminus(\mathcal{D}^{x}_{N}\cup\mathcal{D}^{y}_{N})=\left\{ \bm{\eta}\in\Omega_{N}:|\bm{\eta}(x)|+|\bm{\eta}(y)|\geq N-\pi_{N},\enspace|\bm{\eta}(x)|,|\bm{\eta}(y)|<N(1-2\epsilon)\right\} ,\label{eq:J-def}
\end{equation}
In addition, define their one-unit closures as
\begin{equation}
\begin{aligned}\overline{\mathcal{D}}^{x}_{N} & :=\{\bm{\eta}\in\Omega_{N}:|\bm{\eta}(x)|\geq N(1-2\epsilon)-1\},\\
\overline{\mathcal{J}}^{x,y}_{N} & :=\{\bm{\eta}\in\Omega_{N}:|\bm{\eta}(x)|+|\bm{\eta}(y)|\geq N-\pi_{N}-1,\quad|\bm{\eta}(x)|,|\bm{\eta}(y)|\le N(1-2\epsilon)\}.
\end{aligned}
\label{eq:DJ-cl}
\end{equation}
Note that $\overline{\mathcal{J}}^{x,y}_{N}\cap\overline{\mathcal{J}}^{x',y'}_{N}\neq\emptyset$
if and only if $\{x,y\}=\{x',y'\}$. Finally, we define a set 
\begin{equation}
\mathcal{G}^{x}_{N}:=\bigcup_{y\in S_{\star}}\mathcal{D}^{y}_{N}\cup\bigcup_{y\in S_{\star}\setminus\{x\}}\mathcal{J}^{x,y}_{N}.\label{eq:G-def}
\end{equation}
It will be demonstrated in the sequel by direct computations that
the set $\mathcal{G}^{x}_{N}$ serves as the collection of typical
trajectories of the metastable transition from $x\in S_{\star}$ to
another site in $S_{\star}\setminus\{x\}$.

\subsection{\label{sec4.3}Test Function Construction}

Recall from \eqref{eq:eq-pot-RW} that $\mathfrak{h}_{i,U,V}\in\mathbb{R}^{S}$
denotes the equilibrium potential between $U$ and $V$ for the random
walk generated by $r_{i}:S\times S\to[0,\infty)$. Denote by $\mathfrak{h}^{*}_{i,U,V}\in\mathbb{R}^{S}$
the equilibrium potential between $U$ and $V$ for the adjoint random
walk generated by $r^{*}_{i}:S\times S\to[0,\infty)$, given by $m_{i}(x)r^{*}_{i}(x,y):=m_{i}(y)r_{i}(y,x)$.
Then, define macroscopic coordinates $\mathcal{X}^{1}_{x,y},\mathcal{X}^{2}_{x,y}:(\mathbb{N}^{2}_{0})^{S}\to\mathbb{R}$
for each $x\ne y\in S_{\star}$ as
\begin{equation}
\mathcal{X}^{i}_{x,y}(\bm{\eta}):=\frac{1}{N}\sum_{u\in\{x,y\}}\eta_{i}(u)\mathfrak{h}^{*}_{i,x,y}(u)+\min\left\{ \frac{1}{N}\sum_{u\in S\setminus\{x,y\}}\eta_{i}(u)\mathfrak{h}^{*}_{i,x,y}(u),\epsilon\right\} ,\label{eq:Xxyi-def}
\end{equation}
and $\mathcal{X}_{x,y}(\bm{\eta}):=\mathcal{X}^{1}_{x,y}(\bm{\eta})+\mathcal{X}^{2}_{x,y}(\bm{\eta})$.
One can easily see that
\[
0\le\mathcal{X}^{i}_{x,y}(\bm{\eta})\le\frac{|\eta_{i}|}{N},\qquad\text{thus}\qquad0\le\mathcal{X}_{x,y}(\bm{\eta})\le\frac{|\bm{\eta}|}{N}.
\]
Define some constants as (cf. \eqref{eq:ci-def} and \eqref{eq:rhoN-def})
\[
\tau_{N}(x,y):=\frac{\rho_{N}}{c_{1}(x,y)}+\frac{1-\rho_{N}}{c_{2}(x,y)},\qquad\omega_{N}(x,y):=\frac{\frac{1}{c_{1}(x,y)}-\frac{1}{c_{2}(x,y)}}{\tau_{N}(x,y)}.
\]
It is clear that $\lim_{N\to\infty}\tau_{N}(x,y)$ and $\lim_{N\to\infty}\omega_{N}(x,y)$
exist, and
\begin{equation}
1+\omega_{N}(x,y)(1-\rho_{N})=\frac{1}{\tau_{N}(x,y)c_{1}(x,y)},\qquad1-\omega_{N}(x,y)\rho_{N}=\frac{1}{\tau_{N}(x,y)c_{2}(x,y)}.\label{eq:lambda-theta}
\end{equation}
Before starting the construction of the test function $\mathbb{W}_{x}$,
we first introduce a truncation lemma without proof. A similar version
can be found at \cite[Lemma 7.6]{Seo19}.
\begin{lem}
\label{lem:gamma-eps}For all sufficiently small $\epsilon>0$, there
exists a smooth non-decreasing function $\gamma_{\epsilon}:\mathbb{R}\to\mathbb{R}$
such that
\[
\gamma_{\epsilon}(t)=0\qquad\text{for}\quad t\le3\epsilon,\qquad\gamma_{\epsilon}(t)=1\qquad\text{for}\quad t\ge1-3\epsilon,
\]
and 
\[
\lim_{\epsilon\to0}\sup_{t\in[0,1]}|\gamma_{\epsilon}(t)-t|=0.
\]
\end{lem}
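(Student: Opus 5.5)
The plan is to build $\gamma_{\epsilon}$ explicitly by mollifying a piecewise-linear interpolant. Fix $\epsilon\in(0,1/10)$ and first define the continuous, non-decreasing function
\[
\phi_{\epsilon}(t):=\begin{cases}
0 & t\le4\epsilon,\\
\frac{t-4\epsilon}{1-8\epsilon} & 4\epsilon<t<1-4\epsilon,\\
1 & t\ge1-4\epsilon.
\end{cases}
\]
Next, let $\varrho$ be a standard smooth nonnegative mollifier supported in $[-1,1]$ with $\int\varrho=1$. Set $\varrho_{\epsilon}(s):=\epsilon^{-1}\varrho(s/\epsilon)$ and define $\gamma_{\epsilon}:=\phi_{\epsilon}\ast\varrho_{\epsilon}$.

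\textbf{Step 1: smoothness and monotonicity.} Convolution with a smooth compactly supported kernel gives $\gamma_{\epsilon}\in C^{\infty}$. Since $\gamma_{\epsilon}(t)=\int\phi_{\epsilon}(t-s)\varrho_{\epsilon}(s)\,{\rm d}s$ is an average of translates of a non-decreasing function against a nonnegative weight, $\gamma_{\epsilon}$ is non-decreasing.

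\textbf{Step 2: boundary values.} The support of $\varrho_{\epsilon}$ lies in $[-\epsilon,\epsilon]$. For $t\le3\epsilon$ every argument $t-s$ is at most $4\epsilon$, so $\phi_{\epsilon}(t-s)=0$ and hence $\gamma_{\epsilon}(t)=0$. Likewise, for $t\ge1-3\epsilon$ every argument $t-s$ is at least $1-4\epsilon$, so $\gamma_{\epsilon}(t)=1$.

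\textbf{Step 3: uniform approximation.} I would bound
\[
|\gamma_{\epsilon}(t)-t|\le|\gamma_{\epsilon}(t)-\phi_{\epsilon}(t)|+|\phi_{\epsilon}(t)-t|.
\]
The function $\phi_{\epsilon}$ is Lipschitz with constant $(1-8\epsilon)^{-1}\le2$, so the first term is at most $2\epsilon$. For the second term, on $[0,4\epsilon]$ and on $[1-4\epsilon,1]$ it is at most $4\epsilon$. On the middle interval it equals
\[
\frac{|(t-4\epsilon)-t(1-8\epsilon)|}{1-8\epsilon}=\frac{4\epsilon|2t-1|}{1-8\epsilon}\le8\epsilon.
\]
Hence $\sup_{t\in[0,1]}|\gamma_{\epsilon}(t)-t|\le10\epsilon\to0$.

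There is no real obstacle here. The only care needed is choosing the buffer between $3\epsilon$ and $4\epsilon$ so that it absorbs the mollification radius, which is why the ramp is placed on $[4\epsilon,1-4\epsilon]$ and the mollifier is scaled to width $\epsilon$.
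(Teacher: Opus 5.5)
Your proof is correct. The buffer $[3\epsilon,4\epsilon]$ absorbs the mollification radius exactly as you say, and smoothness, monotonicity (an average of non-decreasing translates against a nonnegative weight) and the boundary values all check out. The paper itself states this lemma without proof, referring to a version in \cite{Seo19}, so your mollified ramp supplies the omitted standard argument.

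There is one small numerical slip in Step 3. For $\epsilon\in(1/16,1/10)$ the bound $(1-8\epsilon)^{-1}\le 2$ is false, since this quantity approaches $5$ as $\epsilon\uparrow 1/10$. The same issue affects your estimate $\frac{4\epsilon|2t-1|}{1-8\epsilon}\le 8\epsilon$. There are two easy fixes:
\begin{itemize}
\item restrict to $\epsilon\le 1/16$; or
\item observe that on the middle interval $|2t-1|<1-8\epsilon$, so $|\phi_\epsilon(t)-t|<4\epsilon$ there. This gives $\sup_{t\in[0,1]}|\gamma_\epsilon(t)-t|\le \epsilon(1-8\epsilon)^{-1}+4\epsilon$ for every $\epsilon<1/8$.
\end{itemize}
Either way the conclusion is unaffected.

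As a by-product, $\gamma_\epsilon$ inherits the Lipschitz constant $(1-8\epsilon)^{-1}$ of $\phi_\epsilon$, because it is an average of translates of $\phi_\epsilon$. This bound is uniform in small $\epsilon$, which is convenient for the boundedness of $U_\epsilon'$ invoked later in the paper.
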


Recall \eqref{eq:Ialpha-def}. For $t\in[0,1]$, define
\[
U(t):=\frac{t^{\alpha}(1-t)^{\alpha}}{\mathcal{I}_{\alpha}},\qquad U_{\epsilon}(t):=\frac{\gamma_{\epsilon}(t)^{\alpha}(1-\gamma_{\epsilon}(t))^{\alpha}}{\mathcal{I}_{\alpha,\epsilon}},
\]
where
\[
\mathcal{I}_{\alpha,\epsilon}:=\int^{1}_{0}\gamma_{\epsilon}(t)^{\alpha}(1-\gamma_{\epsilon}(t))^{\alpha}\,{\rm d}t.
\]
For future use, define an error term
\begin{equation}
\Psi_{\epsilon}(t):=U_{\epsilon}(t)-U(t).\label{eq:Psi-eps-def}
\end{equation}
It follows easily from Lemma \ref{lem:gamma-eps} that
\begin{equation}
\lim_{\epsilon\to0}\mathcal{I}_{\alpha,\epsilon}=\mathcal{I}_{\alpha}\qquad\text{and}\qquad\lim_{\epsilon\to0}\sup_{t\in[0,1]}|\Psi_{\epsilon}(t)|=0.\label{eq:Ialpha-Psi-eps}
\end{equation}

\begin{defn}[Test Function $\mathbb{W}_{x}$]
\label{def:Wx-def}Fix $x\in S_{\star}$. For $u,w\ge0$ and $y\in S_{\star}\setminus\{x\}$,
define
\begin{equation}
W_{N}(u,w):=\int^{u+w}_{0}U_{\epsilon}(s)\,{\rm d}s+\omega_{N}(x,y)U_{\epsilon}(u+w)(u-(u+w)\rho_{N}).\label{eq:WN-def}
\end{equation}
Then, define a function $\mathcal{W}_{x,y}:\Omega_{N}\to\mathbb{R}$
for each $y\in S_{\star}\setminus\{x\}$ as 
\begin{equation}
\mathcal{W}_{x,y}(\bm{\eta}):=W_{N}(\mathcal{X}^{1}_{x,y}(\bm{\eta}),\mathcal{X}^{2}_{x,y}(\bm{\eta})).\label{eq:Wxy-def}
\end{equation}
Finally, via the partition of unity \eqref{eq:PoU}, the global test
function $\mathbb{W}_{x}:\Omega_{N}\to\mathbb{R}$ is defined as
\begin{equation}
\mathbb{W}_{x}(\bm{\eta}):=\sum_{y\in S_{\star}\setminus\{x\}}\Theta^{x,y}_{\epsilon}\left(\frac{\bm{\eta}}{N}\right)\mathcal{W}_{x,y}(\bm{\eta}).\label{eq:Wx-def}
\end{equation}
\end{defn}

\begin{rem}
If we neglect the correction term introduced in \eqref{eq:WN-def}
and consider only the leading term
\[
\widehat{W}_{N}(u,w)=\int^{u+w}_{0}U_{\epsilon}(s)\,{\rm d}s,
\]
this matches the test function for the single-species zero-range process
utilized in \cite{BL12a,LMS25,Seo19,Oh19}. However, this simple test
function does not suffice in our setting: the presence of multiple
particle species generates a non-negligible current within the metastable
transition set, preventing the test function from closing the microscopic
identity \eqref{eq:res-int} into the desired macroscopic limit. To
resolve this issue, the correction term $\omega_{N}(x,y)U_{\epsilon}(u+w)(u-(u+w)\rho_{N})$
is indispensable. This term cancels the interior current and adjusts
the leading order coefficient so as to produce the correct effective
transition rate between metastable wells. The remaining fluctuation
is negligible, as established via a variance estimate for hypergeometric
random variables in \eqref{eq:dom-3-1.5}.
\end{rem}

\subsection{\label{sec4.4}Properties of the Test Function $\mathbb{W}_{x}$}

We start with a lemma that $\mathbb{W}_{x}$ is easy to track on $\mathcal{G}^{x}_{N}$.
\begin{lem}
\label{lem:Wx-prop}For each $x\in S_{\star}$, the function $\mathbb{W}_{x}$
is uniformly bounded and
\begin{equation}
\mathbb{W}_{x}(\bm{\eta})=\begin{cases}
1 & \text{if}\quad\bm{\eta}\in\overline{\mathcal{D}}^{x}_{N},\\
\mathcal{W}_{x,y}(\bm{\eta}) & \text{if}\quad\bm{\eta}\in\overline{\mathcal{J}}^{x,y}_{N},\enspace y\in S_{\star}\setminus\{x\},\\
0 & \text{if}\quad\bm{\eta}\in\overline{\mathcal{D}}^{y}_{N},\enspace y\in S_{\star}\setminus\{x\}.
\end{cases}\label{eq:Wx-prop}
\end{equation}
\end{lem}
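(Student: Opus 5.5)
Uniform boundedness is immediate from the construction. Each $\Theta^{x,y}_{\epsilon}$ takes values in $[0,1]$ and they sum to one. Also $0\le\mathcal{X}^{i}_{x,y}\le|\eta_{i}|/N\le1$, and $\omega_{N}(x,y)$ converges, hence is bounded. Finally $U_{\epsilon}$ is bounded on $[0,1]$, since $\mathcal{I}_{\alpha,\epsilon}>0$. Thus $|W_{N}(u,w)|\le\|U_{\epsilon}\|_{\infty}(1+2\sup_{N}|\omega_{N}|)$ for $u,w\ge0$ with $u+w\le1$. Note that $\mathcal{X}_{x,y}\le1$ always, so only this range matters. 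The three identities in \eqref{eq:Wx-prop} I would obtain by combining two facts: where the partition of unity is concentrated, and the values of $\mathcal{X}_{x,y}$ near the two endpoints.

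\emph{Step 1 (partition of unity).} On $\overline{\mathcal{J}}^{x,y}_{N}$ we have $|\bm{\eta}(x)|+|\bm{\eta}(y)|\ge N-\pi_{N}-1\ge N(1-\epsilon)$ for large $N$, since $\pi_{N}\ll N$. We also have $|\bm{\eta}(x)|\le N(1-2\epsilon)<N(1-3\epsilon)$ is false. The correct reading is that $|\bm{\eta}(x)|/N\le1-2\epsilon$ does not exclude $\mathcal{D}^{x}_{3\epsilon}$, so I must be careful here. Points of $\overline{\mathcal{J}}^{x,y}_{N}$ with $|\bm{\eta}(x)|/N\in(1-3\epsilon,1-2\epsilon]$ lie in $\mathcal{D}^{x}_{3\epsilon}$, where $\Theta^{x,y}_{\epsilon}$ need not equal $1$.

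The fix is to observe that the choice of $\Theta$ does not matter wherever all the $\mathcal{W}_{x,y'}$ agree. Near $\bm{\eta}/N\in\mathcal{D}^{x}_{3\epsilon}$, I claim each $\mathcal{W}_{x,y'}$ equals $1$. Then $\mathbb{W}_{x}=\sum_{y'}\Theta^{x,y'}_{\epsilon}\cdot1=1=\mathcal{W}_{x,y}$ there. Elsewhere on $\overline{\mathcal{J}}^{x,y}_{N}$ the point $\bm{\eta}/N$ lies in $\mathcal{K}^{x,y}_{\epsilon}$, so $\Theta^{x,y}_{\epsilon}=1$ and the other weights vanish, giving the middle case.

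\emph{Step 2 (endpoint values of $\mathcal{X}$).} The key computation uses $\mathfrak{h}^{*}_{i,x,y'}(x)=1$, $\mathfrak{h}^{*}_{i,x,y'}(y')=0$, and $0\le\mathfrak{h}^{*}\le1$. These give
\[
\frac{|\bm{\eta}(x)|}{N}\le\mathcal{X}_{x,y'}(\bm{\eta})\le\frac{|\bm{\eta}(x)|}{N}+\min\Bigl\{\frac{N-|\bm{\eta}(x)|-|\bm{\eta}(y')|}{N},2\epsilon\Bigr\}.
\]
If $|\bm{\eta}(x)|\ge N(1-3\epsilon)$, then $\mathcal{X}_{x,y'}\ge1-3\epsilon$, so $\gamma_{\epsilon}(\mathcal{X}_{x,y'})=1$ and $U_{\epsilon}=0$ on $[1-3\epsilon,1]$. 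Hence $W_{N}=\int_{0}^{1}U_{\epsilon}\,\mathrm{d}s+0$. This integral equals $1$ once one checks that $\int_{0}^{\mathcal{X}}U_{\epsilon}=\int_{0}^{1}U_{\epsilon}$ and that the normalization $\mathcal{I}_{\alpha,\epsilon}$ makes the total mass $1$. Indeed $\int_{0}^{1}U_{\epsilon}=\mathcal{I}_{\alpha,\epsilon}/\mathcal{I}_{\alpha,\epsilon}=1$. This settles the first case on $\overline{\mathcal{D}}^{x}_{N}$ for every $y'$, and so $\mathbb{W}_{x}=1$ there. It also handles the overlap region from Step 1.

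Symmetrically, if $|\bm{\eta}(y)|\ge N(1-2\epsilon)-1$, then $\mathcal{X}_{x,y'}\le3\epsilon$ for every $y'\ne x$. For $y'=y$ this follows from the upper bound, with the remainder at most $2\epsilon$ and $|\bm{\eta}(x)|\le2\epsilon N+1$. For $y'\neq y$ the particles at $y$ fall under the capped remainder, so again $\mathcal{X}_{x,y'}\le|\bm{\eta}(x)|/N+2\epsilon\le3\epsilon$ up to $O(1/N)$. The threshold $3\epsilon$ of $\gamma_{\epsilon}$ absorbs this, possibly after choosing the constants in Lemma \ref{lem:gamma-eps} with a little room, e.g.\ $\gamma_{\epsilon}=0$ on $t\le4\epsilon$. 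Then $U_{\epsilon}\equiv0$ on $[0,\mathcal{X}]$, so both terms of $W_{N}$ vanish and $\mathbb{W}_{x}=0$.

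The main obstacle is this bookkeeping of thresholds. The cap $\epsilon$ in $\mathcal{X}$, the $2\epsilon$ in $\mathcal{D}^{x}_{N}$, and the $3\epsilon$ in $\gamma_{\epsilon}$ and $\mathcal{D}^{x}_{3\epsilon}$ must be compatible, especially at the $O(1/N)$ boundary of the closures. I would verify each inequality with explicit margins for large $N$.
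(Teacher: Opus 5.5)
Your treatment of uniform boundedness, of $\overline{\mathcal{D}}^{x}_{N}$, and of $\overline{\mathcal{J}}^{x,y}_{N}$ is correct and is the paper's argument. There you split at $|\bm{\eta}(x)|/N=1-3\epsilon$, use that every $\mathcal{W}_{x,y'}$ equals $1$ above the split, and use $\bm{\eta}/N\in\mathcal{K}^{x,y}_{\epsilon}$ below it.

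The gap is in the case $\bm{\eta}\in\overline{\mathcal{D}}^{y}_{N}$. You claim $\mathcal{X}_{x,y'}(\bm{\eta})\le3\epsilon$ up to $O(1/N)$ for every $y'\in S_{\star}\setminus\{x\}$, but this is false for $y'\neq y$. On $\overline{\mathcal{D}}^{y}_{N}$ you only know $|\bm{\eta}(x)|\le2\epsilon N+1$. Moreover $\eta_{i}(y)$ is of order $N$ and $\mathfrak{h}^{*}_{i,x,y'}(y)$ is in general strictly positive. So both caps $\epsilon$ in the definition of $\mathcal{X}^{i}_{x,y'}$ are attained, and $|\bm{\eta}(x)|=2\epsilon N+1$ gives $\mathcal{X}_{x,y'}(\bm{\eta})=4\epsilon+1/N$. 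The excess over $3\epsilon$ is of order $\epsilon$, not $O(1/N)$. Hence $\gamma_{\epsilon}(\mathcal{X}_{x,y'}(\bm{\eta}))$ need not vanish, and $\mathcal{W}_{x,y'}(\bm{\eta})$ can be nonzero.

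Enlarging the flat region of $\gamma_{\epsilon}$ is not a fix. It changes the object whose properties the lemma asserts, and $\gamma_{\epsilon}=0$ on $t\le4\epsilon$ would not even cover $4\epsilon+1/N$.

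Your $y'=y$ case has a slip as well. As phrased (remainder at most $2\epsilon$, plus $|\bm{\eta}(x)|/N\le2\epsilon+1/N$), it gives only $4\epsilon+1/N$. You must use the other branch of the minimum in your displayed bound, which yields $\mathcal{X}_{x,y}(\bm{\eta})\le1-|\bm{\eta}(y)|/N\le2\epsilon+1/N\le3\epsilon$.

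The missing idea is to invoke the partition of unity in this case too, which is what the paper does by splitting on $|\bm{\eta}(x)|$:
\begin{itemize}
\item If $|\bm{\eta}(x)|\le N\epsilon$, then $\mathcal{X}_{x,z}(\bm{\eta})\le|\bm{\eta}(x)|/N+2\epsilon\le3\epsilon$ for all $z\in S_{\star}\setminus\{x\}$. So every $\mathcal{W}_{x,z}(\bm{\eta})$ vanishes.
\item If $|\bm{\eta}(x)|\ge N\epsilon+1$, then $|\bm{\eta}(x)|+|\bm{\eta}(y)|\ge N(1-\epsilon)$, while $|\bm{\eta}(x)|\le2\epsilon N+1<N(1-3\epsilon)$. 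Thus $\bm{\eta}/N\in\mathcal{K}^{x,y}_{\epsilon}$, so $\Theta^{x,y}_{\epsilon}(\bm{\eta}/N)=1$ and $\mathbb{W}_{x}(\bm{\eta})=\mathcal{W}_{x,y}(\bm{\eta})$.
\end{itemize}
In the second case the possibly nonzero $\mathcal{W}_{x,y'}$, $y'\neq y$, carry zero weight. Only the sharp $y'=y$ bound above is then needed to conclude $\mathcal{W}_{x,y}(\bm{\eta})=0$.
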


\begin{proof}
Uniform boundedness follows directly from \eqref{eq:WN-def}, \eqref{eq:Wxy-def},
and \eqref{eq:Wx-def}. For the second statement, we first consider
the value of $\mathbb{W}_{x}$ on $\overline{\mathcal{D}}^{x}_{N}$.
By definition, $|\bm{\eta}(x)|\geq N(1-2\epsilon)-1$ for $\bm{\eta}\in\overline{\mathcal{D}}^{x}_{N}$
which implies $\mathcal{X}_{x,y}(\bm{\eta})\geq1-3\epsilon$. Thus,
by the cutoff property of $\gamma_{\epsilon}$ from Lemma \ref{lem:gamma-eps},
we can notice that $\mathcal{W}_{x,y}(\bm{\eta})=1$ for all $y\in S_{\star}\setminus\{x\}$.
Therefore, since $\{\Theta^{x,y}_{\epsilon}\}_{y\in S_{\star}\setminus\{x\}}$
partitions the unity, we obtain that $\mathbb{W}_{x}\equiv1$ on $\overline{\mathcal{D}}^{x}_{N}$.

Next, consider $\overline{\mathcal{D}}^{y}_{N}$ for $y\in S_{\star}\setminus\{x\}$.
For each $\bm{\eta}\in\overline{\mathcal{D}}^{y}_{N}$ we have $|\bm{\eta}(y)|\geq N(1-2\epsilon)-1$
and thus $|\bm{\eta}(x)|\le2N\epsilon+1$. We separate the cases according
to the value of $|\bm{\eta}(x)|$.
\begin{itemize}
\item Assume that $|\bm{\eta}(x)|\leq N\epsilon$. Then, $\mathcal{X}_{x,z}(\bm{\eta})\leq3\epsilon$
for all $z\in S_{\star}\setminus\{x\}$. Therefore, by the definition
of $\mathcal{W}_{x,z}$ and Lemma \ref{lem:gamma-eps}, we have $\mathcal{W}_{x,z}(\bm{\eta})=0$
for all $z\in S_{\star}\setminus\{x\}$, and thus $\mathbb{W}_{x}(\bm{\eta})=0$.
\item Assume that $|\bm{\eta}(x)|\in(N\epsilon,2N\epsilon+1]$. Then, one
can notice that
\[
|\bm{\eta}(x)|+|\bm{\eta}(y)|\geq N(1-\epsilon),
\]
which leads to $\frac{\bm{\eta}}{N}\in\mathcal{K}^{x,y}_{\epsilon}$.
Therefore, since $\{\Theta^{x,y}_{\epsilon}\}$ partitions the unity,
$\mathbb{W}_{x}(\bm{\eta})=\mathcal{W}_{x,y}(\bm{\eta})$. Since 
\[
\mathcal{X}_{x,y}(\bm{\eta})\leq1-\frac{|\bm{\eta}(y)|}{N}\leq3\epsilon,
\]
Lemma \ref{lem:gamma-eps} implies $\mathcal{W}_{x,y}(\bm{\eta})=0$,
and in consequence, $\mathbb{W}_{x}(\bm{\eta})=0$.
\end{itemize}
Now, consider $\bm{\eta}\in\overline{\mathcal{J}}^{x,y}_{N}$ for
each $y\in S_{\star}\setminus\{x\}$, such that $|\bm{\eta}(x)|\leq N(1-2\epsilon)$.
As before, we divide into two cases.
\begin{itemize}
\item Assume that $N(1-3\epsilon)\leq|\bm{\eta}(x)|\leq N(1-2\epsilon)$.
Directly from the assumption, for each $z\in S_{\star}\setminus\{x\}$
we have $\mathcal{X}_{x,z}(\bm{\eta})\geq1-3\epsilon$ which gives
$\mathcal{W}_{x,z}(\bm{\eta})=1$. Since this holds for all $z\in S_{\star}\setminus\{x\}$,
we conclude that $\mathbb{W}_{x}(\bm{\eta})=1=\mathcal{W}_{x,y}(\bm{\eta})$.
\item Finally, assume that $|\bm{\eta}(x)|<N(1-3\epsilon)$. Since $|\bm{\eta}(x)|+|\bm{\eta}(y)|\ge N-\pi_{N}-1\ge N(1-\epsilon)$
and $|\bm{\eta}(x)|<N(1-3\epsilon)$, we have $\frac{\bm{\eta}}{N}\in\mathcal{K}^{x,y}_{\epsilon}$.
Therefore, similarly, $\mathbb{W}_{x}(\bm{\eta})=\mathcal{W}_{x,y}(\bm{\eta})$.
\end{itemize}
\end{proof}

Next, we analyze the increments of the function $\mathcal{W}_{x,y}$
on $\mathcal{J}^{x,y}_{N}$ for $x\in S_{\star}$ and $y\in S_{\star}\setminus\{x\}$.
Consider the particle movement $\bm{\eta}\to\bm{\eta}-\delta^{i}_{u}+\delta^{i}_{w}$
for $\bm{\eta}\in\mathcal{J}^{x,y}_{N}$ and the corresponding increment.
First, by the smoothness of $\gamma_{\epsilon}$ and $\Theta^{x,y}_{\epsilon}$,
one easily obtains rough bounds as
\begin{align}
\max_{x\in S_{\star}}\max_{y\in S_{\star}\setminus\{x\}}\max_{i\in\{1,2\}}\max_{u,w\in S}\max_{\bm{\eta}\in\Omega_{N}}|U_{\epsilon}(\mathcal{X}_{x,y}(\bm{\eta}-\delta^{i}_{u}+\delta^{i}_{w}))-U_{\epsilon}(\mathcal{X}_{x,y}(\bm{\eta}))| & \leq\frac{c}{N},\label{eq:Lip1}\\
\max_{x\in S_{\star}}\max_{i\in\{1,2\}}\max_{u,w\in S}\max_{\bm{\eta}\in\Omega_{N}}|\mathbb{W}_{x}(\bm{\eta}-\delta^{i}_{u}+\delta^{i}_{w})-\mathbb{W}_{x}(\bm{\eta})| & \leq\frac{c}{N}.\label{eq:Lip2}
\end{align}
A more thorough increment analysis is given below, which is needed
in Section \ref{sec5.3}.
\begin{lem}
\label{lem:Wx-incr}Let $x\in S_{\star}$, $y\in S_{\star}\setminus\{x\}$,
and $u,w\in S$ such that $r_{i}(u,w)>0$. For each $\bm{\eta}\in\mathcal{J}^{x,y}_{N}$
and $\bm{\xi}:=\bm{\eta}-\delta^{i}_{u}\in\Omega^{i,-}_{N-1}$ (cf.
\eqref{eq:Omega-minus}) where $\eta_{i}(u)\ge1$,
\[
\mathcal{W}_{x,y}(\bm{\xi}+\delta^{i}_{w})-\mathcal{W}_{x,y}(\bm{\xi}+\delta^{i}_{u})=\left(\frac{\mathfrak{h}^{*}_{i,x,y}(w)-\mathfrak{h}^{*}_{i,x,y}(u)}{N\tau_{N}(x,y)c_{i}(x,y)}\right)\frac{|\bm{\xi}(x)|^{\alpha}|\bm{\xi}(y)|^{\alpha}}{N^{2\alpha}\mathcal{I}_{\alpha}}+\mathcal{U}^{i}_{x,y}(\bm{\xi};u,w)+\mathcal{V}^{i}_{x,y}(\bm{\xi};u,w),
\]
 where
\[
\mathcal{U}^{i}_{x,y}(\bm{\xi};u,w)=O\left(\frac{1}{N}\right)o_{\epsilon}(1)+O\left(\frac{\pi_{N}}{N^{2}}\right)\qquad\text{and}\qquad\mathcal{V}^{i}_{x,y}(\bm{\xi};u,w)=O\left(\frac{\xi_{1}(x)}{N^{2}}-\frac{|\bm{\xi}(x)|}{N^{2}}\rho_{N}\right).
\]
Above, $o_{\epsilon}(1)$ denotes a term which is independent of $N$
and vanishes as $\epsilon\to0$.
\end{lem}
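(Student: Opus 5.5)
The plan is a direct second-order Taylor expansion of $W_N$ in its two macroscopic arguments. The first step is to remove the truncation in \eqref{eq:Xxyi-def}. For $\bm{\eta}\in\mathcal{J}^{x,y}_{N}$ we have $\sum_{u'\notin\{x,y\}}|\bm{\eta}(u')|\le\pi_N$. The same holds, up to one particle, for $\bm{\xi}+\delta^{i}_{u}$ and $\bm{\xi}+\delta^{i}_{w}$. Since $0\le\mathfrak{h}^{*}_{i,x,y}\le1$ and $\pi_N\ll N$, the inner sum is at most $(\pi_N+1)/N<\epsilon$ for large $N$, so the minimum is inactive at both configurations. Hence $\mathcal{X}^{i}_{x,y}$ is linear there, and the move $\bm{\xi}+\delta^{i}_{u}\to\bm{\xi}+\delta^{i}_{w}$ changes only the $i$-th coordinate, by exactly $\Delta_i:=(\mathfrak{h}^{*}_{i,x,y}(w)-\mathfrak{h}^{*}_{i,x,y}(u))/N$. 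Writing $(X^1,X^2):=(\mathcal{X}^1_{x,y},\mathcal{X}^2_{x,y})(\bm{\xi}+\delta^{i}_{u})$ and $s:=X^1+X^2$, Taylor's formula gives
\[
\mathcal{W}_{x,y}(\bm{\xi}+\delta^{i}_{w})-\mathcal{W}_{x,y}(\bm{\xi}+\delta^{i}_{u})=\Delta_i\,\partial_iW_N(X^1,X^2)+O(N^{-2}).
\]
The $O(N^{-2})$ remainder uses that $U_\epsilon$ is smooth with $\epsilon$-dependent but $N$-independent bounds, and it is absorbed into $O(\pi_N/N^2)$.

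The second step computes the partial derivatives from \eqref{eq:WN-def}. Writing $\omega=\omega_N(x,y)$ and $\rho=\rho_N$,
\[
\partial_1W_N=U_\epsilon(s)\bigl(1+\omega(1-\rho)\bigr)+\omega U_\epsilon'(s)(X^1-s\rho),\qquad
\partial_2W_N=U_\epsilon(s)(1-\omega\rho)+\omega U_\epsilon'(s)(X^1-s\rho).
\]
By \eqref{eq:lambda-theta}, both prefactors of $U_\epsilon(s)$ equal $1/(\tau_N(x,y)c_i(x,y))$. This is exactly the point of the correction term: it converts the plain $U_\epsilon$ into the $c_i$-weighted coefficient. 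The remaining term $\omega U_\epsilon'(s)(X^1-s\rho)\Delta_i$ will become $\mathcal{V}^i_{x,y}$ after the next step.

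The third step replaces macroscopic coordinates by site occupations. Since $\mathfrak{h}^{*}_{i,x,y}(x)=1$ and $\mathfrak{h}^{*}_{i,x,y}(y)=0$,
\[
X^1=\frac{\xi_1(x)}{N}+O\Bigl(\frac{\pi_N}{N}\Bigr),\qquad s=\frac{|\bm{\xi}(x)|}{N}+O\Bigl(\frac{\pi_N}{N}\Bigr),\qquad 1-s=\frac{|\bm{\xi}(y)|}{N}+O\Bigl(\frac{\pi_N}{N}\Bigr).
\]
Hence $X^1-s\rho=\xi_1(x)/N-|\bm{\xi}(x)|\rho_N/N+O(\pi_N/N)$. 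Multiplying by $\Delta_i=O(1/N)$ gives the $\mathcal{V}$ term plus an $O(\pi_N/N^2)$ error.

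For the main term, write $U_\epsilon=U+\Psi_\epsilon$. By \eqref{eq:Ialpha-Psi-eps}, $\Psi_\epsilon(s)\Delta_i/(\tau_Nc_i)=O(1/N)o_\epsilon(1)$, using that $\tau_N$ and $c_i$ are bounded away from $0$ and $\infty$. Finally, $U$ is Lipschitz on $[0,1]$, so $U(s)$ differs from $|\bm{\xi}(x)|^\alpha|\bm{\xi}(y)|^\alpha/(N^{2\alpha}\mathcal{I}_\alpha)$ by $O(\pi_N/N)$. Here $\alpha>1$ is what makes $t\mapsto t^\alpha(1-t)^\alpha$ Lipschitz. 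Multiplying by $\Delta_i$, this gives the leading term of the lemma plus $O(\pi_N/N^2)$. Collecting all errors of the form $O(1/N)o_\epsilon(1)+O(\pi_N/N^2)$ into $\mathcal{U}^i_{x,y}$ completes the argument.

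The main obstacle I expect is bookkeeping rather than any single inequality. The constants in the Taylor remainder and in $U_\epsilon'$ depend on $\epsilon$, and these must be kept inside the $O(\pi_N/N^2)$ and $O(\cdot)$ terms, which is harmless since $N\to\infty$ first. What must not happen is for them to be mixed into the $o_\epsilon(1)$ term, which must be $N$-independent. One also has to check carefully that the truncation stays inactive for both endpoint configurations, including when $u$ or $w\notin\{x,y\}$.
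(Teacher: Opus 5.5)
Your proposal is correct and is essentially the paper's argument. The paper removes the truncation the same way, splits the increment into the integral part ${\rm I}$ (handled by the mean value theorem) and the correction part ${\rm II}={\rm II}_1+{\rm II}_2$, and recombines the coefficients via \eqref{eq:lambda-theta}; this is exactly your computation of $\partial_1W_N,\partial_2W_N$, followed by replacing $\mathcal{X}^1_{x,y},\mathcal{X}_{x,y}$ with $\xi_1(x)/N,\ |\bm{\xi}(x)|/N$ up to $O(\pi_N/N)$.

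The one difference is regularity. Your $O(N^{-2})$ Taylor remainder needs $U_\epsilon'$ to be Lipschitz, which holds for smooth $\gamma_\epsilon$ and $\alpha>1$, with an $\epsilon$-dependent constant that is harmless as you note. The paper's exact splitting only uses the Lipschitz bound \eqref{eq:Lip1} on $U_\epsilon$. On the other hand, your explicit treatment of $i=2$ through $\partial_2W_N$ is cleaner than the paper's ``without loss of generality $i=1$'', since the correction term is not symmetric in the two species.
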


\begin{proof}
Without loss of generality, we only consider the case of $i=1$. By
the definition of $\mathcal{W}_{x,y}$ \eqref{eq:Wxy-def}, we can
separate the increment in the left-hand side into
\[
\mathcal{W}_{x,y}(\bm{\xi}+\delta^{1}_{w})-\mathcal{W}_{x,y}(\bm{\xi}+\delta^{1}_{u})=:{\rm I}+{\rm II}
\]
where
\[
{\rm I}:=\int^{\mathcal{X}_{x,y}(\bm{\xi}+\delta^{1}_{w})}_{\mathcal{X}_{x,y}(\bm{\xi}+\delta^{1}_{u})}U_{\epsilon}(s)\,{\rm d}s,
\]
and
\begin{align*}
{\rm II}:=\omega_{N}(x,y)U_{\epsilon}(\mathcal{X}_{x,y}(\bm{\xi}+\delta^{1}_{w})) & \left(\mathcal{X}^{1}_{x,y}(\bm{\xi}+\delta^{1}_{w})-\mathcal{X}_{x,y}(\bm{\xi}+\delta^{1}_{w})\rho_{N}\right)\\
 & -\omega_{N}(x,y)U_{\epsilon}(\mathcal{X}_{x,y}(\bm{\xi}+\delta^{1}_{u}))\left(\mathcal{X}^{1}_{x,y}(\bm{\xi}+\delta^{1}_{u})-\mathcal{X}_{x,y}(\bm{\xi}+\delta^{1}_{u})\rho_{N}\right).
\end{align*}
Since $\bm{\eta}\in\mathcal{J}^{x,y}_{N}$, we have $\sum_{u\neq x,y}|\bm{\eta}(u)|\le\pi_{N}$.
This implies via \eqref{eq:Xxyi-def} that
\[
\mathcal{X}^{i}_{x,y}(\bm{\eta})=\frac{1}{N}\sum_{u\in S}\eta_{i}(u)\mathfrak{h}^{*}_{i,x,y}(u).
\]
From this and the facts that $\mathfrak{h}^{*}_{i,x,y}(x)=1$ and
$\mathfrak{h}^{*}_{i,x,y}(y)=0$, we note that
\begin{equation}
\mathcal{X}^{1}_{x,y}(\bm{\xi}+\delta^{1}_{w})=\mathcal{X}^{1}_{x,y}(\bm{\xi})+\frac{\mathfrak{h}^{*}_{1,x,y}(w)}{N},\qquad\mathcal{X}_{x,y}(\bm{\xi}+\delta^{1}_{w})=\mathcal{X}_{x,y}(\bm{\xi})+\frac{\mathfrak{h}^{*}_{1,x,y}(w)}{N},\label{eq:eq1}
\end{equation}
and the same for $u$ in place of $w$.

Recall \eqref{eq:Psi-eps-def}. We claim that, for any $s$ between
$\mathcal{X}_{x,y}(\bm{\xi}+\delta^{1}_{u})$ and $\mathcal{X}_{x,y}(\bm{\xi}+\delta^{1}_{w})$,
\begin{equation}
U_{\epsilon}(s)=\frac{|\bm{\xi}(x)|^{\alpha}|\bm{\xi}(y)|^{\alpha}}{N^{2\alpha}\mathcal{I}_{\alpha}}+o_{\epsilon}(1)+O\left(\frac{\pi_{N}}{N}\right).\label{eq:eq2}
\end{equation}
Indeed, since $|s-\frac{|\bm{\xi}(x)|}{N}|\leq\frac{\pi_{N}+1}{N}$
and $U_{\epsilon}'$ is uniformly bounded, via Taylor's theorem we
have 
\[
U_{\epsilon}(s)=U_{\epsilon}\left(\frac{|\bm{\xi}(x)|}{N}\right)+O\left(\frac{\pi_{N}}{N}\right)=\frac{\left(\frac{|\bm{\xi}(x)|}{N}\right)^{\alpha}\left(1-\frac{|\bm{\xi}(x)|}{N}\right)^{\alpha}}{\mathcal{I}_{\alpha}}+\Psi_{\epsilon}\left(\frac{|\bm{\xi}(x)|}{N}\right)+O\left(\frac{\pi_{N}}{N}\right).
\]
Combining this with $(1-\frac{|\bm{\xi}(x)|}{N})^{\alpha}=(\frac{|\bm{\xi}(y)|}{N})^{\alpha}+O(\frac{\pi_{N}}{N})$
and \eqref{eq:Ialpha-Psi-eps} leads to the proof of the claim.

By \eqref{eq:eq1}, \eqref{eq:eq2}, and the mean value theorem, we
deduce that
\begin{equation}
{\rm I}=\left(\frac{\mathfrak{h}^{*}_{1,x,y}(w)-\mathfrak{h}^{*}_{1,x,y}(u)}{N}\right)\frac{|\bm{\xi}(x)|^{\alpha}|\bm{\xi}(y)|^{\alpha}}{N^{2\alpha}\mathcal{I}_{\alpha}}+O\left(\frac{1}{N}\right)o_{\epsilon}(1)+O\left(\frac{\pi_{N}}{N^{2}}\right).\label{eq:eq3}
\end{equation}
Moreover, note from \eqref{eq:eq1} that
\[
\mathcal{X}_{x,y}(\bm{\xi}+\delta^{1}_{w})-\mathcal{X}_{x,y}(\bm{\xi}+\delta^{1}_{u})=\mathcal{X}^{1}_{x,y}(\bm{\xi}+\delta^{1}_{w})-\mathcal{X}^{1}_{x,y}(\bm{\xi}+\delta^{1}_{u})=\frac{\mathfrak{h}^{*}_{1,x,y}(w)-\mathfrak{h}^{*}_{1,x,y}(u)}{N}.
\]
Thus, we can rewrite the term ${\rm II}$ as
\begin{align*}
{\rm II}=\omega_{N}(x,y) & \left(U_{\epsilon}(\mathcal{X}_{x,y}(\bm{\xi}+\delta^{1}_{w}))-U_{\epsilon}(\mathcal{X}_{x,y}(\bm{\xi}+\delta^{1}_{u}))\right)\left(\mathcal{X}^{1}_{x,y}(\bm{\xi}+\delta^{1}_{u})-\mathcal{X}_{x,y}(\bm{\xi}+\delta^{1}_{u})\rho_{N}\right)\\
 & +\omega_{N}(x,y)(1-\rho_{N})\left(\frac{\mathfrak{h}^{*}_{1,x,y}(w)-\mathfrak{h}^{*}_{1,x,y}(u)}{N}\right)U_{\epsilon}(\mathcal{X}_{x,y}(\bm{\xi}+\delta^{1}_{w}))=:{\rm II}_{1}+{\rm II}_{2}.
\end{align*}
Using the facts
\[
\left|\mathcal{X}^{1}_{x,y}(\bm{\xi}+\delta^{1}_{u})-\frac{\xi_{1}(x)}{N}\right|,\left|\mathcal{X}_{x,y}(\bm{\xi}+\delta^{1}_{u})-\frac{|\bm{\xi}(x)|}{N}\right|\leq\frac{\pi_{N}+1}{N},
\]
along with \eqref{eq:Lip1}, we can rewrite ${\rm II}_{1}$ as 
\begin{equation}
{\rm II}_{1}=O\left(\frac{1}{N}\right)\left(\frac{\xi_{1}(x)}{N}-\frac{|\bm{\xi}(x)|}{N}\rho_{N}\right)+O\left(\frac{\pi_{N}}{N^{2}}\right).\label{eq:eq4}
\end{equation}
Finally, by the claim at \eqref{eq:eq2},
\begin{equation}
{\rm II}_{2}=\omega_{N}(x,y)(1-\rho_{N})\left(\frac{\mathfrak{h}^{*}_{1,x,y}(w)-\mathfrak{h}^{*}_{1,x,y}(u)}{N}\right)\left[\frac{|\bm{\xi}(x)|^{\alpha}|\bm{\xi}(y)|^{\alpha}}{N^{2\alpha}\mathcal{I}_{\alpha}}+o_{\epsilon}(1)+O\left(\frac{\pi_{N}}{N}\right)\right]+O\left(\frac{\pi_{N}}{N^{2}}\right).\label{eq:eq5}
\end{equation}
Combining \eqref{eq:eq3}, \eqref{eq:eq4}, and \eqref{eq:eq5}, along
with \eqref{eq:lambda-theta}, concludes the proof.
\end{proof}

\section{\label{sec5}Resolvent--$H^{1}$ Analysis on the Tube of Typical
Trajectories}

In this section, we prove Proposition \ref{prop2} by following the
idea presented in Section \ref{sec1.4}. Namely, we consider the (accelerated)
microscopic resolvent equation
\begin{equation}
(\lambda-\theta_{N}\mathcal{L}_{N})F_{N}=G_{N},\label{eq:resol-equ}
\end{equation}
where the function $G_{N}:\Omega_{N}\to\mathbb{R}$ is from \eqref{eq:GN-def}.
As done at \eqref{eq:res-int}, we prove that the test function $\mathbb{W}_{x}$
defined in Section \ref{sec4} approximates the equilibrium potential
of the adjoint process $\mathfrak{H}^{*}_{x}$ between $\mathcal{E}^{x}_{N}$
and 
\begin{equation}
\breve{\mathcal{E}}^{x}_{N}:=\mathcal{E}_{N}\setminus\mathcal{E}^{x}_{N},\label{eq:ENx-breve}
\end{equation}
well enough so that the equation 
\begin{equation}
\lambda\langle F_{N},\mathbb{\mathbb{W}}_{x}\rangle_{\nu_{N}}+\theta_{N}\langle F_{N},-\mathcal{L}^{*}_{N}\mathbb{W}_{x}\rangle_{\nu_{N}}=\langle G_{N},\mathbb{W}_{x}\rangle_{\nu_{N}}\label{eq:inner-product}
\end{equation}
converges to the macroscopic resolvent equation
\[
(\lambda-\mathfrak{L}_{\mathbb{X}})\mathfrak{f}(x)=\mathfrak{g}(x),
\]
as $N\to\infty$ and then $\epsilon\downarrow0$. The first and third
terms in \eqref{eq:inner-product} are straightforward:
\begin{lem}
\label{lem:res-1}We have
\[
\langle G_{N},\mathbb{W}_{x}\rangle_{\nu_{N}}=\mathfrak{g}(x)\nu_{N}(\mathcal{E}^{x}_{N})
\]
and
\[
\lambda\langle F_{N},\mathbb{\mathbb{W}}_{x}\rangle_{\nu_{N}}=\lambda\nu_{N}(\mathcal{E}^{x}_{N})f_{N}(x)+o(1).
\]
\end{lem}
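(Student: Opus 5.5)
The argument reduces to the explicit values of $\mathbb{W}_{x}$ from Lemma \ref{lem:Wx-prop}, the negligibility of $\Delta_{N}$ from Theorem \ref{thm:cond}-(1), and the uniform bound \eqref{eq:FN-bdd}.

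\emph{First identity.} By \eqref{eq:GN-def}, $G_{N}$ is supported on $\mathcal{E}_{N}=\bigcup_{y\in S_{\star}}\mathcal{E}^{y}_{N}$. We therefore only need $\mathbb{W}_{x}$ on each $\mathcal{E}^{y}_{N}$.

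Since $\ell_{N}=\lfloor\gamma\log N\rfloor\le2N\epsilon$ for large $N$, we have $\mathcal{E}^{y}_{N}\subset\mathcal{D}^{y}_{N}\subset\overline{\mathcal{D}}^{y}_{N}$ for every $y\in S_{\star}$. Lemma \ref{lem:Wx-prop} then gives $\mathbb{W}_{x}\equiv1$ on $\mathcal{E}^{x}_{N}$ and $\mathbb{W}_{x}\equiv0$ on $\mathcal{E}^{y}_{N}$ for $y\ne x$. Hence
\[
\langle G_{N},\mathbb{W}_{x}\rangle_{\nu_{N}}=\sum_{y\in S_{\star}}\mathfrak{g}(y)\sum_{\bm{\eta}\in\mathcal{E}^{y}_{N}}\nu_{N}(\bm{\eta})\mathbb{W}_{x}(\bm{\eta})=\mathfrak{g}(x)\nu_{N}(\mathcal{E}^{x}_{N}),
\]
exactly, with no error term.

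\emph{Second identity.} Split $\Omega_{N}=\mathcal{E}^{x}_{N}\cup\breve{\mathcal{E}}^{x}_{N}\cup\Delta_{N}$. By the same facts as above:
\begin{itemize}
\item the sum over $\mathcal{E}^{x}_{N}$ equals $\sum_{\bm{\eta}\in\mathcal{E}^{x}_{N}}\nu_{N}(\bm{\eta})F_{N}(\bm{\eta})=\nu_{N}(\mathcal{E}^{x}_{N})f_{N}(x)$, by the definition \eqref{eq:fN-def};
\item the sum over $\breve{\mathcal{E}}^{x}_{N}$ vanishes, since $\mathbb{W}_{x}\equiv0$ there.
\end{itemize}
For the sum over $\Delta_{N}$, $\mathbb{W}_{x}$ is uniformly bounded by Lemma \ref{lem:Wx-prop} and $\|F_{N}\|_{\infty}\le\|\mathfrak{g}\|_{\infty}/\lambda$ by \eqref{eq:FN-bdd}. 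So this contribution is at most $c\,\nu_{N}(\Delta_{N})$, which is $o(1)$ by Theorem \ref{thm:cond}-(1). Multiplying by $\lambda$ gives the second claim.

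No step here is a real obstacle. The only point to check is the inclusion $\mathcal{E}^{y}_{N}\subset\overline{\mathcal{D}}^{y}_{N}$, which needs $N-\ell_{N}\ge N(1-2\epsilon)-1$; this holds for $N$ large because $\ell_{N}\ll N$ while $\epsilon$ is fixed. The $o(1)$ depends only on $\nu_{N}(\Delta_{N})$ and the constants $\|\mathfrak{g}\|_{\infty}/\lambda$ and $\sup|\mathbb{W}_{x}|$. Since $\nu_{N}(\mathcal{E}^{x}_{N})\to1/\kappa_{\star}$, this error is also $o(\nu_{N}(\mathcal{E}^{x}_{N}))$, which is the form needed for the later division by $\nu_{N}(\mathcal{E}^{x}_{N})$.
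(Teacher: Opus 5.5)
Your proposal is correct and follows the paper's proof essentially step for step: $\langle G_{N},\mathbb{W}_{x}\rangle_{\nu_{N}}$ is evaluated exactly via $\mathcal{E}^{y}_{N}\subset\mathcal{D}^{y}_{N}$ and Lemma \ref{lem:Wx-prop}, and the second term is handled by the split $\Omega_{N}=\mathcal{E}^{x}_{N}\cup\breve{\mathcal{E}}^{x}_{N}\cup\Delta_{N}$, with the $\Delta_{N}$ piece controlled by \eqref{eq:FN-bdd}, the boundedness of $\mathbb{W}_{x}$, and Theorem \ref{thm:cond}-(1). Your explicit check that $\ell_{N}\le2N\epsilon$ for large $N$ justifies the inclusion $\mathcal{E}^{y}_{N}\subset\mathcal{D}^{y}_{N}$ (hence $\subset\overline{\mathcal{D}}^{y}_{N}$), which the paper states without comment.
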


\begin{proof}
By the definition of $G_{N}$, the first inner product becomes
\[
\langle G_{N},\mathbb{W}_{x}\rangle_{\nu_{N}}=\sum_{y\in S_{\star}}\mathfrak{g}(y)\sum_{\bm{\eta}\in\mathcal{E}^{y}_{N}}\nu_{N}(\bm{\eta})\mathbb{W}_{x}(\bm{\eta}).
\]
Since $\mathcal{E}^{y}_{N}\subset\mathcal{D}^{y}_{N}$, Lemma \ref{lem:Wx-prop}
implies that
\[
\langle G_{N},\mathbb{W}_{x}\rangle_{\nu_{N}}=\mathfrak{g}(x)\sum_{\bm{\eta}\in\mathcal{E}^{x}_{N}}\nu_{N}(\bm{\eta})=\mathfrak{g}(x)\nu_{N}(\mathcal{E}^{x}_{N}).
\]
Next, the second inner product is decomposed into (cf. \eqref{eq:ENx-breve})
\[
\lambda\langle F_{N},\mathbb{\mathbb{W}}_{x}\rangle_{\nu_{N}}=\lambda\sum_{\bm{\eta}\in\mathcal{E}^{x}_{N}}F_{N}(\bm{\eta})\mathbb{W}_{x}(\bm{\eta})\nu_{N}(\bm{\eta})+\lambda\sum_{\bm{\eta}\in\breve{\mathcal{E}}^{x}_{N}}F_{N}(\bm{\eta})\mathbb{W}_{x}(\bm{\eta})\nu_{N}(\bm{\eta})+\lambda\sum_{\bm{\eta}\in\Delta_{N}}F_{N}(\bm{\eta})\mathbb{W}_{x}(\bm{\eta})\nu_{N}(\bm{\eta}).
\]
From Lemma \ref{lem:Wx-prop}, the second term at the right-hand side
is zero. The third term at the right-hand side is negligible as $N\to\infty$
by Theorem \ref{thm:cond}-(1) and the boundedness of $F_{N}$ and
$\mathbb{W}_{x}$. Thus, according to the definition of $f_{N}$ at
\eqref{eq:fN-def} and Lemma \ref{lem:Wx-prop},
\[
\lambda\langle F_{N},\mathbb{\mathbb{W}}_{x}\rangle_{\nu_{N}}=\lambda\nu_{N}(\mathcal{E}^{x}_{N})f_{N}(x)+o(1),
\]
as desired.
\end{proof}

The rest of the section is devoted to the estimation of the second
inner product in \eqref{eq:inner-product}, which is $\langle F_{N},-\mathcal{L}^{*}_{N}\mathbb{W}_{x}\rangle_{\nu_{N}}$.

\subsection{\label{sec5.1}A Priori Estimates}

Before we get down to the estimation, let us first introduce two lemmas.
\begin{lem}
\label{lem:PE1}We have
\[
\sup_{N\ge1}\theta_{N}\mathcal{D}_{N}(F_{N})<\infty.
\]
\end{lem}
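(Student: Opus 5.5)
The plan is to test the resolvent equation \eqref{eq:resol-equ} against $F_N$ itself and integrate with respect to $\nu_N$. This gives
\[
\lambda\langle F_N,F_N\rangle_{\nu_N}+\theta_N\langle F_N,-\mathcal{L}_NF_N\rangle_{\nu_N}=\langle G_N,F_N\rangle_{\nu_N}.
\]
The second term on the left is exactly $\theta_N\mathcal{D}_N(F_N)$, by the definition of the Dirichlet form; this identity needs no reversibility. The first term on the left is nonnegative, so we obtain
\[
\theta_N\mathcal{D}_N(F_N)\le\langle G_N,F_N\rangle_{\nu_N}.
\]

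To bound the right-hand side, we first use that $|G_N|\le\|\mathfrak{g}\|_\infty$ pointwise, which follows from \eqref{eq:GN-def} and the disjointness of the sets $\mathcal{E}^x_N$. We then use the a priori bound $\|F_N\|_\infty\le\|\mathfrak{g}\|_\infty/\lambda$ from \eqref{eq:FN-bdd}. Since $\nu_N$ is a probability measure, this gives
\[
\langle G_N,F_N\rangle_{\nu_N}\le\|\mathfrak{g}\|_\infty^2/\lambda,
\]
uniformly in $N$.

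Alternatively, one can avoid the sup bound on $F_N$ and use Cauchy--Schwarz together with $\lambda\|F_N\|^2_{L^2(\nu_N)}\le\|G_N\|_{L^2}\|F_N\|_{L^2}$. This yields $\|F_N\|_{L^2}\le\|\mathfrak{g}\|_\infty/\lambda$, and the same conclusion follows.

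There is no real obstacle here. The only point to check is that $\langle F,-\mathcal{L}_NF\rangle_{\nu_N}$ coincides with the symmetric Dirichlet form even though the process is non-reversible. This holds because $\nu_N$ is stationary, as already noted where $\mathcal{D}_N$ was introduced.
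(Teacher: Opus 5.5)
Your proposal is correct and follows the paper's argument: test the resolvent equation against $F_N$ in $L^2(\nu_N)$, identify $\theta_N\langle F_N,-\mathcal{L}_NF_N\rangle_{\nu_N}$ with $\theta_N\mathcal{D}_N(F_N)$, and conclude from the uniform bounds on $F_N$ and $G_N$. The only difference is cosmetic: you drop the nonnegative term $\lambda\|F_N\|^2_{L^2(\nu_N)}$, whereas the paper keeps it and bounds $\|F_N\|_\infty\|G_N-\lambda F_N\|_\infty$.
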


\begin{proof}
By the definition of the Dirichlet form, we may express $\mathcal{D}_{N}(F_{N})$
as 
\[
\mathcal{D}_{N}(F_{N})=-\sum_{\bm{\eta}\in\Omega_{N}}F_{N}(\bm{\eta})\mathcal{L}_{N}F_{N}(\bm{\eta})\nu_{N}(\bm{\eta}).
\]
Since $F_{N}$ is the solution of \eqref{eq:resol-equ}, we have 
\[
\theta_{N}\mathcal{D}_{N}(F_{N})=\sum_{\bm{\eta}\in\Omega_{N}}F_{N}(\bm{\eta})(G_{N}-\lambda F_{N})(\bm{\eta})\nu_{N}(\bm{\eta})\leq\|F_{N}\|_{\infty}\|G_{N}-\lambda F_{N}\|_{\infty}.
\]
The uniform boundedness of $F_{N}$ and $G_{N}$ thus completes the
proof.
\end{proof}

\begin{lem}
\label{lem:PE2}As in \eqref{eq:Gibar-def}, define $\overline{F}^{i}_{N}:\Omega^{i,-}_{N-1}\to\mathbb{R}$
as
\[
\overline{F}^{i}_{N}(\bm{\xi}):=\frac{1}{\kappa}\sum_{z\in S}F_{N}(\bm{\xi}+\delta^{i}_{z}).
\]
Then,
\[
\sum_{i\in\{1,2\}}\sum_{u,w\in S}\sum_{\substack{\bm{\eta}\in\Omega_{N}:\\
\eta_{i}(u)\geq1
}
}\nu_{N}(\bm{\eta})\bm{g}_{i}(\bm{\eta}(u))r^{*}_{i}(u,w)\left(F_{N}(\bm{\eta})-\overline{F}^{i}_{N}(\bm{\eta}-\delta^{i}_{u})\right)^{2}\leq c\mathcal{D}_{N}(F_{N}).
\]
\end{lem}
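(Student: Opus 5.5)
The plan is to repeat the change-of-variables argument from the proof of Lemma \ref{lem:sector}, now run on the adjoint rates $r^{*}_{i}$ and with $G=F_{N}$. The goal is to bound the variance of $F_{N}$ over the $\kappa$ configurations obtained by placing the removed type-$i$ particle at an arbitrary site.

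\textbf{Step 1: change of variables.} Substitute $\bm{\xi}=\bm{\eta}-\delta^{i}_{u}\in\Omega^{i,-}_{N-1}$. By \eqref{eq:nuN-def} and the structure of $\bm{g}_{i}$,
\[
\nu_{N}(\bm{\xi}+\delta^{i}_{u})\bm{g}_{i}((\bm{\xi}+\delta^{i}_{u})(u))=\frac{N^{\alpha}}{Z_{N}\binom{N}{A}}\frac{\bm{m}^{\bm{\xi}}}{\bm{g!}(\bm{\xi})}m_{i}(u),
\]
exactly as in \eqref{eq:sector-1}. The left-hand side of the lemma therefore becomes
\[
\sum_{i}\sum_{\bm{\xi}\in\Omega^{i,-}_{N-1}}\frac{N^{\alpha}}{Z_{N}\binom{N}{A}}\frac{\bm{m}^{\bm{\xi}}}{\bm{g!}(\bm{\xi})}\sum_{u,w}m_{i}(u)r^{*}_{i}(u,w)\bigl(F_{N}(\bm{\xi}+\delta^{i}_{u})-\overline{F}^{i}_{N}(\bm{\xi})\bigr)^{2}.
\]

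\textbf{Step 2: bound the inner sum on each fiber.} For fixed $\bm{\xi}$, drop the sum over $w$ using $\sum_{w}m_{i}(u)r^{*}_{i}(u,w)\le c$. Next apply \eqref{eq:sector-3} to bound $(F_{N}(\bm{\xi}+\delta^{i}_{u})-\overline{F}^{i}_{N}(\bm{\xi}))^{2}$ by an average of squared differences $(F_{N}(\bm{\xi}+\delta^{i}_{u})-F_{N}(\bm{\xi}+\delta^{i}_{z}))^{2}$. Then use irreducibility of $r_{i}$, paths of length less than $\kappa$, and Cauchy--Schwarz, as in \eqref{eq:sector-5}. Together with \eqref{eq:sector-4}, this gives
\[
\sum_{u}\bigl(F_{N}(\bm{\xi}+\delta^{i}_{u})-\overline{F}^{i}_{N}(\bm{\xi})\bigr)^{2}\le c\sum_{(x,y)\in E_{i}}m_{i}(x)r_{i}(x,y)\bigl(F_{N}(\bm{\xi}+\delta^{i}_{x})-F_{N}(\bm{\xi}+\delta^{i}_{y})\bigr)^{2}.
\]

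\textbf{Step 3: return to $\Omega_{N}$.} Undo the change of variables with $\bm{\eta}=\bm{\xi}+\delta^{i}_{x}$, reusing the identity from Step 1. The right-hand side then becomes
\[
c\sum_{i}\sum_{\bm{\eta}}\sum_{x,y}\nu_{N}(\bm{\eta})\bm{g}_{i}(\bm{\eta}(x))r_{i}(x,y)\bigl(F_{N}(\bm{\eta}-\delta^{i}_{x}+\delta^{i}_{y})-F_{N}(\bm{\eta})\bigr)^{2}=2c\,\mathcal{D}_{N}(F_{N}).
\]
This is the desired bound. If one prefers to stay with the adjoint rates, \eqref{eq:Diri-adj} gives the same conclusion using $E^{*}_{i}$ in place of $E_{i}$; the adjoint walk is irreducible with the same $m_{i}$, so the argument is unchanged.

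There is no real obstacle here. The only point requiring care is the Step 2 bookkeeping: the constants must depend only on $S$, $r_{i}$ and $m_{i}$ (through \eqref{eq:sector-4} and the path length, which is less than $\kappa$), and not on $N$. This uniformity holds because the per-fiber inequality involves only the $\kappa$ configurations $\bm{\xi}+\delta^{i}_{z}$, $z\in S$.
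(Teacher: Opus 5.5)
Your proposal is correct and follows the paper's proof essentially verbatim: the same change of variables $\bm{\xi}=\bm{\eta}-\delta^{i}_{u}$, the same per-fiber variance bound via \eqref{eq:sector-3}--\eqref{eq:sector-5}, and the same return to $\Omega_{N}$. The only cosmetic difference is that the paper closes with the adjoint rates $r^{*}_{i}$ and \eqref{eq:Diri-adj}, whereas you close with $E_{i}$ and the original form of $\mathcal{D}_{N}$; this is equally valid, since both expressions represent the same Dirichlet form.
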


\begin{proof}
By a change of variables $\bm{\eta}-\delta^{i}_{u}=\bm{\xi}$, the
left-hand side can be expressed as
\[
\sum_{i\in\{1,2\}}\sum_{\bm{\xi}\in\Omega^{i,-}_{N-1}}\frac{N^{\alpha}}{Z_{N}\binom{N}{A}}\frac{\bm{m}^{\bm{\xi}}}{\bm{g!}(\bm{\xi})}\sum_{u,w\in S}m_{i}(u)r^{*}_{i}(u,w)\left(F_{N}(\bm{\xi}+\delta^{i}_{u})-\overline{F}^{i}_{N}(\bm{\xi})\right)^{2}.
\]
By the same computation conducted at \eqref{eq:sector-5}, we obtain
that 
\[
\sum_{u,w\in S}m_{i}(u)r^{*}_{i}(u,w)\left(F_{N}(\bm{\xi}+\delta^{i}_{u})-\overline{F}^{i}_{N}(\bm{\xi})\right)^{2}\le c\sum_{x,y\in S}m_{i}(x)r^{*}_{i}(x,y)\left(F_{N}(\bm{\xi}+\delta^{i}_{x})-F_{N}(\bm{\xi}+\delta^{i}_{y})\right)^{2}.
\]
Thus, returning to $\bm{\xi}+\delta^{i}_{x}=\bm{\eta}$ completes
the proof, via \eqref{eq:Diri-adj}.
\end{proof}

\subsection{\label{sec5.2}Negligible Part}

Recall that our remaining goal is to estimate $\langle F_{N},-\mathcal{L}^{*}_{N}\mathbb{W}_{x}\rangle_{\nu_{N}}$.
For $F,G\in\mathbb{R}^{\Omega_{N}}$and $\mathcal{E}\subset\Omega_{N}$,
define
\[
\langle F,G\rangle_{\nu_{N},\mathcal{E}}:=\sum_{\bm{\eta}\in\mathcal{E}}F(\bm{\eta})G(\bm{\eta})\nu_{N}(\bm{\eta}).
\]
In this sense (cf. \eqref{eq:G-def}), we have
\[
\langle F_{N},-\mathcal{L}^{*}_{N}\mathbb{W}_{x}\rangle_{\nu_{N}}=\langle F_{N},-\mathcal{L}^{*}_{N}\mathbb{W}_{x}\rangle_{\nu_{N},\mathcal{G}^{x}_{N}}+\langle F_{N},-\mathcal{L}^{*}_{N}\mathbb{W}_{x}\rangle_{\nu_{N},(\mathcal{G}^{x}_{N})^{c}}.
\]
In this subsection, we prove that the latter part is negligible:
\begin{lem}
\label{lem:res-2}For each $x\in S_{\star}$ we have 
\[
\langle F_{N},-\mathcal{L}^{*}_{N}\mathbb{W}_{x}\rangle_{\nu_{N},(\mathcal{G}^{x}_{N})^{c}}=o(N^{-1-\alpha}).
\]
\end{lem}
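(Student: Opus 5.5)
Since $F_N$ is uniformly bounded by \eqref{eq:FN-bdd}, the plan is to bound $|\langle F_{N},-\mathcal{L}^{*}_{N}\mathbb{W}_{x}\rangle_{\nu_{N},(\mathcal{G}^{x}_{N})^{c}}|\le \frac{\|\mathfrak g\|_\infty}{\lambda}\sum_{\bm\eta\notin\mathcal G^x_N}\nu_N(\bm\eta)|\mathcal L^*_N\mathbb W_x(\bm\eta)|$. We then split $(\mathcal{G}^{x}_{N})^{c}$ according to where $\mathcal L^*_N\mathbb W_x$ can be nonzero. By Lemma \ref{lem:Wx-prop}, $\mathbb W_x$ is constant on each $\overline{\mathcal D}^y_N$. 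Hence $\mathcal{L}^{*}_{N}\mathbb W_x$ vanishes at every $\bm\eta$ all of whose neighbours lie in one such closure, and in particular on the interior of each $\mathcal D^y_N$; these points lie in $\mathcal G^x_N$ anyway. Off $\mathcal G^x_N$, we use the Lipschitz bound \eqref{eq:Lip2} together with boundedness of $\bm g_i, r^*_i$ to get $|\mathcal L^*_N\mathbb W_x|\le c/N$ everywhere. It therefore suffices to show that $\nu_N$ of the relevant part of $(\mathcal G^x_N)^c$ is $o(N^{-\alpha})$.

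\textbf{Decomposition of the complement.} A configuration outside $\mathcal{G}^x_N$ satisfies $|\bm\eta(y)|<N(1-2\epsilon)$ for all $y\in S_\star$, and it does not lie in any tube $\mathcal J^{x,y}_N$. We consider three cases.
\begin{enumerate}
\item Some site $z\in S\setminus S_\star$ carries at least $N(1-2\epsilon)$ particles. By Lemma \ref{lem:s5}, this set has measure exponentially small in $N$.
\item No site carries $N-\ell$ particles with $\ell=2N\epsilon$, and no two sites together carry $N-\pi_N$ particles. This set is contained in $\Xi^{\ell,\pi_N}_N$ (the condition $\pi_N<\ell/2$ holds for large $N$). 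Lemma \ref{lem:s3} then gives measure at most $c(N\epsilon)^{1-\alpha}\pi_N^{1-\alpha}$. Since $\pi_N=N^{\beta}$ with $\beta>1/\alpha$, we get $N^{1-\alpha}N^{\beta(1-\alpha)}=N^{-\alpha}\cdot N^{1-\beta(\alpha-1)}$. This is $o(N^{-\alpha})$ only if $\beta(\alpha-1)>1$, which need not hold.
\item Two sites $u,v$ with $\{u,v\}\not\subset S_\star$ together carry at least $N-\pi_N$ particles. If one of $u,v$ lies outside $S_\star$, the Lemma \ref{lem:s5}-type estimate gives a factor $\upsilon^{\text{(mass at that site)}}$. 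When that mass is small, the configuration is essentially near a single condensate at the other site, which must be in $S_\star$ and so belongs to a $\mathcal D$; otherwise the bound is exponentially small. The case $\{u,v\}\subset S_\star$ with $x\notin\{u,v\}$ needs separate treatment, because $\mathcal J^{u,v}_N$ with $u,v\neq x$ is not part of $\mathcal G^x_N$.
\end{enumerate}

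\textbf{Main obstacle.} The crude $c/N$ bound is too weak in the bulk of case 2, and in the tubes $\mathcal J^{y,z}_N$ with $y,z\ne x$. For those tubes we would exploit the structure of $\mathbb W_x$. On $\mathcal T^{y,z}_\epsilon$ away from $x$, the macroscopic point has $|\bm u(x)|\le\epsilon$, so every coordinate $\mathcal X_{x,\cdot}\le 3\epsilon$ (the truncation $\min\{\cdot,\epsilon\}$ caps the off-$\{x,\cdot\}$ contribution). Then $\gamma_\epsilon=0$, each $\mathcal W_{x,\cdot}=0$, and $\mathbb W_x\equiv0$ in a neighbourhood, so $\mathcal L^*_N\mathbb W_x=0$ there. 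This reduces matters to configurations with $|\bm\eta(x)|>N\epsilon$ and mass spread over at least three sites.

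For the remaining part of case 2, we would refine the measure estimate using Lemma \ref{lem:s4}. We stratify by $\ell'=N-|\bm\eta(u)|-|\bm\eta(v)|$ over pairs $u,v$, summing the bound $c(N\epsilon)^{1-\alpha}(\ell'+1)^{-\alpha}$ over $\ell'\ge\pi_N$. Combined with the Taylor-type bound $|\mathcal L^*_N\mathbb W_x|\le c/N$, this gives $N^{-1}\cdot N^{1-\alpha}\pi_N^{1-\alpha}=o(N^{-\alpha})$, with the extra factor $N^{-1}$ coming from the generator. We expect this bookkeeping, with careful tracking of where $\mathbb W_x$ is locally constant versus where only the $1/N$ gradient bound is available, to be the delicate step.
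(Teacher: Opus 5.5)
Your reduction to $\|F_N\|_\infty\sum_{\bm\eta\notin\mathcal G^x_N}\nu_N(\bm\eta)\,|\mathcal L^*_N\mathbb W_x(\bm\eta)|$, combined with the Lipschitz bound $|\mathcal L^*_N\mathbb W_x|\le c/N$, is where the argument breaks. The repair in your last paragraph does not fix it.

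\begin{itemize}
\item Summing Lemma~\ref{lem:s4} over all levels $\ell'\ge\pi_N$ only reproduces $c(N\epsilon)^{1-\alpha}\pi_N^{1-\alpha}$, which is the same bound Lemma~\ref{lem:s3} already gives.
\item Multiplying by $c/N$ yields $cN^{-\alpha}\pi_N^{1-\alpha}$, which you call $o(N^{-\alpha})$. But the generator's factor $N^{-1}$ is already included at that point, so the target is $o(N^{-1-\alpha})$.
\item The ratio $N\pi_N^{1-\alpha}=N^{1-\beta(\alpha-1)}$ diverges whenever $\alpha\le2$, since $\beta<1$.
\end{itemize}

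This is not a bookkeeping problem: taking absolute values discards the cancellation the lemma depends on. For example, when $\kappa_\star\ge3$, consider configurations with three macroscopic piles on sites of $S_\star$ and $|\bm\eta(x)|/N$ bounded away from $0$ and $1$. They lie in $(\mathcal G^x_N)^c$ and carry $\nu_N$-mass of order $N^{2-2\alpha}$. There $\mathbb W_x$ is a non-constant Lipschitz function of $\bm\eta/N$, and $\mathcal L^*_N\mathbb W_x$ is in general of order $1/N$ with no cancellation. So any $L^1$ estimate is at least of order $N^{1-2\alpha}$, which is not $o(N^{-1-\alpha})$ for $\alpha<2$. Your observation that $\mathbb W_x\equiv0$ near configurations with $|\bm\eta(x)|<N\epsilon$ is correct, and it does dispose of the tubes $\mathcal J^{y,z}_N$ with $x\notin\{y,z\}$; the paper uses the same reduction.

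The missing idea is to keep $F_N$ inside the pairing and use its small Dirichlet form, $\theta_N\mathcal D_N(F_N)=O(1)$ (Lemma~\ref{lem:PE1}).

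\begin{itemize}
\item Rewrite the sum over jumps in the variable $\bm\xi=\bm\eta-\delta^i_u\in\Omega^{i,-}_{N-1}$.
\item Take those $\bm\xi$ for which every $\bm\xi+\delta^i_u$, $u\in S$, lies in $(\mathcal G^x_N)^c$. Stationarity of $m_i$ gives $\sum_{u,w}m_i(u)r^*_i(u,w)[\mathbb W_x(\bm\xi+\delta^i_u)-\mathbb W_x(\bm\xi+\delta^i_w)]=0$. Hence you may replace $F_N(\bm\xi+\delta^i_u)$ by $F_N(\bm\xi+\delta^i_u)-\overline{F}^{i}_{N}(\bm\xi)$.
\item Cauchy--Schwarz then bounds this part by $c\,\mathcal D_N(F_N)^{1/2}\,\mathcal D_N(\mathbb W_x;(\mathcal G^x_N)^c)^{1/2}$.
\item The localized Dirichlet form costs $N^{-2}$ times the measure, not $N^{-1}$. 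So your own bound $c(N\epsilon)^{1-\alpha}\pi_N^{1-\alpha}=o(N^{1-\alpha})$ now suffices (Lemma~\ref{lem:neg-1}), giving $O(N^{-(1+\alpha)/2})\cdot o(N^{-(1+\alpha)/2})=o(N^{-1-\alpha})$.
\end{itemize}

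The crude $c/N$ bound is needed only for the remaining ``edge'' $\bm\xi$'s.

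\begin{itemize}
\item Edge $\bm\xi$'s adjacent to $\overline{\mathcal D}^x_N$ or $\overline{\mathcal D}^y_N$ contribute nothing, because $\mathbb W_x$ is constant there (Lemma~\ref{lem:Wx-prop}).
\item The rest lie in the single layer $|\bm\xi(x)|+|\bm\xi(y)|=N-\pi_N-1$. By Lemma~\ref{lem:s4} with $\ell'\approx\pi_N$, its measure is $O(N^{1-\alpha}\pi_N^{-\alpha})$.
\item Then $N^{-1}\cdot N^{1-\alpha}\pi_N^{-\alpha}\ll N^{-1-\alpha}$ precisely because $\pi_N^\alpha\gg N$. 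This is where $\beta>1/\alpha$ enters.
\end{itemize}

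Summing Lemma~\ref{lem:s4} over all $\ell'\ge\pi_N$, as you propose, loses exactly the power of $\pi_N$ that makes this step work.
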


Recall that we may represent the increment $\bm{\eta}\to\bm{\eta}-\delta^{i}_{u}+\delta^{i}_{w}$
as $\bm{\xi}+\delta^{i}_{u}\to\bm{\xi}+\delta^{i}_{w}$ with $\bm{\xi}:=\bm{\eta}-\delta^{i}_{u}$.
In this regard, we introduce an admissible departure site collection
$\Xi^{i}_{x}(\bm{\xi})$, corresponding to each $\bm{\xi}\in\Omega^{i,-}_{N-1}$,
as\textbf{ }
\begin{equation}
\Xi^{i}_{x}(\bm{\xi}):=\left\{ u\in S:\bm{\xi}+\delta^{i}_{u}\in(\mathcal{G}^{x}_{N})^{c}\right\} .\label{eq:Xix-def}
\end{equation}
Then, write
\[
\mathcal{A}^{i,-}_{N-1}:=\left\{ \bm{\xi}\in\Omega^{i,-}_{N-1}:\Xi^{i}_{x}(\bm{\xi})\neq\emptyset\right\} =\bigcup_{u\in S}\left\{ \bm{\xi}\in\Omega^{i,-}_{N-1}:\bm{\xi}+\delta^{i}_{u}\in(\mathcal{G}^{x}_{N})^{c}\right\} .
\]
In this sense, we may write
\begin{equation}
\langle F_{N},-\mathcal{L}^{*}_{N}\mathbb{W}_{x}\rangle_{\nu_{N},(\mathcal{G}^{x}_{N})^{c}}=\langle\!\langle F_{N}-\mathcal{L}^{*}_{N}\mathbb{W}_{x}\rangle\!\rangle_{\mathcal{A}^{-}_{N-1}},\label{eq:res-2-1}
\end{equation}
for $\mathcal{A}^{-}_{N-1}=\mathcal{A}^{1,-}_{N-1}\times\mathcal{A}^{2,-}_{N-1}$
where 
\begin{equation}
\begin{aligned} & \langle\!\langle F,-\mathcal{L}^{*}_{N}G\rangle\!\rangle_{\mathcal{F}}\\
 & :=\sum_{i\in\{1,2\}}\sum_{\bm{\xi}\in\mathcal{F}^{i}}\sum_{u\in\Xi^{i}_{x}(\bm{\xi})}\sum_{w\in S}\nu_{N}(\bm{\xi}+\delta^{i}_{u})\bm{g}_{i}((\bm{\xi}+\delta^{i}_{u})(u))r^{*}_{i}(u,w)F(\bm{\xi}+\delta^{i}_{u})\left[G(\bm{\xi}+\delta^{i}_{u})-G(\bm{\xi}+\delta^{i}_{w})\right]\\
 & =\sum_{i\in\{1,2\}}\sum_{\bm{\xi}\in\mathcal{F}^{i}}\sum_{u\in\Xi^{i}_{x}(\bm{\xi})}\sum_{w\in S}\frac{N^{\alpha}}{Z_{N}\binom{N}{A}}\frac{\bm{m}^{\bm{\xi}}}{\bm{g!}(\bm{\xi})}m_{i}(u)r^{*}_{i}(u,w)F(\bm{\xi}+\delta^{i}_{u})\left[G(\bm{\xi}+\delta^{i}_{u})-G(\bm{\xi}+\delta^{i}_{w})\right],
\end{aligned}
\label{eq:inner-prod-sim}
\end{equation}
for $\mathcal{F}=\mathcal{F}^{1}\times\mathcal{F}^{2}\subset\Omega^{1,-}_{N-1}\times\Omega^{2,-}_{N-1}$.
In addition, define the bulk part $\mathcal{A}^{i,{\rm bulk}}_{N-1}$
as
\begin{align*}
 & \mathcal{A}^{i,{\rm bulk}}_{N-1}:=\left\{ \bm{\xi}\in\Omega^{i,-}_{N-1}:\Xi^{i}_{x}(\bm{\xi})=S\right\} \\
 & =\left\{ \bm{\xi}\in\Omega^{i,-}_{N-1}:|\bm{\xi}(y)|<N(1-2\epsilon)-1,\enskip\forall y\in S_{\star},\quad|\bm{\xi}(x)|+|\bm{\xi}(z)|<N-\pi_{N}-1,\enspace\forall z\in S_{\star}\setminus\{x\}\right\} ,
\end{align*}
and let $\mathcal{A}^{i,{\rm edge}}_{N-1}:=\mathcal{A}^{i,-}_{N-1}\setminus\mathcal{A}^{i,{\rm bulk}}_{N-1}$
such that
\begin{equation}
\langle\!\langle F_{N},-\mathcal{L}^{*}_{N}\mathbb{W}_{x}\rangle\!\rangle_{\mathcal{A}^{-}_{N-1}}=\langle\!\langle F_{N},-\mathcal{L}^{*}_{N}\mathbb{W}_{x}\rangle\!\rangle_{\mathcal{A}^{{\rm bulk}}_{N-1}}+\langle\!\langle F_{N},-\mathcal{L}^{*}_{N}\mathbb{W}_{x}\rangle\!\rangle_{\mathcal{A}^{{\rm edge}}_{N-1}},\label{eq:res-2-2}
\end{equation}
with $\mathcal{A}^{{\rm bulk}}_{N-1}:=\mathcal{A}^{1,{\rm bulk}}_{N-1}\times\mathcal{A}^{2,{\rm bulk}}_{N-1}$
and $\mathcal{A}^{{\rm edge}}_{N-1}:=\mathcal{A}^{1,{\rm edge}}_{N-1}\times\mathcal{A}^{2,{\rm edge}}_{N-1}$.

\begin{figure}
\begin{tikzpicture}[scale=0.8]
\fill[red!30!white] (0,0)--(3,0)--(1.5,{-1.5*sqrt(3)})--cycle;
\fill[red!30!white] (10,0)--(7,0)--(8.5,{-1.5*sqrt(3)})--cycle;
\fill[red!30!white] (5,{-5*sqrt(3)})--(6.5,{-3.5*sqrt(3)})--(3.5,{-3.5*sqrt(3)})--cycle;
\fill[blue!30!white] (3.2,0)--(2.7,{-0.5*sqrt(3)})--(7.3,{-0.5*sqrt(3)})--(6.8,0)--cycle;
\fill[blue!30!white] (2.1,{-1.1*sqrt(3)})--(1.6,{-1.6*sqrt(3)})--(3.4,{-3.4*sqrt(3)})--(4.4,{-3.4*sqrt(3)})--cycle;
\fill[teal!30!white] (2.6,{-0.6*sqrt(3)})--(2.2,{-1*sqrt(3)})--(4.6,{-3.4*sqrt(3)})--(6.4,{-3.4*sqrt(3)})--(8.3,{-1.5*sqrt(3)})--(7.4,{-0.6*sqrt(3)})--cycle;

\draw[] (0,0)--(10,0)--(5,{-5*sqrt(3)})--cycle;
\draw[dotted] (0,0)--(3,0)--(1.5,{-1.5*sqrt(3)})--cycle;
\draw[dotted] (10,0)--(7,0)--(8.5,{-1.5*sqrt(3)})--cycle;
\draw[dotted] (5,{-5*sqrt(3)})--(6.5,{-3.5*sqrt(3)})--(3.5,{-3.5*sqrt(3)})--cycle;
\draw[] (3.2,0)--(2.7,{-0.5*sqrt(3)})--(7.3,{-0.5*sqrt(3)})--(6.8,0)--cycle;
\draw[] (2.1,{-1.1*sqrt(3)})--(1.6,{-1.6*sqrt(3)})--(3.4,{-3.4*sqrt(3)})--(4.4,{-3.4*sqrt(3)})--cycle;
\draw[] (2.6,{-0.6*sqrt(3)})--(2.2,{-1*sqrt(3)})--(4.6,{-3.4*sqrt(3)})--(6.4,{-3.4*sqrt(3)})--(8.3,{-1.5*sqrt(3)})--(7.4,{-0.6*sqrt(3)})--cycle;

\draw[very thick] (2.6,{-0.6*sqrt(3)})--(2.2,{-1*sqrt(3)}); \draw (2.4,{-0.8*sqrt(3)+0.2}) node[left]{\tiny$\mathcal A_{N-1}^{i,x}$};
\draw[very thick] (2.6,{-0.6*sqrt(3)})--(7.4,{-0.6*sqrt(3)}); \draw (5,{-0.6*sqrt(3)}) node[below]{\tiny$\mathcal A_{N-1}^{i,x,y}$};
\draw[very thick] (8.3,{-1.5*sqrt(3)})--(7.4,{-0.6*sqrt(3)}); \draw (7.85-0.3,{-1.05*sqrt(3)}) node[below]{\tiny$\mathcal A_{N-1}^{i,y}$};
\draw[very thick] (2.2,{-1*sqrt(3)})--(4.6,{-3.4*sqrt(3)}); \draw (3.4+0.1,{-2.2*sqrt(3)}) node[right]{\tiny$\mathcal A_{N-1}^{i,x,z}$};
\draw[very thick] (4.6,{-3.4*sqrt(3)})--(6.4,{-3.4*sqrt(3)}); \draw (5.5,{-3.4*sqrt(3)}) node[above]{\tiny$\mathcal A_{N-1}^{i,z}$};
\fill[violet] (2.6,{-0.6*sqrt(3)}) circle (0.1); \draw (2.6,{-0.6*sqrt(3)}) node[below right]{\tiny\color{violet}$\mathcal A_{N-1}^{i,x|y}$};
\fill[violet] (2.2,{-1*sqrt(3)}) circle (0.1); \draw (2.2,{-1*sqrt(3)}) node[below right]{\tiny\color{violet}$\mathcal A_{N-1}^{i,x|z}$};
\fill[violet] (7.4,{-0.6*sqrt(3)}) circle (0.1); \draw (7.4,{-0.6*sqrt(3)}) node[below left]{\tiny\color{violet}$\mathcal A_{N-1}^{i,y|x}$};
\fill[violet] (4.6,{-3.4*sqrt(3)}) circle (0.1); \draw (4.6,{-3.4*sqrt(3)}) node[below]{\tiny\color{violet}$\mathcal A_{N-1}^{i,z|x}$};

\draw (-0.2,0) node[left]{$x$}; \draw (10.2,0) node[right]{$y$}; \draw (5,{-5*sqrt(3)-0.2}) node[below]{$z$};
\draw (5,0.2) node[above]{\color{blue} $\mathcal B_{N-1}^{i,y,-}$};
\draw (2.5,{-2.5*sqrt(3)}) node[below left]{\color{blue} $\mathcal B_{N-1}^{i,z,-}$};
\draw (7.35,{-2.45*sqrt(3)}) node[below right]{\color{teal} $\mathcal A_{N-1}^{i,-}$};
\draw (5,{-5/sqrt(3)}) node{$\mathcal A_{N-1}^{i,\rm{bulk}}$};
\end{tikzpicture}\hspace{5mm}\begin{tikzpicture}[scale=1.2]
\draw[white] (5,0)--(5,-3.75); 
\fill[blue!30!white] (3.2,0)--(2.7,{-0.5*sqrt(3)})--(7.3,{-0.5*sqrt(3)})--(6.8,0)--cycle;

\draw[] (3.2,0)--(2.7,{-0.5*sqrt(3)})--(7.3,{-0.5*sqrt(3)})--(6.8,0)--cycle;

\draw[very thick] (3.2,0)--(2.7,{-0.5*sqrt(3)}); \draw (2.95,{-0.25*sqrt(3)+0.2}) node[left]{$\mathcal C_{N-1}^{i,x}$};
\draw[very thick] (2.7,{-0.5*sqrt(3)})--(7.3,{-0.5*sqrt(3)}); \draw (5,{-0.5*sqrt(3)-0.1}) node[below]{$\mathcal B_{N-1}^{i,x,y}$};
\draw[very thick] (7.3,{-0.5*sqrt(3)})--(6.8,0); \draw (7.05,{-0.25*sqrt(3)+0.2}) node[right]{$\mathcal C_{N-1}^{i,y}$};
\fill[violet] (2.7,{-0.5*sqrt(3)}) circle (0.1); \draw (2.7,{-0.5*sqrt(3)-0.1}) node[below]{\color{violet} $\mathcal B_{N-1}^{i,x}$};
\fill[violet] (7.3,{-0.5*sqrt(3)}) circle (0.1); \draw (7.3,{-0.5*sqrt(3)-0.1}) node[below]{\color{violet} $\mathcal B_{N-1}^{i,y}$};

\draw (5,0.2) node[above]{\color{blue} $\mathcal B_{N-1}^{i,y,-}$};
\draw (5,{-0.25*sqrt(3)}) node{$\mathcal B_{N-1}^{i,y,\rm{bulk}}$};
\end{tikzpicture}\caption{\label{fig5.1}Decomposition of the set $\Omega^{i,-}_{N-1}$. The
negligible part $\mathcal{A}^{i,-}_{N-1}$, which corresponds to $\mathcal{G}^{c}_{N}$
in the $N$-particle space, is decomposed as (left) $\mathcal{A}^{i,-}_{N-1}=\mathcal{A}^{i,{\rm bulk}}_{N-1}\cup\mathcal{A}^{i,{\rm edge}}_{N-1}$
with
\[
\mathcal{A}^{i,{\rm edge}}_{N-1}=\mathcal{A}^{i,x}_{N-1}\cup\bigcup_{y\in S_{\star}\setminus\{x\}}\left(\mathcal{A}^{i,y}_{N-1}\cup\mathcal{A}^{i,x\mid y}_{N-1}\cup\mathcal{A}^{i,y\mid x}_{N-1}\cup\mathcal{A}^{i,x,y}_{N-1}\right).
\]
The dominant part $\mathcal{B}^{i,y,-}_{N-1}$, $y\in S_{\star}\setminus\{x\}$,
which corresponds to $\mathcal{J}^{x,y}_{N}$ in the $N$-particle
space, is decomposed as (right) $\mathcal{B}^{i,y,-}_{N-1}=\mathcal{B}^{i,y,{\rm bulk}}_{N-1}\cup\mathcal{B}^{i,y,{\rm edge}}_{N-1}$
with
\[
\mathcal{B}^{i,y,{\rm edge}}_{N-1}=\mathcal{B}^{i,x}_{N-1}\cup\mathcal{B}^{i,y}_{N-1}\cup\mathcal{B}^{i,x,y}_{N-1}\cup\mathcal{C}^{i,x}_{N-1}\cup\mathcal{C}^{i,y}_{N-1}.
\]
}
\end{figure}

First, we show that the bulk part is negligible. For each $\mathcal{E}\subset\Omega_{N}$
and $F\in\mathbb{R}^{\Omega_{N}}$, define
\[
\mathcal{D}_{N}(F;\mathcal{E}):=\frac{1}{2}\sum_{\bm{\eta}\in\mathcal{E}}\sum_{i\in\{1,2\}}\sum_{u,w\in S}\nu_{N}(\bm{\eta})\bm{g}_{i}(\bm{\eta}(u))r^{*}_{i}(u,w)\left[F(\bm{\eta}-\delta^{i}_{u}+\delta^{i}_{w})-F(\bm{\eta})\right]^{2}.
\]
Clearly, $\mathcal{D}_{N}(F;\Omega_{N})=\mathcal{D}_{N}(F)$.
\begin{lem}
\label{lem:neg-1}For each $x\in S_{\star}$, we have 
\[
\mathcal{D}_{N}(\mathbb{W}_{x};(\mathcal{G}^{x}_{N})^{c})=o(N^{-1-\alpha}).
\]
\end{lem}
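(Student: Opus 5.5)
By the Lipschitz bound \eqref{eq:Lip2}, every squared increment of $\mathbb{W}_{x}$ is at most $c/N^{2}$. Since $\bm{g}_{i}$ and $r^{*}_{i}$ are uniformly bounded, this gives
\[
\mathcal{D}_{N}(\mathbb{W}_{x};(\mathcal{G}^{x}_{N})^{c})\le\frac{c}{N^{2}}\nu_{N}\bigl(\text{configurations in }(\mathcal{G}^{x}_{N})^{c}\text{ at which some jump changes }\mathbb{W}_{x}\bigr).
\]
It therefore suffices to show that this $\nu_{N}$-mass is $o(N^{1-\alpha})$. Note that this threshold diverges when $\alpha<1$, but here $\alpha>1$. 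A cruder bound is also not good enough: $\nu_{N}((\mathcal{G}^{x}_{N})^{c})\le\nu_{N}(\Delta_{N})=o(1)$ only gives $O(N^{-2})$, which beats $N^{-1-\alpha}$ only if $\alpha<1$. So the plan is to exploit the structure of $\mathbb{W}_{x}$ on $(\mathcal{G}^{x}_{N})^{c}$ in order to gain more decay.

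\textbf{Step 1 (where the increments vanish).} By Lemma \ref{lem:Wx-prop}, $\mathbb{W}_{x}$ is constant (equal to $1$ or $0$) on $\overline{\mathcal{D}}^{y}_{N}$ for every $y\in S_{\star}$. Every configuration outside $\mathcal{G}^{x}_{N}$ satisfies $|\bm{\eta}(y)|<N(1-2\epsilon)$ for all $y\in S_{\star}$. Such configurations split into two types. The first type has $|\bm{\eta}(x)|+|\bm{\eta}(z)|<N-\pi_{N}$ for all $z$; this is the set $\Xi^{N(2\epsilon),\pi_{N}}_{N}$-type region of \eqref{eq:XiNellell'}, together with configurations whose mass sits on two sites $y,z\ne x$. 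The second type lies in a tube $\mathcal{T}^{y,z}_{N}$ with $y,z\neq x$ but outside $\mathcal{D}^{y}_{N}\cup\mathcal{D}^{z}_{N}$. On the second type, $|\bm{\eta}(x)|\le\pi_{N}\ll N\epsilon$, so $\mathcal{X}_{x,w}\le3\epsilon$ for all $w$, and hence $\mathbb{W}_{x}\equiv0$ there and at all neighbours. I would check this as in the first bullet of the proof of Lemma \ref{lem:Wx-prop}. Thus only the first type, together with configurations within one unit of it, contributes.

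\textbf{Step 2 (mass estimate).} Apply Lemma \ref{lem:s3} with $\ell=\lfloor 2N\epsilon\rfloor$ and $\ell'=\pi_{N}$; the condition $\ell'<\ell/2$ holds because $\pi_{N}\ll N$. Together with $Z_{N}\ge1$ from Lemma \ref{lem:s1}, this bounds the $\nu_{N}$-mass of the first type by
\[
\frac{c}{(N\epsilon)^{\alpha-1}\pi_{N}^{\alpha-1}}.
\]
Two additions are needed. First, the one-unit enlargements only shift $\ell$ and $\ell'$ by $1$. Second, configurations where a site outside $S_{\star}$ carries $\ge N-\ell$ particles are exponentially small by Lemma \ref{lem:s5}. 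Multiplying by $c/N^{2}$ yields
\[
cN^{-2}N^{1-\alpha}\pi_{N}^{1-\alpha}=cN^{-1-\alpha}\pi_{N}^{-(\alpha-1)}=o(N^{-1-\alpha}),
\]
because $\alpha>1$ and $\pi_{N}\to\infty$.

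\textbf{Main obstacle.} The delicate part is Step 1. One must verify carefully that near $\mathcal{J}^{y,z}_{N}$ with $y,z\ne x$, and near the boundaries between $\mathcal{G}^{x}_{N}$ and its complement, no configuration escapes both cases. In particular, the truncation at $\epsilon$ in \eqref{eq:Xxyi-def} and the partition of unity must keep $\mathbb{W}_{x}$ exactly zero when $|\bm{\eta}(x)|$ is small. If a residual region survives, such as configurations with $|\bm{\eta}(x)|$ moderate and the remaining mass split among several sites, I would handle it with Lemma \ref{lem:s4} or Lemma \ref{lem:s3} applied with the roles of the sites permuted, which gives the same factor $\pi_{N}^{1-\alpha}$.
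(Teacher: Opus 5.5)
You have the same skeleton as the paper: bound each squared increment by $c/N^{2}$ via \eqref{eq:Lip2}, discard the region where $\mathbb{W}_{x}$ vanishes because $|\bm{\eta}(x)|$ is small, and use Lemma \ref{lem:s3} with $\ell\asymp N\epsilon$, $\ell'=\pi_{N}$ to gain the factor $\pi_{N}^{1-\alpha}$. The gap is in Step 1: your two types do not exhaust $(\mathcal{G}^{x}_{N})^{c}$.

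Take $\bm{\eta}$ with about $N/2$ particles at $x$ and about $N/2$ at some $z\in S\setminus S_{\star}$.
\begin{itemize}
\item It lies outside every $\mathcal{D}^{y}_{N}$ and every $\mathcal{J}^{x,y}_{N}$ with $y\in S_{\star}$, so it belongs to $(\mathcal{G}^{x}_{N})^{c}$.
\item It violates $|\bm{\eta}(x)|+|\bm{\eta}(z)|<N-\pi_{N}$, so it is not of your first type and not in $\Xi^{\lfloor 2N\epsilon\rfloor,\pi_{N}}_{N}$.
\item It is not of your second type, since $|\bm{\eta}(x)|$ is of order $N$.
\item $\mathbb{W}_{x}$ genuinely has increments of order $1/N$ there.
\end{itemize}
Your use of Lemma \ref{lem:s5} in Step 2 only removes configurations with $|\bm{\eta}(z)|\ge N(1-2\epsilon)$, so it misses this region. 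The fallback in your last paragraph also fails. Summing Lemma \ref{lem:s4} over $\ell'\le\pi_{N}$ gives only $O((N\epsilon)^{1-\alpha})$ for the $(x,z)$ tube, which is the same order as the mass of the genuine transition tubes $\mathcal{J}^{x,y}_{N}$. After multiplying by $c/N^{2}$ this gives $O(N^{-1-\alpha})$, not $o(N^{-1-\alpha})$. Lemma \ref{lem:s3} with the roles of the sites permuted does not apply at all, because the pair constraint is violated. You also guessed the residual region wrongly: it is not mass spread over several sites, but mass concentrated on a single site outside $S_{\star}$.

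The missing ingredient is that $z\notin S_{\star}$, which gives exponential rather than polynomial smallness.
\begin{itemize}
\item Suppose $\bm{\eta}\in(\mathcal{G}^{x}_{N})^{c}$ violates the pair constraint for a pair containing $x$. The partner $z$ cannot lie in $S_{\star}$, since otherwise $\bm{\eta}\in\mathcal{J}^{x,z}_{N}$.
\item Since $|\bm{\eta}(x)|<N(1-2\epsilon)$, this forces $|\bm{\eta}(z)|\ge 2N\epsilon-\pi_{N}$.
\item Lemma \ref{lem:s5} holds for every $\ell\in\llbracket0,N\rrbracket$. Applied with $N-\ell=2N\epsilon-\pi_{N}$, it bounds the mass of these configurations by $c\upsilon^{2N\epsilon-\pi_{N}}$.
\end{itemize}
This is exactly the paper's route. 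It first restricts to $|\bm{\eta}(x)|\ge N\epsilon$, which removes every pair not containing $x$ and plays the role of your second type. It then splits the remaining mass as $\nu_{N}(\Xi^{N\epsilon-1,\pi_{N}}_{N})+\sum_{z\in S\setminus S_{\star}}\nu_{N}(\Omega^{z,N(1-\epsilon)+\pi_{N}}_{N})$. With this extra case added, your argument goes through.
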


\begin{proof}
Fix $x\in S_{\star}$. The left-hand side can be explicitly written
as 
\begin{equation}
\mathcal{D}_{N}(\mathbb{W}_{x};(\mathcal{G}^{x}_{N})^{c})=\frac{1}{2}\sum_{i\in\{1,2\}}\sum_{\bm{\eta}\in(\mathcal{G}^{x}_{N})^{c}}\sum_{u,w\in S}\nu_{N}(\bm{\eta})\bm{g}_{i}(\bm{\eta}(u))r^{*}_{i}(u,w)\left[\mathbb{W}_{x}(\bm{\eta}-\delta^{i}_{u}+\delta^{i}_{w})-\mathbb{W}_{x}(\bm{\eta})\right]^{2}.\label{eq:neg-1-1}
\end{equation}
First, each configuration $\bm{\eta}\in(\mathcal{G}^{x}_{N})^{c}$
satisfies $|\bm{\eta}(x)|\leq N(1-\epsilon)$. Moreover, for any $\bm{\eta}\in\Omega_{N}$
that satisfies $|\bm{\eta}(x)|<N\epsilon$, for all $y\in S_{\star}\setminus\{x\}$,
$i\in\{1,2\}$, and $u,w\in S$ (cf. \eqref{eq:Xxyi-def}),
\[
\mathcal{X}_{x,y}(\bm{\eta}-\delta^{i}_{u}+\delta^{i}_{w}),\mathcal{X}_{x,y}(\bm{\eta})\leq3\epsilon.
\]
Thus, by the definitions of $\gamma_{\epsilon}$ and $\mathbb{W}_{x}$
(recall \eqref{eq:Wx-def}), one can notice that
\[
\mathbb{W}_{x}(\bm{\eta}-\delta^{i}_{u}+\delta^{i}_{w})=\mathbb{W}_{x}(\bm{\eta})=0.
\]
Therefore, we can restrict the summation in \eqref{eq:neg-1-1} to
the configurations $\bm{\eta}\in(\mathcal{G}^{x}_{N})^{c}$ that satisfy
$N\epsilon\leq|\bm{\eta}(x)|\leq N(1-\epsilon)$. By \eqref{eq:Lip2}
and the boundedness of $\bm{g}_{i}$ and $r^{*}_{i}$, the right-hand
side of \eqref{eq:neg-1-1} can be bounded above by 
\begin{equation}
\frac{c}{N^{2}}\sum_{\substack{\bm{\eta}\in(\mathcal{G}^{x}_{N})^{c}:\\
N\epsilon\leq|\bm{\eta}(x)|\leq N(1-\epsilon)
}
}\nu_{N}(\bm{\eta}).\label{eq:neg-1-2}
\end{equation}
Note that $\bm{\eta}\in(\mathcal{G}^{x}_{N})^{c}$ and $N\epsilon\le|\bm{\eta}(x)|\le N(1-\epsilon)$
implies that $|\bm{\eta}(y)|\le N(1-\epsilon)$ for all $y\in S$,
and 
\[
|\bm{\eta}(y)|+|\bm{\eta}(z)|<N-\pi_{N},\quad\forall y\neq z\in S,\qquad\text{or}\qquad|\bm{\eta}(z)|\ge N\epsilon-\pi_{N},\quad\exists z\in S\setminus S_{\star}.
\]
Via Lemmas \ref{lem:s1}, \ref{lem:s3}, and \ref{lem:s5}, this implies
that
\begin{align*}
\sum_{\substack{\bm{\eta}\in(\mathcal{G}^{x}_{N})^{c}:\\
N\epsilon\leq|\bm{\eta}(x)|\leq N(1-\epsilon)
}
}\nu_{N}(\bm{\eta}) & \le\nu_{N}(\Xi^{N\epsilon-1,\pi_{N}}_{N})+\sum_{z\in S\setminus S_{\star}}\nu_{N}(\Omega^{z,N(1-\epsilon)+\pi_{N}}_{N})\\
 & \le\frac{c}{(N\epsilon)^{\alpha-1}(\pi_{N}+1)^{\alpha-1}}+c\upsilon^{N\epsilon-\pi_{N}}\ll\frac{1}{N^{\alpha-1}},
\end{align*}
where the final asymptotic holds since $\pi_{N}\gg1$ and $N\epsilon-\pi_{N}\asymp N$.
Combining this with \eqref{eq:neg-1-2} concludes the proof of the
lemma.
\end{proof}

\begin{lem}
\label{lem:neg-2}For each $x\in S_{\star}$,
\begin{equation}
\langle\!\langle F_{N},-\mathcal{L}^{*}_{N}\mathbb{W}_{x}\rangle\!\rangle_{\mathcal{A}^{{\rm bulk}}_{N-1}}=o(\theta^{-1}_{N}).\label{eq:neg-2-1}
\end{equation}
 
\end{lem}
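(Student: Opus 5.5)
For each $\bm{\xi}\in\mathcal{A}^{i,{\rm bulk}}_{N-1}$ all of $\bm{\xi}+\delta^{i}_{u}$, $u\in S$, lie in $(\mathcal{G}^{x}_{N})^{c}$, so the inner sum over $u$ in \eqref{eq:inner-prod-sim} runs over the full set $S$. This lets us reuse the trick from the proof of the sector condition (Lemma \ref{lem:sector}). Stationarity of $m_{i}$ for $r^{*}_{i}$ gives
\[
\sum_{u,w\in S}m_{i}(u)r^{*}_{i}(u,w)\left[\mathbb{W}_{x}(\bm{\xi}+\delta^{i}_{u})-\mathbb{W}_{x}(\bm{\xi}+\delta^{i}_{w})\right]=0 .
\]
Hence, for each bulk $\bm{\xi}$, we may replace $F_{N}(\bm{\xi}+\delta^{i}_{u})$ by $F_{N}(\bm{\xi}+\delta^{i}_{u})-\overline{F}^{i}_{N}(\bm{\xi})$ without changing the sum. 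The left-hand side of \eqref{eq:neg-2-1} therefore becomes
\[
\sum_{i}\sum_{\bm{\xi}\in\mathcal{A}^{i,{\rm bulk}}_{N-1}}\frac{N^{\alpha}}{Z_{N}\binom{N}{A}}\frac{\bm{m}^{\bm{\xi}}}{\bm{g!}(\bm{\xi})}\sum_{u,w}m_{i}(u)r^{*}_{i}(u,w)\left(F_{N}(\bm{\xi}+\delta^{i}_{u})-\overline{F}^{i}_{N}(\bm{\xi})\right)\left[\mathbb{W}_{x}(\bm{\xi}+\delta^{i}_{u})-\mathbb{W}_{x}(\bm{\xi}+\delta^{i}_{w})\right].
\]

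Next, apply Cauchy--Schwarz and return to $\bm{\eta}=\bm{\xi}+\delta^{i}_{u}$, so that $\nu_{N}(\bm{\eta})\bm{g}_{i}(\bm{\eta}(u))$ reappears. The expression is bounded by the square root of
\[
\sum_{i,u,w}\sum_{\bm{\eta}}\nu_{N}(\bm{\eta})\bm{g}_{i}(\bm{\eta}(u))r^{*}_{i}(u,w)\left(F_{N}(\bm{\eta})-\overline{F}^{i}_{N}(\bm{\eta}-\delta^{i}_{u})\right)^{2},
\]
times the square root of
\[
\sum_{i,u,w}\sum_{\bm{\eta}\in(\mathcal{G}^{x}_{N})^{c}}\nu_{N}(\bm{\eta})\bm{g}_{i}(\bm{\eta}(u))r^{*}_{i}(u,w)\left[\mathbb{W}_{x}(\bm{\eta})-\mathbb{W}_{x}(\bm{\eta}-\delta^{i}_{u}+\delta^{i}_{w})\right]^{2}=2\mathcal{D}_{N}(\mathbb{W}_{x};(\mathcal{G}^{x}_{N})^{c}).
\]
In the second factor, every $\bm{\eta}=\bm{\xi}+\delta^{i}_{u}$ with $\bm{\xi}$ bulk lies in $(\mathcal{G}^{x}_{N})^{c}$, so restricting to that set only enlarges the sum. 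By Lemma \ref{lem:PE2}, the first factor is at most $c\mathcal{D}_{N}(F_{N})$, which is $O(\theta_{N}^{-1})$ by Lemma \ref{lem:PE1}. By Lemma \ref{lem:neg-1}, the second factor is $o(N^{-1-\alpha})=o(\theta_{N}^{-1})$.

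Multiplying the two bounds gives $\sqrt{O(\theta_{N}^{-1})\,o(\theta_{N}^{-1})}=o(\theta_{N}^{-1})$, which is \eqref{eq:neg-2-1}.

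The only delicate step is the recentering by $\overline{F}^{i}_{N}$. It requires the $u$-sum to be complete, which is exactly the defining property $\Xi^{i}_{x}(\bm{\xi})=S$ of the bulk set. It also requires the factor $\bm{g}_{i}(\bm{\eta}(u))$ to cancel against $\bm{g!}$, producing the $\bm{\xi}$-measure weighted by $m_{i}(u)$; this is the same change of variables as in \eqref{eq:sector-1}. Configurations with $\eta_{i}(u)=0$ contribute nothing, so no boundary issue arises. The edge part $\mathcal{A}^{{\rm edge}}_{N-1}$ is left for a separate argument.
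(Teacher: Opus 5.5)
Your proposal is correct and follows the paper's proof essentially step for step. You recenter $F_N$ by $\overline{F}^{i}_{N}(\bm{\xi})$ using stationarity of $m_i$ under $r^{*}_{i}$, which is valid because $\Xi^{i}_{x}(\bm{\xi})=S$ on the bulk, and then apply Cauchy--Schwarz. The $F_N$ factor is controlled by Lemmas~\ref{lem:PE2} and~\ref{lem:PE1}, and the $\mathbb{W}_{x}$ factor by $2\mathcal{D}_{N}(\mathbb{W}_{x};(\mathcal{G}^{x}_{N})^{c})=o(\theta_{N}^{-1})$ from Lemma~\ref{lem:neg-1}, using the same inclusion $\bm{\xi}+\delta^{i}_{u}\in(\mathcal{G}^{x}_{N})^{c}$ that the paper invokes.
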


\begin{proof}
The idea of proof is similar to that of Lemma \ref{lem:sector}. First,
the left-hand side of \eqref{eq:neg-2-1} can be expressed as 
\[
\sum_{i\in\{1,2\}}\sum_{\bm{\xi}\in\mathcal{A}^{i,{\rm bulk}}_{N-1}}\sum_{u,w\in S}\frac{N^{\alpha}}{Z_{N}\binom{N}{A}}\frac{\bm{m}^{\bm{\xi}}}{\bm{g!}(\bm{\xi})}m_{i}(u)r^{*}_{i}(u,w)F_{N}(\bm{\xi}+\delta^{i}_{u})\left\{ \mathbb{W}_{x}(\bm{\xi}+\delta^{i}_{u})-\mathbb{W}_{x}(\bm{\xi}+\delta^{i}_{w})\right\} .
\]
By the stationarity of $m_{i}$, we have 
\[
\sum_{u,w\in S}m_{i}(u)r^{*}_{i}(u,w)\left\{ \mathbb{W}_{x}(\bm{\xi}+\delta^{i}_{u})-\mathbb{W}_{x}(\bm{\xi}+\delta^{i}_{w})\right\} =0.
\]
Thus, the left hand side of \eqref{eq:neg-2-1} is equal to 
\begin{equation}
\sum_{i\in\{1,2\}}\sum_{\bm{\xi}\in\mathcal{A}^{i,{\rm bulk}}_{N-1}}\sum_{u,w\in S}\frac{N^{\alpha}}{Z_{N}\binom{N}{A}}\frac{\bm{m}^{\bm{\xi}}}{\bm{g!}(\bm{\xi})}m_{i}(u)r^{*}_{i}(u,w)\left\{ F_{N}(\bm{\xi}+\delta^{i}_{u})-\overline{F}^{i}_{N}(\bm{\xi})\right\} \left\{ \mathbb{W}_{x}(\bm{\xi}+\delta^{i}_{u})-\mathbb{W}_{x}(\bm{\xi}+\delta^{i}_{w})\right\} ,\label{eq:neg-2-2}
\end{equation}
where $\overline{F}^{i}_{N}:\Omega^{i,-}_{N-1}\to\mathbb{R}$ is defined
as in \eqref{eq:Gibar-def}. By the Cauchy--Schwarz inequality, \eqref{eq:neg-2-2}
is bounded above by 
\begin{align*}
 & \left(\sum_{i\in\{1,2\}}\sum_{\bm{\xi}\in\mathcal{A}^{i,{\rm bulk}}_{N-1}}\frac{N^{\alpha}}{Z_{N}\binom{N}{A}}\frac{\bm{m}^{\bm{\xi}}}{\bm{g!}(\bm{\xi})}\sum_{u,w\in S}m_{i}(u)r^{*}_{i}(u,w)\left\{ F_{N}(\bm{\xi}+\delta^{i}_{u})-\overline{F}^{i}_{N}(\bm{\xi})\right\} ^{2}\right)^{\frac{1}{2}}\\
 & \times\left(\sum_{i\in\{1,2\}}\sum_{\bm{\xi}\in\mathcal{A}^{i,{\rm bulk}}_{N-1}}\frac{N^{\alpha}}{Z_{N}\binom{N}{A}}\frac{\bm{m}^{\bm{\xi}}}{\bm{g!}(\bm{\xi})}\sum_{u,w\in S}m_{i}(u)r^{*}_{i}(u,w)\left\{ \mathbb{W}_{x}(\bm{\xi}+\delta^{i}_{u})-\mathbb{W}_{x}(\bm{\xi}+\delta^{i}_{w})\right\} ^{2}\right)^{\frac{1}{2}}.
\end{align*}
As done in \eqref{eq:sector-3}, we may divide the square of difference
between $F_{N}$ and its average into the sum of square of differences
between individual values. Then, by the inclusion relation $\mathcal{A}^{i,{\rm bulk}}_{N-1}\subset\Omega^{i,-}_{N-1}$
for each $i\in\{1,2\}$, we obtain that the above term is bounded
above by 
\[
c\sqrt{\mathcal{D}_{N}(F_{N})}\times\sqrt{\mathcal{D}_{N}(\mathbb{W}_{x};(\mathcal{G}^{x}_{N})^{c})},
\]
which is $o(\theta^{-1}_{N})$ as desired by Lemmas \ref{lem:PE1}
and \ref{lem:neg-1}.
\end{proof}

Now, we handle the edge part $\langle\!\langle F_{N},-\mathcal{L}^{*}_{N}\mathbb{W}_{x}\rangle\!\rangle_{\mathcal{A}^{{\rm edge}}_{N-1}}$.
To this end, we separate the collection $\mathcal{A}^{{\rm edge}}_{N-1}=\mathcal{A}^{1,{\rm edge}}_{N-1}\times\mathcal{A}^{2,{\rm edge}}_{N-1}$
with certain admissible departure collections. For $y\in S_{\star}\setminus\{x\}$,
let
\begin{align*}
\mathcal{A}^{i,x}_{N-1} & :=\left\{ \bm{\xi}\in\mathcal{A}^{i,-}_{N-1}:|\bm{\xi}(x)|=N(1-2\epsilon)-1,\quad|\bm{\xi}(x)|+|\bm{\xi}(z)|<N-\pi_{N}-1,\enspace\forall z\in S_{\star}\setminus\{x\}\right\} ,\\
\mathcal{A}^{i,y}_{N-1} & :=\left\{ \bm{\xi}\in\mathcal{A}^{i,-}_{N-1}:|\bm{\xi}(y)|=N(1-2\epsilon)-1,\quad|\bm{\xi}(x)|+|\bm{\xi}(y)|<N-\pi_{N}-1\right\} ,\\
\mathcal{A}^{i,x\mid y}_{N-1} & :=\left\{ \bm{\xi}\in\mathcal{A}^{i,-}_{N-1}:|\bm{\xi}(x)|=N(1-2\epsilon)-1,\quad|\bm{\xi}(x)|+|\bm{\xi}(y)|=N-\pi_{N}-1\right\} ,\\
\mathcal{A}^{i,y\mid x}_{N-1} & :=\left\{ \bm{\xi}\in\mathcal{A}^{i,-}_{N-1}:|\bm{\xi}(y)|=N(1-2\epsilon)-1,\quad|\bm{\xi}(x)|+|\bm{\xi}(y)|=N-\pi_{N}-1\right\} ,\\
\mathcal{A}^{i,x,y}_{N-1} & :=\left\{ \bm{\xi}\in\mathcal{A}^{i,-}_{N-1}:|\bm{\xi}(x)|,|\bm{\xi}(y)|<N(1-2\epsilon)-1,\quad|\bm{\xi}(x)|+|\bm{\xi}(y)|=N-\pi_{N}-1\right\} .
\end{align*}
Since $\epsilon$ is sufficiently small, one can notice that all collections
above are disjoint and form a partition of $\mathcal{A}^{i,{\rm edge}}_{N-1}$.
For each collection, the corresponding admissible departure site is
given by 
\[
\Xi^{i}_{x}(\bm{\xi})=\begin{cases}
S\setminus\{x\} & \text{if}\quad\bm{\xi}\in\mathcal{A}^{i,x}_{N-1},\\
S\setminus\{y\} & \text{if}\quad\bm{\xi}\in\mathcal{A}^{i,y}_{N-1},\enspace y\in S_{\star}\setminus\{x\},\\
S\setminus\{x,y\} & \text{if}\quad\bm{\xi}\in\mathcal{A}^{i,x\mid y}_{N-1}\cup\mathcal{A}^{i,y\mid x}_{N-1}\cup\mathcal{A}^{i,x,y}_{N-1},\enspace y\in S_{\star}\setminus\{x\}.
\end{cases}
\]
In this regard, we may decompose the edge part as (cf. \eqref{eq:inner-prod-sim})
\begin{equation}
\begin{aligned}\langle\!\langle F_{N},-\mathcal{L}^{*}_{N}\mathbb{W}_{x}\rangle\!\rangle_{\mathcal{A}^{{\rm edge}}_{N-1}} & =\langle\!\langle F_{N},-\mathcal{L}^{*}_{N}\mathbb{W}_{x}\rangle\!\rangle_{\mathcal{A}^{x}_{N-1}}\\
 & +\sum_{y\in S_{\star}\setminus\{x\}}\langle\!\langle F_{N},-\mathcal{L}^{*}_{N}\mathbb{W}_{x}\rangle\!\rangle_{\mathcal{A}^{y}_{N-1}}+\sum_{y\in S_{\star}\setminus\{x\}}\langle\!\langle F_{N},-\mathcal{L}^{*}_{N}\mathbb{W}_{x}\rangle\!\rangle_{\mathcal{A}^{x\mid y}_{N-1}}\\
 & +\sum_{y\in S_{\star}\setminus\{x\}}\langle\!\langle F_{N},-\mathcal{L}^{*}_{N}\mathbb{W}_{x}\rangle\!\rangle_{\mathcal{A}^{y\mid x}_{N-1}}+\sum_{y\in S_{\star}\setminus\{x\}}\langle\!\langle F_{N},-\mathcal{L}^{*}_{N}\mathbb{W}_{x}\rangle\!\rangle_{\mathcal{A}^{x,y}_{N-1}},
\end{aligned}
\label{eq:edge-dec}
\end{equation}
where $\mathcal{A}^{x}_{N-1}:=\mathcal{A}^{1,x}_{N-1}\times\mathcal{A}^{2,x}_{N-1}$,
$\mathcal{A}^{y}_{N-1}:=\mathcal{A}^{1,y}_{N-1}\times\mathcal{A}^{2,y}_{N-1}$,
and so on.
\begin{lem}
\label{lem:neg-3}We have 
\[
\langle\!\langle F_{N},-\mathcal{L}^{*}_{N}\mathbb{W}_{x}\rangle\!\rangle_{\mathcal{A}^{{\rm edge}}_{N-1}}=o(\theta^{-1}_{N}).
\]
\end{lem}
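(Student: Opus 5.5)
We bound each of the five terms in \eqref{eq:edge-dec} separately. In every case we use only three ingredients: the Lipschitz bound \eqref{eq:Lip2}, the boundedness \eqref{eq:FN-bdd}, and the mass estimates of Section \ref{sec2}. The goal is to show that the $\nu_{N}$-mass of each edge collection, multiplied by $1/N$, is $o(N^{-1-\alpha})$. For $\bm{\xi}$ in any edge collection, the summand in \eqref{eq:inner-prod-sim} satisfies
\[
\left|\nu_{N}(\bm{\xi}+\delta^{i}_{u})\bm{g}_{i}\,r^{*}_{i}(u,w)F_{N}(\bm{\xi}+\delta^{i}_{u})\bigl[\mathbb{W}_{x}(\bm{\xi}+\delta^{i}_{u})-\mathbb{W}_{x}(\bm{\xi}+\delta^{i}_{w})\bigr]\right|\le\frac{c}{N}\,\nu_{N}(\bm{\xi}+\delta^{i}_{u}).
\]
Hence each term is bounded by $\frac{c}{N}\nu_{N}(\mathcal{S})$, where $\mathcal{S}\subset\Omega_{N}$ is the set of configurations $\bm{\xi}+\delta^{i}_{u}$ with $\bm{\xi}$ in that collection and $u\in\Xi^{i}_{x}(\bm{\xi})$. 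Since $\theta_{N}^{-1}=N^{-1-\alpha}$, it therefore suffices to show $\nu_{N}(\mathcal{S})=o(N^{-\alpha})$ for each collection. In other words, each edge set is a ``codimension-one'' slice, and it should carry one extra factor $N^{-1}$ beyond the bulk mass, which is already $O(N^{1-\alpha}\pi_{N}^{1-\alpha})$.

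\textbf{Collections $\mathcal{A}^{i,x}_{N-1}$ and $\mathcal{A}^{i,y}_{N-1}$.} The configuration $\bm{\eta}=\bm{\xi}+\delta^{i}_{u}$ with $u\ne x$ (resp.\ $u\ne y$) has $|\bm{\eta}(x)|=N(1-2\epsilon)-1$ exactly. In addition, no pair of sites in $S_{\star}$ holds $N-\pi_{N}$ particles, or else some site in $S\setminus S_{\star}$ carries $\gtrsim\pi_{N}$ particles. We will bound this mass by the argument of Lemma \ref{lem:s4}, with $\ell'$ replaced by the number of particles outside $\{x,y'\}$ for the relevant site $y'$. Concretely, fix the occupation $p_{1}=|\bm{\eta}(x)|$ and use Lemmas \ref{lem:comb-bound} and \ref{lem:asymp2}. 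The factor $N^{\alpha}/\mathfrak{a}(p_{1})$ is $O(1)$. What remains is $\sum_{p_{2}+\cdots+p_{\kappa}=2N\epsilon+1}\prod_{j\ge2}\mathfrak{a}(p_{j})^{-1}$, restricted to $p_{2}+p_{j}<\cdots$ type constraints that force at least two remaining sites to carry many particles, or else $\pi_{N}$ particles to sit outside the pair. If some other site carries $\ge cN\epsilon$ particles while the complement carries $\ge\pi_{N}$, we get $N^{-\alpha}\cdot\pi_{N}^{1-\alpha}$. Lemma \ref{lem:s5} handles mass on $S\setminus S_{\star}$ at exponential cost. The resulting bound is $c\,N^{-\alpha}\pi_{N}^{1-\alpha}=o(N^{-\alpha})$, as required.

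\textbf{Collections $\mathcal{A}^{i,x\mid y}$, $\mathcal{A}^{i,y\mid x}$, $\mathcal{A}^{i,x,y}$.} Here $|\bm{\eta}(x)|+|\bm{\eta}(y)|=N-\pi_{N}$ up to $O(1)$, while both $|\bm{\eta}(x)|$ and $|\bm{\eta}(y)|$ are at most $N(1-2\epsilon)$. This is exactly the setting of Lemma \ref{lem:s4} with $\ell=2N\epsilon$ and $\ell'=\pi_{N}$ (or $\pi_{N}\pm1$, with $u$ ranging over finitely many sites). That lemma gives
\[
\nu_{N}(\mathcal{S})\le\frac{c}{(N\epsilon)^{\alpha-1}\pi_{N}^{\alpha}}=N^{1-\alpha}\cdot O(N^{-\beta\alpha})=o(N^{-\alpha}),
\]
since $\beta\alpha>1$ by \eqref{eq:piN}. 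The corner sets $\mathcal{A}^{i,x\mid y}$ and $\mathcal{A}^{i,y\mid x}$ are subsets of this slice and need nothing extra.

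\textbf{Main obstacle.} The delicate step is the first case. There we must verify that the slice $\{|\bm{\eta}(x)|=N(1-2\epsilon)-1\}\cap(\mathcal{G}^{x}_{N})^{c}$ really has mass $o(N^{-\alpha})$ and not merely $O(N^{-\alpha})$. The gain has to come from the exclusion of $\mathcal{J}^{x,z}_{N}$: outside $\mathcal{G}^{x}_{N}$, the $\approx2N\epsilon$ particles not at $x$ cannot be essentially all at a single site $z\in S_{\star}$. I would first try to redo the sum as in Lemma \ref{lem:asymp4}, splitting according to which remaining site is largest. This shows the extra restriction costs a factor $\pi_{N}^{1-\alpha}\to0$. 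If that becomes messy, a fallback is to note that every such $\bm{\eta}$ lies in $\Xi^{N\epsilon,\pi_{N}}_{N}$ and to apply a one-coordinate-fixed version of Lemma \ref{lem:s3}, which gains the factor $N^{-1}$ from fixing $|\bm{\eta}(x)|$. Summing the five bounds then yields $o(\theta^{-1}_{N})$.
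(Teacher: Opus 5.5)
There is a genuine gap in your treatment of $\mathcal{A}^{i,y}_{N-1}$, $y\in S_{\star}\setminus\{x\}$, whenever $\kappa_{\star}\ge3$. Your handling of $\mathcal{A}^{i,x,y}_{N-1}$ and of the corner sets, via \eqref{eq:Lip2} and Lemma \ref{lem:s4} with $\ell=2N\epsilon$, $\ell'=\pi_N+1$, is correct.

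\textbf{Where the argument fails.} For the $y$-slice you assert that no pair of sites in $S_{\star}$ holds $N-\pi_N$ particles. This is false. The definition of $\mathcal{A}^{i,y}_{N-1}$ constrains only the pair $\{x,y\}$, and $(\mathcal{G}^{x}_{N})^{c}$ contains the tubes $\mathcal{J}^{y,z}_{N}$ for $z\in S_{\star}\setminus\{x,y\}$, since these are not part of $\mathcal{G}^{x}_{N}$. For example, take $|\bm{\xi}(y)|=N(1-2\epsilon)-1$ with the remaining $2N\epsilon$ particles at $z$. This $\bm{\xi}$ lies in $\mathcal{A}^{i,y}_{N-1}$, because $\bm{\xi}+\delta^{i}_{z}\in\mathcal{J}^{y,z}_{N}\subset(\mathcal{G}^{x}_{N})^{c}$. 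By Vandermonde's identity, the $\nu_N$-mass of such configurations is of order $N^{\alpha}(N(1-2\epsilon))^{-\alpha}(2N\epsilon)^{-\alpha}\asymp\epsilon^{-\alpha}N^{-\alpha}$. Multiplying by the Lipschitz factor $c/N$ gives only $O(\epsilon^{-\alpha}\theta_N^{-1})$. That is not $o(\theta_N^{-1})$, and it does not even become small as $\epsilon\to0$. So the ``mass times Lipschitz'' strategy cannot close this case. Your fallback through $\Xi^{N\epsilon,\pi_N}_{N}$ fails for the same reason.

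\textbf{The missing idea.} These terms vanish identically; this is the purpose of the one-unit closures in \eqref{eq:DJ-cl}.
\begin{itemize}
\item If $|\bm{\xi}(x)|=N(1-2\epsilon)-1$, i.e. $\bm{\xi}\in\mathcal{A}^{i,x}_{N-1}\cup\mathcal{A}^{i,x\mid y}_{N-1}$, then $\bm{\xi}+\delta^{i}_{u}$ and $\bm{\xi}+\delta^{i}_{w}$ both lie in $\overline{\mathcal{D}}^{x}_{N}$ for every $u,w$. By Lemma \ref{lem:Wx-prop}, $\mathbb{W}_x=1$ at both endpoints, so the increment is exactly zero.
\item If $|\bm{\xi}(y)|=N(1-2\epsilon)-1$, both endpoints lie in $\overline{\mathcal{D}}^{y}_{N}$, where $\mathbb{W}_x\equiv0$, so again the increment is zero.
\end{itemize}
Hence four of the five terms in \eqref{eq:edge-dec} are exactly $0$, and your ``main obstacle'' disappears. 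Only $\mathcal{A}^{i,x,y}_{N-1}$ needs an estimate, and the Lemma \ref{lem:s4} bound you already have supplies it.

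For $\mathcal{A}^{i,x}_{N-1}$ alone, your mass argument would actually go through, because there every pair $\{x,z\}$ with $z\in S_{\star}\setminus\{x\}$ is constrained. It is simply unnecessary.
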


\begin{proof}
We prove the lemma according to the decomposition \eqref{eq:edge-dec}.
First, if $\bm{\xi}\in\mathcal{A}^{i,x}_{N-1}$ or $\mathcal{A}^{i,x|y}_{N-1}$
for each $i\in\{1,2\}$, $y\in S_{\star}\setminus\{x\}$, then $|\bm{\xi}(x)|=N(1-2\epsilon)-1$
implies $\bm{\xi}+\delta^{i}_{u},\bm{\xi}+\delta^{i}_{w}\in\overline{\mathcal{D}}^{x}_{N}$
(cf. \eqref{eq:DJ-cl}), thus $\mathbb{W}_{x}(\bm{\xi}+\delta^{i}_{u})=\mathbb{W}_{x}(\bm{\xi}+\delta^{i}_{w})=1$
by Lemma \ref{lem:Wx-prop}. Therefore, by \eqref{eq:inner-prod-sim},
\[
\langle\!\langle F_{N},-\mathcal{L}^{*}_{N}\mathbb{W}_{x}\rangle\!\rangle_{\mathcal{A}^{x}_{N-1}}=\sum_{y\in S_{\star}\setminus\{x\}}\langle\!\langle F_{N},-\mathcal{L}^{*}_{N}\mathbb{W}_{x}\rangle\!\rangle_{\mathcal{A}^{x\mid y}_{N-1}}=0.
\]
Similarly, if $\bm{\xi}\in\mathcal{A}^{i,y}_{N-1}$ or $\mathcal{A}^{i,y|x}_{N-1}$
for $y\in S_{\star}\setminus\{x\}$, then $|\bm{\xi}(y)|=N(1-2\epsilon)-1$
thus $\bm{\xi}+\delta^{i}_{u},\bm{\xi}+\delta^{i}_{w}\in\overline{\mathcal{D}}^{y}_{N}$,
and $\mathbb{W}_{x}(\bm{\xi}+\delta^{i}_{w})=\mathbb{W}_{x}(\bm{\xi}+\delta^{i}_{u})=0$.
Therefore,
\[
\sum_{y\in S_{\star}\setminus\{x\}}\langle\!\langle F_{N},-\mathcal{L}^{*}_{N}\mathbb{W}_{x}\rangle\!\rangle_{\mathcal{A}^{y}_{N-1}}=\sum_{y\in S_{\star}\setminus\{x\}}\langle\!\langle F_{N},-\mathcal{L}^{*}_{N}\mathbb{W}_{x}\rangle\!\rangle_{\mathcal{A}^{y\mid x}_{N-1}}=0.
\]
It thus suffices to prove that, for each $y\in S_{\star}\setminus\{x\}$,
\[
\langle\!\langle F_{N},-\mathcal{L}^{*}_{N}\mathbb{W}_{x}\rangle\!\rangle_{\mathcal{A}^{x,y}_{N-1}}=o(\theta^{-1}_{N}).
\]
To this end, consider a configuration $\bm{\xi}\in\mathcal{A}^{i,x,y}_{N-1}$
and a jump $\bm{\xi}+\delta^{i}_{u}\to\bm{\xi}+\delta^{i}_{w}$ for
$u\in\Xi^{i}_{x}(\bm{\xi})$ and $w\in S$. By the Lipschitz property
of $\mathbb{W}_{x}$ at \eqref{eq:Lip2} and the boundedness of $m_{i}$,
$r^{*}_{i}$, and $F_{N}$, the absolute value of the left-hand side
of the previous display is bounded by
\begin{equation}
\frac{c}{N}\nu_{N}\left(\bm{\eta}:|\bm{\eta}(x)|,|\bm{\eta}(y)|<N(1-2\epsilon)-1,\enspace|\bm{\eta}(x)|+|\bm{\eta}(y)|=N-\pi_{N}-1\right)\le\frac{c'}{N\pi^{\alpha}_{N}(N\epsilon)^{\alpha-1}}\ll\frac{1}{N^{1+\alpha}},\label{eq:neg-3-1}
\end{equation}
where the inequality follows from Lemma \ref{lem:s4}, and the asymptotic
follows from \eqref{eq:piN} that $\pi^{\alpha}_{N}\gg N$. This concludes
the proof.
\end{proof}

\begin{proof}[Proof of Lemma \ref{lem:res-2}]
 From \eqref{eq:res-2-1} and \eqref{eq:res-2-2}, we have
\[
\langle F_{N},-\mathcal{L}^{*}_{N}\mathbb{W}_{x}\rangle_{\nu_{N},(\mathcal{G}^{x}_{N})^{c}}=\langle\!\langle F_{N},-\mathcal{L}^{*}_{N}\mathbb{W}_{x}\rangle\!\rangle_{\mathcal{A}^{{\rm bulk}}_{N-1}}+\langle\!\langle F_{N},-\mathcal{L}^{*}_{N}\mathbb{W}_{x}\rangle\!\rangle_{\mathcal{A}^{{\rm edge}}_{N-1}}.
\]
Lemmas \ref{lem:neg-2} and \ref{lem:neg-3} imply that the right-hand
side equals $o(\theta^{-1}_{N})$ as desired.
\end{proof}

\subsection{\label{sec5.3}Dominant Part}

According to the analysis conducted in the previous subsection, we
only need to estimate the term $\langle F_{N},-\mathcal{L}^{*}_{N}\mathbb{W}_{x}\rangle_{\nu_{N},\mathcal{G}^{x}_{N}}$,
which would give the leading order term of $\langle F_{N},-\mathcal{L}^{*}_{N}\mathbb{W}_{x}\rangle_{\nu_{N}}$.
\begin{lem}
\label{lem:res-3}For each $x\in S_{\star}$,
\begin{align*}
 & \langle F_{N},-\mathcal{L}^{*}_{N}\mathbb{W}_{x}\rangle_{\nu_{N},\mathcal{G}^{x}_{N}}\\
 & =N^{-1-\alpha}\sum_{y\in S_{\star}\setminus\{x\}}\frac{1}{\kappa_{\star}\Gamma(\alpha)\mathcal{I}_{\alpha}}\left(\frac{\rho}{c_{1}(x,y)}+\frac{1-\rho}{c_{2}(x,y)}\right)^{-1}\left[f_{N}(x)-f_{N}(y)\right]+o(\theta^{-1}_{N})+\theta^{-1}_{N}o_{\epsilon}(1),
\end{align*}
where $f_{N}$ is defined at \eqref{eq:fN-def}.
\end{lem}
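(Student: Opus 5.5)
We split $\mathcal{G}^{x}_{N}$ into the valleys $\mathcal{D}^{z}_{N}$, $z\in S_{\star}$, and the tubes $\mathcal{J}^{x,y}_{N}$, $y\in S_{\star}\setminus\{x\}$. We treat each piece in the $(N-1)$-particle representation \eqref{eq:inner-prod-sim}: we write $\bm{\eta}=\bm{\xi}+\delta^{i}_{u}$ and jumps as $\bm{\xi}+\delta^{i}_{u}\to\bm{\xi}+\delta^{i}_{w}$.

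On configurations whose departure point and all neighbours lie in $\overline{\mathcal{D}}^{z}_{N}$, Lemma \ref{lem:Wx-prop} makes $\mathbb{W}_{x}$ constant, so $\mathcal{L}^{*}_{N}\mathbb{W}_{x}$ vanishes there. Hence only the tube regions contribute. For fixed $y$ we define $\mathcal{B}^{i,y,-}_{N-1}$ as the set of $\bm{\xi}$ for which some $\bm{\xi}+\delta^{i}_{u}\in\mathcal{J}^{x,y}_{N}$, and split it as in Figure \ref{fig5.1} into a bulk part (all $u\in S$ admissible) and an edge part. Edge configurations next to $\overline{\mathcal{D}}^{x}_{N}$ or $\overline{\mathcal{D}}^{y}_{N}$ again see a constant $\mathbb{W}_{x}$. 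The edge $|\bm{\xi}(x)|+|\bm{\xi}(y)|=N-\pi_{N}-1$ is controlled exactly as in \eqref{eq:neg-3-1}, via \eqref{eq:Lip2} and Lemma \ref{lem:s4}, giving $o(\theta_{N}^{-1})$.

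On the bulk tube we insert Lemma \ref{lem:Wx-incr} into \eqref{eq:inner-prod-sim}. The main term has the form
\[
\sum_{i}\frac{1}{N\tau_{N}c_{i}}\sum_{\bm{\xi}}\frac{N^{\alpha}}{Z_{N}\binom{N}{A}}\frac{\bm{m}^{\bm{\xi}}}{\bm{g!}(\bm{\xi})}\frac{|\bm{\xi}(x)|^{\alpha}|\bm{\xi}(y)|^{\alpha}}{N^{2\alpha}\mathcal{I}_{\alpha}}\sum_{u}F_{N}(\bm{\xi}+\delta^{i}_{u})\,m_{i}(u)(-L^{*}_{i}\mathfrak{h}^{*}_{i,x,y})(u).
\]
Since $\mathfrak{h}^{*}_{i,x,y}$ is harmonic off $\{x,y\}$, only $u\in\{x,y\}$ survive, with weights $m_{i}(x)(-L^{*}_{i}\mathfrak{h}^{*})(x)=c_{i}(x,y)$ and $-c_{i}(x,y)$; here we use that capacities of the adjoint walk coincide with $c_{i}$, by \eqref{eq:cap-sym}. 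This produces $\frac{1}{N\tau_{N}}\,[F_{N}(\bm{\xi}+\delta^{i}_{x})-F_{N}(\bm{\xi}+\delta^{i}_{y})]$ times the weight.

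We then replace $F_{N}(\bm{\xi}+\delta^{i}_{x})$ by $\overline{F}^{i}_{N}(\bm{\xi})$ and by $f_{N}$-values. The error is handled by Cauchy--Schwarz together with Lemmas \ref{lem:PE1} and \ref{lem:PE2}: the weights $\frac{1}{N}\nu_{N}$ restricted to the tube have squared sum of order $N^{-2}\cdot N^{-\alpha}\cdot(\text{tube mass})$, so the error is $O(\sqrt{\theta_{N}^{-1}N^{-1-\alpha}})\cdot o(1)$. A cleaner route is summation by parts along the tube. Writing $\bm{\xi}+\delta^{i}_{x}$ and $\bm{\xi}+\delta^{i}_{y}$ as neighbouring configurations in $|\bm{\eta}(x)|$, the difference telescopes to boundary values at the ends of $\mathcal{J}^{x,y}_{N}$, where $F_{N}\approx f_{N}(x)$ and $F_{N}\approx f_{N}(y)$ by Proposition \ref{prop1}. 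To make this rigorous, we would argue that $F_{N}$ is nearly flat on $\mathcal{D}^{x}_{N}$ in the $\nu_{N}$-averaged sense, which follows from Lemma \ref{lem:PE1} combined with the mass of $\mathcal{D}^{x}_{N}\setminus\mathcal{E}^{x}_{N}$ being small.

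The combinatorial sum is then computed. Summing over the small coordinates off $\{x,y\}$ gives $\Gamma(\alpha)^{\kappa_{\star}-2}\prod_{z\notin S_{\star}}\Gamma_{z}$, exactly as in Proposition \ref{prop:ZN-limit}. Summing over $|\bm{\xi}(x)|=k$ of $\frac{N^{\alpha}}{k^{\alpha}(N-k)^{\alpha}}\cdot\frac{k^{\alpha}(N-k)^{\alpha}}{N^{2\alpha}}$ gives $\approx N^{1-\alpha}$. The type split between $x$ and $y$ is governed by a hypergeometric law, and its mean removes the $\mathcal{V}$ term of Lemma \ref{lem:Wx-incr}. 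The overall constant becomes $\frac{1}{\kappa_{\star}\Gamma(\alpha)\mathcal{I}_{\alpha}\tau(x,y)}N^{-1-\alpha}$ after dividing by $Z_{N}$ from Proposition \ref{prop:ZN-limit}.

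The $\mathcal{U}$ terms contribute $\theta_{N}^{-1}o_{\epsilon}(1)+O(\pi_{N}/N)\theta_{N}^{-1}$. The $\mathcal{V}$ terms, paired with $F_{N}$, are bounded by Cauchy--Schwarz using the hypergeometric variance $\operatorname{Var}(\xi_{1}(x))\lesssim N$, which gives $O(N^{-1/2})$ relative size.

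We expect the main obstacle to be the $\mathcal{V}$ (current) term and the replacement of $F_{N}$ by the $f_{N}$-values along the tube. Both require the precise hypergeometric concentration of the type proportion at $x$, and the correction in $W_{N}$ must exactly cancel the first-order current.
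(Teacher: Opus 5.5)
Up to the summation by parts your outline is the paper's. $\mathbb W_x$ is constant near the valleys (Lemma \ref{lem:Wx-prop}). The edge $|\bm\xi(x)|+|\bm\xi(y)|=N-\pi_N-1$ is removed with Lemma \ref{lem:s4}. Harmonicity of $\mathfrak h^*_{i,x,y}$ reduces the main term to $\tau_N^{-1}[F_N(\bm\xi+\delta^i_x)-F_N(\bm\xi+\delta^i_y)]$. Finally, and you should make this explicit, $|\bm\xi(x)|^{\alpha}|\bm\xi(y)|^{\alpha}$ cancels the factors $\mathfrak a(\cdot)\mathfrak a(\cdot)$ in $\bm{g!}(\bm\xi)$, so the type-split sum telescopes in $p=|\bm\xi(x)|$ through the hypergeometric slice averages $\overline F^{\bm\sigma,p}_N$.

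The gap is in closing the telescoping. Its boundary terms live on the slices $p=N(1-2\epsilon)-1$ and $p=2N\epsilon-k+1$, which are far from $\mathcal E^x_N$ and $\mathcal E^y_N$, so Proposition \ref{prop1} says nothing there. Your substitute ($\nu_N$-averaged flatness of $F_N$ on $\mathcal D^x_N$, from Lemma \ref{lem:PE1} and the small mass of $\mathcal D^x_N\setminus\mathcal E^x_N$) cannot supply it, for two reasons.

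First, such a slice has $\nu_N$-mass of order $(\epsilon N)^{-\alpha}$. The telescoped main term is $\theta_N^{-1}$ times an $O(1)$-weighted average of $\overline F^{\bm\sigma,p}_N$ over exactly that slice, and this average must be identified to within $o_\epsilon(1)$. A small $\nu_N$-average over $\mathcal D^x_N$ controls the slice average only after multiplication by $(\epsilon N)^{\alpha}$.

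Second, $\nu_N(\mathcal D^x_N\setminus\mathcal E^x_N)=O(\ell_N^{1-\alpha})$ is an $N\to\infty$ effect independent of $\epsilon$, while the true discrepancy is not. Heuristically, $F_N$ along the tube follows $f_N(y)+(f_N(x)-f_N(y))\mathcal I_\alpha^{-1}\int_0^{t}s^{\alpha}(1-s)^{\alpha}\,{\rm d}s$ with $t=|\bm\eta(x)|/N$. So at the end of $\mathcal J^{x,y}_N$ it differs from $f_N(x)$ by order $\epsilon^{1+\alpha}|f_N(x)-f_N(y)|$ for every $N$. This is precisely the source of the $\theta_N^{-1}o_\epsilon(1)$ error in the statement, and no argument giving an $o(1)$ error at fixed $\epsilon$ can be right.

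The missing ingredient is a resistance estimate along the telescoping path, which is the paper's Lemma \ref{lem:dom-5}.
First discard $k\in\llbracket\ell_N+1,\pi_N-1\rrbracket$, at cost $O(\ell_N^{1-\alpha})\theta_N^{-1}$. Then the slices $p=N-k$ and $p=0$ lie in $\mathcal E^x_N$ and $\mathcal E^y_N$, where Proposition \ref{prop1} applies.
Next write $\overline F^{\bm\sigma,N-k}_N-\overline F^{\bm\sigma,N(1-2\epsilon)-1}_N$ as a sum of increments. Apply Cauchy--Schwarz with weights $\mathfrak a(p)\mathfrak a(N-p-k-1)$, together with Jensen on each increment.
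The weighted squared increments, summed over $k,r,\bm\sigma$, are bounded by $cN^{-\alpha}\mathcal D_N(F_N)=O(N^{-1-2\alpha})$.
Meanwhile $\sum_{p\ge N(1-2\epsilon)-1}\mathfrak a(p)\mathfrak a(N-p-k-1)=O(\epsilon^{1+\alpha})N^{2\alpha+1}$, because conductances near the condensate are large.
This gives a mean-square error $o_\epsilon(1)$, and another Cauchy--Schwarz gives $\theta_N^{-1}o_\epsilon(1)$; the $y$-end is symmetric.

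Two smaller remarks.
\begin{enumerate}
\item Your first route (replacing $F_N(\bm\xi+\delta^i_x)$ by $\overline F^i_N(\bm\xi)$ or by $f_N$-values) fails. Cauchy--Schwarz against $\mathcal D_N(F_N)$ only shows the main term is $O(\theta_N^{-1})$. Also, $F_N(\bm\xi+\delta^i_x)-F_N(\bm\xi+\delta^i_y)$ is a discrete gradient, not $f_N(x)-f_N(y)$; with your tube count $\approx N^{1-\alpha}$ that replacement would give the wrong order $N^{-\alpha}$.
\item The $\mathcal U$ and $\mathcal V$ errors must be paired with $F_N(\bm\xi+\delta^i_u)-\overline F^i_N(\bm\xi)$, which is allowed on the bulk by stationarity of $m_i$, and then handled with Lemma \ref{lem:PE2}. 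Bounding them against $\|F_N\|_\infty$ loses powers of $N$. The orders you state are those of the correct pairing.
\end{enumerate}
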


First, we show that the inner product equals zero on $\mathcal{D}^{y}_{N}$
for all $y\in S_{\star}$.
\begin{lem}
\label{lem:dom-1}For each $x\in S_{\star}$ and for all $y\in S_{\star}$,
we have 
\[
\langle F_{N},-\mathcal{L}^{*}_{N}\mathbb{W}_{x}\rangle_{\nu_{N},\mathcal{D}^{y}_{N}}=0.
\]
\end{lem}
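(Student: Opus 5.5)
The plan is to show that $\mathcal{L}^{*}_{N}\mathbb{W}_{x}$ vanishes identically on $\mathcal{D}^{y}_{N}$ for every $y\in S_{\star}$. The claimed identity then follows directly from
\[
\langle F_{N},-\mathcal{L}^{*}_{N}\mathbb{W}_{x}\rangle_{\nu_{N},\mathcal{D}^{y}_{N}}=\sum_{\bm{\eta}\in\mathcal{D}^{y}_{N}}F_{N}(\bm{\eta})(-\mathcal{L}^{*}_{N}\mathbb{W}_{x})(\bm{\eta})\nu_{N}(\bm{\eta}).
\]
Since
\[
\mathcal{L}^{*}_{N}\mathbb{W}_{x}(\bm{\eta})=\sum_{i\in\{1,2\}}\sum_{u,w\in S}\bm{g}_{i}(\bm{\eta}(u))r^{*}_{i}(u,w)\bigl(\mathbb{W}_{x}(\bm{\eta}-\delta^{i}_{u}+\delta^{i}_{w})-\mathbb{W}_{x}(\bm{\eta})\bigr),
\]
it suffices to check that every increment $\mathbb{W}_{x}(\bm{\eta}-\delta^{i}_{u}+\delta^{i}_{w})-\mathbb{W}_{x}(\bm{\eta})$ with $\bm{\eta}\in\mathcal{D}^{y}_{N}$ equals zero.

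The key geometric observation is a one-step closure property. If $\bm{\eta}\in\mathcal{D}^{y}_{N}$, so that $|\bm{\eta}(y)|\ge N(1-2\epsilon)$, then a single jump $\bm{\eta}\to\bm{\eta}-\delta^{i}_{u}+\delta^{i}_{w}$ changes $|\bm{\eta}(y)|$ by at most one. The new configuration therefore satisfies $|\bm{\eta}'(y)|\ge N(1-2\epsilon)-1$, which means $\bm{\eta}'\in\overline{\mathcal{D}}^{y}_{N}$ by \eqref{eq:DJ-cl}. The original configuration $\bm{\eta}$ itself also lies in $\mathcal{D}^{y}_{N}\subset\overline{\mathcal{D}}^{y}_{N}$.

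Lemma \ref{lem:Wx-prop} then finishes the argument by splitting into two cases. If $y=x$, then $\mathbb{W}_{x}\equiv1$ on $\overline{\mathcal{D}}^{x}_{N}$. If $y\in S_{\star}\setminus\{x\}$, then $\mathbb{W}_{x}\equiv0$ on $\overline{\mathcal{D}}^{y}_{N}$. In either case both endpoints of every jump give the same value of $\mathbb{W}_{x}$. Hence $\mathcal{L}^{*}_{N}\mathbb{W}_{x}\equiv0$ on $\mathcal{D}^{y}_{N}$, and the inner product vanishes.

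There is no real obstacle here. The only point needing care is that the one-unit closure $\overline{\mathcal{D}}^{y}_{N}$, rather than $\mathcal{D}^{y}_{N}$ itself, is the set on which Lemma \ref{lem:Wx-prop} guarantees constancy. This is exactly why that lemma was stated on the closures. Rounding issues with $N(1-2\epsilon)$ do not arise, because $N\epsilon$ is assumed to be an integer.
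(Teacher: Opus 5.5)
Your proof is correct and follows essentially the same route as the paper's: both reduce the claim to showing that every increment $\mathbb{W}_{x}(\bm{\eta}-\delta^{i}_{u}+\delta^{i}_{w})-\mathbb{W}_{x}(\bm{\eta})$ vanishes for $\bm{\eta}\in\mathcal{D}^{y}_{N}$. Both then note that both endpoints lie in $\overline{\mathcal{D}}^{y}_{N}$, where Lemma \ref{lem:Wx-prop} gives $\mathbb{W}_{x}\equiv1$ (if $y=x$) or $\mathbb{W}_{x}\equiv0$ (if $y\neq x$).
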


\begin{proof}
It suffices to show that $\mathbb{W}_{x}(\bm{\eta})=\mathbb{W}_{x}(\bm{\eta}-\delta^{i}_{u}+\delta^{i}_{w})$
for all $\bm{\eta}\in\mathcal{D}^{y}_{N}$, $x,y\in S_{\star}$, $i\in\{1,2\}$,
and $u,w\in S$. If $y=x$ then both $\bm{\eta}$ and $\bm{\eta}-\delta^{i}_{u}+\delta^{i}_{w}$
lie in $\overline{\mathcal{D}}^{x}_{N}$, thus \eqref{eq:Wx-prop}
implies $\mathbb{W}_{x}(\bm{\eta})=\mathbb{W}_{x}(\bm{\eta}-\delta^{i}_{u}+\delta^{i}_{w})=1$.
On the other hand, if $y\ne x$, then both lie in $\overline{\mathcal{D}}^{y}_{N}$
thus \eqref{eq:Wx-prop} again implies $\mathbb{W}_{x}(\bm{\eta})=\mathbb{W}_{x}(\bm{\eta}-\delta^{i}_{u}+\delta^{i}_{w})=0$,
as desired.
\end{proof}

Thus, according to \eqref{eq:G-def}, only the $\{\mathcal{J}^{x,y}_{N}\}_{y\in S_{\star}\setminus\{x\}}$
part remains. Since the collection is disjoint, we can estimate each
term $\langle F_{N},-\mathcal{L}^{*}_{N}\mathbb{W}_{x}\rangle_{\nu_{N},\mathcal{J}^{x,y}_{N}}$
separately for each $y\in S_{\star}\setminus\{x\}$. As in the previous
subsection, we will decompose $\mathcal{J}^{x,y}_{N}$ by separating
the admissible departure sites. Since it is impossible that $\bm{\eta}\in\mathcal{J}^{x,y}_{N}$
jumps into $\mathcal{J}^{x,y'}_{N}$ for $y\neq y'$ by a one-particle
movement, we can analyze the sets separately. 

As in \eqref{eq:Xix-def}, define
\[
\Xi^{i}_{x,y}(\bm{\xi}):=\left\{ u\in S:\bm{\xi}+\delta^{i}_{u}\in\mathcal{J}^{x,y}_{N}\right\} .
\]
Define a collection $\mathcal{B}^{i,y,-}_{N-1}$ as 
\[
\mathcal{B}^{i,y,-}_{N-1}:=\left\{ \bm{\xi}\in\Omega^{i,-}_{N-1}:\Xi^{i}_{x,y}(\bm{\xi})\neq\emptyset\right\} =\bigcup_{u\in S}\left\{ \bm{\xi}\in\Omega^{i,-}_{N-1}:\bm{\xi}+\delta^{i}_{u}\in\mathcal{J}^{x,y}_{N}\right\} .
\]
Decompose this set as $\mathcal{B}^{i,y,-}_{N-1}=\mathcal{B}^{i,y,{\rm bulk}}_{N-1}\cup\mathcal{B}^{i,y,{\rm edge}}_{N-1}$
where
\begin{equation}
\begin{aligned}\mathcal{B}^{i,y,{\rm bulk}}_{N-1} & :=\left\{ \bm{\xi}\in\Omega^{i,-}_{N-1}:\Xi^{i}_{x,y}(\bm{\xi})=S\right\} \\
 & =\left\{ \bm{\xi}\in\Omega^{i,-}_{N-1}:|\bm{\xi}(x)|+|\bm{\xi}(y)|\geq N-\pi_{N},\quad|\bm{\xi}(x)|,|\bm{\xi}(y)|<N(1-2\epsilon)-1\right\} .
\end{aligned}
\label{eq:Bbulk-def}
\end{equation}
In this way, we may write
\begin{equation}
\langle F_{N},-\mathcal{L}^{*}_{N}\mathbb{W}_{x}\rangle_{\nu_{N},\mathcal{J}^{x,y}_{N}}=\langle\!\langle F_{N},-\mathcal{L}^{*}_{N}\mathbb{W}_{x}\rangle\!\rangle_{\mathcal{B}^{y,{\rm bulk}}_{N-1}}+\langle\!\langle F_{N},-\mathcal{L}^{*}_{N}\mathbb{W}_{x}\rangle\!\rangle_{\mathcal{B}^{y,{\rm edge}}_{N-1}},\label{eq:inn-prod-xy-dec}
\end{equation}
where the double-angle brackets at the right-hand side are defined
as in \eqref{eq:inner-prod-sim}, but with $\Xi^{i}_{x}$ replaced
by $\Xi^{i}_{x,y}$.

Next, we decompose the collection $\mathcal{B}^{i,y,{\rm edge}}_{N-1}$
as
\[
\mathcal{B}^{i,y,{\rm edge}}_{N-1}=\mathcal{B}^{i,x}_{N-1}\cup\mathcal{B}^{i,y}_{N-1}\cup\mathcal{B}^{i,x,y}_{N-1}\cup\mathcal{C}^{i,x}_{N-1}\cup\mathcal{C}^{i,y}_{N-1}\cup\mathcal{C}^{i,x,y}_{N-1},
\]
where 
\begin{align*}
\mathcal{B}^{i,x}_{N-1} & :=\left\{ \bm{\xi}\in\mathcal{B}^{i,y,-}_{N-1}:|\bm{\xi}(x)|+|\bm{\xi}(y)|=N-\pi_{N}-1,\enspace|\bm{\xi}(x)|=N(1-2\epsilon)-1,\enspace|\bm{\xi}(y)|<N(1-2\epsilon)-1\right\} ,\\
\mathcal{B}^{i,y}_{N-1} & :=\left\{ \bm{\xi}\in\mathcal{B}^{i,y,-}_{N-1}:|\bm{\xi}(x)|+|\bm{\xi}(y)|=N-\pi_{N}-1,\enspace|\bm{\xi}(x)|<N(1-2\epsilon)-1,\enspace|\bm{\xi}(y)|=N(1-2\epsilon)-1\right\} ,\\
\mathcal{B}^{i,x,y}_{N-1} & :=\left\{ \bm{\xi}\in\mathcal{B}^{i,y,-}_{N-1}:|\bm{\xi}(x)|+|\bm{\xi}(y)|=N-\pi_{N}-1,\enspace|\bm{\xi}(x)|,|\bm{\xi}(y)|<N(1-2\epsilon)-1\right\} ,
\end{align*}
and 
\begin{align*}
\mathcal{C}^{i,x}_{N-1} & :=\left\{ \bm{\xi}\in\mathcal{B}^{i,y,-}_{N-1}:|\bm{\xi}(x)|+|\bm{\xi}(y)|\geq N-\pi_{N},\enspace|\bm{\xi}(x)|=N(1-2\epsilon)-1,\enspace|\bm{\xi}(y)|<N(1-2\epsilon)-1\right\} ,\\
\mathcal{C}^{i,y}_{N-1} & :=\left\{ \bm{\xi}\in\mathcal{B}^{i,y,-}_{N-1}:|\bm{\xi}(x)|+|\bm{\xi}(y)|\geq N-\pi_{N},\enspace|\bm{\xi}(x)|<N(1-2\epsilon)-1,\enspace|\bm{\xi}(y)|=N(1-2\epsilon)-1\right\} ,\\
\mathcal{C}^{i,x,y}_{N-1} & :=\left\{ \bm{\xi}\in\mathcal{B}^{i,y,-}_{N-1}:|\bm{\xi}(x)|+|\bm{\xi}(y)|\geq N-\pi_{N},\enspace|\bm{\xi}(x)|,|\bm{\xi}(y)|=N(1-2\epsilon)-1\right\} .
\end{align*}
Since $\epsilon$ is sufficiently small, we have $\mathcal{C}^{i,x,y}_{N-1}=\emptyset$.
For each set, the corresponding admissible departure sites are given
by
\[
\Xi^{i}_{x,y}(\bm{\xi})=\begin{cases}
\{y\} & \textnormal{if}\quad\bm{\xi}\in\mathcal{B}^{i,x}_{N-1},\\
\{x\} & \textnormal{if}\quad\bm{\xi}\in\mathcal{B}^{i,y}_{N-1},\\
\{x,y\} & \textnormal{if}\quad\bm{\xi}\in\mathcal{B}^{i,x,y}_{N-1},\\
S\setminus\{x\} & \textnormal{if}\quad\bm{\xi}\in\mathcal{C}^{i,x}_{N-1},\\
S\setminus\{y\} & \textnormal{if}\quad\bm{\xi}\in\mathcal{C}^{i,y}_{N-1}.
\end{cases}
\]
According to the decomposition,
\begin{equation}
\begin{aligned}\langle\!\langle F_{N},-\mathcal{L}^{*}_{N}\mathbb{W}_{x}\rangle\!\rangle_{\mathcal{B}^{y,{\rm edge}}_{N-1}} & =\langle\!\langle F_{N},-\mathcal{L}^{*}_{N}\mathbb{W}_{x}\rangle\!\rangle_{\mathcal{B}^{x}_{N-1}}+\langle\!\langle F_{N},-\mathcal{L}^{*}_{N}\mathbb{W}_{x}\rangle\!\rangle_{\mathcal{B}^{y}_{N-1}}\\
 & +\langle\!\langle F_{N},-\mathcal{L}^{*}_{N}\mathbb{W}_{x}\rangle\!\rangle_{\mathcal{B}^{x,y}_{N-1}}+\langle\!\langle F_{N},-\mathcal{L}^{*}_{N}\mathbb{W}_{x}\rangle\!\rangle_{\mathcal{C}^{x}_{N-1}}+\langle\!\langle F_{N},-\mathcal{L}^{*}_{N}\mathbb{W}_{x}\rangle\!\rangle_{\mathcal{C}^{y}_{N-1}}.
\end{aligned}
\label{eq:dom-edge-dec}
\end{equation}

\begin{lem}
\label{lem:dom-2}We have 
\[
\langle\!\langle F_{N},-\mathcal{L}^{*}_{N}\mathbb{W}_{x}\rangle\!\rangle_{\mathcal{B}^{y,{\rm edge}}_{N-1}}=o(N^{-(\alpha+1)}).
\]
 
\end{lem}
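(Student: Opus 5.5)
I will follow the decomposition \eqref{eq:dom-edge-dec} and treat the five pieces separately, in the same spirit as the proof of Lemma \ref{lem:neg-3}.

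\emph{The $\mathcal{C}$-pieces vanish.} If $\bm{\xi}\in\mathcal{C}^{i,x}_{N-1}$, then $|\bm{\xi}(x)|=N(1-2\epsilon)-1$. Hence for every $u,w\in S$ both $\bm{\xi}+\delta^{i}_{u}$ and $\bm{\xi}+\delta^{i}_{w}$ have at least $N(1-2\epsilon)-1$ particles at $x$, so both lie in $\overline{\mathcal{D}}^{x}_{N}$. Lemma \ref{lem:Wx-prop} then gives $\mathbb{W}_{x}\equiv1$ on both, and every increment in \eqref{eq:inner-prod-sim} vanishes. The same argument with $\overline{\mathcal{D}}^{y}_{N}$, where $\mathbb{W}_{x}\equiv0$, handles $\mathcal{C}^{i,y}_{N-1}$. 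So these two terms are exactly zero.

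\emph{The $\mathcal{B}$-pieces are thin layers.} For $\mathcal{B}^{i,x}_{N-1}$, $\mathcal{B}^{i,y}_{N-1}$ and $\mathcal{B}^{i,x,y}_{N-1}$, every $\bm{\xi}$ satisfies $|\bm{\xi}(x)|+|\bm{\xi}(y)|=N-\pi_{N}-1$. The departure sites are contained in $\{x,y\}$, so the departure configuration $\bm{\eta}=\bm{\xi}+\delta^{i}_{u}$ satisfies $|\bm{\eta}(x)|+|\bm{\eta}(y)|=N-\pi_{N}$. I bound each term crudely, using:
\begin{itemize}
\item $\|F_{N}\|_{\infty}\le\|\mathfrak{g}\|_{\infty}/\lambda$ from \eqref{eq:FN-bdd};
\item boundedness of $\bm{g}_{i}$ and $r^{*}_{i}$;
\item the Lipschitz bound \eqref{eq:Lip2}, $|\mathbb{W}_{x}(\bm{\eta}-\delta^{i}_{u}+\delta^{i}_{w})-\mathbb{W}_{x}(\bm{\eta})|\le c/N$.
\end{itemize}
This gives
\[
\bigl|\langle\!\langle F_{N},-\mathcal{L}^{*}_{N}\mathbb{W}_{x}\rangle\!\rangle_{\mathcal{B}^{x}_{N-1}\cup\mathcal{B}^{y}_{N-1}\cup\mathcal{B}^{x,y}_{N-1}}\bigr|\le\frac{c}{N}\,\nu_{N}\bigl(\bm{\eta}:|\bm{\eta}(x)|+|\bm{\eta}(y)|=N-\pi_{N},\ |\bm{\eta}(x)|,|\bm{\eta}(y)|\le N(1-2\epsilon)\bigr).
\]

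\emph{Estimating the layer.} Apply Lemma \ref{lem:s4} with $\ell'=\pi_{N}$ and $\ell=\lfloor 2N\epsilon\rfloor$. The hypothesis $\ell'<\ell/2<N/4$ holds since $\pi_{N}\ll N$ and $\epsilon$ is small, and the constraints $|\bm{\eta}(x)|,|\bm{\eta}(y)|\le N(1-2\epsilon)<N-\ell+1$ match the lemma up to a harmless shift by one. The lemma yields the bound
\[
\frac{c}{(N\epsilon)^{\alpha-1}\pi_{N}^{\alpha}},
\]
so the total is $\le c_{\epsilon}N^{-\alpha}\pi_{N}^{-\alpha}$. Since $\pi_{N}^{\alpha}\gg N$ by \eqref{eq:piN}, i.e.\ $\beta\alpha>1$, this is $o(N^{-1-\alpha})$ for fixed $\epsilon$. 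This is exactly the computation in \eqref{eq:neg-3-1}.

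\emph{Possible obstacle.} The only delicate point is bookkeeping: checking that the sets listed exhaust $\mathcal{B}^{i,y,{\rm edge}}_{N-1}$, with $\mathcal{C}^{i,x,y}_{N-1}=\emptyset$ for small $\epsilon$, and that the departure configurations on the $\mathcal{B}$-layers indeed lie on the level set $|\bm{\eta}(x)|+|\bm{\eta}(y)|=N-\pi_{N}$. For a departure site $u\in\{x,y\}$ this is immediate. No cancellation via the sector-type averaging of Lemma \ref{lem:neg-2} is needed, because the thinness of the layer, measured by the factor $\pi_{N}^{-\alpha}$, already beats the required $N^{-1}$ gain.
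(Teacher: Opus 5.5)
Your proof is correct and follows the paper's argument: the $\mathcal{C}$-pieces vanish by Lemma \ref{lem:Wx-prop}, and the remaining layer is controlled by \eqref{eq:FN-bdd}, \eqref{eq:Lip2} and Lemma \ref{lem:s4}, giving $cN^{-\alpha}\pi_{N}^{-\alpha}=o(N^{-1-\alpha})$ since $\pi_{N}^{\alpha}\gg N$. The only difference is that the paper also notes $\mathcal{B}^{i,x}_{N-1}$ and $\mathcal{B}^{i,y}_{N-1}$ contribute exactly zero (there $|\bm{\xi}(x)|$, respectively $|\bm{\xi}(y)|$, equals $N(1-2\epsilon)-1$, so both configurations lie in $\overline{\mathcal{D}}^{x}_{N}$, respectively $\overline{\mathcal{D}}^{y}_{N}$), so only $\mathcal{B}^{i,x,y}_{N-1}$ needs the layer estimate; your crude bound absorbs these two pieces instead, which is equally valid.
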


\begin{proof}
The proof is quite similar to that of Lemma \ref{lem:neg-3}. For
$\bm{\xi}\in\mathcal{B}^{i,x}_{N-1}\cup\mathcal{C}^{i,x}_{N-1}$ we
have $\mathbb{W}_{x}(\bm{\xi}+\delta^{i}_{w})=\mathbb{W}_{x}(\bm{\xi}+\delta^{i}_{u})=1$,
whereas for $\bm{\xi}\in\mathcal{B}^{i,y}_{N-1}\cup\mathcal{C}^{i,y}_{N-1}$
we have $\mathbb{W}_{x}(\bm{\xi}+\delta^{i}_{w})=\mathbb{W}_{x}(\bm{\xi}+\delta^{i}_{u})=0$.
Only remaining term to consider is 
\begin{equation}
\langle\!\langle F_{N},-\mathcal{L}^{*}_{N}\mathbb{W}_{x}\rangle\!\rangle_{\mathcal{B}^{x,y}_{N-1}},\label{eq:dom-2-1}
\end{equation}
which can be written as 
\[
-\sum_{i\in\{1,2\}}\sum_{\bm{\xi}\in\mathcal{B}^{i,x,y}_{N-1}}\sum_{u\in\Xi^{i}_{x,y}(\bm{\xi})}\sum_{w\in S}\frac{N^{\alpha}}{Z_{N}\binom{N}{A}}\frac{\bm{m}^{\bm{\xi}}}{\bm{g!}(\bm{\xi})}m_{i}(u)r^{*}_{i}(u,w)F_{N}(\bm{\xi}+\delta^{i}_{u})\left[\mathbb{W}_{x}(\bm{\xi}+\delta^{i}_{w})-\mathbb{W}_{x}(\bm{\xi}+\delta^{i}_{u})\right].
\]
Then, the absolute value of \eqref{eq:dom-2-1} is bounded above by
\[
\frac{c}{N}\sum_{i\in\{1,2\}}\sum_{\bm{\xi}\in\mathcal{B}^{i,x,y}_{N-1}}\frac{N^{\alpha}}{Z_{N}\binom{N}{A}}\frac{\bm{m}^{\bm{\xi}}}{\bm{g!}(\bm{\xi})},
\]
by the boundedness of $m_{i}$, $r^{*}_{i}$, $F_{N}$, and the Lipschitz
continuity \eqref{eq:Lip2} of $\mathbb{W}_{x}$. Thus, the same $\nu_{N}$
argument conducted at \eqref{eq:neg-3-1} guarantees that the above
term is bounded above by $cN^{-\alpha}\pi^{-\alpha}_{N}$. From \eqref{eq:piN}
we have $\pi^{\alpha}_{N}\gg N$, thus the lemma follows.
\end{proof}

Finally, we only to consider the term $\langle\!\langle F_{N},-\mathcal{L}^{*}_{N}\mathbb{W}_{x}\rangle\!\rangle_{\mathcal{B}^{y,{\rm bulk}}_{N-1}}$,
which has an explicit form:
\[
-\sum_{i\in\{1,2\}}\sum_{\bm{\xi}\in\mathcal{B}^{i,y,{\rm bulk}}_{N-1}}\sum_{u,w\in S}\frac{N^{\alpha}}{Z_{N}\binom{N}{A}}\frac{\bm{m}^{\bm{\xi}}}{\bm{g!}(\bm{\xi})}m_{i}(u)r^{*}_{i}(u,w)F_{N}(\bm{\xi}+\delta^{i}_{u})\left[\mathbb{W}_{x}(\bm{\xi}+\delta^{i}_{w})-\mathbb{W}_{x}(\bm{\xi}+\delta^{i}_{u})\right],
\]
since all elements in $S$ can be admissible departure site for the
configurations in $\mathcal{B}^{i,y,{\rm bulk}}_{N-1}$. By the stationarity
of $m_{i}$, one can rewrite the above term as 
\[
-\sum_{i\in\{1,2\}}\sum_{\bm{\xi}\in\mathcal{B}^{i,y,{\rm bulk}}_{N-1}}\sum_{u,w\in S}\frac{N^{\alpha}}{Z_{N}\binom{N}{A}}\frac{\bm{m}^{\bm{\xi}}}{\bm{g!}(\bm{\xi})}m_{i}(u)r^{*}_{i}(u,w)\left[F_{N}(\bm{\xi}+\delta^{i}_{u})-\overline{F}^{i}_{N}(\bm{\xi})\right]\left[\mathbb{W}_{x}(\bm{\xi}+\delta^{i}_{w})-\mathbb{W}_{x}(\bm{\xi}+\delta^{i}_{u})\right],
\]
as in the proof of Lemma \ref{lem:neg-2} (cf. \eqref{eq:Gibar-def}).
According to Lemmas \ref{lem:Wx-prop} and \ref{lem:Wx-incr}, we
may write
\begin{equation}
\mathbb{W}_{x}(\bm{\xi}+\delta^{i}_{w})-\mathbb{W}_{x}(\bm{\xi}+\delta^{i}_{u})=\mathcal{W}^{i}_{x,y,1}(\bm{\xi};u,w)+\mathcal{W}^{i}_{x,y,2}(\bm{\xi};u,w),\label{eq:Wx-diff-dec-1}
\end{equation}
where
\begin{equation}
\begin{aligned}\mathcal{W}^{i}_{x,y,1}(\bm{\xi};u,w) & =\frac{1}{\tau_{N}(x,y)c_{i}(x,y)}\left(\frac{\mathfrak{h}^{*}_{i,x,y}(w)-\mathfrak{h}^{*}_{i,x,y}(u)}{N}\right)\frac{|\bm{\xi}(x)|^{\alpha}|\bm{\xi}(y)|^{\alpha}}{N^{2\alpha}\mathcal{I}_{\alpha}},\\
\mathcal{W}^{i}_{x,y,2}(\bm{\xi};u,w) & =O\left(\frac{1}{N}\right)o_{\epsilon}(1)+O\left(\frac{\pi_{N}}{N^{2}}\right)+O\left(\frac{\xi_{1}(x)}{N^{2}}-\frac{|\bm{\xi}(x)|}{N^{2}}\rho_{N}\right).
\end{aligned}
\label{eq:Wx-diff-dec-2}
\end{equation}

\begin{lem}
\label{lem:dom-3}We have
\begin{align*}
-\sum_{i\in\{1,2\}}\sum_{\bm{\xi}\in\mathcal{B}^{i,y,{\rm bulk}}_{N-1}}\sum_{u,w\in S}\frac{N^{\alpha}}{Z_{N}\binom{N}{A}}\frac{\bm{m}^{\bm{\xi}}}{\bm{g!}(\bm{\xi})}m_{i}(u)r^{*}_{i}(u,w) & \left[F_{N}(\bm{\xi}+\delta^{i}_{u})-\overline{F}^{i}_{N}(\bm{\xi})\right]\mathcal{W}^{i}_{x,y,2}(\bm{\xi};u,w)\\
 & =o(N^{-1-\alpha})+O(N^{-1-\alpha})o_{\epsilon}(1).
\end{align*}
\end{lem}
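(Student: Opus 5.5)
We will split $\mathcal{W}^{i}_{x,y,2}$ according to its three pieces in \eqref{eq:Wx-diff-dec-2} and estimate each contribution by the Cauchy--Schwarz inequality. One factor will always be the $F_{N}$-fluctuation term
\[
\Big(\sum_{i}\sum_{\bm{\xi}}\frac{N^{\alpha}}{Z_{N}\binom{N}{A}}\frac{\bm{m}^{\bm{\xi}}}{\bm{g!}(\bm{\xi})}\sum_{u,w}m_{i}(u)r^{*}_{i}(u,w)\big[F_{N}(\bm{\xi}+\delta^{i}_{u})-\overline{F}^{i}_{N}(\bm{\xi})\big]^{2}\Big)^{1/2}.
\]
By Lemma \ref{lem:PE2} and Lemma \ref{lem:PE1}, this is $O(\theta_{N}^{-1/2})=O(N^{-(1+\alpha)/2})$. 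It therefore suffices to show that the complementary factor
\[
\Big(\sum_{i}\sum_{\bm{\xi}\in\mathcal{B}^{i,y,{\rm bulk}}_{N-1}}\frac{N^{\alpha}}{Z_{N}\binom{N}{A}}\frac{\bm{m}^{\bm{\xi}}}{\bm{g!}(\bm{\xi})}\sum_{u,w}\big(\mathcal{W}^{i}_{x,y,2}\big)^{2}\Big)^{1/2}
\]
is $o(N^{-(1+\alpha)/2})+O(N^{-(1+\alpha)/2})o_{\epsilon}(1)$. For this we need the $\nu$-type mass of $\mathcal{B}^{i,y,{\rm bulk}}_{N-1}$. The measure $\frac{N^{\alpha}}{Z_{N}\binom{N}{A}}\frac{\bm{m}^{\bm{\xi}}}{\bm{g!}(\bm{\xi})}$ on $(N-1)$-particle configurations with $|\bm{\xi}(x)|,|\bm{\xi}(y)|\ge 2N\epsilon-1$ and the rest $\le\pi_{N}$ should have total mass $O(N^{-\alpha}\cdot N\cdot N^{-\alpha}\cdot N^{\alpha})=O(N^{1-\alpha})$. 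Concretely, we sum over $p=|\bm{\xi}(x)|$ in a range of length $N$ with weight $N^{\alpha}/(p^{\alpha}(N-p)^{\alpha})\asymp N^{-\alpha}$, use Lemma \ref{lem:comb-bound} and Lemma \ref{lem:asymp2} for the remaining sites, and note that the ratio of binomials between $N$ and $N-1$ particles is $O(1)$. This gives mass $O(N^{1-\alpha})\cdot O_{\epsilon}(1)$, the same order as in Lemma \ref{lem:s4} with $\ell,\ell'\asymp N$.

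The first two error pieces are then direct. The $O(N^{-1})o_{\epsilon}(1)$ piece squared gives $N^{-2}o_{\epsilon}(1)\cdot N^{1-\alpha}=N^{-1-\alpha}o_{\epsilon}(1)$. Here we must check that the $\epsilon$-dependence of the mass bound (a factor like $\epsilon^{-2\alpha}$) does not ruin this. To avoid that, we restrict the $o_{\epsilon}(1)$ estimate to the region where $\Psi_{\epsilon}$ is small, or absorb it by noting that the total mass is in fact bounded by $c\int_{0}^{1}t^{-\alpha}(1-t)^{-\alpha}\cdots$. We expect instead to use the bound $N^{\alpha}/(p^{\alpha}(N-p)^{\alpha})$ summed only where $U_{\epsilon}$ varies, which is fine since $\epsilon$ is fixed before $N\to\infty$; an $\epsilon$-dependent constant times $o_{\epsilon}(1)$ is acceptable provided the $o_{\epsilon}(1)$ from Lemma \ref{lem:gamma-eps} is chosen to dominate. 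The $O(\pi_{N}/N^{2})$ piece gives $\pi_{N}^{2}N^{-4}N^{1-\alpha}=o(N^{-1-\alpha})$, because $\pi_{N}\ll N$.

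The main obstacle is the third piece, $O\big(\frac{\xi_{1}(x)}{N^{2}}-\frac{|\bm{\xi}(x)|}{N^{2}}\rho_{N}\big)$. Pointwise it is only $O(1/N)$, which alone would give $O(N^{-1-\alpha})$ with no smallness. We will exploit the fact that, conditionally on the occupation numbers $p_{x}=|\bm{\xi}(x)|$, $p_{y}$ and the small remainder, the type-$1$ count $\xi_{1}(x)$ under $\nu$ is essentially hypergeometric. Indeed, the weight $\binom{p_{x}}{q_{x}}\binom{p_{y}}{q_{y}}$ with $q_{x}+q_{y}\approx A$ fixed, since $m_{i}\equiv1$ on $S_{\star}$, has mean $p_{x}\rho_{N}+O(\pi_{N})$ and variance $O(N)$. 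Hence
\[
\sum\nu\cdot\Big(\frac{\xi_{1}(x)-|\bm{\xi}(x)|\rho_{N}}{N^{2}}\Big)^{2}\le N^{-4}\big(O(N)+O(\pi_{N}^{2})\big)\cdot N^{1-\alpha}=o(N^{-1-\alpha}).
\]
Making this rigorous requires writing the sum over $q_{x}$ via Vandermonde's identity, computing the second moment of a hypergeometric variable, and controlling the contribution of the $\le\pi_{N}$ particles off $\{x,y\}$ by Lemma \ref{lem:comb-bound}. This is the computation we expect to be the delicate step; it is the variance estimate referred to as \eqref{eq:dom-3-1.5}. Combining the three pieces via $(a+b+c)^{2}\le3(a^{2}+b^{2}+c^{2})$ and the Cauchy--Schwarz inequality proves the lemma.
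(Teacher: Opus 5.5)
Your proposal is correct and follows the paper's proof essentially step by step: Cauchy--Schwarz with the $F_{N}$-fluctuation factor bounded by $O(\theta_{N}^{-1/2})$ via Lemmas \ref{lem:PE1} and \ref{lem:PE2}; Young's inequality to separate the three pieces of $\mathcal{W}^{i}_{x,y,2}$; a direct mass bound for the $O(N^{-1})o_{\epsilon}(1)$ and $O(\pi_{N}/N^{2})$ pieces; and the hypergeometric variance bound (variance at most $p$, mean shift $O(p\pi_{N}/N)$) for the $\xi_{1}(x)-|\bm{\xi}(x)|\rho_{N}$ piece. On the $\epsilon$-uniformity issue you raise (the relevant mass is of order $(N\epsilon)^{1-\alpha}$, which \eqref{eq:dom-3-4} does not track), fixing $\epsilon$ before $N\to\infty$ does not help an $o_{\epsilon}(1)$ term, and $\int_{0}^{1}t^{-\alpha}(1-t)^{-\alpha}\,{\rm d}t$ diverges for $\alpha>1$, so the workable fix is the last one you mention: choose $\gamma_{\epsilon}$ so that $\epsilon^{1-\alpha}\sup_{t}|\Psi_{\epsilon}(t)|^{2}\to0$ (e.g.\ $\gamma_{\epsilon}(t)=t$ on $[4\epsilon,1-4\epsilon]$, which gives $\sup_{t}|\Psi_{\epsilon}(t)|=O(\epsilon^{\alpha})$).
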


\begin{proof}
By the Cauchy--Schwarz inequality, the absolute value of the left-hand
side is bounded above by 
\begin{align*}
 & \left(\sum_{i\in\{1,2\}}\sum_{\bm{\xi}\in\mathcal{B}^{i,y,{\rm bulk}}_{N-1}}\sum_{u,w\in S}\frac{N^{\alpha}}{Z_{N}\binom{N}{A}}\frac{\bm{m}^{\bm{\xi}}}{\bm{g!}(\bm{\xi})}m_{i}(u)r^{*}_{i}(u,w)\left[F_{N}(\bm{\xi}+\delta^{i}_{u})-\overline{F}^{i}_{N}(\bm{\xi})\right]^{2}\right)^{\frac{1}{2}}\\
 & \times\left(\sum_{i\in\{1,2\}}\sum_{\bm{\xi}\in\mathcal{B}^{i,y,{\rm bulk}}_{N-1}}\sum_{u,w\in S}\frac{N^{\alpha}}{Z_{N}\binom{N}{A}}\frac{\bm{m}^{\bm{\xi}}}{\bm{g!}(\bm{\xi})}m_{i}(u)r^{*}_{i}(u,w)\mathcal{W}^{i}_{x,y,2}(\bm{\xi};u,w)^{2}\right)^{\frac{1}{2}}.
\end{align*}
By Lemmas \ref{lem:PE1}, \ref{lem:PE2}, and the non-negativity of
summands, the first term has an order of $O(\theta^{-1/2}_{N})$.
Hence, it suffices to show that the second term is negligible:
\begin{equation}
\sum_{i\in\{1,2\}}\sum_{\bm{\xi}\in\mathcal{B}^{i,y,{\rm bulk}}_{N-1}}\sum_{u,w\in S}\frac{N^{\alpha}}{Z_{N}\binom{N}{A}}\frac{\bm{m}^{\bm{\xi}}}{\bm{g!}(\bm{\xi})}m_{i}(u)r^{*}_{i}(u,w)\mathcal{W}^{i}_{x,y,2}(\bm{\xi};u,w)^{2}=o(N^{-1-\alpha})+O(N^{-1-\alpha})o_{\epsilon}(1).\label{eq:dom-3-1}
\end{equation}
By Young's inequality and \eqref{eq:Wx-diff-dec-2}, it suffices to
check \eqref{eq:dom-3-1} where $\mathcal{W}^{i}_{x,y,2}(\bm{\xi};u,w)^{2}$
is replaced with
\[
\left(\frac{\xi_{1}(x)}{N^{2}}-\frac{|\bm{\xi}(x)|}{N^{2}}\rho_{N}\right)^{2}\qquad\text{or}\qquad\frac{o_{\epsilon}(1)}{N^{2}}+\frac{\pi^{2}_{N}}{N^{4}}.
\]
Since the estimation is identical for each $i\in\{1,2\}$, it suffices
to consider the case of $i=1$. First, consider the summation 
\begin{equation}
\sum_{\bm{\xi}\in\mathcal{B}^{1,y,{\rm bulk}}_{N-1}}\sum_{u,w\in S}\frac{N^{\alpha}}{Z_{N}\binom{N}{A}}\frac{\bm{m}^{\bm{\xi}}}{\bm{g!}(\bm{\xi})}m_{1}(u)r^{*}_{1}(u,w)\left(\frac{\xi_{1}(x)}{N^{2}}-\frac{|\bm{\xi}(x)|}{N^{2}}\rho_{N}\right)^{2}.\label{eq:dom-3-1.5}
\end{equation}
By \eqref{eq:Lip1}, Lemma \ref{lem:s1}, and the uniform boundedness
of the graph constants, this is bounded by
\[
cN^{\alpha-4}\sum_{\bm{\xi}\in\mathcal{B}^{1,y,{\rm bulk}}_{N-1}}\frac{1}{\binom{N}{A}}\frac{\bm{m}^{\bm{\xi}}}{\bm{g!}(\bm{\xi})}\left(\xi_{1}(x)-|\bm{\xi}(x)|\rho_{N}\right)^{2}.
\]
Recall \eqref{eq:Bbulk-def}. Let us assume that $\bm{\xi}$ has $k\in\llbracket0,\pi_{N}-1\rrbracket$
particles on $S\setminus\{x,y\}$, $p\in\llbracket2N\epsilon-k+1,N(1-2\epsilon)-2\rrbracket$
particles at $x$, and let $\bm{\sigma}$ be the corresponding restricted
configuration on $S\setminus\{x,y\}$ with $k$ particles. Then, the
last display can be written explicitly as
\begin{equation}
\frac{cN^{\alpha-4}}{\binom{N}{A}}\sum^{\pi_{N}-1}_{k=0}\sum^{k}_{r=0}\sum_{\bm{\sigma}\in\Omega_{S\setminus\{x,y\}}(r,k-r)}\frac{\bm{m}^{\bm{\sigma}}}{\bm{g!}(\bm{\sigma})}\sum^{N(1-2\epsilon)-2}_{p=2N\epsilon-k+1}\sum^{A-r-1}_{q=0}\frac{{p \choose q}{N-k-p-1 \choose A-r-q-1}}{\mathfrak{a}(p)\mathfrak{a}(N-k-p-1)}(q-p\rho_{N})^{2}.\label{eq:dom-3-2}
\end{equation}
Notice that $\frac{\binom{p}{q}\binom{N-k-p-1}{A-r-q-1}}{\binom{N-k-1}{A-r-1}}$
is the probability mass function $\mathbb{P}(H=q)$ of a hypergeometric
random variable $H$ with mean $\mathbb{E}[H]=\frac{p(A-r-1)}{N-k-1}$
and variance
\[
\mathbb{V}{\rm ar}[H]=\frac{p(A-r-1)}{N-k-1}\frac{B-(k-r)}{N-k-1}\frac{N-k-1-p}{N-k-2}\leq p.
\]
Thus, one can notice that for some constant $c>0$,
\begin{align*}
\sum^{A-r-1}_{q=0}\frac{{p \choose q}{N-k-p-1 \choose A-r-q-1}}{\binom{N-k-1}{A-r-1}}(q-p\rho_{N})^{2} & \le2\sum^{A-r-1}_{q=0}\frac{{p \choose q}{N-k-p-1 \choose A-r-q-1}}{\binom{N-k-1}{A-r-1}}\left(q-\frac{p(A-r-1)}{N-k-1}\right)^{2}+2p^{2}\left(\frac{A-r-1}{N-k-1}-\rho_{N}\right)^{2}\\
 & \le2\mathbb{V}{\rm ar}[H]+p^{2}\frac{c\pi^{2}_{N}}{N^{2}}\le2p+p^{2}\frac{c\pi^{2}_{N}}{N^{2}}.
\end{align*}
Thus, the term \eqref{eq:dom-3-2} is bounded above by 
\[
\frac{cN^{\alpha-4}}{\binom{N}{A}}\sum^{\pi_{N}-1}_{k=0}\sum^{k}_{r=0}\binom{N-k-1}{A-r-1}\sum_{\bm{\sigma}\in\Omega_{S\setminus\{x,y\}}(r,k-r)}\frac{\bm{m}^{\bm{\sigma}}}{\bm{g!}(\bm{\sigma})}\sum^{N(1-2\epsilon)}_{p=N\epsilon}\frac{2p+p^{2}\frac{c\pi^{2}_{N}}{N^{2}}}{p^{\alpha}(N(1-\epsilon)-p)^{\alpha}}.
\]
By Lemma \ref{lem:s1} (applied to $N\leftarrow\pi_{N}-1$) implies
that 
\[
\sum_{\bm{\sigma}\in\Omega_{S\setminus\{x,y\}}(r,k-r)}\frac{\bm{m}^{\bm{\sigma}}}{\bm{g!}(\bm{\sigma})}\le\frac{c{k \choose r}}{\mathfrak{a}(k)}.
\]
Thus, the penultimate display is bounded by
\begin{equation}
\frac{cN^{\alpha-4}}{\binom{N}{A}}\sum^{\pi_{N}-1}_{k=0}\sum^{k}_{r=0}\frac{\binom{N-k-1}{A-r-1}{k \choose r}}{\mathfrak{a}(k)}\sum^{N(1-2\epsilon)}_{p=N\epsilon}\frac{2p+p^{2}\frac{c\pi^{2}_{N}}{N^{2}}}{p^{\alpha}(N(1-\epsilon)-p)^{\alpha}}\le cN^{\alpha-4}\sum^{N(1-2\epsilon)}_{p=N\epsilon}\frac{2p+p^{2}\frac{c\pi^{2}_{N}}{N^{2}}}{p^{\alpha}(N(1-\epsilon)-p)^{\alpha}}.\label{eq:dom-3-3}
\end{equation}
It is straightforward that the last term is bounded by $O(N^{-\alpha-2})+O(N^{-\alpha-1}\pi^{2}_{N}N^{-2})\ll N^{-\alpha-1}$
since $\pi_{N}\ll N$.

Finally, consider the summation
\[
\sum_{\bm{\xi}\in\mathcal{B}^{1,y,{\rm bulk}}_{N-1}}\sum_{u,w\in S}\frac{N^{\alpha}}{Z_{N}\binom{N}{A}}\frac{\bm{m}^{\bm{\xi}}}{\bm{g!}(\bm{\xi})}m_{1}(u)r^{*}_{1}(u,w)\left(\frac{o_{\epsilon}(1)}{N^{2}}+\frac{\pi^{2}_{N}}{N^{4}}\right).
\]
According to the same characterization, this is bounded by
\begin{equation}
cN^{\alpha}\sum^{N(1-2\epsilon)}_{p=N\epsilon}\frac{1}{p^{\alpha}(N(1-\epsilon)-p)^{\alpha}}\left(\frac{o_{\epsilon}(1)}{N^{2}}+\frac{\pi^{2}_{N}}{N^{4}}\right)\le cN^{-\alpha-1}\left(o_{\epsilon}(1)+\frac{\pi^{2}_{N}}{N^{2}}\right).\label{eq:dom-3-4}
\end{equation}
The last term inside the parenthesis is clearly $o(1)+o_{\epsilon}(1)$.
Combining \eqref{eq:dom-3-3} and \eqref{eq:dom-3-4} proves \eqref{eq:dom-3-1}
as desired.
\end{proof}

\begin{lem}
\label{lem:dom-4}We have
\begin{align*}
 & -\sum_{i\in\{1,2\}}\sum_{\bm{\xi}\in\mathcal{B}^{i,y,{\rm bulk}}_{N-1}}\sum_{u,w\in S}\frac{N^{\alpha}}{Z_{N}\binom{N}{A}}\frac{\bm{m}^{\bm{\xi}}}{\bm{g!}(\bm{\xi})}m_{i}(u)r^{*}_{i}(u,w)\left[F_{N}(\bm{\xi}+\delta^{i}_{u})-\overline{F}^{i}_{N}(\bm{\xi})\right]\mathcal{W}^{i}_{x,y,1}(\bm{\xi};u,w)\\
 & =\frac{N^{-1-\alpha}}{\kappa_{\star}\Gamma(\alpha)\mathcal{I}_{\alpha}}\left(\frac{\rho}{c_{1}(x,y)}+\frac{1-\rho}{c_{2}(x,y)}\right)^{-1}\left[f_{N}(x)-f_{N}(y)\right]+o(N^{-1-\alpha})+N^{-1-\alpha}o_{\epsilon}(1).
\end{align*}
\end{lem}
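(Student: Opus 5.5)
The plan is to exploit the fact that $\mathcal{W}^{i}_{x,y,1}(\bm{\xi};u,w)$ depends on $(u,w)$ only through $\mathfrak{h}^{*}_{i,x,y}(w)-\mathfrak{h}^{*}_{i,x,y}(u)$. This reduces the sum to a one-dimensional telescoping sum along the tube $\mathcal{J}^{x,y}_{N}$.

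\emph{Step 1: remove $\overline{F}^{i}_{N}$ and integrate out the walk.} For fixed $\bm{\xi}$, the prefactor of $\overline{F}^{i}_{N}(\bm{\xi})$ is
\[
\sum_{u,w}m_{i}(u)r^{*}_{i}(u,w)\bigl(\mathfrak{h}^{*}_{i,x,y}(w)-\mathfrak{h}^{*}_{i,x,y}(u)\bigr)=\sum_{u}m_{i}(u)(L^{*}_{i}\mathfrak{h}^{*}_{i,x,y})(u)=0,
\]
by stationarity of $m_{i}$ for $r^{*}_{i}$. So $\overline{F}^{i}_{N}$ drops out. Next, $\sum_{w}r^{*}_{i}(u,w)(\mathfrak{h}^{*}_{i,x,y}(w)-\mathfrak{h}^{*}_{i,x,y}(u))$ equals $L^{*}_{i}\mathfrak{h}^{*}_{i,x,y}(u)$, which vanishes for $u\notin\{x,y\}$. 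At the endpoints, $m_{i}(x)L^{*}_{i}\mathfrak{h}^{*}_{i,x,y}(x)=-c^{*}_{i}(x,y)=-c_{i}(x,y)$, and $m_{i}(y)L^{*}_{i}\mathfrak{h}^{*}_{i,x,y}(y)=c_{i}(x,y)$; this uses that adjoint and original capacities coincide. The factor $c_{i}(x,y)$ then cancels the $1/c_{i}(x,y)$ in $\mathcal{W}^{i}_{x,y,1}$, and the left-hand side becomes
\[
\frac{1}{N\tau_{N}(x,y)\mathcal{I}_{\alpha}}\sum_{i\in\{1,2\}}\sum_{\bm{\xi}\in\mathcal{B}^{i,y,{\rm bulk}}_{N-1}}\frac{N^{\alpha}}{Z_{N}\binom{N}{A}}\frac{\bm{m}^{\bm{\xi}}}{\bm{g!}(\bm{\xi})}\frac{|\bm{\xi}(x)|^{\alpha}|\bm{\xi}(y)|^{\alpha}}{N^{2\alpha}}\bigl[F_{N}(\bm{\xi}+\delta^{i}_{x})-F_{N}(\bm{\xi}+\delta^{i}_{y})\bigr].
\]

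\emph{Step 2: rewrite as a telescoping sum along $|\bm{\eta}(x)|$.} Write $\bm{\xi}$ as $(p,q)$ at $x$, $(p',q')$ at $y$, and a configuration $\bm{\sigma}$ on $S\setminus\{x,y\}$ with $k<\pi_{N}$ particles. Since $\bm{g!}(p,q)=p^{\alpha}/\binom{p}{q}$, the factor $|\bm{\xi}(x)|^{\alpha}|\bm{\xi}(y)|^{\alpha}$ exactly cancels the $\mathfrak{a}$-factors at $x$ and $y$. What remains is a weight that is binomial in the species split and independent of $p$. Summing the two species contributions (type $1$ with $\binom{p}{q}\binom{p'}{q'}$ and type $2$ analogously), I would check via Vandermonde/Pascal that the total weight of each transition from the level $\{|\bm{\eta}(x)|=p\}$ to $\{|\bm{\eta}(x)|=p+1\}$ is the same as the stationary mass on each level, up to $1+O(\pi_{N}/N)$. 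The sum in $p$ then telescopes to
\[
\langle F_{N}\rangle_{\text{level }p_{+}}-\langle F_{N}\rangle_{\text{level }p_{-}},\qquad p_{+}\approx N(1-2\epsilon),\quad p_{-}\approx2N\epsilon,
\]
where each level average is weighted by the same measure. The leftover $\bm{\sigma}$-sum, divided by $Z_{N}$, should produce the constant $1/(\kappa_{\star}\Gamma(\alpha))$. Here I would use Proposition \ref{prop:ZN-limit} together with the computation in its proof, namely that the $\bm{\sigma}$-sum tends to $\Gamma(\alpha)^{\kappa_{\star}-2}\prod_{z\notin S_{\star}}\Gamma_{z}$. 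The $N$-powers combine as $N^{\alpha}\cdot N^{-2\alpha}\cdot N^{-1}=N^{-1-\alpha}$, and $\tau_{N}\to\rho/c_{1}+(1-\rho)/c_{2}$. Errors from $k\le\pi_{N}$ and from $\rho_{N}\neq(A-r)/(N-k)$ are $O(\pi_{N}/N)=o(1)$.

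\emph{Step 3: identify the boundary level averages with $f_{N}(x)$ and $f_{N}(y)$.} This is the main obstacle, since Proposition \ref{prop1} only gives flatness on $\mathcal{E}^{x}_{N}$, not on the level $|\bm{\eta}(x)|\approx N(1-2\epsilon)$. First, I would average the telescoping starting point over a window of levels near each end, which is possible because the weight is uniform in $p$. Then I would compare the window averages with $f_{N}(x)$ using the a priori bound $\theta_{N}\mathcal{D}_{N}(F_{N})=O(1)$ from Lemma \ref{lem:PE1}, via a Cauchy--Schwarz/path argument inside $\mathcal{D}^{x}_{N}$ (where $\nu_{N}$ carries mass of order one), together with Proposition \ref{prop1}. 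If this direct route fails, I would fall back on the resolvent representation of $F_{N}$ and the hitting-time bound of the proof of Proposition \ref{prop1} extended to $\mathcal{D}^{x}_{N}$: from $\mathcal{D}^{x}_{N}$ the process reaches $\mathcal{E}^{x}_{N}$ in time $\ll\theta_{N}$ with high probability. The residual dependence on $\epsilon$ enters only through the shifted window endpoints and through $\mathcal{I}_{\alpha,\epsilon}$, giving the $N^{-1-\alpha}o_{\epsilon}(1)$ error.
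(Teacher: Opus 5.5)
Your proposal is correct and follows the paper's proof. Step 1 is exactly the paper's reduction via $L^{*}_{i}\mathfrak{h}^{*}_{i,x,y}$ and the capacity. Step 2 is the paper's Pascal-identity telescoping of the hypergeometric level averages $\overline{F}^{\bm{\sigma},p}_{N}$, which is in fact exact, so no $O(\pi_{N}/N)$ error arises there. Your ``direct route'' in Step 3 is precisely Lemma~\ref{lem:dom-5}: Cauchy--Schwarz along the levels with resistances $\sum_{p}\mathfrak{a}(p)\mathfrak{a}(N-p-k-1)=N^{2\alpha+1}o_{\epsilon}(1)$, paired with $\theta_{N}\mathcal{D}_{N}(F_{N})=O(1)$, and then Proposition~\ref{prop1}.

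One point you should make explicit: the paper first discards the terms with $k>\ell_{N}$, which carry total weight $O(\ell_{N}^{1-\alpha})$. This is needed so that the terminal levels $|\bm{\eta}(x)|=N-k$ and $|\bm{\eta}(y)|=N-k$ lie in $\mathcal{E}^{x}_{N}$ and $\mathcal{E}^{y}_{N}$, where Proposition~\ref{prop1} applies. The window averaging and the hitting-time fallback are not needed.
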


\begin{proof}
By \eqref{eq:Wx-diff-dec-2}, the left-hand side can be expressed
as
\begin{equation}
\begin{aligned}-\sum_{i\in\{1,2\}} & \sum_{\bm{\xi}\in\mathcal{B}^{i,y,{\rm bulk}}_{N-1}}\frac{\theta^{-1}_{N}}{\mathcal{I}_{\alpha}\tau_{N}(x,y)c_{i}(x,y)Z_{N}\binom{N}{A}}\frac{\bm{m}^{\bm{\xi}}}{\bm{g!}(\bm{\xi})}|\bm{\xi}(x)|^{\alpha}|\bm{\xi}(y)|^{\alpha}\\
 & \times\sum_{u,w\in S}m_{i}(u)r^{*}_{i}(u,w)\left[F_{N}(\bm{\xi}+\delta^{i}_{u})-\overline{F}^{i}_{N}(\bm{\xi})\right]\left[\mathfrak{h}^{*}_{i,x,y}(w)-\mathfrak{h}^{*}_{i,x,y}(u)\right].
\end{aligned}
\label{eq:dom-4-1}
\end{equation}
We have
\[
\sum_{w\in S}r^{*}_{i}(u,w)\left[\mathfrak{h}^{*}_{i,x,y}(w)-\mathfrak{h}^{*}_{i,x,y}(u)\right]=L^{*}_{i}\mathfrak{h}^{*}_{i,x,y}(u)\qquad\text{for each}\quad u\in S,
\]
where $L^{*}_{i}$ is the infinitesimal generator of the adjoint random
walk generated by $r^{*}_{i}(\cdot,\cdot)$. Moreover, by the harmonic
property of $\mathfrak{h}^{*}_{i,x,y}$, the term $L^{*}_{i}\mathfrak{h}^{*}_{i,x,y}(u)$
is zero on $S\setminus\{x,y\}$. Thus, using the property $-L^{*}_{i}\mathfrak{h}^{*}_{i,x,y}(x)=L^{*}_{i}\mathfrak{h}^{*}_{i,x,y}(y)=c_{i}(x,y)$
for each $i\in\{1,2\}$, the summation in $u,w$ at \eqref{eq:dom-4-1}
is equal to
\[
-c_{i}(x,y)\left[F_{N}(\bm{\xi}+\delta^{i}_{x})-F_{N}(\bm{\xi}+\delta^{i}_{y})\right].
\]
Inserting this into \eqref{eq:dom-4-1} rewrites the term as
\[
\sum_{i\in\{1,2\}}\sum_{\bm{\xi}\in\mathcal{B}^{i,y,{\rm bulk}}_{N-1}}\frac{\theta^{-1}_{N}}{\mathcal{I}_{\alpha}\tau_{N}(x,y)Z_{N}\binom{N}{A}}\frac{\bm{m}^{\bm{\xi}}}{\bm{g!}(\bm{\xi})}|\bm{\xi}(x)|^{\alpha}|\bm{\xi}(y)|^{\alpha}\left[F_{N}(\bm{\xi}+\delta^{i}_{x})-F_{N}(\bm{\xi}+\delta^{i}_{y})\right].
\]
Let us define
\[
\Delta_{p}(n_{1},n_{2}):=\left\{ (q_{1},q_{2})\in\mathbb{N}^{2}_{0}:q_{1}+q_{2}=p,\enspace q_{1}\in\llbracket0,n_{1}\rrbracket,\enspace q_{2}\in\llbracket0,n_{2}\rrbracket\right\} .
\]
Then, the penultimate display becomes
\begin{equation}
\begin{aligned}\frac{\theta^{-1}_{N}}{\mathcal{I}_{\alpha}\tau_{N}(x,y)Z_{N}\binom{N}{A}} & \sum^{\pi_{N}-1}_{k=0}\sum^{k}_{r=0}\sum_{\bm{\sigma}\in\Omega_{S\setminus\{x,y\}}(r,k-r)}\frac{\bm{m}^{\bm{\sigma}}}{\bm{g!}(\bm{\sigma})}\sum^{N(1-2\epsilon)-2}_{p=2N\epsilon-k+1}\binom{N-k}{A-r}\\
 & \,\Bigg[\sum_{(q_{1},q_{2})\in\Delta_{p}(A-r-1,B-(k-r))}\frac{\binom{p}{q_{1}}\binom{N-k-p-1}{A-r-q_{1}-1}}{\binom{N-k}{A-r}}\left[F_{N}(\bm{\sigma};q_{1}+1,q_{2})-F_{N}(\bm{\sigma};q_{1},q_{2})\right]\\
 & \qquad+\sum_{(q_{1},q_{2})\in\Delta_{p}(A-r,B-(k-r)-1)}\frac{\binom{p}{q_{1}}\binom{N-k-p-1}{A-r-q_{1}}}{\binom{N-k}{A-r}}\left[F_{N}(\bm{\sigma};q_{1},q_{2}+1)-F_{N}(\bm{\sigma};q_{1},q_{2})\right]\Bigg]\,,
\end{aligned}
\label{eq:dom-4-2}
\end{equation}
where $F_{N}(\bm{\sigma};i,j):=F_{N}(\bm{\eta})$ for the configuration
$\bm{\eta}\in\Omega_{N}$ such that 
\begin{equation}
\bm{\eta}(z)=\begin{cases}
\bm{\sigma}(z) & \text{if}\quad z\in S\setminus\{x,y\},\\
(i,j) & \text{if}\quad z=x,\\
(A-|\sigma_{1}|-i,B-|\sigma_{2}|-j) & \text{if}\quad z=y.
\end{cases}\label{eq:dom-4-3}
\end{equation}
Denote by $\overline{F}^{\bm{\sigma},p}_{N}$ the average of $F_{N}$
on the submanifold $\Delta_{p}(A-r,B-(k-r))$ given $\bm{\sigma}$:
\[
\overline{F}^{\bm{\sigma},p}_{N}:=\sum_{(q_{1},q_{2})\in\Delta_{p}(A-r,B-(k-r))}\frac{\binom{p}{q_{1}}\binom{N-k-p}{A-r-q_{1}}}{\binom{N-k}{A-r}}F_{N}(\bm{\sigma};q_{1},q_{2}).
\]
Then,
\begin{equation}
\begin{aligned}\overline{F}^{\bm{\sigma},p+1}_{N}-\overline{F}^{\bm{\sigma},p}_{N}= & \sum_{(q_{1},q_{2})\in\Delta_{p}(A-r,B-(k-r))}\frac{\binom{p}{q_{1}}\binom{N-k-p-1}{A-r-q_{1}-1}}{\binom{N-k}{A-r}}\left[F_{N}(\bm{\sigma};q_{1}+1,q_{2})-F_{N}(\bm{\sigma};q_{1},q_{2})\right]\\
 & +\sum_{(q_{1},q_{2})\in\Delta_{p}(A-r,B-(k-r))}\frac{\binom{p}{q_{1}}\binom{N-k-p-1}{A-r-q_{1}}}{\binom{N-k}{A-r}}\left[F_{N}(\bm{\sigma};q_{1},q_{2}+1)-F_{N}(\bm{\sigma};q_{1},q_{2})\right],
\end{aligned}
\label{eq:dom-4-4}
\end{equation}
thus we may rewrite \eqref{eq:dom-4-2} as
\begin{equation}
\begin{aligned} & \frac{\theta^{-1}_{N}}{\tau_{N}(x,y)Z_{N}\mathcal{I}_{\alpha}}\sum^{\pi_{N}-1}_{k=0}\sum^{k}_{r=0}\frac{\binom{N-k}{A-r}}{\binom{N}{A}}\sum_{\bm{\sigma}\in\Omega_{S\setminus\{x,y\}}(r,k-r)}\frac{\bm{m}^{\bm{\sigma}}}{\bm{g!}(\bm{\sigma})}\sum^{N(1-2\epsilon)-2}_{p=2N\epsilon-k+1}\left[\overline{F}^{\bm{\sigma},p+1}_{N}-\overline{F}^{\bm{\sigma},p}_{N}\right]\\
 & =\frac{\theta^{-1}_{N}}{\tau_{N}(x,y)Z_{N}\mathcal{I}_{\alpha}}\sum^{\pi_{N}-1}_{k=0}\sum^{k}_{r=0}\frac{\binom{N-k}{A-r}}{\binom{N}{A}}\sum_{\bm{\sigma}\in\Omega_{S\setminus\{x,y\}}(r,k-r)}\frac{\bm{m}^{\bm{\sigma}}}{\bm{g!}(\bm{\sigma})}\left[\overline{F}^{\bm{\sigma},N(1-2\epsilon)-1}_{N}-\overline{F}^{\bm{\sigma},2N\epsilon-k+1}_{N}\right].
\end{aligned}
\label{eq:dom-4-5}
\end{equation}
Divide the summation in $k\in\llbracket0,\pi_{N}-1\rrbracket$ at
the right-hand side of \eqref{eq:dom-4-5} into $\llbracket0,\ell_{N}\rrbracket$
and $\llbracket\ell_{N}+1,\pi_{N}-1\rrbracket$. First, consider the
summation $\sum^{\pi_{N}-1}_{k=\ell_{N}+1}$. By Lemma \ref{lem:s1},
\[
\frac{1}{\binom{k}{r}}\sum_{\bm{\sigma}\in\Omega_{S\setminus\{x,y\}}(r,k-r)}\frac{\bm{m}^{\bm{\sigma}}}{\bm{g!}(\bm{\sigma})}\leq\frac{c}{k^{\alpha}},
\]
which implies 
\[
\sum^{\pi_{N}-1}_{k=\ell_{N}+1}\sum^{k}_{r=0}\frac{\binom{N-k}{A-r}}{\binom{N}{A}}\sum_{\bm{\sigma}\in\Omega_{S\setminus\{x,y\}}(r,k-r)}\frac{\bm{m}^{\bm{\sigma}}}{\bm{g!}(\bm{\sigma})}\leq\frac{c}{\ell^{\alpha-1}_{N}}\ll1.
\]
Thus, we have
\[
\left|\frac{\theta^{-1}_{N}}{\tau_{N}(x,y)Z_{N}\mathcal{I}_{\alpha}}\sum^{\pi_{N}-1}_{k=\ell_{N}+1}\sum^{k}_{r=0}\frac{\binom{N-k}{A-r}}{\binom{N}{A}}\sum_{\bm{\sigma}\in\Omega_{S\setminus\{x,y\}}(r,k-r)}\frac{\bm{m}^{\bm{\sigma}}}{\bm{g!}(\bm{\sigma})}\left[\overline{F}^{\bm{\sigma},N(1-2\epsilon)-1}_{N}-\overline{F}^{\bm{\sigma},2N\epsilon-k+1}_{N}\right]\right|\ll\theta^{-1}_{N},
\]
by the uniform boundedness of $F_{N}$. To estimate the remaining
summation, which is
\begin{equation}
\frac{\theta^{-1}_{N}}{\tau_{N}(x,y)Z_{N}\mathcal{I}_{\alpha}}\sum^{\ell_{N}}_{k=0}\sum^{k}_{r=0}\frac{\binom{N-k}{A-r}}{\binom{N}{A}}\sum_{\bm{\sigma}\in\Omega_{S\setminus\{x,y\}}(r,k-r)}\frac{\bm{m}^{\bm{\sigma}}}{\bm{g!}(\bm{\sigma})}\left[\overline{F}^{\bm{\sigma},N(1-2\epsilon)-1}_{N}-\overline{F}^{\bm{\sigma},2N\epsilon-k+1}_{N}\right],\label{eq:dom-4-6}
\end{equation}
we first prove the following lemma:
\begin{lem}
\label{lem:dom-5}We have
\[
\sum^{\ell_{N}}_{k=0}\sum^{k}_{r=0}\frac{\binom{N-k}{A-r}}{\binom{N}{A}}\sum_{\bm{\sigma}\in\Omega_{S\setminus\{x,y\}}(r,k-r)}\frac{\bm{m}^{\bm{\sigma}}}{\bm{g!}(\bm{\sigma})}\left[\overline{F}^{\bm{\sigma},N(1-2\epsilon)-1}_{N}-\overline{F}^{\bm{\sigma},N-k}_{N}\right]^{2}=o_{\epsilon}(1),
\]
and similarly,
\[
\sum^{\ell_{N}}_{k=0}\sum^{k}_{r=0}\frac{\binom{N-k}{A-r}}{\binom{N}{A}}\sum_{\bm{\sigma}\in\Omega_{S\setminus\{x,y\}}(r,k-r)}\frac{\bm{m}^{\bm{\sigma}}}{\bm{g!}(\bm{\sigma})}\left[\overline{F}^{\bm{\sigma},2N\epsilon-k+1}_{N}-\overline{F}^{\bm{\sigma},0}_{N}\right]^{2}=o_{\epsilon}(1).
\]
\end{lem}

\begin{proof}
By symmetry, we only prove the first estimate. By the Cauchy--Schwarz
inequality,
\begin{equation}
\begin{aligned} & \left[\overline{F}^{\bm{\sigma},N(1-2\epsilon)-1}_{N}-\overline{F}^{\bm{\sigma},N-k}_{N}\right]^{2}\\
 & \le\left(\sum^{N-k-1}_{p=N(1-2\epsilon)-1}\frac{\left[\overline{F}^{\bm{\sigma},p+1}_{N}-\overline{F}^{\bm{\sigma},p}_{N}\right]^{2}}{\mathfrak{a}(p)\mathfrak{a}(N-p-k-1)}\right)\left(\sum^{N-k-1}_{p=N(1-2\epsilon)-1}\mathfrak{a}(p)\mathfrak{a}(N-p-k-1)\right).
\end{aligned}
\label{eq:dom-5-1}
\end{equation}
 Then by Jensen's inequality, via \eqref{eq:dom-4-4}, the first summation
at the second line is bounded above by 
\begin{align*}
\sum^{N-k-1}_{p=N(1-2\epsilon)-1} & \sum_{(q_{1},q_{2})\in\Delta_{p}(A-r,B-(k-r))}\frac{\frac{\binom{p}{q_{1}}\binom{N-k-p-1}{A-r-q_{1}-1}}{\binom{N-k}{A-r}}\left[F_{N}(\bm{\sigma};q_{1}+1,q_{2})-F_{N}(\bm{\sigma};q_{1},q_{2})\right]^{2}}{\mathfrak{a}(p)\mathfrak{a}(N-p-k-1)}\\
+ & \sum^{N-k-1}_{p=N(1-2\epsilon)-1}\sum_{(q_{1},q_{2})\in\Delta_{p}(A-r,B-(k-r))}\frac{\frac{\binom{p}{q_{1}}\binom{N-k-p-1}{A-r-q_{1}}}{\binom{N-k}{A-r}}\left[F_{N}(\bm{\sigma};q_{1},q_{2}+1)-F_{N}(\bm{\sigma};q_{1},q_{2})\right]^{2}}{\mathfrak{a}(p)\mathfrak{a}(N-p-k-1)}.
\end{align*}
Note the following relations: 
\begin{align*}
\frac{\binom{p}{q_{1}}\binom{N-k-p-1}{A-r-q_{1}-1}}{\mathfrak{a}(p)\mathfrak{a}(N-p-k-1)} & =\frac{1}{\bm{g!}(q_{1},q_{2})}\frac{1}{\bm{g!}(A-r-q_{1}-1,B-q_{2}-(k-r))},\\
\frac{\binom{p}{q_{1}}\binom{N-k-p-1}{A-r-q_{1}}}{\mathfrak{a}(p)\mathfrak{a}(N-p-k-1)} & =\frac{1}{\bm{g!}(q_{1},q_{2})}\frac{1}{\bm{g!}(A-r-q_{1},B-q_{2}-(k-r)-1)},
\end{align*}
Thus, 
\[
\sum^{\ell_{N}}_{k=0}\sum^{k}_{r=0}\frac{\binom{N-k}{A-r}}{\binom{N}{A}}\sum_{\bm{\sigma}\in\Omega_{S\setminus\{x,y\}}(r,k-r)}\frac{\bm{m}^{\bm{\sigma}}}{\bm{g!}(\bm{\sigma})}\left(\sum^{N-k-1}_{p=N(1-2\epsilon)-1}\frac{\left[\bar{F}^{\bm{\sigma},p+1}_{N}-\bar{F}^{\bm{\sigma},p}_{N}\right]^{2}}{\mathfrak{a}(p)\mathfrak{a}(N-p-k-1)}\right)
\]
is bounded above by
\begin{align*}
\frac{1}{\binom{N}{A}}\sum^{\ell_{N}}_{k=0}\sum^{k}_{r=0}\sum_{\bm{\sigma}\in\Omega_{S\setminus\{x,y\}}(r,k-r)}\frac{\bm{m}^{\bm{\sigma}}}{\bm{g!}(\bm{\sigma})}\sum^{N-k-1}_{p=N(1-2\epsilon)-1} & \sum_{(q_{1},q_{2})\in\Delta_{p}(A-r,B-(k-r))}\\
 & \frac{\left[F_{N}(\bm{\sigma};q_{1}+1,q_{2})-F_{N}(\bm{\sigma};q_{1},q_{2})\right]^{2}}{\bm{g!}(q_{1},q_{2})\bm{g!}(A-r-q_{1}-1,B-q_{2}-(k-r))}\\
+\frac{1}{\binom{N}{A}}\sum^{\ell_{N}}_{k=0}\sum^{k}_{r=0}\sum_{\bm{\sigma}\in\Omega_{S\setminus\{x,y\}}(r,k-r)}\frac{\bm{m}^{\bm{\sigma}}}{\bm{g!}(\bm{\sigma})} & \sum^{N-k-1}_{p=N(1-2\epsilon)-1}\sum_{(q_{1},q_{2})\in\Delta_{p}(A-r,B-(k-r))}\\
 & \frac{\left[F_{N}(\bm{\sigma};q_{1},q_{2}+1)-F_{N}(\bm{\sigma};q_{1},q_{2})\right]^{2}}{\bm{g!}(q_{1},q_{2})\bm{g!}(A-r-q_{1},B-q_{2}-(k-r)-1)}.
\end{align*}
From the definition \eqref{eq:dom-4-3}, by enlarging the summation
region to $\Omega^{i,-}_{N-1}$, the above term is bounded above by
\begin{equation}
\frac{1}{\binom{N}{A}}\sum_{i\in\{1,2\}}\sum_{\bm{\xi}\in\Omega^{i,-}_{N-1}}\frac{\bm{m}^{\bm{\xi}}}{\bm{g!}(\bm{\xi})}\left[F_{N}(\bm{\xi}+\delta^{i}_{x})-F_{N}(\bm{\xi}+\delta^{i}_{y})\right]^{2}.\label{eq:dom-5-2}
\end{equation}
As argued in the proof of \eqref{eq:sector-5},
\[
\left[F_{N}(\bm{\xi}+\delta^{i}_{x})-F_{N}(\bm{\xi}+\delta^{i}_{y})\right]^{2}\leq c\sum_{u,w\in S}m_{i}(u)r^{*}_{i}(u,w)\left[F_{N}(\bm{\xi}+\delta^{i}_{w})-F_{N}(\bm{\xi}+\delta^{i}_{u})\right]^{2}.
\]
Inserting this into \eqref{eq:dom-5-2} implies that \eqref{eq:dom-5-2}
is bounded above by $cN^{-\alpha}\mathcal{D}_{N}(F_{N})$. Since 
\[
\sum^{N-k-1}_{p=N(1-2\epsilon)-1}\mathfrak{a}(p)\mathfrak{a}(N-p-k-1)=N^{2\alpha+1}o_{\epsilon}(1),
\]
uniformly over all $k\in\llbracket0,\ell_{N}\rrbracket$, thus we
conclude that 
\[
\sum^{\ell_{N}}_{k=0}\sum^{k}_{r=0}\frac{\binom{N-k}{A-r}}{\binom{N}{A}}\sum_{\bm{\sigma}\in\Omega_{S\setminus\{x,y\}}(r,k-r)}\frac{\bm{m}^{\bm{\sigma}}}{\bm{g!}(\bm{\sigma})}\left[\overline{F}^{\bm{\sigma},N(1-2\epsilon)-1}_{N}-\overline{F}^{\bm{\sigma},N-k}_{N}\right]^{2}\leq cN^{\alpha+1}\mathcal{D}_{N}(F_{N})\times o_{\epsilon}(1)=o_{\epsilon}(1),
\]
where the equality follows from Lemma \ref{lem:PE1}. 
\end{proof}

Now, we return to estimate \eqref{eq:dom-4-6}. By the Cauchy--Schwarz
inequality,
\[
\left|\frac{\theta^{-1}_{N}}{\tau_{N}(x,y)Z_{N}\mathcal{I}_{\alpha}}\sum^{\ell_{N}}_{k=0}\sum^{k}_{r=0}\frac{\binom{N-k}{A-r}}{\binom{N}{A}}\sum_{\bm{\sigma}\in\Omega_{S\setminus\{x,y\}}(r,k-r)}\frac{\bm{m}^{\bm{\sigma}}}{\bm{g!}(\bm{\sigma})}\left[\overline{F}^{\bm{\sigma},N-k}_{N}-\overline{F}^{\bm{\sigma},N(1-2\epsilon)-1}_{N}\right]\right|
\]
is bounded above by
\begin{align*}
 & \left(\frac{\theta^{-1}_{N}}{\tau_{N}(x,y)Z_{N}\mathcal{I}_{\alpha}}\sum^{\ell_{N}}_{k=0}\sum^{k}_{r=0}\frac{\binom{N-k}{A-r}}{\binom{N}{A}}\sum_{\bm{\sigma}\in\Omega_{S\setminus\{x,y\}}(r,k-r)}\frac{\bm{m}^{\bm{\sigma}}}{\bm{g!}(\bm{\sigma})}\right)^{\frac{1}{2}}\\
 & \times\left(\frac{\theta^{-1}_{N}}{\tau_{N}(x,y)Z_{N}\mathcal{I}_{\alpha}}\sum^{\ell_{N}}_{k=0}\sum^{k}_{r=0}\frac{\binom{N-k}{A-r}}{\binom{N}{A}}\sum_{\bm{\sigma}\in\Omega_{S\setminus\{x,y\}}(r,k-r)}\frac{\bm{m}^{\bm{\sigma}}}{\bm{g!}(\bm{\sigma})}\left[\overline{F}^{\bm{\sigma},N-k}_{N}-\overline{F}^{\bm{\sigma},N(1-2\epsilon)-1}_{N}\right]^{2}\right)^{\frac{1}{2}}.
\end{align*}
The first term is $O(\theta^{-1/2}_{N})$ by Lemma \ref{lem:s1},
and the second term is $o_{\epsilon}(1)\theta^{-1/2}_{N}$ by Lemma
\ref{lem:dom-5}. Therefore, we have
\begin{equation}
\begin{aligned}\frac{\theta^{-1}_{N}}{\tau_{N}(x,y)Z_{N}\mathcal{I}_{\alpha}} & \sum^{\ell_{N}}_{k=0}\sum^{k}_{r=0}\frac{\binom{N-k}{A-r}}{\binom{N}{A}}\sum_{\bm{\sigma}\in\Omega_{S\setminus\{x,y\}}(r,k-r)}\frac{\bm{m}^{\bm{\sigma}}}{\bm{g!}(\bm{\sigma})}\overline{F}^{\bm{\sigma},N(1-2\epsilon)-1}_{N}\\
= & \frac{\theta^{-1}_{N}}{\tau_{N}(x,y)Z_{N}\mathcal{I}_{\alpha}}\sum^{\ell_{N}}_{k=0}\sum^{k}_{r=0}\frac{\binom{N-k}{A-r}}{\binom{N}{A}}\sum_{\bm{\sigma}\in\Omega_{S\setminus\{x,y\}}(r,k-r)}\frac{\bm{m}^{\bm{\sigma}}}{\bm{g!}(\bm{\sigma})}\overline{F}^{\bm{\sigma},N-k}_{N}+o_{\epsilon}(1)\theta^{-1}_{N}.
\end{aligned}
\label{eq:dom-4-7}
\end{equation}
Similarly, we have
\begin{equation}
\begin{aligned}\frac{\theta^{-1}_{N}}{\tau_{N}(x,y)Z_{N}\mathcal{I}_{\alpha}} & \sum^{\ell_{N}}_{k=0}\sum^{k}_{r=0}\frac{\binom{N-k}{A-r}}{\binom{N}{A}}\sum_{\bm{\sigma}\in\Omega_{S\setminus\{x,y\}}(r,k-r)}\frac{\bm{m}^{\bm{\sigma}}}{\bm{g!}(\bm{\sigma})}\overline{F}^{\bm{\sigma},2N\epsilon-k+1}_{N}\\
= & \frac{\theta^{-1}_{N}}{\tau_{N}(x,y)Z_{N}\mathcal{I}_{\alpha}}\sum^{\ell_{N}}_{k=0}\sum^{k}_{r=0}\frac{\binom{N-k}{A-r}}{\binom{N}{A}}\sum_{\bm{\sigma}\in\Omega_{S\setminus\{x,y\}}(r,k-r)}\frac{\bm{m}^{\bm{\sigma}}}{\bm{g!}(\bm{\sigma})}\overline{F}^{\bm{\sigma},0}_{N}+o_{\epsilon}(1)\theta^{-1}_{N}.
\end{aligned}
\label{eq:dom-4-8}
\end{equation}
By \eqref{eq:dom-4-7} and \eqref{eq:dom-4-8}, the term at \eqref{eq:dom-4-6}
equals
\begin{equation}
\frac{\theta^{-1}_{N}}{\tau_{N}(x,y)Z_{N}\mathcal{I}_{\alpha}}\sum^{\ell_{N}}_{k=0}\sum^{k}_{r=0}\frac{\binom{N-k}{A-r}}{\binom{N}{A}}\sum_{\bm{\sigma}\in\Omega_{S\setminus\{x,y\}}(r,k-r)}\frac{\bm{m}^{\bm{\sigma}}}{\bm{g!}(\bm{\sigma})}\left[\overline{F}^{\bm{\sigma},N-k}_{N}-\overline{F}^{\bm{\sigma},0}_{N}\right]+o_{\epsilon}(1)\theta^{-1}_{N}.\label{eq:dom-4-9}
\end{equation}

Finally, notice that all the configurations that appear in the definition
of $\overline{F}^{\bm{\sigma},N-k}_{N}$ (resp. $\overline{F}^{\bm{\sigma},0}_{N}$)
belong to $\mathcal{E}^{x}_{N}$ (resp. $\mathcal{E}^{y}_{N}$) by
definition. Thus by Proposition \ref{prop1},
\[
\lim_{N\to\infty}\left|\overline{F}^{\bm{\sigma},N-k}_{N}-f_{N}(x)\right|=0,\qquad\lim_{N\to\infty}\left|\overline{F}^{\bm{\sigma},0}_{N}-f_{N}(y)\right|=0.
\]
From these and collecting \eqref{eq:dom-4-1}, \eqref{eq:dom-4-2},
\eqref{eq:dom-4-5}, \eqref{eq:dom-4-6}, and \eqref{eq:dom-4-9},
we conclude that the left-hand side of Lemma \ref{lem:dom-4} is equal
to
\begin{equation}
\frac{\theta^{-1}_{N}}{\tau_{N}(x,y)Z_{N}\mathcal{I}_{\alpha}}\sum^{\ell_{N}}_{k=0}\sum^{k}_{r=0}\frac{\binom{N-k}{A-r}}{\binom{N}{A}}\sum_{\bm{\sigma}\in\Omega_{S\setminus\{x,y\}}(r,k-r)}\frac{\bm{m}^{\bm{\sigma}}}{\bm{g!}(\bm{\sigma})}\left[f_{N}(x)-f_{N}(y)\right]+(o(1)+o_{\epsilon}(1))\theta^{-1}_{N}.\label{eq:dom-4-10}
\end{equation}
Note that as in the proof of Proposition \ref{prop:ZN-limit},
\[
\frac{1}{\tau_{N}(x,y)Z_{N}\mathcal{I}_{\alpha}}\sum^{\ell_{N}}_{k=0}\sum^{k}_{r=0}\frac{\binom{N-k}{A-r}}{\binom{N}{A}}\sum_{\bm{\sigma}\in\Omega_{S\setminus\{x,y\}}(r,k-r)}\frac{\bm{m}^{\bm{\sigma}}}{\bm{g!}(\bm{\sigma})}
\]
converges, as $N\to\infty$, to
\[
\frac{\Gamma(\alpha)^{\kappa_{\star}-2}\left(\prod_{z\in S\setminus S_{\star}}\Gamma_{z}\right)}{\left(\frac{\rho}{c_{1}(x,y)}+\frac{1-\rho}{c_{2}(x,y)}\right)\kappa_{\star}\Gamma(\alpha)^{\kappa_{\star}-1}\left(\prod_{z\in S\setminus S_{\star}}\Gamma_{z}\right)\mathcal{I}_{\alpha}}=\frac{1}{\left(\frac{\rho}{c_{1}(x,y)}+\frac{1-\rho}{c_{2}(x,y)}\right)\kappa_{\star}\Gamma(\alpha)\mathcal{I}_{\alpha}}.
\]
Substituting this to \eqref{eq:dom-4-10}, it equals
\[
\frac{\theta^{-1}_{N}\left[f_{N}(x)-f_{N}(y)\right]}{\left(\frac{\rho}{c_{1}(x,y)}+\frac{1-\rho}{c_{2}(x,y)}\right)\kappa_{\star}\Gamma(\alpha)\mathcal{I}_{\alpha}}+(o(1)+o_{\epsilon}(1))\theta^{-1}_{N},
\]
as claimed in Lemma \ref{lem:dom-4}. This finishes the proof.
\end{proof}

\begin{proof}[Proof of Lemma \ref{lem:res-3}]
 By Lemma \ref{lem:dom-1} and the discussion after,
\[
\langle F_{N},-\mathcal{L}^{*}_{N}\mathbb{W}_{x}\rangle_{\nu_{N},\mathcal{G}^{x}_{N}}=\sum_{y\in S_{\star}\setminus\{x\}}\langle F_{N},-\mathcal{L}^{*}_{N}\mathbb{W}_{x}\rangle_{\nu_{N},\mathcal{J}^{x,y}_{N}}.
\]
By \eqref{eq:inn-prod-xy-dec} and Lemma \ref{lem:dom-2},
\[
\langle F_{N},-\mathcal{L}^{*}_{N}\mathbb{W}_{x}\rangle_{\nu_{N},\mathcal{J}^{x,y}_{N}}=\langle\!\langle F_{N},-\mathcal{L}^{*}_{N}\mathbb{W}_{x}\rangle\!\rangle_{\mathcal{B}^{y,{\rm bulk}}_{N-1}}+o(N^{-1-\alpha}).
\]
By \eqref{eq:Wx-diff-dec-1}, Lemmas \ref{lem:dom-3}, and \ref{lem:dom-4},
\[
\langle\!\langle F_{N},-\mathcal{L}^{*}_{N}\mathbb{W}_{x}\rangle\!\rangle_{\mathcal{B}^{y,{\rm bulk}}_{N-1}}=\frac{N^{-1-\alpha}}{\kappa_{\star}\Gamma(\alpha)\mathcal{I}_{\alpha}}\left(\frac{\rho}{c_{1}(x,y)}+\frac{1-\rho}{c_{2}(x,y)}\right)^{-1}\left[f_{N}(x)-f_{N}(y)\right]+o(N^{-1-\alpha})+N^{-1-\alpha}o_{\epsilon}(1).
\]
Gathering the three displays completes the proof of Lemma \ref{lem:res-3}.
\end{proof}

Finally, we are ready to prove Proposition \ref{prop2}.
\begin{proof}[Proof of Proposition \ref{prop2}]
 Substituting Lemmas \ref{lem:res-1}, \ref{lem:res-2}, and \ref{lem:res-3}
to \eqref{eq:inner-product}, we obtain that
\[
\lambda\nu_{N}(\mathcal{E}^{x}_{N})f_{N}(x)+\sum_{y\in S_{\star}\setminus\{x\}}\frac{f_{N}(x)-f_{N}(y)}{\left(\frac{\rho}{c_{1}(x,y)}+\frac{1-\rho}{c_{2}(x,y)}\right)\kappa_{\star}\mathcal{I}_{\alpha}\Gamma(\alpha)}={\bf g}(x)\nu_{N}(\mathcal{E}^{x}_{N})+o(1)+o_{\epsilon}(1).
\]
Since the left-hand side is independent of $\epsilon$, by first taking
the limit superior as $N\to\infty$ and then sending $\epsilon\to0$,
and recalling \eqref{eq:lMC-gen} and Theorem \ref{thm:cond}, we
obtain
\[
(\lambda-\mathfrak{L}_{\mathbb{X}})f_{N}(x)={\bf g}(x)+o(1)\qquad\text{for all}\quad x\in S_{\star}.
\]
This implies that
\[
f_{N}(x)=(\lambda-\mathfrak{L}_{\mathbb{X}})^{-1}{\bf g}(x)+o(1)={\bf f}(x)+o(1),
\]
which concludes the proof of Proposition \ref{prop2}.
\end{proof}

\begin{proof}[Proof of Theorem \ref{thm:main}-(2)/(3)]
 Clearly, Propositions \ref{prop1} and \ref{prop2} imply Theorem
\ref{thm:res}. Then, the equivalence statement in \cite[Theorem 2.3]{LMS25}
verifies that Theorem \ref{thm:res} implies parts (2) and (3) of
Theorem \ref{thm:main}. This concludes the proof.
\end{proof}

\begin{acknowledgement*}
SK would like to thank Claudio Landim for the suggestion of the model
and helpful discussions. SK and SL have been supported by the Basic
Science Research Program through the National Research Foundation
of Korea funded by the Ministry of Science and ICT (RS-2025-00518980,
RS-2026-25518141), the Yonsei University Research Fund of 2026 (2026-22-0181),
and the POSCO Science Fellowship of POSCO TJ Park Foundation.
\end{acknowledgement*}

\appendix

\section{\label{secA}Asymptotic Summation of Reciprocals}

Here, we record miscellaneous summation asymptotics, which will be
useful in Section \ref{sec2} when we conduct the stationary analysis.
We start with an inequality.
\begin{lem}
\label{lem:comb-bound}Fix positive real numbers $x_{1},y_{1},\dots,x_{k},y_{k}>0$.
Then for any integers $N,n,p_{1},\dots,p_{k}\ge0$ such that $p_{1}+\cdots+p_{k}=N$
and $0\le n\le N$,
\[
\frac{1}{\binom{N}{n}}\sum_{\substack{q_{1},\dots,q_{k}\ge0:\\
q_{1}+\cdots+q_{k}=n
}
}\prod^{k}_{j=1}\left[{p_{j} \choose q_{j}}x^{q_{j}}_{j}y^{p_{j}-q_{j}}_{j}\right]\le c\prod^{k}_{j=1}\left(\frac{nx_{j}+(N-n)y_{j}}{N}\right)^{p_{j}},
\]
where $c>0$ depends only on the fixed constants $x_{1},y_{1},\dots,x_{k},y_{k}$.
\end{lem}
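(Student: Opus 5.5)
The plan is to read the left-hand side as a conditioned product-Bernoulli expectation and then apply an exponential tilt. This reduces the claim to comparing two local probabilities of sums of independent Bernoulli variables. The naive generating-function bound (coefficient of $t^{n}$ at most $\prod_j(x_jt+y_j)^{p_j}/t^{n}$, optimized at $t=n/(N-n)$) loses a factor of order $\sqrt{n(N-n)/N}$ against $\binom{N}{n}$ by Stirling. The local-limit comparison below is meant to recover exactly that factor.

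\textbf{Step 1 (representation).} The cases $n\in\{0,N\}$ are immediate: both sides are explicit products, and the inequality holds with $c=1$. Assume $1\le n\le N-1$ and set $s:=n/N$. Consider $N$ items split into blocks of sizes $p_1,\dots,p_k$, each item selected independently with probability $s$, and let $Q_j$ be the number selected in block $j$. Conditioned on $\sum_jQ_j=n$, the vector $(Q_j)$ is multivariate hypergeometric, with law $\prod_j\binom{p_j}{q_j}/\binom{N}{n}$. Hence the left-hand side equals
\[
\frac{\mathbb{E}_{s}\bigl[\prod_j x_j^{Q_j}y_j^{p_j-Q_j}\,{\bf 1}\{\textstyle\sum_jQ_j=n\}\bigr]}{\mathbb{P}_{s}\bigl(\sum_jQ_j=n\bigr)}.
\]

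\textbf{Step 2 (tilt).} Let $\widetilde{\mathbb{P}}$ select each item of block $j$ independently with probability $\tilde s_j:=sx_j/(sx_j+(1-s)y_j)$. Then
\[
s^{q}(1-s)^{p-q}x^{q}y^{p-q}=(sx+(1-s)y)^{p}\,\tilde s^{q}(1-\tilde s)^{p-q},
\]
and $sx_j+(1-s)y_j=(nx_j+(N-n)y_j)/N$. So the numerator in Step 1 equals $\prod_j\bigl((nx_j+(N-n)y_j)/N\bigr)^{p_j}\cdot\widetilde{\mathbb{P}}(\sum_jQ_j=n)$. It therefore suffices to prove
\[
\widetilde{\mathbb{P}}\Bigl(\sum_jQ_j=n\Bigr)\le c\,\mathbb{P}_{s}\Bigl(\sum_jQ_j=n\Bigr).
\]

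\textbf{Step 3 (local estimates).} Under $\mathbb{P}_s$ the sum is ${\rm Bin}(N,s)$ evaluated at its mean $n$. Stirling's formula (or the standard lower bound on the central binomial mass) gives $\mathbb{P}_s\ge c_1(1+Ns(1-s))^{-1/2}$. Under $\widetilde{\mathbb{P}}$ the sum is a Poisson-binomial variable with variance
\[
\tilde\sigma^2=\sum_jp_j\tilde s_j(1-\tilde s_j).
\]
Since $\min(x_j,y_j)\le sx_j+(1-s)y_j\le\max(x_j,y_j)$, we have $\tilde s_j\asymp s$ and $1-\tilde s_j\asymp 1-s$, with constants depending only on the $x_j,y_j$. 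Hence $\tilde\sigma^2\asymp Ns(1-s)$. The upper bound then comes from the uniform local bound
\[
\max_m\mathbb{P}(X=m)\le C(1+{\rm Var}X)^{-1/2}
\]
for sums $X$ of independent Bernoulli variables. This follows, for instance, from the Kolmogorov--Rogozin concentration inequality, or from log-concavity of Poisson-binomial laws combined with Chebyshev's inequality. Combining the two bounds gives the ratio bound of Step 2 with $c$ depending only on the $x_j,y_j$.

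The main obstacle is Step 3: obtaining a point-probability upper bound under $\widetilde{\mathbb{P}}$ that is uniform in $n$, including the regimes $n$ or $N-n$ small where no CLT is available. I would first try the log-concavity route. A log-concave integer law with variance $\sigma^2$ has maximal mass at most $C/\sqrt{1+\sigma^2}$. When $Ns(1-s)=O(1)$, both local probabilities are of order one. Indeed $\mathbb{P}_s\ge c$ for binomials at their integer mean, and $\widetilde{\mathbb{P}}\le1$, so the ratio bound is trivial there.
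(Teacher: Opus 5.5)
Your argument is correct and is the paper's proof in probabilistic language. The paper writes the sum as the $z^{n}$-coefficient of $\prod_j(x_jz+y_j)^{p_j}$ and applies Cauchy's formula on the circle of radius $r=n/(N-n)$; this is exactly your tilt, since $(x_jre^{i\theta}+y_j)/(x_jr+y_j)=\tilde s_je^{i\theta}+1-\tilde s_j$. Its Gaussian bound, with $\beta_j=2\tilde s_j(1-\tilde s_j)/\pi^2\ge c\,s(1-s)$, is a direct characteristic-function proof of the local bound you cite, and its Stirling step plays the role of your binomial lower bound.

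One small correction to the log-concavity route. Chebyshev's inequality gives a lower bound on $\max_m\mathbb{P}(X=m)$, not an upper one. The upper bound comes instead from the exponential tail decay of log-concave laws at scale $(\max_m\mathbb{P}(X=m))^{-1}$, which forces $\mathrm{Var}\,X\lesssim(\max_m\mathbb{P}(X=m))^{-2}$. Alternatively, your Kolmogorov--Rogozin route already suffices.
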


\begin{proof}
If $n=0$, then both the left-hand side and the product in the right-hand
side equal $\prod^{k}_{j=1}y^{p_{j}}_{j}$. If $n=N$, then both values
equal $\prod^{k}_{j=1}x^{p_{j}}_{j}$.

Henceforth, assume that $0<n<N$. Let us denote by $\Sigma$ the summation
in the left-hand side. By an elementary combinatorial argument, $\Sigma$
equals the coefficient of $z^{n}$ in $\prod^{\kappa}_{j=1}(x_{j}z+y_{j})^{p_{j}}$.
Thus by Cauchy's integral formula, we obtain that
\[
\Sigma=\frac{1}{2\pi i}\oint\frac{\prod^{\kappa}_{j=1}(x_{j}z+y_{j})^{p_{j}}}{z^{n+1}}\,{\rm d}z=\frac{1}{2\pi}\int^{\pi}_{-\pi}\frac{\prod^{\kappa}_{j=1}(x_{j}re^{i\theta}+y_{j})^{p_{j}}}{r^{n}e^{in\theta}}\,{\rm d}\theta,
\]
where the contour integral is taken over the circle with radius $r>0$
around its center $0$. Then, notice that
\[
|x_{j}re^{i\theta}+y_{j}|^{2}=(x_{j}r+y_{j})^{2}-2rx_{j}y_{j}(1-\cos\theta)\le(x_{j}r+y_{j})^{2}e^{-2\beta_{j}\theta^{2}},
\]
where $\beta_{j}:=2rx_{j}y_{j}/(\pi^{2}(x_{j}r+y_{j})^{2})>0$. At
the inequality, we used $1-\cos\theta\ge2\theta^{2}/\pi^{2}$ for
$\theta\in[-\pi,\pi]$. Substituting this to the penultimate equation,
we obtain that
\begin{equation}
\Sigma\le cr^{-n}\prod^{\kappa}_{j=1}(x_{j}r+y_{j})^{p_{j}}\int^{\pi}_{-\pi}e^{-\beta_{\min}N\theta^{2}}\,{\rm d}\theta\le\frac{c'r^{-n}}{\sqrt{\beta_{\min}N}}\prod^{\kappa}_{j=1}(x_{j}r+y_{j})^{p_{j}},\label{eq:comb-1}
\end{equation}
where we used that $p_{1}+\cdots+p_{k}=N$. Above, $\beta_{\min}:=\min_{j\in\llbracket1,k\rrbracket}\beta_{j}>0$.
Note that
\[
\beta_{\min}=\min_{j\in\llbracket1,k\rrbracket}\frac{2rx_{j}y_{j}}{\pi^{2}(x_{j}r+y_{j})^{2}}\ge\frac{cr}{(r+1)^{2}},
\]
where $c>0$ depends on the values of $x_{1},y_{1},\dots,x_{k},y_{k}>0$.

Now, let us take $r:=\frac{n}{N-n}>0$. Then, Stirling's formula and
the last display imply that
\begin{align*}
\frac{1}{{N \choose n}}\frac{r^{-n}}{\sqrt{\beta_{\min}N}} & \le c\frac{n^{n}(N-n)^{N-n}\sqrt{n(N-n)}}{N^{N}\sqrt{N}}\frac{N}{\sqrt{n}\sqrt{N-n}}\frac{n^{-n}(N-n)^{n}}{\sqrt{N}}=c\left(\frac{N-n}{N}\right)^{N}.
\end{align*}
Substituting this to \eqref{eq:comb-1}, we conclude that
\[
\frac{\Sigma}{\binom{N}{n}}\le c\left(\frac{N-n}{N}\right)^{N}\prod^{k}_{j=1}\left(\frac{nx_{j}}{N-n}+y_{j}\right)^{p_{j}}=c\prod^{k}_{j=1}\left(\frac{nx_{j}}{N}+\frac{(N-n)y_{j}}{N}\right)^{p_{j}},
\]
as desired, where at the equality we used $N=p_{1}+\cdots+p_{k}$.
\end{proof}

Next, we record an elementary asymptotic summation lemma of reciprocals.
We could not find a direct reference for the results, so we included
a short proof for completeness.
\begin{lem}
\label{lem:asymp}For fixed $k\ge1$ and $\alpha\in\mathbb{R}$, as
$N\to\infty$,
\[
\sum_{\substack{p_{1},\dots,p_{k}\ge1:\\
p_{1}+\cdots+p_{k}=N
}
}\frac{1}{p^{\alpha}_{1}p^{\alpha}_{2}\cdots p^{\alpha}_{k}}\simeq\begin{cases}
k\bm{\zeta}(\alpha)^{k-1}N^{-\alpha} & \text{if}\quad\alpha>1,\\
\frac{k(\log N)^{k-1}}{N} & \text{if}\quad\alpha=1,\\
\frac{\bm{\Gamma}(1-\alpha)^{k}}{\bm{\Gamma}(k(1-\alpha))}N^{k(1-\alpha)-1} & \text{if}\quad\alpha<1.
\end{cases}
\]
Above, $\bm{\Gamma}(\cdot)$ and $\bm{\zeta}(\cdot)$ denote the usual
gamma and zeta functions, respectively.
\end{lem}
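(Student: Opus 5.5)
We prove Lemma \ref{lem:asymp} by induction on $k$, writing $\Sigma_k(N)$ for the left-hand side. The case $k=1$ is trivial, since $\Sigma_1(N)=N^{-\alpha}$ matches all three formulas (with $\bm{\Gamma}(1-\alpha)/\bm{\Gamma}(1-\alpha)=1$ and $(\log N)^0=1$). For the inductive step we use the convolution identity
\[
\Sigma_{k+1}(N)=\sum_{p=1}^{N-k}\frac{\Sigma_{k}(N-p)}{p^{\alpha}},
\]
and treat the three regimes separately.

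\emph{Case $\alpha>1$.} We split the sum at $p\le N/2$ and $p>N/2$. On $p\le N/2$ we have $N-p\asymp N$. By induction, $\Sigma_k(N-p)\simeq k\bm{\zeta}(\alpha)^{k-1}(N-p)^{-\alpha}$, and for each fixed $p$ this is $\simeq k\bm{\zeta}(\alpha)^{k-1}N^{-\alpha}$. Since $\sum_p p^{-\alpha}<\infty$, dominated convergence (after multiplying by $N^\alpha$) gives a contribution $k\bm{\zeta}(\alpha)^{k}N^{-\alpha}(1+o(1))$. On $p>N/2$, substitute $m=N-p$, so the range becomes $m<N/2$. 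Then $p^{-\alpha}\simeq N^{-\alpha}$ uniformly in the relevant sense, and $\sum_{m}\Sigma_k(m)=(\sum_{p\ge1}p^{-\alpha})^k=\bm{\zeta}(\alpha)^k$ converges. This piece therefore contributes $\bm{\zeta}(\alpha)^kN^{-\alpha}(1+o(1))$, again by dominated convergence using the bound $\Sigma_k(m)\le C m^{-\alpha}$ from induction. Adding the two pieces gives $(k+1)\bm{\zeta}(\alpha)^kN^{-\alpha}$.

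\emph{Case $\alpha<1$.} This is a Riemann-sum argument. Writing $p_j=Nt_j$, we get
\[
\Sigma_k(N)\approx N^{k(1-\alpha)-1}\int_{\{t_j>0,\ \sum t_j=1\}}\prod_j t_j^{-\alpha}\,\mathrm d\sigma.
\]
The Dirichlet integral on the right equals $\bm{\Gamma}(1-\alpha)^k/\bm{\Gamma}(k(1-\alpha))$. To make this rigorous inductively, we take $\Sigma_{k+1}(N)=\sum_p p^{-\alpha}\Sigma_k(N-p)$ and insert the induction hypothesis. The integrable singularities at $t=0$ and $t=1$ are controlled by the uniform bound $\Sigma_k(m)\le Cm^{k(1-\alpha)-1}$ (valid for all $m\ge1$), which allows dominated convergence. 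The limit is then the Beta integral
\[
B\bigl(1-\alpha,\,k(1-\alpha)\bigr)\,\frac{\bm{\Gamma}(1-\alpha)^k}{\bm{\Gamma}(k(1-\alpha))},
\]
and this simplifies to the claimed constant for $k+1$.

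\emph{Case $\alpha=1$.} This is the delicate step, and we expect it to be the main obstacle. We use the partial-fraction identity
\[
\frac{1}{p_1\cdots p_k}=\frac1N\sum_{i}\prod_{j\ne i}\frac{1}{p_j}.
\]
It reduces the problem to showing
\[
\sum_{p_1+\dots+p_{k-1}\le N-1}\ \prod_{j\le k-1}\frac{1}{p_j}\simeq(\log N)^{k-1}.
\]
We establish this by squeezing between products: the region $\{p_j\le N/k\ \forall j\}$ lies inside the constraint set, which in turn lies inside $\{p_j\le N\ \forall j\}$. Each of these product sums is $(\log N+O(1))^{k-1}$. Multiplying by the factor $k$ (from the $k$ choices of $i$) and by $1/N$ gives $k(\log N)^{k-1}/N$. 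The main care needed here is to make sure the errors are lower order, namely $O((\log N)^{k-2})$ relative to the leading term.
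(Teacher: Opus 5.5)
Your proposal is correct. For $\alpha>1$ it is exactly the paper's argument: induction through the one-variable convolution, a split at $N/2$, and dominated convergence on each half, with the two halves contributing $k\bm{\zeta}(\alpha)^{k}$ and $\bm{\zeta}(\alpha)^{k}$.

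The other two regimes are organized differently. For $\alpha<1$, the paper goes directly to a $(k-1)$-dimensional Riemann integral over the simplex and says only that the Dirichlet integral is evaluated ``via induction and change of variables''. You instead run the induction at the discrete level. Each step is then a one-dimensional Riemann sum whose integrable endpoint singularities are dominated via $\Sigma_k(m)\le Cm^{k(1-\alpha)-1}$. The constant is propagated by $B(1-\alpha,k(1-\alpha))\,\bm{\Gamma}(1-\alpha)^{k}/\bm{\Gamma}(k(1-\alpha))=\bm{\Gamma}(1-\alpha)^{k+1}/\bm{\Gamma}((k+1)(1-\alpha))$. This is the same idea, but it makes explicit the justification the paper leaves implicit.

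For $\alpha=1$ your route is genuinely different. The paper writes $\Sigma_k(N)$ as the $x^{N}$-coefficient of $(-\log(1-x))^{k}$, i.e.\ $\frac{k!}{N!}{N \brack k}$, and imports the asymptotics of Stirling numbers of the first kind from the literature. Your identity $\frac{1}{p_1\cdots p_k}=\frac1N\sum_i\prod_{j\ne i}\frac1{p_j}$ instead gives the exact relation $\Sigma_k(N)=\frac kN\sum_{p_1+\cdots+p_{k-1}\le N-1}\prod_{j=1}^{k-1}\frac1{p_j}$. With $H_n$ the harmonic numbers, this yields
\[
\frac{k}{N}\,H_{\lfloor N/k\rfloor}^{\,k-1}\;\le\;\Sigma_k(N)\;\le\;\frac{k}{N}\,H_{N-1}^{\,k-1}\qquad(N\ge k).
\]
Both sides are $\frac kN\bigl((\log N)^{k-1}+O((\log N)^{k-2})\bigr)$, so this settles the case immediately. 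No induction is needed, and it is not really the obstacle you anticipated. Your version is fully elementary, self-contained, and non-asymptotic, giving two-sided bounds for every $N\ge k$. The paper's version is shorter and yields an exact closed form, at the price of relying on an external asymptotic for ${N \brack k}$.
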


\begin{proof}
Denote by $S_{N}(k)$ the left-hand side of the lemma. First, consider
the case of $\alpha<1$. By the Riemann integration theory,
\begin{align*}
 & S_{N}(k)=N^{k(1-\alpha)-1}\frac{1}{N^{k-1}}\sum_{\substack{p_{1},\dots,p_{k}\ge1:\\
p_{1}+\cdots+p_{k}=N
}
}\frac{1}{\left(\frac{p_{1}}{N}\right)^{\alpha}\left(\frac{p_{2}}{N}\right)^{\alpha}\cdots\left(\frac{p_{k}}{N}\right)^{\alpha}}\\
 & \simeq N^{k(1-\alpha)-1}\int_{\substack{\{x_{1},\dots,x_{k-1}\ge0:\\
x_{1}+\cdots+x_{k-1}\le1\}
}
}\frac{1}{x^{\alpha}_{1}\cdots x^{\alpha}_{k-1}(1-x_{1}-\cdots-x_{k-1})^{\alpha}}\,{\rm d}x_{k-1}\,\cdots\,{\rm d}x_{2}\,{\rm d}x_{1}.
\end{align*}
It is easy to check that the last integral value equals $\bm{\Gamma}(1-\alpha)^{k}/\bm{\Gamma}(k(1-\alpha))$
via induction and change of variables; we omit the detail.

Next, let $\alpha=1$. It is clear that $S_{N}(k)$ is the $x^{N}$-coefficient
of the function $(-\log(1-x))^{k}$. It is well known (e.g., \cite[Sec. 26.8.8]{OLBC10})
that
\[
(-\log(1-x))^{k}=k!\sum^{\infty}_{n=k}{n \brack k}\frac{x^{n}}{n!},
\]
where ${n \brack k}$ is the unsigned Stirling number of the first
kind, so that
\[
S_{N}(k)=\frac{k!}{N!}{N \brack k}\simeq\frac{k!}{N!}\frac{(N-1)!}{(k-1)!}(\log N)^{k-1}=\frac{k}{N}(\log N)^{k-1}\qquad\text{as}\quad N\to\infty.
\]
The asymptotic above is elementary; see for instance \cite[Sec. 26.8.40]{OLBC10}.
This concludes the $\alpha=1$ case.

Finally, suppose that $\alpha>1$. It is clear that $S_{N}(1)=N^{-\alpha}$.
Let us assume the result for $k-1$ and let $k\ge2$. Substituting
$n=N-p_{k}$, we rewrite as
\[
N^{\alpha}S_{N}(k)=\sum^{N-1}_{n=1}\frac{N^{\alpha}}{(N-n)^{\alpha}}\sum_{\substack{p_{1},\dots,p_{k-1}\ge1:\\
p_{1}+\cdots+p_{k-1}=n
}
}\frac{1}{p^{\alpha}_{1}\cdots p^{\alpha}_{k-1}}=\sum^{N-1}_{n=1}\left(1-\frac{n}{N}\right)^{-\alpha}S_{n}(k-1).
\]
We divide the summation into two parts:
\[
N^{\alpha}S_{N}(k)=\sum_{1\le n\le\frac{N}{2}}\left(1-\frac{n}{N}\right)^{-\alpha}S_{n}(k-1)+\sum_{\frac{N}{2}<n\le N-1}\left(1-\frac{n}{N}\right)^{-\alpha}S_{n}(k-1)=:X_{N}+Y_{N}.
\]
First, we may write
\[
X_{N}=\sum^{\infty}_{n=1}\left(1-\frac{n}{N}\right)^{-\alpha}S_{n}(k-1){\bf 1}\left\{ n\le\frac{N}{2}\right\} .
\]
Considering this as an integration of $x_{N}(n):=(1-\frac{n}{N})^{-\alpha}S_{n}(k-1){\bf 1}\{n\le\frac{N}{2}\}$,
we have $\lim_{N\to\infty}x_{N}(n)=S_{n}(k-1)$ and, by the induction
hypothesis,
\[
|x_{N}(n)|\le2^{\alpha}S_{n}(k-1)\le\frac{c2^{\alpha}}{n^{\alpha}},\qquad\text{where}\quad\sum^{\infty}_{n=1}\frac{c2^{\alpha}}{n^{\alpha}}<\infty.
\]
Thus, the dominated convergence theorem implies that
\begin{equation}
\lim_{N\to\infty}X_{N}=\sum^{\infty}_{n=1}S_{n}(k-1)=\sum_{p_{1},\dots,p_{k-1}\ge1}\frac{1}{p^{\alpha}_{1}\cdots p^{\alpha}_{k-1}}=\bm{\zeta}(\alpha)^{k-1}.\label{eq:AN-lim}
\end{equation}
Next, substituting $n\leftarrow N-n$, we write
\[
Y_{N}=\sum_{1\le n<\frac{N}{2}}\left(\frac{n}{N}\right)^{-\alpha}S_{N-n}(k-1)=\sum^{\infty}_{n=1}n^{-\alpha}N^{\alpha}S_{N-n}(k-1){\bf 1}\left\{ n<\frac{N}{2}\right\} .
\]
Let $y_{N}(n):=n^{-\alpha}N^{\alpha}S_{N-n}(k-1){\bf 1}\{n<\frac{N}{2}\}$.
Then, by the induction hypothesis, $\lim_{N\to\infty}y_{N}(n)=n^{-\alpha}(k-1)\zeta(\alpha)^{k-2}$.
Moreover, again by the hypothesis,
\[
|y_{N}(n)|\le n^{-\alpha}\frac{N^{\alpha}c}{(N-n)^{\alpha}}{\bf 1}\left\{ n<\frac{N}{2}\right\} \le\frac{2^{\alpha}c}{n^{\alpha}},
\]
which is summable in $n\ge1$. Therefore, the dominated convergence
theorem again gives
\begin{equation}
\lim_{N\to\infty}Y_{N}=\sum^{\infty}_{n=1}n^{-\alpha}(k-1)\bm{\zeta}(\alpha)^{k-2}=(k-1)\bm{\zeta}(\alpha)^{k-1}.\label{eq:BN-lim}
\end{equation}
Combining \eqref{eq:AN-lim} and \eqref{eq:BN-lim} yields the desired
result for $k$, concluding the proof of the case of $\alpha>1$.
\end{proof}

\begin{lem}
\label{lem:asymp2}Recall \eqref{eq:a-def}. For fixed $k\ge1$ and
$\alpha\in\mathbb{R}$, as $N\to\infty$,
\[
\sum_{\substack{p_{1},\dots,p_{k}\ge0:\\
p_{1}+\cdots+p_{k}=N
}
}\frac{1}{\mathfrak{a}(p_{1})\mathfrak{a}(p_{2})\cdots\mathfrak{a}(p_{k})}\simeq\begin{cases}
k(1+\bm{\zeta}(\alpha))^{k-1}N^{-\alpha} & \text{if}\quad\alpha>1,\\
\frac{k(\log N)^{k-1}}{N} & \text{if}\quad\alpha=1,\\
\frac{\bm{\Gamma}(1-\alpha)^{k}}{\bm{\Gamma}(k(1-\alpha))}N^{k(1-\alpha)-1} & \text{if}\quad\alpha<1.
\end{cases}
\]
\end{lem}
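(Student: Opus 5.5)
The plan is to reduce Lemma \ref{lem:asymp2} to Lemma \ref{lem:asymp} by sorting the summands according to which coordinates vanish. Since $\mathfrak{a}(0)=1$ and $\mathfrak{a}(p)=p^{\alpha}$ for $p\ge1$, for each subset $J\subset\llbracket1,k\rrbracket$ of size $j\ge1$, the contribution of the tuples with $p_{i}\ge1$ exactly for $i\in J$ (and $p_{i}=0$ otherwise) equals $S_{N}(j)$ in the notation of the proof of Lemma \ref{lem:asymp}. For $N\ge1$ the case $J=\emptyset$ is excluded. Hence
\[
\sum_{\substack{p_{1},\dots,p_{k}\ge0:\\
p_{1}+\cdots+p_{k}=N
}
}\frac{1}{\mathfrak{a}(p_{1})\cdots\mathfrak{a}(p_{k})}=\sum^{k}_{j=1}\binom{k}{j}S_{N}(j),
\]
and everything reduces to inserting the three asymptotics of Lemma \ref{lem:asymp}. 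Since $k$ is fixed, this is a finite sum.

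For $\alpha>1$, every $S_{N}(j)\simeq j\bm{\zeta}(\alpha)^{j-1}N^{-\alpha}$ has the same order $N^{-\alpha}$, so all terms contribute. The constant is
\[
\sum_{j=1}^{k}\binom{k}{j}j\bm{\zeta}(\alpha)^{j-1}=k\sum_{j=1}^{k}\binom{k-1}{j-1}\bm{\zeta}(\alpha)^{j-1}=k(1+\bm{\zeta}(\alpha))^{k-1},
\]
by $\binom{k}{j}j=k\binom{k-1}{j-1}$ and the binomial theorem. This matches the claim.

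For $\alpha=1$, $S_{N}(j)\simeq j(\log N)^{j-1}/N$, so the term $j=k$ strictly dominates every $j<k$ by a power of $\log N$. The sum is therefore $\simeq k(\log N)^{k-1}/N$.

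For $\alpha<1$, $S_{N}(j)\asymp N^{j(1-\alpha)-1}$, and the exponent is strictly increasing in $j$ because $1-\alpha>0$. The term $j=k$ again dominates and yields the stated Gamma-function constant.

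The only point requiring care is the $\alpha\le0$ range in the last case (the lemma allows $\alpha\in\mathbb{R}$). There $\mathfrak{a}$ is still defined as $p^{\alpha}$ for $p\ge1$, and Lemma \ref{lem:asymp} is stated for all real $\alpha$ with $\alpha<1$ covered by the Riemann-sum argument, so no extra work is needed. The main (mild) obstacle is just the bookkeeping of the binomial identity in the supercritical case; everything else is immediate from Lemma \ref{lem:asymp}.
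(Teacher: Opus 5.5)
Your proposal is correct and is essentially the paper's proof. The paper also groups the summands by the number $m$ of vanishing coordinates, obtaining $\sum_{m=0}^{k-1}\binom{k}{m}S_N(k-m)$; it keeps only the all-positive term when $\alpha\le1$ and, for $\alpha>1$, sums the same binomial identity to get $k(1+\bm{\zeta}(\alpha))^{k-1}N^{-\alpha}$.
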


\begin{proof}
For $\alpha\le1$, if at least one $p_{j}$ is zero in the summation,
then Lemma \ref{lem:asymp} implies that the result gives a scale
strictly smaller than the ones in the right-hand side. Thus, again,
Lemma \ref{lem:asymp} implies that
\[
\sum_{\substack{p_{1},\dots,p_{k}\ge0:\\
p_{1}+\cdots+p_{k}=N
}
}\frac{1}{\mathfrak{a}(p_{1})\mathfrak{a}(p_{2})\cdots\mathfrak{a}(p_{k})}\simeq\sum_{\substack{p_{1},\dots,p_{k}\ge1:\\
p_{1}+\cdots+p_{k}=N
}
}\frac{1}{p^{\alpha}_{1}p^{\alpha}_{2}\cdots p^{\alpha}_{k}}\simeq\begin{cases}
\frac{k(\log N)^{k-1}}{N} & \text{if}\quad\alpha=1,\\
\frac{\bm{\Gamma}(1-\alpha)^{k}}{\bm{\Gamma}(k(1-\alpha))}N^{k(1-\alpha)-1} & \text{if}\quad\alpha<1.
\end{cases}
\]
On the other hand, suppose that $\alpha>1$. Then, the zero cases
contribute equally at the same scale. Thus, denoting by $m$ the number
of zeros, we obtain via Lemma \ref{lem:asymp} that
\begin{align*}
\sum_{\substack{p_{1},\dots,p_{k}\ge0:\\
p_{1}+\cdots+p_{k}=N
}
}\frac{1}{\mathfrak{a}(p_{1})\mathfrak{a}(p_{2})\cdots\mathfrak{a}(p_{k})} & =\sum^{k-1}_{m=0}{k \choose m}\sum_{\substack{p_{1},\dots,p_{k-m}\ge1:\\
p_{1}+\cdots+p_{k-m}=N
}
}\frac{1}{p^{\alpha}_{1}p^{\alpha}_{2}\cdots p^{\alpha}_{k-m}}\\
 & \simeq\sum^{k-1}_{m=0}{k \choose m}(k-m)\bm{\zeta}(\alpha)^{k-m-1}N^{-\alpha}=k(1+\bm{\zeta}(\alpha))^{k-1}N^{-\alpha},
\end{align*}
concluding the proof.
\end{proof}

The next two lemmas will be useful in Section \ref{sec2.1}.
\begin{lem}
\label{lem:asymp3}Let $\alpha>1$. For any $\ell<N/2$,
\[
\sum_{\substack{0\le p_{1},\dots,p_{k}<N-\ell:\\
p_{1}+\cdots+p_{k}=N
}
}\frac{1}{\mathfrak{a}(p_{1})\mathfrak{a}(p_{2})\cdots\mathfrak{a}(p_{k})}\le\frac{c}{(\ell+1)^{\alpha-1}N^{\alpha}},
\]
where $c>0$ does not depend on $\ell$.
\end{lem}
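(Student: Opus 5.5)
Since every $p_j<N-\ell$ while $p_1+\cdots+p_k=N$, no single coordinate can carry almost all of the mass: at least two coordinates must be large. I will exploit this by locating the largest coordinate and showing that the remaining mass is at least $\ell+1$. Without loss of generality (paying a factor $k$), assume $p_1=\max_j p_j$, so that $p_1\ge N/k$ and $\mathfrak{a}(p_1)\ge (N/k)^{\alpha}$. Writing $L:=N-p_1$, the constraint $p_1<N-\ell$ gives $L\ge \ell+1$. Then the sum is bounded by
\[
k\sum_{L=\ell+1}^{N}\frac{k^{\alpha}}{N^{\alpha}}\sum_{\substack{p_2,\dots,p_k\ge0:\\ p_2+\cdots+p_k=L}}\frac{1}{\mathfrak{a}(p_2)\cdots\mathfrak{a}(p_k)}.
\]

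For the inner sum I apply Lemma \ref{lem:asymp2} with $k-1$ in place of $k$. Since $\alpha>1$, this gives a bound $c(L+1)^{-\alpha}$, uniformly in $L\ge0$; the asymptotic for large $L$ together with finitely many small $L$ yields a uniform constant. If $k=1$ the sum is empty because $p_1=N\ge N-\ell$, so I assume $k\ge2$.

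It then remains to sum $\sum_{L\ge \ell+1}(L+1)^{-\alpha}\le c(\ell+1)^{1-\alpha}$, which holds by comparison with an integral since $\alpha>1$. Combining the two bounds gives the claim $c(\ell+1)^{-(\alpha-1)}N^{-\alpha}$, with $c$ depending only on $k$ and $\alpha$.

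The only delicate point is ensuring that the bound $\mathfrak{a}(p_1)\gtrsim N^{\alpha}$ is legitimate. This requires choosing $p_1$ as the maximal coordinate, rather than relying on the assumption $\ell<N/2$; the maximal choice makes the bound automatic. The assumption $\ell<N/2$ is then needed only to ensure the range of $L$ is nonempty and meaningful.
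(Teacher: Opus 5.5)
Your proof is correct, and it takes a different and shorter route than the paper.

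The paper argues by induction on $k$, splitting according to whether $p_1>\ell$ or $p_1\le\ell$. In the first case the remaining coordinates automatically lie below $N-\ell$, so Lemma~\ref{lem:asymp2} reduces matters to $\sum_{p_1=\ell+1}^{N-\ell-1}p_1^{-\alpha}(N-p_1)^{-\alpha}$, which is handled via the symmetry $p_1\leftrightarrow N-p_1$. In the second case the induction hypothesis is applied with $(N,\ell)$ replaced by $(N-p_1,\ell-p_1)$, followed by a discrete convolution estimate.

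Your reduction to a maximal coordinate bypasses both the induction and the convolution. It gives $\mathfrak{a}(p_1)\ge(N/k)^{\alpha}$ at the cost of a factor $k$. The constraint $p_1<N-\ell$ then becomes a pure tail condition $L\ge\ell+1$ on the complementary mass. The whole estimate thus collapses to $\sum_{L\ge\ell+1}(L+1)^{-\alpha}$, with an explicit constant of order $k^{1+\alpha}/(\alpha-1)$ times the uniform constant extracted from Lemma~\ref{lem:asymp2}.

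A side benefit is that the hypothesis $\ell<N/2$ is never used. The paper needs it for the symmetry step and to keep $\ell-p_1<(N-p_1)/2$ in the induction. Your closing remark that it is needed to make the range of $L$ nonempty is unnecessary, since an empty sum is trivially bounded.

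The paper's inductive splitting serves as a common template for Lemmas~\ref{lem:asymp4} and~\ref{lem:asymp5}. Your trick adapts to those as well. For instance, in Lemma~\ref{lem:asymp4} you order the two largest coordinates and extract the second tail $N-p_1-p_2\ge\ell'+1$.
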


\begin{proof}
The inequality is clear for $k=1$ since the left-hand side is zero.
Assume the induction hypothesis for $k-1$ and fix $k\ge2$. As $p_{1}$
varies from $0$ to $N-\ell-1$, the situation changes at the value
$\ell$. Namely, if $\ell<p_{1}\le N-\ell-1$ then the other variables
$p_{2},\dots,p_{k}$ are automatically smaller than $N-\ell$, thus
the corresponding part becomes
\[
\sum^{N-\ell-1}_{p_{1}=\ell+1}\frac{1}{\mathfrak{a}(p_{1})}\sum_{\substack{p_{2},\dots,p_{k}\ge0:\\
p_{2}+\cdots+p_{k}=N-p_{1}
}
}\frac{1}{\mathfrak{a}(p_{2})\cdots\mathfrak{a}(p_{k})}\le\sum^{N-\ell-1}_{p_{1}=\ell+1}\frac{c}{p^{\alpha}_{1}(N-p_{1})^{\alpha}},
\]
where the inequality follows from Lemma \ref{lem:asymp2}. By symmetry,
the right hand side is bounded by
\begin{equation}
\sum^{N/2}_{p_{1}=\ell+1}\frac{2c}{p^{\alpha}_{1}(N-p_{1})^{\alpha}}\le\sum^{N/2}_{p_{1}=\ell+1}\frac{c'N^{-\alpha}}{p^{\alpha}_{1}}\le c''N^{-\alpha}(\ell+1)^{-\alpha+1},\label{eq:as2-1}
\end{equation}
as desired. On the other hand, if $0\le p_{1}\le\ell$ then the induction
hypothesis comes into play:
\begin{equation}
\sum^{\ell}_{p_{1}=0}\frac{1}{\mathfrak{a}(p_{1})}\sum_{\substack{0\le p_{2},\dots,p_{k}<(N-p_{1})-(\ell-p_{1}):\\
p_{2}+\cdots+p_{k}=N-p_{1},
}
}\frac{1}{\mathfrak{a}(p_{2})\cdots\mathfrak{a}(p_{k})}\le\sum^{\ell}_{p_{1}=0}\frac{1}{\mathfrak{a}(p_{1})}\frac{c}{(\ell-p_{1}+1)^{\alpha-1}N^{\alpha}}.\label{eq:as2-2}
\end{equation}
At the inequality, we used $2(N-p_{1})>N$. Since 
\[
\sum^{\ell}_{p_{1}=1}p^{-\alpha}_{1}(\ell-p_{1}+1)^{-\alpha+1}=\frac{\ell+1}{2}\sum^{\ell}_{p_{1}=1}p^{-\alpha}_{1}(\ell-p_{1}+1)^{-\alpha},
\]
the right-hand side of \eqref{eq:as2-2} is bounded by $cN^{-\alpha}(\ell+1)^{-\alpha+1}$
using the same logic as in \eqref{eq:as2-1}.
\end{proof}

\begin{lem}
\label{lem:asymp4}For $\alpha>1$ and $\ell'<\ell/2<N/4$,
\[
\sum_{\substack{0\le p_{1},\dots,p_{k}<N-\ell:\\
p_{1}+\cdots+p_{k}=N,\\
p_{j}+p_{j'}<N-\ell',\ \forall j\ne j'
}
}\frac{1}{\mathfrak{a}(p_{1})\mathfrak{a}(p_{2})\cdots\mathfrak{a}(p_{k})}\leq\frac{c}{(\ell+1)^{\alpha-1}(\ell'+1)^{\alpha-1}N^{\alpha}}.
\]
\end{lem}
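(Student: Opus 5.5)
We argue by induction on $k$, following the scheme of Lemma \ref{lem:asymp3}, which is the special case with the pairwise constraint dropped. For $k=1$ the sum is empty since $p_1=N\ge N-\ell$. For $k=2$ the constraint $p_1+p_2=N<N-\ell'$ fails, so the sum is again empty. Hence the content lies in $k\ge3$.

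Assume the bound holds for $k-1$, and split according to the size of $p_1$, the first (or any fixed) coordinate.

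\emph{Case $p_1\le\ell'$.} The remaining variables satisfy $p_2+\cdots+p_k=N-p_1$. Each obeys $p_j<N-\ell$, and each pair obeys $p_j+p_{j'}<N-\ell'$. We rewrite the constraints relative to the new total $N'=N-p_1$. The pairwise constraint becomes $p_j+p_{j'}<N'-(\ell'-p_1)$. The single-site constraint $p_j<N-\ell$ is at least as strong as $p_j<N'-(\ell-p_1)$, since $N-\ell\le N-\ell+p_1=N'-(\ell-p_1)$... more precisely $N'-(\ell-p_1)=N-\ell$, so the constraints coincide exactly. With $\ell_1=\ell-p_1$ and $\ell_1'=\ell'-p_1$, we still have $\ell_1'<\ell_1/2<N'/4$, up to harmless constants, because $p_1\le\ell'<\ell/2$ gives $\ell-p_1>\ell/2$ and $N'\ge N-\ell$. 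By the induction hypothesis this part is at most
\[
\sum_{p_1=0}^{\ell'}\frac{1}{\mathfrak a(p_1)}\frac{c}{(\ell+1)^{\alpha-1}(\ell'-p_1+1)^{\alpha-1}N^{\alpha}}.
\]
Here we used $\ell-p_1+1\ge(\ell+1)/2$. The convolution $\sum_{p}\mathfrak a(p)^{-1}(\ell'-p+1)^{1-\alpha}\le c(\ell'+1)^{1-\alpha}$ is handled exactly as in \eqref{eq:as2-2}, by splitting at $\ell'/2$ and using $\alpha>1$.

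\emph{Case $p_1>\ell'$.} Now the rest has total $N-p_1<N-\ell'$. The pairwise constraint among $p_2,\dots,p_k$ then holds automatically. What remains are the constraints involving $p_1$, namely $p_1+p_j<N-\ell'$ for all $j$, together with $p_1<N-\ell$.

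\emph{Sub-case $p_1\le N/2$.} The requirement $p_1+p_j<N-\ell'$ for every $j\ge2$ means every $p_j<(N-p_1)-\ell'$. This is exactly the single-site restriction of Lemma \ref{lem:asymp3} on $k-1$ variables with total $N-p_1$ and threshold $\ell'$; its condition $\ell'<(N-p_1)/2$ holds. That lemma gives a contribution of at most $c\,(\ell'+1)^{1-\alpha}(N-p_1)^{-\alpha}$. Summing $\sum_{p_1>\ell'}p_1^{-\alpha}$ would only give $(\ell'+1)^{1-\alpha}$ once more, which is the wrong factor. So we must extract $(\ell+1)^{1-\alpha}$ from elsewhere. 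Note that if $p_1\le\ell$, then some other $p_j$ must be large, since the $p_j$ with $j\ge2$ sum to $N-p_1\ge N-\ell$ while each is $<N-\ell$. This is handled by relabeling: we choose $p_1$ to be the \emph{largest} coordinate from the start (at the cost of a factor $k$).

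With that choice, $p_1\ge N/k$, so $p_1^{-\alpha}\le cN^{-\alpha}$. The constraint $p_1<N-\ell$ means the remaining mass $m=N-p_1$ satisfies $m>\ell$. The remaining $k-1$ coordinates then have each entry $<m-\ell'$, so Lemma \ref{lem:asymp3} applies with total $m$ and threshold $\ell'$ (valid since $\ell'<\ell/2<m/2$) and gives $c(\ell'+1)^{1-\alpha}m^{-\alpha}$. Summing over $m>\ell$ yields
\[
cN^{-\alpha}(\ell'+1)^{1-\alpha}\sum_{m>\ell}m^{-\alpha}\le cN^{-\alpha}(\ell'+1)^{1-\alpha}(\ell+1)^{1-\alpha},
\]
which is the claimed bound.

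With this maximal-coordinate relabeling, the whole induction is in fact unnecessary: the argument reduces directly to Lemma \ref{lem:asymp3}, and the first case above disappears. The only step needing care is checking that the hypotheses $\ell'<m/2$ of Lemma \ref{lem:asymp3} hold and that the pairwise constraint involving the maximal site really is equivalent to the single-site bound $p_j<m-\ell'$. This equivalence holds because any pair not containing site $1$ sums to at most $m<N-\ell'$ automatically. I expect this verification to be the main point to get right.
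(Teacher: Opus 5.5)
Your final argument, the maximal-coordinate reduction, is correct, and it takes a genuinely different and shorter route than the paper. The paper keeps the induction on $k$ and splits the range of $p_1$ into three pieces, each followed by a convolution estimate as in \eqref{eq:as2-1}--\eqref{eq:as2-2}:
\begin{itemize}
\item for $p_1\in\llbracket \ell-\ell'+1,N-\ell-1\rrbracket$ it applies Lemma \ref{lem:asymp3} with threshold $\ell'$;
\item for $p_1\in\llbracket \ell'+1,\ell-\ell'\rrbracket$ it applies Lemma \ref{lem:asymp3} with threshold $\ell-p_1$;
\item for $p_1\le \ell'$ it uses the induction hypothesis with $(N,\ell,\ell')\leftarrow(N-p_1,\ell-p_1,\ell'-p_1)$.
\end{itemize}
Your symmetrization onto a maximal coordinate removes both the induction and all case distinctions, at the cost of a harmless factor $k^{1+\alpha}$ in the constant. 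Each factor then has a clear source: $\mathfrak a(p_1)^{-1}\le (k/N)^{\alpha}$ gives $N^{-\alpha}$; Lemma \ref{lem:asymp3} on the other $k-1$ sites, with total $m=N-p_1$ and threshold $\ell'$ (valid since $m>\ell>2\ell'$), gives $(\ell'+1)^{1-\alpha}m^{-\alpha}$; and the tail $\sum_{m>\ell}m^{-\alpha}$ gives $(\ell+1)^{1-\alpha}$. Your argument does not even need $\ell<N/2$. The paper's route stays inside the same inductive template as Lemmas \ref{lem:asymp3} and \ref{lem:asymp5}, but it is longer and must check that the hypotheses propagate in each case.

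One correction to your last sentence. The justification ``any pair not containing site $1$ sums to at most $m<N-\ell'$'' is not valid in general. The inequality $m<N-\ell'$ means $p_1>\ell'$, but a maximal coordinate can be as small as $N/k<\ell'$ once $k\ge5$ (take all $p_j=N/k$ with $\ell$ close to $N/2$). No equivalence is needed, however, since you only want an upper bound. You may simply drop the constraints $p_j<N-\ell$ and $p_j+p_{j'}<N-\ell'$ for $j,j'\ge2$; they are in any case implied by maximality, since $p_j+p_{j'}\le p_j+p_1<N-\ell'$.

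For the same reason, the case $p_1\le\ell'$ does not literally disappear. Your unified bound still covers it, because the only input is $m>\ell$. In a write-up, drop the abandoned induction attempt and present only the maximal-coordinate argument.
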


\begin{proof}
It clear for $k=1$. Assume for $k-1$ and suppose that $k\ge2$.
With a similar decomposition according to the range of $p_{1}$, the
left-hand side becomes
\[
\begin{aligned}\sum^{N-\ell-1}_{p_{1}=\ell-\ell'+1}\frac{1}{\mathfrak{a}(p_{1})}\sum_{\substack{0\le p_{2},\dots,p_{k}<(N-p_{1})-\ell':\\
p_{2}+\cdots+p_{k}=N-p_{1}
}
} & \frac{1}{\mathfrak{a}(p_{2})\cdots\mathfrak{a}(p_{k})}+\sum^{\ell-\ell'}_{p_{1}=\ell'+1}\frac{1}{\mathfrak{a}(p_{1})}\sum_{\substack{0\le p_{2},\dots,p_{k}<(N-p_{1})-(\ell-p_{1}):\\
p_{2}+\cdots+p_{k}=N-p_{1}
}
}\frac{1}{\mathfrak{a}(p_{2})\cdots\mathfrak{a}(p_{k})}\\
 & +\sum^{\ell'}_{p_{1}=0}\frac{1}{\mathfrak{a}(p_{1})}\sum_{\substack{0\le p_{2},\dots,p_{k}<(N-p_{1})-(\ell-p_{1}):\\
p_{2}+\cdots+p_{k}=N-p_{1},\\
p_{j}+p_{j'}<(N-p_{1})-(\ell'-p_{1}),\ \forall j\ne j'\ge2
}
}\frac{1}{\mathfrak{a}(p_{2})\cdots\mathfrak{a}(p_{k})}.
\end{aligned}
\]
We handle the three terms separately. The first term is bounded as
desired via Lemma \ref{lem:asymp3} since
\[
\sum^{N-\ell-1}_{p_{1}=\ell-\ell'+1}\frac{1}{\mathfrak{a}(p_{1})}\frac{c}{(\ell'+1)^{\alpha-1}(N-p_{1})^{\alpha}}\le\frac{c'}{(\ell-\ell'+1)^{\alpha-1}N^{\alpha}(\ell'+1)^{\alpha-1}}\le\frac{c''}{(\ell+1)^{\alpha-1}N^{\alpha}(\ell'+1)^{\alpha-1}},
\]
where the first inequality follows similarly as in \eqref{eq:as2-1},
and the second inequality holds since $\ell>2\ell'$. The second term
is bounded as desired via Lemma \ref{lem:asymp3} since
\[
\sum^{\ell-\ell'}_{p_{1}=\ell'+1}\frac{1}{\mathfrak{a}(p_{1})}\frac{c}{(\ell-p_{1}+1)^{\alpha-1}(N-p_{1})^{\alpha}}\le\sum^{\ell-\ell'}_{p_{1}=\ell'+1}\frac{c'}{p^{\alpha}_{1}(\ell-p_{1}+1)^{\alpha-1}N^{\alpha}}\le\frac{c''}{(\ell+1)^{\alpha-1}(\ell'+1)^{\alpha-1}N^{\alpha}},
\]
where the second inequality follows similarly as in \eqref{eq:as2-2}.
Finally, the third term is bounded as desired via the induction hypothesis
since
\begin{align*}
\sum^{\ell'}_{p_{1}=0}\frac{1}{\mathfrak{a}(p_{1})}\frac{c}{(\ell-p_{1}+1)^{\alpha-1}(\ell'-p_{1}+1)^{\alpha-1}(N-p_{1})^{\alpha}} & \le\frac{c'}{(\ell+1)^{\alpha-1}N^{\alpha}}\sum^{\ell'}_{p_{1}=0}\frac{1}{\mathfrak{a}(p_{1})(\ell'-p_{1}+1)^{\alpha-1}}\\
 & \le\frac{c''}{(\ell+1)^{\alpha-1}N^{\alpha}(\ell'+1)^{\alpha-1}},
\end{align*}
where the first inequality follows from $\ell>2\ell'$, and the second
one follows similarly as in \eqref{eq:as2-2}. These three displays
conclude the proof via induction.
\end{proof}

On the other hand, the next lemma will be handy in Section \ref{sec2.2}.
\begin{lem}
\label{lem:asymp5}Assume that $\alpha=1$. Then for any $\ell<N/2$,
\[
\sum_{\substack{0\le p_{1},\dots,p_{k}<N-\ell:\\
p_{1}+\cdots+p_{k}=N
}
}\frac{1}{\mathfrak{a}(p_{1})\mathfrak{a}(p_{2})\cdots\mathfrak{a}(p_{k})}\le\frac{c(\log N)^{k-2}(\log N-\log(\ell+1))}{N},
\]
where $c>0$ does not depend on $\ell$.
\end{lem}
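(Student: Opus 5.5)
We argue by induction on $k$, mirroring the proof of Lemma \ref{lem:asymp3} with the $\alpha>1$ estimates replaced by their logarithmic counterparts from Lemma \ref{lem:asymp2} (case $\alpha=1$). For $k=1$ the left-hand side is empty, since $p_{1}=N\not<N-\ell$, so there is nothing to prove. For $k=2$ the sum is
\[
\sum_{p_{1}=\ell+1}^{N-\ell-1}\frac{1}{p_{1}(N-p_{1})}=\frac{2}{N}\sum_{p_{1}=\ell+1}^{N-\ell-1}\frac{1}{p_{1}}\le\frac{c(\log N-\log(\ell+1))}{N},
\]
by the partial fraction identity $\frac{1}{p(N-p)}=\frac{1}{N}(\frac1p+\frac1{N-p})$. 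This already exhibits the desired factor.

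For the inductive step with $k\ge3$, we split according to $p_{1}$, exactly as in Lemma \ref{lem:asymp3}. In the range $\ell<p_{1}\le N-\ell-1$ the constraint on $p_{2},\dots,p_{k}$ is automatic, because they sum to $N-p_{1}<N-\ell$. Lemma \ref{lem:asymp2} then bounds the inner sum by $c(\log N)^{k-2}/(N-p_{1})$. Summing in $p_{1}$ and using the same partial fractions gives at most
\[
\frac{c(\log N)^{k-2}}{N}\sum_{p_{1}=\ell+1}^{N-\ell-1}\Big(\frac{1}{p_{1}}+\frac{1}{N-p_{1}}\Big)\le\frac{c'(\log N)^{k-2}(\log N-\log(\ell+1))}{N}.
\]

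In the range $0\le p_{1}\le\ell$, the remaining variables satisfy $p_{j}<(N-p_{1})-(\ell-p_{1})$ with total $N-p_{1}\ge N/2$. The induction hypothesis with $(N,\ell)\leftarrow(N-p_{1},\ell-p_{1})$ bounds this part by
\[
\sum_{p_{1}=0}^{\ell}\frac{1}{\mathfrak{a}(p_{1})}\frac{c(\log N)^{k-3}\big(\log N-\log(\ell-p_{1}+1)\big)}{N}.
\]
This is the main obstacle. Here $\sum_{p_{1}\le\ell}1/\mathfrak{a}(p_{1})\asymp\log(\ell+1)$ is not summable, so the bound does not simply reproduce the factor $\log N-\log(\ell+1)$.

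We handle it by splitting at $p_{1}\le\ell/2$ versus $p_{1}>\ell/2$.

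\emph{Case $p_{1}\le\ell/2$.} Here $\log(\ell-p_{1}+1)\ge\log(\ell+1)-\log 2$. The contribution is at most
\[
c(\log N)^{k-3}\log(\ell+1)\,(\log N-\log(\ell+1)+c)/N.
\]
Since $\log(\ell+1)\le\log N$, this is bounded by $c(\log N)^{k-2}(\log N-\log(\ell+1)+1)/N$.

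\emph{Case $p_{1}>\ell/2$.} Bound $1/p_{1}\le2/\ell$. The remaining sum $\sum_{m\le\ell}(\log N-\log(m+1))$ is at most $\ell(\log N-\log(\ell+1))+c\ell$, by comparison with $\int\log$. This gives the same bound.

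The residual additive constant is harmless when $\log N-\log(\ell+1)\ge\log2$, which holds since $\ell<N/2$. Thus it can be absorbed into $c(\log N-\log(\ell+1))$, and combining the two ranges closes the induction.
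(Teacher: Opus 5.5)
Your argument is correct and is essentially the paper's proof: induction on $k$, the same split of $p_1$ at $\ell$, Lemma~\ref{lem:asymp2} with partial fractions on $\ell<p_1\le N-\ell-1$, the induction hypothesis with $(N,\ell)\leftarrow(N-p_1,\ell-p_1)$ on $p_1\le\ell$, and a further split near $\ell/2$. For $p_1>(\ell+1)/2$ the paper simply bounds the numerator by $\log N$ and uses $\sum_{(\ell+1)/2<p_1\le\ell}1/p_1=O(1)$, which is cruder than your integral comparison but sufficient.

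There are two cosmetic slips. First, $\sum_{p_1\le\ell/2}1/\mathfrak{a}(p_1)$ is bounded by $c(1+\log(\ell+1))$ rather than $c\log(\ell+1)$, as $\ell=0$ shows. Second, $\ell<N/2$ does not give $\log N-\log(\ell+1)\ge\log 2$ for odd $N$; it gives only a uniform positive lower bound such as $\log(3/2)$ for $N\ge2$, which is all that absorbing the additive constants requires.
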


\begin{proof}
The initial step $k=1$ is clear since the left-hand side is zero.
Suppose that the lemma holds for $k-1$ and consider $k\ge2$. Summing
first for $p_{1}$, the left-hand side of the lemma equals
\[
\left[\sum^{N-\ell-1}_{p_{1}=\ell+1}\sum_{\substack{p_{2},\dots,p_{k}\ge0:\\
p_{2}+\cdots+p_{k}=N-p_{1}
}
}+\sum^{\ell}_{p_{1}=0}\sum_{\substack{0\le p_{2},\dots,p_{k}<(N-p_{1})-(\ell-p_{1}):\\
p_{2}+\cdots+p_{k}=N-p_{1}
}
}\right]\frac{1}{\mathfrak{a}(p_{1})\mathfrak{a}(p_{2})\cdots\mathfrak{a}(p_{k})}.
\]
By Lemma \ref{lem:asymp2}, the first part is easily bounded above
by
\[
\sum^{N-\ell-1}_{p_{1}=\ell+1}\frac{1}{p_{1}}\frac{c(\log N)^{k-2}}{N-p_{1}}\le\frac{c'}{N}(\log N)^{k-2}(\log N-\log(\ell+1)).
\]
For the second part, we apply the induction hypothesis to bound it
above by
\[
\sum^{\ell}_{p_{1}=0}\frac{1}{\mathfrak{a}(p_{1})}\times\frac{c(\log N)^{k-3}(\log(N-p_{1})-\log(\ell-p_{1}+1))}{N-p_{1}}.
\]
Thus, we are left to prove that
\[
\sum^{\ell}_{p_{1}=0}\frac{\log(N-p_{1})-\log(\ell-p_{1}+1)}{\mathfrak{a}(p_{1})\mathfrak{a}(N-p_{1})}\le c\frac{\log N(\log N-\log(\ell+1))}{N}.
\]
If $p_{1}\le\frac{\ell+1}{2}$, then $\frac{N-p_{1}}{\ell-p_{1}+1}\le\frac{2N}{\ell+1}$,
thus
\[
\sum^{\frac{\ell+1}{2}}_{p_{1}=0}\frac{\log(N-p_{1})-\log(\ell-p_{1}+1)}{\mathfrak{a}(p_{1})\mathfrak{a}(N-p_{1})}\le\sum^{\frac{\ell+1}{2}}_{p_{1}=0}\frac{\log2+\log N-\log(\ell+1)}{\mathfrak{a}(p_{1})\mathfrak{a}(N-p_{1})}\le\frac{c\log N(\log N-\log(\ell+1))}{N}.
\]
Finally, if $\frac{\ell+1}{2}<p_{1}\le\ell$, then $\log(N-p_{1})-\log(\ell-p_{1}+1)\le\log N$,
thus
\[
\sum_{\frac{\ell+1}{2}<p_{1}\le\ell}\frac{\log(N-p_{1})-\log(\ell-p_{1}+1)}{\mathfrak{a}(p_{1})\mathfrak{a}(N-p_{1})}\le\sum_{\frac{\ell+1}{2}<p_{1}\le\ell}\frac{\log N}{\mathfrak{a}(p_{1})\mathfrak{a}(N-p_{1})}\le c\log N\frac{\log N-\log(\ell+1)}{N}.
\]
The last two inequalities conclude the proof via induction.
\end{proof}

\end{document}